\numberwithin{equation}{section}
\theoremstyle{plain}
\newtheorem*{question*}{\protect\questionname}
\theoremstyle{plain}
\newtheorem*{thm*}{\protect\theoremname}
\theoremstyle{plain}
\newtheorem{thm}{\protect\theoremname}[section]
\theoremstyle{definition}
\newtheorem{example}[thm]{\protect\examplename}
\theoremstyle{definition}
\newtheorem{defn}[thm]{\protect\definitionname}
\theoremstyle{remark}
\newtheorem{rem}[thm]{\protect\remarkname}
\theoremstyle{plain}
\newtheorem{lem}[thm]{\protect\lemmaname}
\theoremstyle{plain}
\newtheorem{prop}[thm]{\protect\propositionname}
\theoremstyle{remark}
\newtheorem{notation}[thm]{\protect\notationname}
\date{}
\providecommand{\definitionname}{Definition}
\providecommand{\examplename}{Example}
\providecommand{\lemmaname}{Lemma}
\providecommand{\notationname}{Notation}
\providecommand{\propositionname}{Proposition}
\providecommand{\questionname}{Question}
\providecommand{\remarkname}{Remark}
\providecommand{\theoremname}{Theorem}
\begin{document}
\title{Expected Signature on a Riemannian Manifold and Its Geometric Implications}
\author{X. Geng, H. Ni and C. Wang}

\author{X. Geng\thanks{School of Mathematics and Statistics, University of Melbourne, Parkville VIC 3052, Australia. Email: xi.geng@unimelb.edu.au.},$\ $ H. Ni\thanks{Department of Mathematics, University College London, London WC1H 0AY, United Kingdom. Email: h.ni@ucl.ac.uk.}$\ \ $and C. Wang\thanks{Department of Mathematical Sciences, University of Bath, Bath BA2 7AY, United Kingdom. Email: cw2581@bath.ac.uk.}}

\maketitle
\begin{abstract}
On a compact Riemannian manifold $M,$ we show that the Riemannian distance
function $d(x,y)$ can be explicitly reconstructed from suitable asymptotics
of the expected signature of Brownian bridge from $x$ to $y$. In
addition, by looking into the asymptotic expansion of the fourth level
expected signature of the Brownian loop based at $x\in M$, one can explicitly
reconstruct both intrinsic (Ricci curvature) and extrinsic (second
fundamental form) curvature properties of $M$ at $x$. As independent interest, we also derive
the intrinsic PDE for the expected Brownian signature dynamics on $M$ from the perspective of the Eells-Elworthy-Malliavin horizontal lifting.

\tableofcontents{}

\end{abstract}

\section{Introduction and outline of main results}

In this section, we discuss the main motivation, introduce some background
information and outline the main results of the present article. 

\subsection{The signature transform}

The \textit{signature transform} (or simply the signature) of a multidimensional
path $\gamma:[0,T]\rightarrow\mathbb{R}^{d},$ which is defined by
the formal tensor series 
\begin{align*}
S(\gamma) & \triangleq\big(1,\gamma_{T}-\gamma_{0},\int_{0<s<t<T}d\gamma_{s}\otimes d\gamma_{t},\cdots,\int_{0<t_{1}<\cdots<t_{n}<T}d\gamma_{t_{1}}\otimes\cdots\otimes d\gamma_{t_{m}},\cdots\big)
\end{align*}
of iterated path integrals, provides an effective summary of the essential
information encoded in the original path $\gamma$. Such a transformation,
as well as its intrinsic form defined in terms of iterated integrals
against spatial one-forms, was originally introduced by the geometer
K.T. Chen \cite{Che54} in the 1950s for his purpose of constructing
a de Rham-type cohomology theory on loop spaces over manifolds. Similar
type of iterated integrals (with $\gamma$ being operator-valued paths)
was also used by the physicist F. Dyson \cite{Dys49} to obtain perturbative
expansions of Schr\"odinger equation (Dyson series).

The signature plays a fundamental role in the analysis of rough paths
and rough differential equations. Its mathematical properties have
been largely developed and better understood over the past two decades
by using modern techniques from the rough path theory, which was founded
by T. Lyons in his seminal work \cite{Lyo98} in 1998 and has led to far-reaching applications in stochastic analysis. Among others, a basic result is
the so-called \textit{signature uniqueness theorem}. It asserts
that every rough path is uniquely determined by its signature up to
treelike equivalence. This result was already obtained in Chen's originial
work \cite{Che58} in 1958 for piecewise smooth paths. It was extended
to the bounded variation case by Hambly-Lyons \cite{HL10} in 2010
and further to the general rough path case by Boedihardjo et al. \cite{BGLY16}
in 2016. The importance of the signature uniqueness theorem lies in
the fact that it potentially opens a gateway of studying geometric
properties of a rough path from its signature. In fact, the reconstruction
of a rough path from its signature at variaous quantitative levels
(the signature inversion problem) is among the most significant and
challenging open problems in rough path theory. Partial progress and
several different methods have been developed in the literature over
the past years to understand this problem, e.g. Lyons-Xu \cite{LX15}
by hyperbolic developments, Lyons-Xu \cite{LX18} by symmetrisation,
Chang et al. \cite{CDNX17} by probabilistic sampling, Chang-Lyons
\cite{CL19} by tensor insertions, Le Jan-Qian \cite{LQ12} and Geng
\cite{Gen17} by iterated integrals against suitable spatial one-forms
etc. If one is only interested in some particular quantitative properties
of a path instead of recovering the full trajectory, there could be
neat inversion formulae such as the relation (\ref{eq:LC}) below
which reconstructs the length of a smooth path from its signature
asymptotics in a rather simple and elegant way. 

On the other hand, if the underlying rough path is random (sample
paths of a stochastic process such as the Brownian motion) the signature
becomes a tensor-valued random variable. In this case, one naturally
considers the expectation of the signature (the \textit{expected signature}).
There is a probabilistic counterpart of the signature uniqueness theorem
due to Chevyrev-Lyons \cite{CL16} in 2016, which asserts that under
suitable conditions the law of a stochastic process is uniquely characterised
by its expected signature. To some extent, this can be viewed as an
infinite dimensional analogue of the classical moment problem for
random variables. There has been several works using a PDE approach to compute and analyse the expected signature of  Brownian motion \cite{LV04}, \cite{LN15}, diffusion processes \cite{N12} and L\'evy processes \cite{FS17}. For a general stochastic process, how one can explicitly and quantitatively reconstruct
distributional properties of the process from its expected
signature is widely unknown at the moment. 

On the applied side, more recently the applications of expected signature in machine learning are gaining attention. Specifically, Sig-MMD or Sig-Wasserstein-1 (Sig-W$_1$) distance based on the expected signature can serve as the maximum mean discrepancy (MMD) distance for general stochastic processes, which have wide applications in hypothesis testing \cite{chevyrev2022signature}, distributional regression \cite{LSDEL21} and generative models on sequential data \cite{ni2023conditional,HLSXWL21}. Compared with the vanilla MMD with vector-valued time series feature, Sig-MMD demonstrates superior performance in the two-sample hypothesis test of stochastic processes, as illustrated in \cite{chevyrev2022signature}. This metric can be kernelized to facilitate efficient computation \cite{SCFLY21}. Moreover, the Sig-WGAN model, which utilises Sig-W$_1$ based discriminator, has empirically shown to improve the accuracy and robustness of traditional GAN models for synthetic time series generation. This approach reduces the min-max game to supervised learning, hence significantly reducing the computational cost and yielding better performance, in particular, for the case of limited data  \cite{ni2023conditional,HLSXWL21}.

\subsection{Extended signature and the basic geometric question }

In this article, we take a different perspective of signature inversion.
Instead of trying to reconstruct a generic path from the knowledge
of its signature, we study signature dymanics in a geometric setting
and investigate the following basic question. 
\begin{question*}
Let $M$ be a Riemannian manifold. By observing the expected signature
of certain geometric dynamics on $M$ (e.g. Brownian bridges), can
one explicitly recover geometric properties (e.g. Riemannian distance,
curvature properties) of the underlying space? 
\end{question*}
Summarised in vague terms, the main finding of the present article
is that the expected signature of a Brownian bridge on $M$ with lifetime
$t$ encode rich geometric information (more precisely, metric and
curvature properties) about the underlying manifold $M$ in the asymptotics
when $t\rightarrow0^{+}$. How such information can be extracted explicitly
from the expected signature asymptotics is the main focus of the present
work. The precise statements of our main results are stated in Theorem
\ref{thm:ReconRD} (reconstruction of Riemannian distance) and Theorem
\ref{thm:RecCurv} (reconstruction of curvature properties) respectively. 

\vspace{2mm} Before explaining the essential ideas, one needs to
be careful about the notion of signature for manifold-valued paths
in the first place. Let $\gamma:[0,T]\rightarrow M$ be a smooth path
taking values in a differentiable manifold $M$. In the differential-geometric
setting, one cannot give intrinsic meanings to integrals like $\int d\gamma\otimes\cdots\otimes d\gamma$
without any additional structure. In fact, the intrinsic notion of
integration along $\gamma$ is the \textit{line integral against a
one-form} on $M$. More specifically, let $\phi_{1},\cdots,\phi_{n}$
be a given family of smooth one-forms on $M$. One can consider the
iterated line integral 
\[
\int_{0<t_{1}<\cdots<t_{n}<T}\phi_{1}(d\gamma_{t_{1}})\cdots\phi_{n}(d\gamma_{t_{n}}),
\]
which is globally well-defined due to the natural pairing between
cotangent and tangent vectors ($\phi(d\gamma_{t})$ is understood
as the pairing between $\phi(\gamma_{t})\in T_{\gamma_{t}}^{*}M$
and $\dot{\gamma}_{t}\in T_{\gamma_{t}}M$). By varying $n$ and the
test one-forms $\phi_{1},\cdots,\phi_{n}$, one obtains a family of
numbers associated with the given path $\gamma.$ This collection
of numbers, known as the \textit{extended signature} of $\gamma$
(cf. \cite{LQ12}), uniquely determines $\gamma$ up to tree-like
pieces. This geometric viewpoint is classical and was already well
understood back in K.T. Chen's original signature uniqueness theorem
\cite{Che58} in the 1950s. The extended signature (iterated line
integrals against one-forms) has been used in various contexts, e.g.
in reconstructing trajectories from the signature \cite{LQ12,Gen17}
and in the study of rough paths on manifolds \cite{CDL15}. In the
Euclidean case, the signature is a special case of iterated line integrals
(one simply takes $\phi_{i}=dx^{i}$) and the extended signature contains
exactly the same amount of information as the usual signature. Indeed,
it is well known that smooth one-forms can be approximated by polynomials
and iterated line integrals against polynomial forms can be expressed
as a suitable linear combination of signature coefficients (the shuffle
product formula). As a consequence, the extended signature is uniquely
determined by the usual signature. 

In our study, we will assume that a collection of one-forms $(\phi_{1},\cdots,\phi_{N})$
on $M$ (equivalently, an $\mathbb{R}^{N}$-valued one form $\phi$)
is given fixed and consider the Euclidean signature of the $\mathbb{R}^{N}$-valued
path $\int_{0}^{\cdot}\phi(d\gamma_{t})$. We call this the \textit{$\phi$-signature}
of $\gamma$ (of course, this is just part of the extended signature
of $\gamma$ given by iterated line integrals along combinations of
one-forms taken from the family $\phi=(\phi_{1},\cdots,\phi_{N})$).
A basic example that will be of our primary interest is the case when
$\phi$ is given by the exterior derivative of an embedding $F:M\rightarrow\mathbb{R}^{N}$.
In this case, the $\phi$-signature of $\gamma$ is just the Euclidean
signature of $F(\gamma)$ computed in the ambient space $\mathbb{R}^{N}$.

\subsection{Intrinsic PDE for the expected signature dynamics}

Let us now consider a Riemannian manifold $M$ and let $\phi$ be
an $\mathbb{R}^{N}$-valued one-form on $M$. For most of the time,
$\phi=dF$ where $F:M\rightarrow\mathbb{R}^{N}$ is an \textit{isometric}
embedding (this always exists by Nash's isometric embedding theorems).
We consider the expected $\phi$-signature of Brownian dynamics on
$M$. More specifically, let $W_{t}^{x}$ be a Brownian motion on
$M$ (i.e. a Markov process generated by $\Delta/2$ where $\Delta$
is the Laplace-Beltrami operator) starting at $x$. Let $\Psi(t,x)$
be the expected value of the $\phi$-signature (in the Stratonovich
sense) of $W^{x}$ up to time $t$. 

Our first main result establishes the intrinsic PDE governing the
dynamics of $\Psi(t,x)$ (cf. Theorem \ref{thm:BMSigPDE} ).
\begin{thm*}
The function $\Psi(t,x)$ satisfies the following tensor-algebra-valued
parabolic PDE on $M$:
\begin{equation}
\frac{\partial\Psi}{\partial t}=\frac{1}{2}\Delta\Psi+{\rm Tr}\big(\phi\otimes d\Psi+\frac{1}{2}(\nabla\phi+\phi\otimes\phi)\otimes\Psi\big)\label{eq:BMPDEIntro}
\end{equation}
\end{thm*}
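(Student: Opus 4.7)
The plan is to derive the PDE by combining the Eells-Elworthy-Malliavin horizontal lift with a Feynman-Kac-type martingale argument. First I would introduce the horizontal lift $U_{t}$ of $W_{t}^{x}$ to the orthonormal frame bundle $O(M)$: picking a frame $u_{0}\in\pi^{-1}(x)$, the process $U_{t}$ solves the canonical SDE
\begin{equation*}
dU_{t}=\sum_{i=1}^{d}H_{i}(U_{t})\circ dB_{t}^{i},\qquad U_{0}=u_{0},
\end{equation*}
where $H_{1},\dots,H_{d}$ are the fundamental horizontal vector fields, $\{e_{i}\}$ is the standard basis of $\mathbb{R}^{d}$, and $B$ is a standard $\mathbb{R}^{d}$-valued Brownian motion. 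Since $\circ dW_{t}^{x}=(U_{t}e_{i})\circ dB_{t}^{i}$, the $\phi$-signature obeys the linear Stratonovich SDE on the tensor algebra,
\begin{equation*}
dS_{t}=S_{t}\otimes\phi(U_{t}e_{i})\circ dB_{t}^{i},\qquad S_{0}=\mathbf{1},
\end{equation*}
so that $(U_{t},S_{t})$ becomes a diffusion on $O(M)\times T((\mathbb{R}^{N}))$ driven by a single Euclidean Brownian motion.

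The next step is to compute the generator $\mathcal{L}$ of this joint diffusion. By standard Stratonovich theory, $\mathcal{L}=\tfrac{1}{2}\sum_{i}V_{i}^{2}$ with $V_{i}(u,s)=(H_{i}(u),\,s\otimes\phi(ue_{i}))$. For a test function of the product form $f(u,s)=s\otimes\widetilde{F}(u)$, arising from a tensor-valued function $F$ on $M$ with $\widetilde{F}=F\circ\pi$, one application of $V_{i}$ yields
\begin{equation*}
V_{i}f(u,s)=s\otimes\bigl(H_{i}\widetilde{F}(u)+\phi(ue_{i})\otimes\widetilde{F}(u)\bigr).
\end{equation*}
Iterating once more, summing over $i$, and using the two standard identities $\sum_{i}H_{i}^{2}\widetilde{F}=\widetilde{\Delta F}$ (Eells-Elworthy-Malliavin) and $H_{i}\bigl[\phi(ue_{j})\bigr]=(\nabla\phi)(ue_{i},ue_{j})$ (because the horizontal flow is parallel transport), and recognising that a sum against an orthonormal frame realises the metric trace, I obtain $\mathcal{L}f(u,s)=s\otimes\widetilde{G}(u)$ with
\begin{equation*}
G=\tfrac{1}{2}\Delta F+\mathrm{Tr}\bigl(\phi\otimes dF+\tfrac{1}{2}(\nabla\phi+\phi\otimes\phi)\otimes F\bigr),
\end{equation*}
which is precisely the right-hand side of (\ref{eq:BMPDEIntro}) applied to $F$.

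To convert this generator identity into the PDE for $\Psi$, I would invoke the Markov property of $W^{x}$ together with the multiplicativity $S_{0,t+s}=S_{0,s}\otimes S_{s,t+s}$ of the signature. A short calculation then shows that $N_{s}:=S_{s}\otimes\widetilde{\Psi}(t-s,U_{s})$ is a $T((\mathbb{R}^{N}))$-valued martingale on $[0,t]$; requiring its $ds$-drift to vanish immediately produces the claimed equation, which descends to $M$ by $\pi$-invariance. The principal technical obstacle is the infinite-dimensional tensor-algebra target: I would project the entire argument to each level $T^{n}(\mathbb{R}^{N})$, where the PDE becomes a finite-dimensional linear parabolic system coupling $\Psi_{n}$ to $\Psi_{n-1}$ (through $\phi\otimes d\Psi_{n-1}$ and $\mathrm{Tr}(\nabla\phi)\otimes\Psi_{n-1}$) and to $\Psi_{n-2}$ (through $\mathrm{Tr}(\phi\otimes\phi)\otimes\Psi_{n-2}$). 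This triangular structure yields a unique solution inductively in $n$ and sidesteps any genuine infinite-dimensional analytic difficulty. A subordinate book-keeping point is to correctly track the Stratonovich cross-term $2\phi(ue_{i})\otimes H_{i}\widetilde{F}$ inside $V_{i}^{2}$, but the product ansatz $s\otimes\widetilde{F}(u)$ keeps this expansion linear and transparent.
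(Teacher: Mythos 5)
Your proposal follows essentially the same route as the paper: lift to the orthonormal frame bundle, couple the horizontal Brownian motion with its signature process into a joint Stratonovich SDE on ${\cal O}(M)\times T((E))$, compute the generator $\frac{1}{2}\sum_i{\cal V}_i^2$ on product-form functions $s\otimes\widetilde F(u)$, and descend to $M$ via the two projection identities ($\sum_i H_i^2$ projects to $\Delta$, and $H_i\langle\Phi,H_i\rangle$ gives ${\rm Tr}(\nabla\phi)$ by horizontality of the lift). The one genuine point of divergence is the direction of the Feynman--Kac step. You run it forward: you form $N_s=S_s\otimes\widetilde\Psi(t-s,U_s)$, observe it is a martingale by the Markov property and multiplicativity of the signature, and read off the PDE from the vanishing drift. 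The paper runs it backward: it first proves the PDE has a unique smooth solution $\hat\Psi$ (by decoupling into scalar inhomogeneous heat equations, inductively in the signature degree), lifts $\hat\Psi$ to the bundle, and then uses Feynman--Kac to identify $\hat\Psi$ with the expected signature. The practical difference is that your direction requires knowing a priori that $\Psi$ is $C^{1,2}$ in $(t,x)$ before Itô's formula can be applied to $\widetilde\Psi(t-s,U_s)$; your remark about the triangular structure of the level-$n$ system establishes smoothness of the \emph{PDE solution}, not of $\Psi$ itself, so as written there is a small circularity to patch (either prove regularity of $\Psi$ directly from the SDE, or reverse the argument as the paper does). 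Apart from that regularity point, the generator computation and the projection identities you invoke are exactly those in the paper's proof.
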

Here $d$ is the exterior derivative operator and $\nabla$ is the
Riemannian connection. The definition of ${\rm Tr}(\cdot)$ is explained
in the remarks following the statement of Theorem \ref{thm:BMSigPDE}
below. The PDE for the expected signature of Euclidean Brownian motion
(and also of diffusion processes) was first established by Lyons-Ni
\cite{LN15} from the Markovian perspective. To derive the PDE (\ref{eq:BMPDEIntro}),
we take an intrinsic approach from the perspective of the Eells-Elworthy-Malliavin
lifting onto the orthornormal frame bundle. It is well known that
the lifted Brownian motion $\Xi_{t}$ on the bundle satisfies a canonical
SDE governed by the fundamental horizontal vector fields. As a result,
one can write down an SDE for the joint process $(\Xi_{t},\tilde{S}_{t})$
(cf. (\ref{eq:JointSDE})), where $\tilde{S}_{t}$ is the $\Phi$-signature
of $\Xi$ up to time $t$ and $\Phi$ is the pullback of $\phi$ onto
the bundle. Once the SDE for $(\Xi_{t},\tilde{S}_{t})$ is obtained,
it is standard to extract its generator and write down the associated
PDE governing the expectation of $\tilde{S}_{t}$ on the bundle (cf.
Lemma \ref{lem:PDEBundle}). It then remains to see how the
PDE on the bundle gets projected to the intrinsic PDE (\ref{eq:BMPDEIntro})
on the base manifold $M$. 

The PDE (\ref{eq:BMPDEIntro}) for the Brownian signature dynamics
easily yields an associated PDE for the expected signature of Brownian
bridge. Let $\{X_{s}^{t,x,y}:s\in[0,t]\}$ be a Brownian bridge from
$x$ to $y$ with lifetime $t$, i.e. the Brownian motion starting
at $x$ conditioned on reaching $y$ at time $t$. Let $\psi(t,x,y)$
be its expected $\phi$-signature. 
\begin{thm*}
The function $\psi(t,x,y)$ satisfies the following tensor-algebra-valued
parabolic PDE on $M$: 
\begin{align*}
\frac{\partial\psi}{\partial t} & =\frac{1}{2}\Delta_{x}\psi+\langle\nabla_{x}\log p(t,x,y),\nabla_{x}\psi+\phi\otimes\psi\rangle+{\rm Tr}(\phi\otimes d_{x}\psi)\\
 & \ \ \ +\frac{1}{2}{\rm Tr}\big((\nabla\phi+\phi\otimes\phi)\otimes\psi\big),
\end{align*}
where $p(t,x,y)$ is the heat kernel on $M$ and the subscript $x$
means differentiating with respect to the $x$-variable. 
\end{thm*}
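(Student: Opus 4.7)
The plan is to deduce the bridge PDE from the Brownian motion PDE of Theorem \ref{thm:BMSigPDE} via a disintegration against the heat kernel. For a smooth test function $g:M\to\mathbb{R}$, let $\tilde{S}^{\phi}_{[0,t]}$ denote the $\phi$-signature of $W^{x}$ on $[0,t]$ and introduce the endpoint-weighted expectation
\[
u^{g}(t,x):=\mathbb{E}^{x}\big[\tilde{S}^{\phi}_{[0,t]}\,g(W^{x}_{t})\big]=\int_{M}g(y)\,f(t,x,y)\,d\mathrm{vol}(y),\qquad f(t,x,y):=p(t,x,y)\,\psi(t,x,y),
\]
where the second identity is the endpoint disintegration and $g\equiv 1$ recovers $\Psi$.

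The main step is to show that $u^{g}$ satisfies the same tensor-algebra-valued parabolic PDE (\ref{eq:BMPDEIntro}) as $\Psi$, for every smooth $g$. The horizontal-lift/It\^o argument underlying Theorem \ref{thm:BMSigPDE} identifies the infinitesimal generator of the joint Markov process consisting of the lifted frame bundle motion and the running signature; the extra terminal factor $g(W^{x}_{t})$ only alters the initial condition to $u^{g}(0,x)=g(x)\mathbf{1}$ and leaves the Kolmogorov backward equation on $(0,t]$ unchanged. Varying $g$ over a separating family of smooth compactly supported functions and invoking smoothness of $f$ on $(0,\infty)\times M\times M$ (inherited from that of $p$ together with standard parabolic regularity), the integral identity upgrades to a pointwise PDE for $f$ in the variables $(t,x)$, with $y$ a parameter.

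The remaining step is the algebraic conversion $f\mapsto\psi=f/p$. Expanding $\Delta_{x}(p\psi)=p\,\Delta_{x}\psi+2\langle\nabla_{x}p,\nabla_{x}\psi\rangle+\psi\,\Delta_{x}p$, $d_{x}(p\psi)=p\,d_{x}\psi+\psi\,d_{x}p$, and $\partial_{t}(p\psi)=p\,\partial_{t}\psi+\psi\,\partial_{t}p$, substituting into the PDE for $f$, and using the heat equation $\partial_{t}p=\frac{1}{2}\Delta_{x}p$ to cancel $\psi\,\partial_{t}p$ against $\frac{1}{2}\psi\,\Delta_{x}p$, one divides through by $p$. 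The two remaining cross terms, namely $p^{-1}\langle\nabla_{x}p,\nabla_{x}\psi\rangle$ from the Laplacian and $p^{-1}{\rm Tr}(\phi\otimes\psi\,d_{x}p)=p^{-1}\phi(\nabla_{x}p)\otimes\psi$ from the first-order term, combine via $\nabla_{x}p/p=\nabla_{x}\log p$ into $\langle\nabla_{x}\log p,\nabla_{x}\psi+\phi\otimes\psi\rangle$, yielding the stated equation.

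The main obstacle lies in the second paragraph: one must verify that the horizontal-lift derivation of (\ref{eq:BMPDEIntro}) extends cleanly to the endpoint-weighted functional $u^{g}$ and then upgrade the family of $g$-indexed integral identities to a pointwise PDE for $f$. The semigroup/It\^o structure of the original derivation is robust to endpoint weighting, and the pointwise upgrade follows from standard Gaussian-type heat-kernel bounds on the compact manifold $M$ together with smoothness of $p$ and $\psi$ away from $t=0$; the conversion in the third paragraph is then a mechanical Leibniz calculation.
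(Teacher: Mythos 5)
Your proposal is correct and follows essentially the same route as the paper: the paper likewise observes that the endpoint-weighted expectation $\mathbb{E}[f(W_t^x)S_t^\phi(W^x)]$ satisfies the same PDE as $\Psi$ (applying it with $f=\delta_y$ so that $\bar\psi = p\,\psi$ solves (\ref{eq:PhiSigPDE})), and then performs the identical Leibniz/heat-equation computation to convert to the equation for $\psi=\bar\psi/p$. Your use of smooth test functions $g$ plus a density argument is just a slightly more careful phrasing of the paper's direct substitution of the Dirac delta.
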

As we will see, the function $\psi(t,x,y)$ encodes rich geometric
information about the manifold $M$ in the asymptotics as $t\rightarrow0^{+}$.
In what follows, we consider the particular case when $\phi=dF$, where $F:M\rightarrow E\triangleq\mathbb{R}^N$ is an isometric embedding. We will establish two types of reconstruction results:

\vspace{2mm}\noindent (i) {[}\textit{Reconstruction of Riemannian distance}{]}
Through a suitable procedure of combined asymptotics for $\psi(t,x,y)$
as $t\rightarrow0^{+}$ and $n\rightarrow\infty$ ($n$ is the degree
of the signature), one can explicitly recover the Riemannian distance
between $x$ and $y$.\

\vspace{2mm}\noindent (ii) {[}\textit{Reconstruction of curvature properties}{]} By considering the
asymptotic expansion of the fourth level component of $\psi(t,x,x)$
as $t\rightarrow0^{+}$, one can explicitly recover the Riemannian
metric tensor as well as both intrinsic (Ricci curvature) and extrinsic
(the second fundamental form) curvature properties at $x$. Here we
consider the fourth level projection because this can be shown to
be the first nonzero component in $\psi(t,x,x)$. 

\subsection{Reconstruction of Riemannian distance}

Recall that $\psi(t,x,y)$ is the expected $dF$-signature of the
Brownian bridge $X^{t,x,y}$ from $x$ to $y$ with lifetime $t$,
where $F:M\rightarrow\mathbb{R}^{N}$ is a given isometric embedding.
Our main idea of reconstructing the Riemannian distance $d(x,y)$
is largely inspired by a well-known open problem in rough path theory
which we shall first describe. 

Let $\gamma:[0,T]\rightarrow\mathbb{R}^{d}$ be a continuous path
with bounded total variation. By using the triangle inequality, it
is straight forward to see that 
\begin{equation}
\big(n!\big\| S_{n}(\gamma)\big\|\big)^{1/n}\leqslant L(\gamma)\ \ \ \forall n\geqslant1.\label{eq:LCUpBd}
\end{equation}
Here $S_{n}(\gamma)\in(\mathbb{R}^{d})^{\otimes n}$ is the $n$-th
level signature of $\gamma$, $\|\cdot\|$ is a suitable tensor norm
(e.g. the projective norm) and $L(\gamma)$ is the length of $\gamma$.
The remarkable (and rather surprising) point is that the simple estimate
(\ref{eq:LCUpBd}) becomes asymptotically sharp as $n\rightarrow\infty$.
More precisely, it was implicitly conjectured by Hambly-Lyons \cite{HL10}
which was made explicit in Chang-Lyons-Ni \cite{CLN18} that under
the projective tensor norm $\|\cdot\|_{{\rm proj}}$, 
\begin{equation}
\lim_{n\rightarrow\infty}\big(n!\big\| S_{n}(\gamma)\big\|_{{\rm proj}}\big)^{1/n}=L(\gamma)\label{eq:LC}
\end{equation}
for any tree-reduced (i.e. without treelike pieces) BV path $\gamma$.
In other words, the length of a tree-reduced path is recovered from
its normalised signature asymptotics. The formula (\ref{eq:LC}) was
established for ${\cal C}^{3}$ paths by Hambly-Lyons \cite{HL10}
and ${\cal C}^{1}$ paths by Lyons-Xu \cite{LX15}. It was extended
to the two-dimensional BV case by Boedihardjo-Geng \cite{BG22} without
any regularity assumption but under a stronger notion of tree-reducedness.
A stronger version of (\ref{eq:LC}) for ${\cal C}^{2}$-paths was
recently obtained by \cite{CMT23} generalising a corresponding result
in \cite{HL10}. To our best knowledge, proving (or disproving) the
conjectural formula (\ref{eq:LC}) for the general BV case remains
an open problem.

Returning to the geometric setting, our main idea of reconstructing
the Riemannian distance $d(x,y)$ from suitable asymptotics of the
expcted $dF$-signature $\psi(t,x,y)$ of the Brownian bridge $X^{t,x,y}$
can be summarised as follows.
\begin{enumerate}
\item When $t$ is small, one is essentially forcing the bridge $X^{t,x,y}$
to travel from $x$ to $y$ by a short amout of time and thus $X^{t,x,y}$
behaves like a minimising geodesic $\gamma^{x,y}$ joining $x$ to
$y$. This property is quantified through the large deviation principle
(LDP) of the Brownian bridge established by Hsu \cite{Hsu90}.
\item As a consequence of the first point, it is reasonable to expected
that $\pi_{n}\psi(t,x,y)$ (the $n$-th level projection) is close
to the $n$-th level $dF$-signature of the geodesic $\gamma^{x,y}$.
Note that the latter is just the usual Euclidean signature of $F(\gamma^{x,y})$
in the ambient space $\mathbb{R}^{N}.$
\item As a consequence of the length conjecture (\ref{eq:LC}) (which is
a proven fact due to the smoothness of geodesics), the length of $F(\gamma^{x,y})$
in $\mathbb{R}^{N}$ can be recovered from its normalised signature
asymptotics. Note that this length is precisely the Riemannian distance
$d(x,y)$ since $F$ is an isometric embedding.
\item In view of the second and third points, one naturally expects that
\[
\big(n!\|\pi_{n}\psi(t,x,y)\|\big)^{1/n}\approx d(x,y)
\]
provided that $t$ is small and $n$ is large. 
\end{enumerate}
Our main reconstruction result can be roughly stated as follows.
\begin{thm*}
The Riemannian distance $d(x,y)$ can be reconstructed from the following
asymptotic formula:
\begin{equation}
\lim_{n\rightarrow\infty}\big(n!\|\pi_{n}\psi(t_{n},x,y)\|\big)^{1/n}=d(x,y)\label{eq:ReconRDIntro}
\end{equation}
provided that $t_{n}<<n^{-6}$. 
\end{thm*}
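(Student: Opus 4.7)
The strategy follows closely the four-step heuristic the authors articulate just before the statement. Assume first that $y$ lies outside the cut locus of $x$, so that the minimising geodesic $\gamma^{x,y}\colon[0,1]\to M$ is unique and smooth. Because $F$ is isometric, $F(\gamma^{x,y})$ is a smooth curve in $\mathbb{R}^{N}$ of Euclidean length exactly $d(x,y)$, and the Hambly--Lyons length conjecture in the smooth category \cite{HL10} gives
\[
\lim_{n\to\infty}\bigl(n!\,\|\pi_{n}S(F(\gamma^{x,y}))\|_{\mathrm{proj}}\bigr)^{1/n}=d(x,y).
\]
Combining this with the elementary inequality $|a^{1/n}-b^{1/n}|\leq|a-b|^{1/n}$ for $a,b\geq 0$ reduces \eqref{eq:ReconRDIntro} to showing the quantitative signature comparison $(n!\,\varepsilon_{n})^{1/n}\to 0$, where $\varepsilon_{n}\triangleq\|\pi_{n}\psi(t_{n},x,y)-\pi_{n}S(F(\gamma^{x,y}))\|_{\mathrm{proj}}$.

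To establish this comparison I would work with the time-rescaled bridge $Y^{n}_{s}\triangleq X^{t_{n},x,y}_{t_{n}s}$ on $[0,1]$, whose signature equals that of $X^{t_{n},x,y}$ by reparametrisation invariance. Split the expectation over the event $G_{n}$ that $F(Y^{n})$ stays within a $\delta_{n}$-tube around $F(\gamma^{x,y})$ in $\alpha$-H\"older distance (equivalently $p$-variation with $p=1/\alpha$ for some $\alpha<1/2$), and its complement. On $G_{n}$, the Lipschitz continuity of the signature map under $p$-variation perturbations (Lyons' extension theorem combined with the neo-classical inequality) yields a level-$n$ bound of the form
\[
\bigl\|\pi_{n}S(F(Y^{n}))-\pi_{n}S(F(\gamma^{x,y}))\bigr\|_{\mathrm{proj}}\leq\frac{C^{n}\,\delta_{n}\,(d(x,y)+\delta_{n})^{n-1}}{\Gamma(1+n/p)}.
\]
On the complement $G_{n}^{c}$, Hsu's small-time LDP for Brownian bridges \cite{Hsu90}, upgraded to a H\"older topology, provides an exponential tail $\mathbb{P}(G_{n}^{c})\lesssim\exp(-c\delta_{n}^{2}/t_{n})$; coupled with polynomial-in-$n$ moment bounds on $\|Y^{n}\|_{p\text{-var}}$ (Fernique-type), this contribution is negligible relative to the leading term. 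Taking expectation, extracting the $n$-th root, and optimising $\delta_{n}=n^{-\kappa}$ for a suitable $\kappa>0$, the hypothesis $t_{n}\ll n^{-6}$ provides enough slack to drive $(n!\,\varepsilon_{n})^{1/n}\to 0$.

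The principal technical obstacle is twofold. First, Hsu's LDP is classically stated in the uniform topology, whereas level-$n$ signature continuity requires concentration in $p$-variation or H\"older norm; the upgrade is standard in spirit but demands Kolmogorov-type moment bounds on bridge increments that are uniform in the small-time regime. Second, and more delicately, one must track the precise $n$-dependence throughout: the critical exponent $6$ very plausibly arises as the product of the Gaussian concentration exponent $2$ in the LDP and a signature-Lipschitz exponent of order $3$ from the neo-classical inequality, and arranging all the constants to line up requires careful bookkeeping in the spirit of Hambly--Lyons \cite{HL10}. A secondary but important subtlety concerns the cut locus: when $y\in\mathrm{Cut}(x)$, several minimisers may coexist and average (possibly destructively) inside the expected signature; this case would likely be handled by first proving \eqref{eq:ReconRDIntro} away from the cut locus and then extending by continuity of both sides in $y$.
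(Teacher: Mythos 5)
Your overall architecture (Hambly--Lyons limit for the geodesic plus a quantitative comparison between $\pi_n\psi(t_n,x,y)$ and the geodesic signature, with a good/bad event split controlled by Hsu's LDP) matches the paper's, and your bad-event treatment is essentially right in spirit, although the paper never needs to upgrade the LDP to a H\"older topology: it uses the LDP in the uniform topology only to bound the probability of exiting a normal chart, and controls the signature on the bad event separately via global moment estimates (heat kernel bounds, the Stroock--Turetsky gradient estimate, and H\"older's inequality). The genuine gap is in your good-event estimate. The level-$n$ local Lipschitz bound you invoke from Lyons' extension theorem has the shape $C^n\delta_n(d(x,y)+\delta_n)^{n-1}/\Gamma(1+n/p)$ with $p>2$ (the bridge is only of Brownian regularity), and after multiplying by the normalisation $n!$ one picks up a factor of order $(n!)^{1-1/p}$, which diverges super-exponentially; the single power of $\delta_n$ cannot compensate unless $\delta_n$ (hence $t_n$) is taken exponentially small in $n$. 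This is exactly the trap the paper flags: ``standard'' signature estimates force $t_n=o(e^{-Cn})$, and the whole point of the theorem is to reach the polynomial scale $n^{-6}$. Your guess for the origin of the exponent $6$ (Gaussian concentration exponent $2$ times a Lipschitz exponent $3$) is also not what happens: in the paper the $6$ arises because the remainder is eventually bounded by $Cn^3\sqrt{t}$, where $n^3$ comes from combinatorial counting over words, so one needs $n^3\sqrt{t_n}$ bounded, i.e.\ $t_n=O(n^{-6})$.

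What is missing from your proposal is the mechanism that replaces rough-path continuity. The paper works in a normal chart at $y$, writes the bridge SDE there using the Malliavin--Stroock expansion of $\nabla\log p$, and factors the geodesic out of the semimartingale decomposition: $\phi(\circ dX_{tr})=\dot\Gamma_r\,dr+\sqrt{t}(A^t_r\,dr+dM^t_r)$. Expanding $\pi_nS^{\phi}(X^{t,x,y})$ as a sum over words $I\in\{0,1\}^n$ then isolates, for each word with $k$ geodesic slots, an iterated integral bounded by $\frac{(\Lambda p)^{n-k}}{\sqrt{(n-k)!}}\cdot\frac{L^k}{k!}\cdot t^{(n-k)/2}$ (Proposition \ref{prop:MainSigEst}); crucially the exponential constant is $\Lambda^{n-k}$, not $\Lambda^{n}$, and the geodesic slots contribute the full $1/k!$ needed to absorb the $n!$ normalisation. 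Obtaining this without a dimension-dependent factor $C^n$ from the $E^{\otimes n}$-valued martingale terms requires the Kallenberg--Sztencel dimension-free BDG inequality, together with H\"older exponents chosen level by level ($r=p(n+1-k)$, $q=p(n+1-k)/(n-k)$). None of these ingredients is recoverable from a generic $p$-variation Lipschitz estimate, so as written your comparison step would not close at the scale $t_n\ll n^{-6}$. Finally, note that the precise theorem assumes $d(x,y)<\rho_M/2$, so the cut-locus case you discuss is excluded by hypothesis rather than handled by a continuity argument.
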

The precise and quantitative statement of such a result, which requires
a few technical assumptions, is given by Theorem \ref{thm:ReconRD}.
Although the underlying idea is simple and natural, proving such a
relation mathematically requires a substantial amount of non-trivial
analysis. For instance, the above second point is not a direct consequence
of the LDP since the latter was proved under the uniform (instead
of rough path) topology and it is well-known that the signature is
not a continuous functional of the driving path with respect to the
uniform topology. As a result, one has to establish the signature
approximation separately and we do so by using the semimartingale
decomposition of Brownian bridge under normal coordinate charts. In
particular, one needs to separate out the geodesic component in the
decomposition and estimate the remainder effectively. 

We should mention a particularly subtle point among several other
technical challenges which will all be clear along the development
of the proof in Section \ref{subsec:RDPf}. For each fixed level $n$,
as long as the lifetime $t_{n}$ is chosen to be small enough one
naturally has 
\[
\pi_{n}\psi(t_{n},x,y)\approx n\text{-th level signature of }\gamma^{x,y}.
\]
If one applies standard signature moment estimates for semimartingales
without much care, one is led to choosing $t_{n}=o(e^{-Cn})$ which
is too small to be practically useful (e.g. in the context of simulating
a Brownian bridge with lifetime $t_{n}$). A main contribution of
our analysis is that the lifetime needs not be exponentially small
in $n$ to make the asymptotics (\ref{eq:ReconRDIntro}) valid. Indeed,
a \textit{polynomial} dependence $t_{n}=O(n^{-6})$ is sufficient
(cf. (\ref{eq:QuanRD}) for the quantitative reconstruction estimate
with explicit error bound). Improving from exponential to polynomial
dependence requires much finer signature estimates for the aforementioned
semimartingale decomposition. To achieve this, we make use of a deep
result of Kallenberg-Sztencel \cite{KS91} on dimension-free BDG inequalities
for Hilbert-space-valued martingales. 

\subsection{Reconstruction of metric and curvature properties}

The asymptotic formula (\ref{eq:ReconRDIntro}) provides a way of
recovering the Riemannian distance $d(x,y)$ from the expected signature
asymptotics of Brownian bridge. This can be viewed as an zeroth order
result. It then becomes reasonable to expect that curvature properties
would start to appear in the higher order asymptotic expansion of
the expected signature function $\psi(t,x,y)$.

To be specific, we consider the case when $x=y$ (the Brownian loop
based at $x$) and look at the function 
\[
\psi_{4}(t,x)\triangleq\pi_{4}\psi(t,x,x)\in(\mathbb{R}^{N})^{\otimes4}.
\]
Due to symmetry considerations, it can be shown that $\pi_{i}\psi_{4}(t,x,x)=0$
for $i=1,2,3$ (see the discussion at the start of Section \ref{subsec:MainThmRecCurv}).
Therefore, $\psi_{4}(t,x)$ is the first non-trivial component of
$\psi(t,x,x)$ which indeed carries rich geometric information. In
Propositions \ref{prop:t2Coef} and \ref{prop:t3Ceof} below, we compute
the expansion of $\psi_{4}(t,x)$ up to order $t^{3}$:
\[
\psi_{4}(t,x)=\hat{\Theta}_{x}t^{2}+\hat{\Xi}_{x}t^{3}+O(t^{4})
\]
with explicit expressions for the tensor coefficients $\hat{\Theta}_{x},\hat{\Xi}_{x}\in(\mathbb{R}^{N})^{\otimes4}$
in terms of metric and curvature coefficients as well as the embedding
$F$$.$ The actual expressions of $\hat{\Theta}_{x}$ and $\hat{\Xi}_{x}$
are too complicated to write down here. However, after applying suitable
tensor contraction $\mathfrak{C}:(\mathbb{R}^{N})^{\otimes4}\rightarrow(\mathbb{R}^{N})^{\otimes2}$
(cf. (\ref{eq:24Trace}) for its precise definition) their expressions
are simplified substantially and their intrinsic meanings in terms
of metric and curvature properties become clear. The main formulae
are given as follows. 
\begin{thm*}
The contracted tensors $\Theta_{x}\triangleq\mathfrak{C}\hat{\Theta}_{x}$
and $\Xi_{x}\triangleq\mathfrak{C}\hat{\Xi}_{x}$, being viewed as
symmetric bilinear forms on $T_{x}M$ (note that $T_{x}M$ is viewed
as a subspace of $\mathbb{R}^{N}$ under the embedding $F$), are
given by the following formulae:
\[
\Theta_{x}=\frac{d-1}{24}g_{x}
\]
and 
\[
\Xi_{x}=\frac{{\rm S}_{x}-18d^{2}|H_{x}|_{\mathbb{R}^{N}}^{2}}{8640}g_{x}+\frac{49d-20}{8640}{\rm Ric}_{x}+\frac{(5-4d)d}{480}\langle B_{x},H_{x}\rangle_{\mathbb{R}^{N}}.
\]
Here $d$ is the dimension of $M,$ $g$ is the Riemannian metric
tensor, ${\rm S}$ is the scalar curvature, ${\rm Ric}$ is the Ricci
curvature tensor, $B_{x}$ is the second fundamental form and $H$
is the mean curvature field with respect to $F$. 
\end{thm*}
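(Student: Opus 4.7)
My plan is to perform a direct small-time asymptotic analysis of the iterated Stratonovich integral defining $\pi_4\psi(t,x,x)$ and then apply the contraction $\mathfrak{C}$ to extract the stated geometric invariants. I begin by fixing normal coordinates $(y^1,\ldots,y^d)$ at $x$ and recording the standard expansions
\[
g_{ij}(y)=\delta_{ij}-\tfrac{1}{3}R_{ikjl}(x)\,y^ky^l+O(|y|^3),\qquad F(y)=F(x)+e_i\,y^i+\tfrac{1}{2}B_x(e_i,e_j)\,y^iy^j+O(|y|^3),
\]
where $\{e_i\}$ is the image under $(dF)_x$ of a $g$-orthonormal frame (so $e_i\cdot e_j=\delta_{ij}$) and $B_x$ is the normal-bundle-valued second fundamental form. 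Next, the Brownian bridge $X^{t,x,x}$ is written locally as a semimartingale $dX_s^i=\sigma^i_\alpha(X_s)\,dW_s^\alpha+b^i(s,X_s)\,ds$ with $\sigma\sigma^T=g^{-1}$ and $b$ encoding the It\^o correction plus the logarithmic bridge drift $\nabla_x\log p(t-s,X_s,x)$; the Minakshisundaram--Pleijel asymptotics of $p$ supply the needed expansion of $b$. Rescaling by $Y_u=t^{-1/2}X_{tu}$ brings the process close to a standard Euclidean bridge on $[0,1]$ from $0$ to $0$, with curvature- and drift-induced corrections organized in powers of $\sqrt{t}$.

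Substituting these expansions into
\[
\pi_4\psi(t,x,x)=\mathbb{E}\biggl[\int_{0<s_1<\cdots<s_4<t}\bigotimes_{k=1}^4 dF(\circ dX^{t,x,x}_{s_k})\biggr]
\]
makes the integrand a formal series in $\sqrt{t}$. Parity of the leading Gaussian bridge annihilates every odd-order contribution and yields an expansion of the form $\hat\Theta_x t^2+\hat\Xi_x t^3+O(t^4)$. The order-$t^2$ piece $\hat\Theta_x$ is produced by the flat Euclidean bridge alone, with all four factors replaced by their linear part $e_i\,dX^i$. The order-$t^3$ piece $\hat\Xi_x$ combines four distinct contributions: the Riemann-tensor correction to the metric entering the bridge's quadratic variation; the quadratic jet of $F$, producing $B_x$- and hence $H_x$-dependent terms at pairs of slots; the next-order heat-kernel drift; and cross terms between the quadratic Taylor coefficient of $F$ at two slots and its linear coefficient at the remaining two. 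All expectations are evaluated by Wick's formula applied to ordered-time bridge covariances on $[0,1]$.

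Finally, I apply the contraction $\mathfrak{C}:(\mathbb{R}^N)^{\otimes 4}\to(\mathbb{R}^N)^{\otimes 2}$ as defined in the paper. Each fourth-order Wick moment collapses under $\mathfrak{C}$ into a sum of inner products of the $e_i$, so that only $\delta_{ij}$ restricted to $T_xM$ survives; by the isometry this equals $g_x$, and the combinatorial constants for the ordered-time bridge moments yield the prefactor $\tfrac{d-1}{24}$ for $\Theta_x$. For $\Xi_x$, the intrinsic contributions are reorganized via $g^{kl}R_{ikjl}=\mathrm{Ric}_{ij}$ and $g^{ij}\mathrm{Ric}_{ij}=\mathrm{S}$, while the extrinsic contributions use $g^{ij}B_x(e_i,e_j)=dH_x$ and $|H_x|^2_{\mathbb{R}^N}=\langle H_x,H_x\rangle_{\mathbb{R}^N}$, assembling into the stated linear combination of $g_x$, $\mathrm{Ric}_x$, and $\langle B_x,H_x\rangle_{\mathbb{R}^N}$. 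The main obstacle is the bookkeeping in the preceding step: one must enumerate every cross term across four tensor slots, confirm that parity and Wick pairings leave exactly the contributions matching the claimed formula, and track numerical coefficients without arithmetic slip. A clean separation of the computation into intrinsic (metric/Ricci) and extrinsic (second fundamental form/mean curvature) sectors will be essential to keep the algebra tractable.
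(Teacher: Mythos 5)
Your overall strategy is the same as the paper's: localise in a normal chart at $x$, write the Brownian loop as a semimartingale whose drift is expanded via the small-time asymptotics of $\nabla_x\log p$, organise the iterated Stratonovich integral as a series in $\sqrt t$, evaluate expectations by Wick/It\^o calculus, and finally contract with $\mathfrak{C}$ and convert derivatives of $F$ into $g_x$, ${\rm Ric}_x$, ${\rm S}_x$, $B_x$, $H_x$. Two structural remarks. First, the paper does not attack the $4$-fold ordered integral directly: it first uses the time-reversal symmetry of the loop to show $\pi_1=\pi_2=\pi_3=0$ and to obtain $\psi_4(t,x)=\tfrac12\mathbb{E}[S_2\otimes S_2]$, so the entire computation is carried out on a product of two double integrals rather than on a single $4$-fold simplex; your direct route is legitimate but combinatorially heavier, and you should justify the vanishing of levels $1$--$3$ (your parity heuristic is not quite the right mechanism, since the drift is not odd under $B\mapsto -B$; reversal symmetry is what the paper uses). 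Second, you need the gradient expansion of $\log p$ (Malliavin--Stroock), not merely the Minakshisundaram--Pleijel expansion of $p$ itself, because the coefficient $G_1$ with $\partial^2_{ij}G_1(\mathbf 0,\mathbf 0)=\tfrac16 R_{pipj}$ enters the drift and contributes to the $t^3$ coefficient; and the event that the bridge exits the chart must be shown to contribute only $O(e^{-C/t})$.

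The genuine gap is in your enumeration of the order-$t^3$ contributions. You truncate $F$ at its quadratic jet, $F(y)=F(x)+e_iy^i+\tfrac12 B_x(e_i,e_j)y^iy^j+O(|y|^3)$, and your list of sources for $\hat\Xi_x$ (metric/Riemann correction, quadratic jet of $F$ at pairs of slots, heat-kernel drift, quadratic-times-linear cross terms) omits the contribution obtained by taking the \emph{cubic} Taylor coefficient of $F$ at a single slot. Since each one-form factor is $\partial_iF(X)\circ dX^i$ and $X\sim\sqrt t$, expanding a single $\partial_iF$ to second order in $X$ produces a term of relative order $t$, exactly what is needed to promote the base order $t^2$ to $t^3$. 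These third-derivative terms are not negligible: in the paper's reduction they enter through $\langle\partial_iF,\partial^3_{jkk}F\rangle=\tfrac23{\rm Ric}_{ij}-d\,\langle B_x(\partial_iF,\partial_jF),H_x\rangle$ and through $\partial_iF\otimes\partial^3_{ijj}F$, and they carry a substantial share of the final coefficients of ${\rm Ric}_x$ and $\langle B_x,H_x\rangle$. As written, your scheme would therefore produce incorrect numerical coefficients in $\Xi_x$. To repair it, carry the Taylor expansion of $F$ (equivalently of the one-form $\phi=dF$) to third order, include the corresponding single-slot order-$t$ expansions in the bookkeeping, and use the normal-coordinate identities relating $\partial^2F$, $\partial^3F$ and $\partial^2 g$ to $B_x$, $H_x$ and the curvature tensor to reduce the result to the stated intrinsic/extrinsic form.
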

The precise formulation of the theorem is given by Theorem \ref{thm:RecCurv}
below, where the relevant geometric concepts are recalled in Section
\ref{subsec:Curv}. As a consequence, the tensor $\Theta_{x}$ recovers
the metric tensor and $\Xi_{x}$ encodes both intrinsic and extrinsic
curvature properties. It is also seen from the formula (\ref{eq:t2Coef})
and Remark \ref{rem:RecMetric} that the tangent space $T_{x}M$ can
be reconstructed from the bilinear form $\Theta_{x}\in{\cal L}(E\times E;\mathbb{R})$.
In addition, by modifying the tensor contraction map $\mathfrak{C}$
it is actually possible to recover the quantities 
\[
{\rm Ric}_{x},{\rm S}_{x},\langle B_{x},H_{x}\rangle_{\mathbb{R}^{N}},|H_{x}|_{\mathbb{R}^{N}}
\]
separately. This is a simple linear algebra matter and is explained
in Remark \ref{rem:SepCurvRec} below.

\subsubsection*{Comparision with heat kernel asymptotics and potential applications}

The aforementioned results should be compared with the classical heat
kernel asymptotics in geometric analysis. It was a renowned theorem
of Varadhan \cite{Var67} that the Riemannian distance function can
be recovered from the small-time asymptotics of the heat kernel:
\begin{equation}
\lim_{t\rightarrow\infty}t\log p(t,x,y)=-\frac{1}{2}d(x,y)^{2}.\label{eq:VaradAsymp}
\end{equation}
In addition, it was shown by Minakshisundaram-Pleijel \cite{MP49}
that the heat kernel admits the following asymptotic expansion: 
\[
p(t,x,y)\sim\frac{1}{(2\pi t)^{d/2}}e^{-\frac{d(x,y)^{2}}{2t}}\sum_{k=0}^{\infty}u_{k}(x,y)t^{k}\ \ \ \text{as }t\rightarrow0^{+},
\]
where the coefficients $u_{k}(x,y)$ are certain $C^{\infty}$-functions
defined in a neighbourhood of the diagonal $\{(x,x):x\in M\}$. It
is known that $u_{0}(x,x)=1$, $u_{1}(x,x)={\rm S}_{x}/12$ (${\rm S}_{x}$
is the scalar curvature at $x$) and $u_{k}(x,x)$ are certain universal
polynomial functions of curvature coefficients and their covariant
derivatives at $x$. These asymptotic results indicate that some aspects
of the geometry / shape of the underlying space can be inferred from
the small-time asymptotics of the heat kernel. On the applied side,
the Varadhan asymptotics formula (\ref{eq:VaradAsymp}) provides the
theoretical foundation for the so-called \textit{heat method} in manifold
learning that are widely used in problems related to
learning the shape / structure of high-dimensional data sets (cf.
\cite{CWW17,ZCX20,LS23} and the references therein).

Our asymptotic results (Theorems \ref{thm:ReconRD} and \ref{thm:RecCurv})
provide a different perspective and mechanism for learning the shape
of the underlying manifold. It is based on the use of Brownian bridges
and their associated signature features instead of the heat kernel.
From the practical viewpoint, both the simulation of Brownian bridges
on manifolds and the effective computation of path signatures have
been largely developed over the past years by various groups (cf.
\cite{Jen22}, \textcolor{blue}{\cite{KL20}}
and the references therein). We therefore expect that our results
will potentially lead to new applications in manifold learning. As a mathematical
work on its own, we do not address applications in data sciences in
the current article and leave these exciting questions to future work. 

\section{Some geometric background and a notion of signature on manifolds}

We discuss the basic geometric set-up, introduce some necessary geometric
tools and define a natural notion of geometric signature that is used
in the current work.

\subsection{Notions from Riemannian geometry}

Throughout the rest of the article, let $(M,g)$ be a $d$-dimensional,
compact, oriented Riemannian manifold without boundary. We choose
to focus on the compact case to avoid non-essential technicalities;
we do not use global geometric properties (e.g. spectral decomposition)
and thus the main results extend naturally to the non-compact case under
extra technical conditions. We begin by reviewing some basics on Riemannian geometry. A standard reference for this part is \cite{DoC92}. 

\vspace{2mm}\noindent \textit{Convention}. Unless otherwise stated,
we always adopt Einstein's convention that repeated indices in an
expression are summed automatically over their range.

\subsubsection{The Laplace-Beltrami operator}

A fundamental analytic object on $M$ is the so-called \textit{Laplace-Beltrami
operator}. This is a second order differential operator which admits the
following local expression: 
\begin{equation}
\Delta f=\frac{1}{\sqrt{\det g}}\partial_{i}\big(\sqrt{\det g}g^{ij}\partial_{j}f\big).\label{eq:LapLocal}
\end{equation}
Here $g=(g_{ij})$ is the metric tensor and $(g^{ij})$ is the inverse
of $g$ under a local chart. The Laplacian $\Delta$ can be alternatively defined as follows.
Suppose that $M$ is isometrically embedding inside some Euclidean
space $\mathbb{R}^{N}$ with canonical basis $\{e_{1},\cdots,e_{N}\}$.
Let $V_{i}$ ($1\leqslant i\leqslant N$) be the vector field on $M$
defined by orthogonally projecting $e_{i}$ onto the tangent
space at every point of $M$. Then one has $\Delta=\sum_{i=1}^{N}V_{i}^{2}$,
where $V_{i}$ is equivalently viewed as a differential operator on
$M$ (defined by taking directional derivative along $V_{i}$).

The Laplace-Beltrami operator $\Delta$ is a non-positive definite,
unbounded, essentially self-adjoint operator on $L^{2}(M,dx)$ ($dx$
is the volume measure) which admits a kernel $p(t,x,y)$ known as
the \textit{heat kernel}. This is the fundamental solution to the
heat equation, i.e. the smallest positive solution to the equation
\begin{equation}
\begin{cases}
\partial_{t}p(t,x,y)=\frac{1}{2}\Delta_{x}p(t,x,y), & t>0,x,y\in M;\\
\lim_{t\downarrow0}p(t,x,\cdot)=\delta_{x},
\end{cases}\label{eq:HeatEq}
\end{equation}
where $\delta_{x}$ denotes the Dirac delta function on $M$. The
factor $1/2$ is introduced for its connection with Brownian motion;
the Brownian motion $B_{t}$ is generated by $\Delta/2$ and the heat
kernel $p(t,x,y)$ is also the transition density of Brownian motion:
\[
\mathbb{P}(B_{t}\in dy|B_{0}=x)=p(t,x,y)dy.
\]

\subsubsection{The Malliavin-Stroock heat kernel expansion}

Small-time expansions of the heat kernel are well studied in the geometric
analysis literature (cf. \cite{BGV04} and the references therein).
We are going to use the following expansion for the logarithmic derivative
of the heat kernel, which was due to Malliavin and Stroock \cite{MS96}.
Recall that the \textit{injective radius} at $x\in M$ is the largest
$\rho_{x}>0$ such that the exponential map $\exp_{x}:T_{x}M\rightarrow M$
is a diffeomorphism on the metric ball $B_{x}(\rho)\triangleq\{y:d(x,y)<\rho\}.$
Here $d(x,y)$ denotes the Riemannian distance function. The \textit{global
injective radius} $\rho_{M}$ is the minimum of $\rho_{x}$ over all
$x\in M$. This is a finite positive number due to the compactness
of $M$. For example, $\rho_M = \pi$ if $M=S^2$ (the unit sphere in $\mathbb{R}^3$). Define $$S_{M}\triangleq\{(x,y):d(x,y)<\rho_{M}\}.$$ It is
clear that any pair $(x,y)\in S_{M}$ can be joined by a unique minimising
geodesic.
\begin{thm}
The following expansion holds uniformly on each compact subset of
$S_{M}:$
\begin{equation}
u\nabla_{x}\log p(u,x,y)\sim\sum_{k=0}^{\infty}\nabla_{x}G_{k}(x,y)u^{k}\ \ \ \text{as }u\searrow0,\label{eq:MSExp}
\end{equation}
where $\nabla_{x}$ denotes the Riemannian gradient with respect to
the $x$-variable and $G_{k}(x,y)$ are intrinsic $C^{\infty}$-functions
on $S_{M}$.
\end{thm}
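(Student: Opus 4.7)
The plan is to derive the expansion (\ref{eq:MSExp}) from the classical Minakshisundaram--Pleijel parametrix expansion of the heat kernel on $S_{M}$, combined with uniform $C^{r}$-control of the remainder in the spatial variable; the passage to $\nabla_{x}\log p$ is then algebraic.

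\medskip

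First I would recall the parametrix: on $S_{M}$, for every $K\geqslant 0$,
\begin{equation*}
p(u,x,y)=\frac{e^{-d(x,y)^{2}/(2u)}}{(2\pi u)^{d/2}}\sum_{k=0}^{K}u_{k}(x,y)\,u^{k}+R_{K}(u,x,y),
\end{equation*}
where the coefficients $u_{k}\in C^{\infty}(S_{M})$ are intrinsic (determined by transport ODEs along radial geodesics from $x$, with $u_{0}$ equal to the reciprocal square root of the Jacobian of $\exp_{x}$), and where for each multi-index $\alpha$ and each compact $K_{0}\subset S_{M}$,
\begin{equation*}
\sup_{(x,y)\in K_{0}}\bigl|D^{\alpha}_{x}R_{K}(u,x,y)\bigr|=O\bigl(u^{K+1-d/2-|\alpha|/2}\bigr)\quad\text{as }u\downarrow 0.
\end{equation*}
Since $u_{0}>0$ on $S_{M}$, the heat kernel stays bounded away from $0$ at the Gaussian rate on $K_{0}$, so $\log p(u,x,y)$ is well defined and smooth in $x$ for small $u$.

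\medskip

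Next, writing the parametrix in the form $(2\pi u)^{-d/2}e^{-d(x,y)^{2}/(2u)}\,u_{0}(x,y)\bigl(1+\sum_{k=1}^{K}(u_{k}/u_{0})u^{k}+\rho_{K}\bigr)$ for a suitably small $\rho_{K}$, applying $\log(1+z)=z-z^{2}/2+\cdots$ and collecting powers of $u$ yields
\begin{equation*}
\log p(u,x,y)=-\tfrac{d}{2}\log(2\pi u)-\tfrac{d(x,y)^{2}}{2u}+\log u_{0}(x,y)+\sum_{j=1}^{K}v_{j}(x,y)\,u^{j}+O(u^{K+1}),
\end{equation*}
where each $v_{j}$ is a universal polynomial in $u_{1}/u_{0},\ldots,u_{j}/u_{0}$ and is therefore intrinsic and $C^{\infty}$ on $S_{M}$. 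Multiplying by $u$ and applying $\nabla_{x}$ (which kills the purely $u$-dependent term $-(ud/2)\log(2\pi u)$ and commutes with the finite sum) gives
\begin{equation*}
u\,\nabla_{x}\log p(u,x,y)=-\tfrac{1}{2}\nabla_{x}d(x,y)^{2}+u\,\nabla_{x}\log u_{0}(x,y)+\sum_{j=1}^{K}u^{j+1}\nabla_{x}v_{j}(x,y)+O(u^{K+2}).
\end{equation*}
Setting $G_{0}:=-\tfrac{1}{2}d(\cdot,\cdot)^{2}$, $G_{1}:=\log u_{0}$, and $G_{j+1}:=v_{j}$ for $j\geqslant 1$, one obtains (\ref{eq:MSExp}) with intrinsic $C^{\infty}$-coefficients, as required. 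Letting $K\to\infty$ produces the full asymptotic series.

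\medskip

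The main technical obstacle is the $C^{1}$-uniform control of the parametrix remainder $R_{K}$: the pointwise Minakshisundaram--Pleijel statement is not enough, since to extract the expansion of $\nabla_{x}\log p$ one needs $\nabla_{x}R_{K}$ to be at least one order higher than the leading heat-kernel size, and one must iterate this as $K\to\infty$. This is obtained via the standard Duhamel iteration underlying the parametrix, combined with Gaussian off-diagonal bounds for $p$ and its spatial derivatives; alternatively, one may follow Malliavin--Stroock's original route via Malliavin calculus on the Brownian bridge, in which $\nabla_{x}\log p(u,x,y)$ is expressed as a conditional expectation that is expanded via the Taylor formula in $u$. Once the uniform $C^{r}$-remainder is in hand, the passage through the logarithm and the gradient reduces to routine manipulations of formal power series with smooth coefficients.
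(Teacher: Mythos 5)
The paper does not prove this statement at all: it is recalled verbatim from Malliavin--Stroock \cite{MS96}, so your attempt has to be judged as a proof of that cited theorem rather than compared with an argument in the text. Your overall strategy (parametrix expansion, then logarithm, then gradient) is the natural one, the bookkeeping $G_{0}=-d^{2}/2$, $G_{1}=\log u_{0}=-\tfrac12\log\det(d\exp_{y})_{\bf x}$, $G_{j+1}=v_{j}$ is consistent with (\ref{eq:G0G1}), and the formal manipulation of the series is fine. The gap is in the one step you flag as ``the main technical obstacle'' and then dispose of too quickly.

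The issue is that to expand $\log p$ (and $\nabla_{x}\log p$) uniformly on a compact subset of $S_{M}$ that contains pairs with $d(x,y)>0$ fixed, you need \emph{multiplicative} control of the remainder, i.e.
\[
p(u,x,y)=\frac{e^{-d(x,y)^{2}/(2u)}}{(2\pi u)^{d/2}}\Big(\sum_{k=0}^{K}u_{k}(x,y)u^{k}+O(u^{K+1})\Big),
\]
with the error \emph{inside} the Gaussian prefactor, together with the same statement after one $\nabla_{x}$. The additive bound you quote, $\sup|D_{x}^{\alpha}R_{K}|=O(u^{K+1-d/2-|\alpha|/2})$ with no Gaussian factor, is useless off the diagonal: there the main term is of size $e^{-d^{2}/2u}u^{-d/2}$, which is exponentially \emph{smaller} than the remainder, so $\rho_{K}$ is not ``suitably small'' and $\log(1+z)$ cannot be expanded. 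The standard Duhamel iteration does not repair this: it yields a remainder of the form $Cu^{K+1-d/2}e^{-d^{2}/(Cu)}$ with a Gaussian constant $C>2$ strictly worse than the one in the leading term, and $e^{-d^{2}/(Cu)}/e^{-d^{2}/(2u)}\to\infty$ as $u\downarrow 0$ for fixed $d>0$. Obtaining the sharp constant $2$ in the exponent of the remainder, uniformly on compacts away from the cut locus and with one spatial derivative, is exactly the nontrivial content of the off-diagonal expansions of Molchanov, Azencott and Ben Arous, and of \cite{MS96} itself (whose proof goes through Malliavin calculus for the conditioned diffusion, not through the parametrix). Your fallback ``alternatively, follow Malliavin--Stroock's original route'' therefore amounts to invoking the theorem being proved. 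To close the gap you would need to either carry out that Laplace-method/large-deviation analysis on path space, or cite the refined off-diagonal expansion with derivatives as a black box --- which is in effect what the paper does.
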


The following explicit expressions will be used in the sequel:
\begin{equation}
G_{0}(x,y)=-\frac{d(x,y)^{2}}{2},\ G_{1}(x,y)=-\frac{1}{2}\log\det(d\exp_{y})_{{\bf x}},\ \ \ (x,y)\in S_{M}.\label{eq:G0G1}
\end{equation}
Here $\exp_{y}:T_{y}M\rightarrow M$ denotes the exponential map,
${\bf x}\triangleq\exp_{y}^{-1}(x)$ and $(d\exp_{y})_{{\bf x}}$
is the differential of $\exp_{y}$ at ${\bf x}$, which is a linear
isomorphism between Euclidean spaces $T_{{\bf x}}T_{y}M\cong T_{y}M$
and $T_{x}M$. The functions $G_{k}(x,y)$ ($k\geqslant2$) can also
be determined explicitly in a recursive manner, but this is not needed
for us.

\subsubsection{\label{subsec:Curv}Ricci curvature and the second fundamental form }

Many fundamental geometric properties are related to curvature. Recall
that the \textit{Riemannian curvature tensor} is defined by 
\[
R(X,Y)Z\triangleq\nabla_{X}\nabla_{Y}Z-\nabla_{Y}\nabla_{X}Z-\nabla_{[X,Y]}Z,
\]
where $\nabla$ is the Levi-Civita connection and $X,Y,Z$ are smooth
vector fields. Note that $R$ is a type-$(1,3)$ tensor field on $M$
and for each $x\in M$, it gives rise to a multilinear map $R_{x}:T_{x}M\times T_{x}M\times T_{x}M\rightarrow T_{x}M$.
The \textit{Ricci curvature tensor} at $x$ is the symmetric bilinear
form ${\rm Ric}_{x}:T_{x}M\times T_{x}M\rightarrow\mathbb{R}$ defined
by 
\[
{\rm Ric}_{x}(v,w)\triangleq{\rm Tr}\big[u\mapsto R_{x}(u,v)w\big],\ \ \ v,w\in T_{x}M.
\]
The \textit{scalar curvature} at $x$ is the trace of ${\rm Ric}_{x}$
and is denoted as ${\rm S}_{x}$. Note that ${\rm Ric}$ is a type-$(0,2)$
tensor field and ${\rm S}$ is a smooth function on $M$. All these
curvature quantities are \textit{intrinsic} in the sense that their
definitions only depend on the metric tensor. 

On the other hand, one can also consider extrinsic curvature properties.
Let $F:M\rightarrow\mathbb{R}^{N}$ be a given isometric embedding.
For each $x\in M$, let $(T_{x}M)^{\perp}$ denote the orthogonal
complement of $T_{x}M$ inside $\mathbb{R}^{N}$ ($T_{x}M$ is viewed
as a subspace of $T_{F(x)}\mathbb{R}^{N}\cong\mathbb{R}^{N}$ through
the embedding). The\textit{ second fundamental form} at $x$ is the
vector valued bilinear form defined by 
\begin{equation}
B_{x}(v,w)\triangleq\tilde{\nabla}_{\tilde{X}}\tilde{Y}-\nabla_{X}Y,\ \ \ v,w\in T_{x}M.\label{eq:2FundForm}
\end{equation}
Here $X,Y$ are any vector fields on $M$ such that $X_{x}=v,Y_{x}=w$
and $\tilde{X},\tilde{Y}$ are any extensions of $X,Y$ to $\mathbb{R}^{N}$.
The operator $\tilde{\nabla}$ is the Levi-Civita connection in $\mathbb{R}^{N}$(which
is just usual Euclidean differentiation). It can be shown that the
expression (\ref{eq:2FundForm}) is well-defined (independent of choices
of $X,Y$ and their extensions) and $B_{x}$ is an $(T_{x}M)^{\perp}$-valued,
symmetric bilinear form on $T_{x}M\times T_{x}M$. In other words,
$B$ is a type-$(0,2)$ tensor field on $M$ taking values in the
normal bundle $(TM)^{\perp}$. It is also known that $\nabla_{X}Y$
is the tangential component of $\tilde{\nabla}_{\tilde{X}}\tilde{Y}$
(i.e. its projection onto $TM$). As a result, $B(X,Y)$ can also
be defined as the normal component of $\tilde{\nabla}_{\tilde{X}}\tilde{Y}$
(i.e. its projection onto $(TM)^{\perp}$). The \textit{mean curvature
vector} at $x\in M$ is the normal vector defined by 
\begin{equation}
H_{x}\triangleq d^{-1}{\rm Tr}B_{x}\in(T_{x}M)^{\perp},\label{eq:MeanCur}
\end{equation}
where $d$ is the dimension of $M$. By varying $x$, this gives rise
to a smooth section $H$ of the normal bundle $(TM)^{\perp}$. It
can be shown that 
\begin{equation}
\Delta F=d\cdot H,\label{eq:Lap=00003DH}
\end{equation}
where $\Delta$ is the Laplace-Beltrami operator on $M$. Note that the second
fundamental form is an \textit{extrinsic} curvature quantity as it
depends on the embedding $F$ (it describes how $M$ is embedded /
curved inside the ambient space).

\subsection{Brownian motion and bridge}

In this section, we recall the construction of Brownian motion and
Brownian bridge on $M$. The reader is referred to \cite{Hsu02} for
more details.

\subsubsection{The Eells-Elworthy-Malliavin construction of Brownian motion}

The Brownian motion on $M$ could just be defined as the Markov process
with generator $\Delta/2$ from the Markovian perspective. However,
the following intrinsic (SDE / pathwise) construction by Eells-Elworthy-Malliavin
will be useful for our derivation of the intrinsic PDE for the expected
signature dynamics in Section \ref{sec:PDE}.

A main difficulty in the intrinsic SDE construction of Brownian motion
is that the Laplacian may not always admit a decomposition $\Delta=\sum_{i}V_{i}^{2}$
with intrinsic vector fields $V_{i}$. This issue is overcome by lifting
the construction to the so-called \textit{orthonormal frame bundle}
(OFB) ${\cal O}(M)$ over $M$ defined by

\[
{\cal O}(M)\triangleq\bigcup_{x\in M}{\cal F}_{x}.
\]
Here ${\cal F}_{x}$ denotes the set of ONBs of $T_{p}M$. A generic
element in ${\cal O}(M)$ is given by a pair $(x,u)$, where $x\in M$
and $u=(e_{1},\cdots,e_{d})$ is an ONB of $T_{x}M$. The OFB ${\cal O}(M)$
is a principal bundle over $M$ with structure group $O(d)$ (the orthogonal group) acting
from the right:
\[
(x,u)\cdot Q\triangleq(x,u\cdot Q),\ \ \ x\in M,u=(e_{1},\cdots,e_{d})\in{\cal F}_{x}.
\]
In particular, it is a differentiable manifold of dimension
\[
D=\dim M+\dim O(d)=\frac{d^{2}+d}{2}.
\]
Let $\pi:{\cal O}(M)\rightarrow M$ denote the canonical projection.

An essential point of considering ${\cal O}(M)$ is that one can construct
intrinsic (horizontal) vector fields on it. Let $1\leqslant i\leqslant d$
be fixed. Given a point 
\[
\xi=(x,u=(e_{1},\cdots,e_{d}))\in{\cal O}(M),
\]
note that $e_{i}$ is a tangent vector of $M$ at $x$. Let $\gamma:(-\varepsilon,\varepsilon)\rightarrow M$
be a curve such that $\gamma_{0}=x$ and $\gamma'_{0}=e_{i}.$ By
parallel-translating the ONB $u$ along $\gamma$, one obtains a frame
of ONBs along $\gamma$, more precisely a curve $\xi_{t}\triangleq(\gamma_{t},u_{t})\in{\cal O}(M)$
where $u_{t}$ is an ONB of $T_{\gamma_{t}}M$. Define 
\[
H_{i}(\xi)\triangleq\xi'_{0}\in T_{\xi}{\cal O}(M).
\]
Since $\xi$ is arbitrary, one then obtains a global vector field
$H_{i}$ on ${\cal O}(M).$ The vector fields $\{H_{1},\cdots,H_{d}\}$
are called \textit{fundamental horizontal vector fields} on ${\cal O}(M).$ 

It is well known that the second order differential operator ${\cal L}\triangleq\sum_{i=1}^{d}H_{i}^{2}$
naturally projects to the Laplacian on $M$. This motivates the Eells-Elworthy-Malliavin
construction of Brownian motion given by the theorem below. The essential
idea is to first construct a horizontal Brownian motion on ${\cal O}(M)$
by solving an intrinsic SDE associated with the horizontal vector
fields $\{H_{i}\}$ and then to obtain an $M$-valued Brownian motion
through projection.
\begin{thm}\label{thm:HorBM}
Let $x\in M$ and $u=(e_{1},\cdots,e_{d})$ be a given fixed ONB of
$T_{x}M$. Let $\Xi_{t}$ be the solution to the following
Stratonovich SDE on $\mathcal{O}(M)$:
\begin{equation}
\begin{cases}
d\Xi_{t}=\sum_{i=1}^{d}H_{i}(\Xi_{t})\circ dB_{t}^{i},\\
\Xi_{0}=(x,u)\in{\cal O}(M),
\end{cases}\label{eq:HorBM}
\end{equation}
where $(B^{1},\cdots,B^{d})$ denotes a $d$-dimensional Euclidean
Brownian motion. Define $W_{t}\triangleq\pi(\Xi_{t})\in M.$ Then
the law of $W$ depends only on $x$ but not on the initial frame
$u$. By varying $x$ over $M$, one obtains a Markovian family whose
generator is $\Delta/2$ (thus a Brownian family on $M$).
\end{thm}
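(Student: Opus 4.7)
\smallskip

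\noindent\textbf{Proof plan for Theorem \ref{thm:HorBM}.} The plan is to break the statement into three parts: (i) global existence and uniqueness of the SDE (\ref{eq:HorBM}); (ii) the identification that $W_t = \pi(\Xi_t)$ solves the martingale problem associated with $\Delta/2$; and (iii) the independence of the law of $W$ on the initial frame $u$. The backbone of the argument is the intertwining identity
\[
\mathcal{L}(f\circ\pi)=(\Delta f)\circ\pi \qquad \forall f\in C^{\infty}(M),
\]
where $\mathcal{L}=\sum_{i=1}^{d}H_{i}^{2}$ acts on smooth functions on $\mathcal{O}(M)$. Once this is in hand, everything else follows by standard SDE/martingale machinery.

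For existence and uniqueness in (i), the horizontal vector fields $H_{1},\ldots,H_{d}$ are smooth on the smooth manifold $\mathcal{O}(M)$. Since $M$ is compact, so is $\mathcal{O}(M)$ (it is an $O(d)$-bundle over a compact base with compact structure group), so the coefficients of the Stratonovich SDE (\ref{eq:HorBM}) are globally bounded with bounded derivatives in any finite atlas. Standard SDE theory on manifolds then gives a unique strong solution $\Xi_{t}$ defined for all $t\geqslant0$, and by construction $\Xi_{t}$ is a semimartingale on $\mathcal{O}(M)$.

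For (ii), I would first verify the intertwining identity. Given $f\in C^{\infty}(M)$, unwinding the definition of $H_{i}$ through parallel transport along a curve with initial velocity $e_{i}$ shows $H_{i}(f\circ\pi)(\xi)=(e_{i}f)(x)$ at $\xi=(x,(e_{1},\ldots,e_{d}))$. Applying $H_{j}$ a second time, one picks up two contributions: a derivative of the map $e_{i}\mapsto e_{i}f$ along $e_{j}$ (which contributes $(\nabla^{2}f)(e_{j},e_{i})$) plus a term involving the derivative of $e_{i}$ along the horizontal lift (which vanishes because parallel transport preserves the frame by construction). Summing over $i=j$ and using the coordinate-free identity $\Delta f=\operatorname{Tr}\nabla^{2}f=\sum_{i=1}^{d}(\nabla^{2}f)(e_{i},e_{i})$ for any ONB $\{e_{i}\}$ of $T_{x}M$ yields the intertwining identity. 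Applying the Stratonovich--Itô conversion and Itô's formula to $f\circ\pi$ along $\Xi_{t}$, the stochastic integral part is a martingale and the bounded variation part is
\[
\frac{1}{2}\int_{0}^{t}\mathcal{L}(f\circ\pi)(\Xi_{s})\,ds=\frac{1}{2}\int_{0}^{t}(\Delta f)(W_{s})\,ds.
\]
Thus $W_{t}$ solves the martingale problem for $\Delta/2$, which has a unique solution on the compact manifold $M$; hence $W$ is Brownian motion on $M$ starting at $x$, giving a Markov family with generator $\Delta/2$.

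For (iii), independence on the initial frame follows at once from (ii): the law of the Brownian motion, being the unique solution to a martingale problem depending only on the starting point $x=\pi(\Xi_{0})$, cannot depend on the choice of ONB $u\in\mathcal{F}_{x}$. Alternatively, one can give a pathwise proof using $O(d)$-equivariance: the horizontal vector fields satisfy $(R_{Q})_{*}H_{i}=Q^{j}_{\ i}H_{j}$, so if $\Xi_{t}$ solves (\ref{eq:HorBM}) with initial frame $u$ and driving noise $B$, then $\Xi_{t}\cdot Q$ solves the analogous SDE with initial frame $uQ$ and with driving noise $Q^{-1}B$, which is again a standard $d$-dimensional Brownian motion; projecting to $M$ kills the $Q$. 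The most delicate step is the intertwining identity in (ii), in particular the clean vanishing of the ``extra'' term coming from second derivatives of the frame along the horizontal lift; this is where the geometric meaning of parallel transport (its compatibility with the metric and the torsion-free property of the Levi-Civita connection) is used in an essential way.
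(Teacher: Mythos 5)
The paper states Theorem \ref{thm:HorBM} as a recalled classical result and gives no proof of its own, referring instead to \cite{Hsu02} (and later invoking the key intertwining identity $\sum_{i}H_{i}^{2}(f\circ\pi)=(\Delta f)\circ\pi$ via \cite[Proposition 3.1.2]{Hsu02} in the proof of Lemma \ref{lem:PDEBundle}). Your argument is correct and is precisely the standard proof: the compactness and smoothness argument for global well-posedness, the Hessian-trace computation with the parallel-transport term vanishing (which mirrors the paper's own computation of (\ref{eq:0OrderTr}) with $\phi=df$), the martingale-problem identification of $W$, and the frame-independence via either well-posedness or $O(d)$-equivariance are all sound.
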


\subsubsection{The Brownian bridge}

Let $x,y\in M$ and $t\in(0,1]$ be given fixed. The \textit{Brownian
bridge} $(X_{s}^{t,x,y})_{0\leqslant s\leqslant t}$ from $x$ to
$y$ with lifetime $t$ is the Brownian motion starting at $x$ conditional
on reaching $y$ at time $t$:
\[
X_{s}^{t,x,y}=W_{s}^{x}|_{W_{t}^{x}=y},\ \ \ 0\leqslant s\leqslant t,
\]
where $W^{x}$ is a Brownian motion starting at $x$. Simple calculation
shows that this is a (time-inhomogeneous) Markov process with transition
density 
\begin{equation}
\mathbb{P}\big(X_{s_{2}}^{t,x,y}\in dz|X_{s_{1}}^{t,x,y}=w\big)=\frac{p(t-s_{2},z,y)p(s_{2}-s_{1},w,z)}{p(t-s_{1},w,y)}dz,\ \ \ 0\leqslant s_{1}<s_{2}<t\label{eq:BBTran}
\end{equation}
and generator
\[
({\cal L}f)(w)=\frac{1}{2}\Delta f(w)+\nabla_{w}\log p(t-s,w,y)\cdot\nabla f(w).
\]
Here $p(t,x,y)$ is the heat kernel for $\Delta/2$. One can also use the above Markovian perspective to construct the
Brownian bridge mathematically.

If the Brownian motion $W^{x}$ admits an SDE representation (e.g.
under some coordinate chart) 
\[
dW_{s}^{x}=b(W_{s}^{x})ds+\sigma(W_{s}^{x})dB_{s},\ W_{0}^{x}=x
\]
in $\mathbb{R}^{d},$ then the Brownian bridge $X^{t,x,y}$ has a
corresponding SDE representation
\begin{equation}
\begin{cases}
dX_{s}^{t,x,y}=\big(b(X_{s}^{t,x,y})+\nabla\log p(t-s,X_{s}^{t,x,y},y)\big)ds+\sigma(X_{s}^{t,x,y})dB_{s}, & 0\leqslant s<t;\\
X_{0}^{t,x,y}=x.
\end{cases}\label{eq:BBSDE}
\end{equation}
By abuse of notation the above $\nabla$ is its local expression under
the given chart (i.e. the vector $(g^{ij}\partial_{j})_{1\leqslant i\leqslant d}$)
. 
\begin{example}
In the Euclidean case $M=\mathbb{R}^{d}$, by using the explicit formula
for the heat kernel, one finds that 
\[
\nabla\log p(t-s,x,y)=\frac{y-x}{t-s}.
\]
The SDE (\ref{eq:BBSDE}) becomes 
\[
\begin{cases}
dX_{s}^{t,x,y}=-\frac{1}{t-s}X_{s}^{t,x,y}ds+dB_{s},\\
X_{0}^{t,x,y}=x-y=:\xi.
\end{cases}
\]
Its solution is explicitly given by 
\[
X_{s}^{t,x,y}=\frac{t-s}{t}\xi+(t-s)\int_{0}^{s}\frac{dB_{u}}{t-u},\ \ \ 0\leqslant s<t.
\]
\end{example}

\subsection{The $\phi$-signature on manifolds}

In this section, we introduce a natural notion of signature for paths
on differentiable manifolds (iterated integrals against one-forms).
This was first used by K.T. Chen in a geometric setting and also by
Le Jan-Qian in the study of the signature uniqueness theorem for Brownian
motion (they called it \textit{extended signature}).

We first recall the definition of path signature in the Euclidean space. Let $E=\mathbb{R}^{N}$
be equipped with the Euclidean metric. The (infinite) tensor algebra
over $E$ is the unital algebra 
\[
T((E))\triangleq\prod_{n=0}^{\infty}E^{\otimes n}=\big\{ a=(a_{0},a_{1},a_{2},\cdots):a_{i}\in E^{\otimes i}\ \forall i\in\mathbb{N}\big\},
\]
where addition $+$ is just the vector addition, multiplication $\otimes$
is defined by 
\[
(a\otimes b)_{m}\triangleq\sum_{k=0}^{m}a_{k}\otimes b_{m-k}\in(\mathbb{R}^{N})^{\otimes m},\ \ \ a=(a_{i}),b=(b_{i})\in T((E))
\]
and the unit is ${\bf 1}\triangleq(1,0,0,\cdots).$ The \textit{signature}
of a smooth path $\gamma:[0,T]\rightarrow E$ is the element in $T((E))$
defined by 
\[
S(\gamma)\triangleq\big(1,\gamma_{T}-\gamma_{0},\int_{0<s<t<T}d\gamma_{s}\otimes d\gamma_{t},\cdots,\int_{0<t_{1}<\cdots<t_{n}<T}d\gamma_{t_{1}}\otimes\cdots\otimes d\gamma_{t_{n}},\cdots\big).
\]
More generally, one could also define the \textit{signature path}
$t\mapsto S_{t}(\gamma)$ by integrating up to time $t$ instead ($S(\gamma)=S_{T}(\gamma)$).
The definition extends to the rough path setting according to Lyons'
extension theorem (cf. \cite[Theorem 2.2.1]{Lyo98}). If $\gamma$
is the Brownian motion, the above iterated integrals are equivalently understood
in the Stratonovich sense. It is standard that the signature path
$S_{t}(\gamma)$ satisfies the following differential equation on
$T((E))$:
\begin{equation}
dS_{t}(\gamma)=S_{t}(\gamma)\otimes d\gamma_{t}.\label{eq:SigDE}
\end{equation}

If $\gamma$ is a smooth path on a differentiable manifold $M$, one
cannot integrate along $\gamma$ intrinsically without any additional structure;
one needs to integrate $\gamma$ against a one-form. Recall that a
(smooth) \textit{one-form} is a smooth section $\phi$ of the cotangent bundle. In other words, it is a map that assigns to each location $x\in M$ a linear functional $\phi(x)\in T_x^*M$ in a smooth manner.  The integral 
\[
\int_{0}^{t}\phi(d\gamma)\triangleq\int_{0}^{t}{}_{T_{\gamma_{s}}^{*}M}\langle\phi(\gamma_{s}),\gamma'_{s}\rangle_{T_{\gamma_{s}}M}ds
\]
clearly has an intrinsic meaning and is called the \textit{line integral}
of $\gamma$ against $\phi$.
\begin{example}
Let $M$ be the torus $S^{1}\times S^{1}$ and let $\phi=(\alpha,\beta)$
be the two canonical generators of the first homology group of $M$. Then $\int_{0}^{t}\phi(d\gamma)\in\mathbb{R}^{2}$
describes the winding angles of $\gamma$ up to time $t$ with respect
to the vertical and horizontal circles.
\end{example}

Now let $E=\mathbb{R}^{N}$ and suppose that $\phi$ is a smooth $E$-valued
one-form on $M$. In other words, $\phi(x)$ is an $E$-valued linear
functional on $T_{x}M$. Equivalently, one can write $\phi=(\phi^{1},\cdots,\phi^{N})$
where each $\phi^{i}$ is a real-valued one-form.
\begin{defn}
Let $\gamma:[0,T]\rightarrow M$ be a smooth path. The $\phi$\textit{-signature}
of $\gamma$ up to time $t$ is defined to be
\[
S_{t}^{\phi}(\gamma)\triangleq S_{t}(\int_{0}^{\cdot}\phi(d\gamma))\in T((E)).
\]
This is the signature up to time $t$ of the $E$-valued path $\int_{0}^{\cdot}\phi(d\gamma)$
. We also write $S^{\phi}(\gamma)\triangleq S_{T}^{\phi}(\gamma)$.
\end{defn}

\begin{rem}
This notion of signature depends on the choice of $\phi$. Indeed,
the usual Euclidean signature is a special case of this: the signature
of an $\mathbb{R}^{N}$-valued path is the $\phi$-signature with
respect to the $\mathbb{R}^{N}$-valued one-form $\phi=(dx^{1},\cdots,dx^{N}).$
\end{rem}

\begin{rem}
The definition extends naturally to the rough path case. We choose
not to introduce any technicalities from general rough path theory
since they are not essential for most of our analysis. In our study,
$\gamma$ will either be a smooth path (a geodesic) or an $M$-valued
semimartingale where the Stratonovich calculus for $\gamma$ is classical
(e.g. via embedding $M$ into an ambient Euclidean space and perform
the usual stochastic calculus over there). The reader is referred
to \cite{Hsu02} for an excellent introduction to stochastic analysis
on manifolds.
\end{rem}

\begin{example}
An important example which is of our primary interest is the case
when $M$ is embedded inside some Euclidean space $\mathbb{R}^{N}$
and the one-form $\phi=dF$ ($F$ is the embedding map). In this case,
one has $\int_{0}^{t}\phi(d\gamma_{s})=F(\gamma_{t})-F(\gamma_{0})$
and the $\phi$-signature of $\gamma$ is just the Euclidean
signature of $F(\gamma_{t})$ in $\mathbb{R}^{N}$.
\end{example}

\section{\label{sec:PDE}The expected Brownian signature dynamics}

In this section, we consider the expected $\phi$-signature of Brownian
motion and Brownian bridge on $M$. We derive intrinsic PDEs describing
their dynamics from the perspective of the Eells-Elworthy-Malliavin
lifting. Throughout the rest, $E=\mathbb{R}^{N}$ and $\phi$ is a
given fixed $E$-valued one-form on $M$.

\subsection{Intrinsic PDE for the expected $\phi$-signature }

Let $W^{x}=\{W_{t}^{x}:t\geqslant0\}$ be a Brownian motion in $M$
starting at $x\in M$. Consider the expected $\phi$-signature of
$W^{x}$ up to time $t$ defined by
\begin{equation}
\Psi(t,x)\triangleq\mathbb{E}[S_{t}^{\phi}(W^{x})]\in T((E)),\ \ \ (t,x)\in[0,\infty)\times M.\label{eq:DefESig}
\end{equation}
Note that $\Psi$ is well-defined due to the compactness of $M$;
by embedding $M$ into an Euclidean space $V$ one can view $W^{x}$
as the solution to an SDE on $V$ with $C_{b}^{\infty}$-coefficients
and $\phi$ also extends to a $C_{b}^{\infty}$-form on $V$. We first
derive the intrinsic PDE governing the dynamics of $\Psi(t,x)$.
\begin{thm}
\label{thm:BMSigPDE}The function $\Psi(t,x)$ satisfies the following
parabolic PDE on $M$:

\begin{equation}
\frac{\partial\Psi}{\partial t}=\frac{1}{2}\Delta\Psi+{\rm Tr}\big(\phi\otimes d\Psi+\frac{1}{2}(\nabla\phi+\phi\otimes\phi)\otimes\Psi\big)\label{eq:PhiSigPDE}
\end{equation}
with initial condition $\Psi(0,\cdot)={\bf 1}.$
\end{thm}

Before getting into the proof, some explanation about the notation
in (\ref{eq:PhiSigPDE}) is needed.

\vspace{2mm}\noindent (i) The PDE (\ref{eq:PhiSigPDE}) is $T((E))$-valued.
Its projection onto the first $m$ degrees becomes a finite dimensional
coupled PDE system. \\
(ii) Since $\Psi$ is an $T((E))$-valued function on $M$ (for fixed
time $t$), $d\Psi$ is a $T((E))$-valued one-form. The product $\phi\otimes d\Psi$
is the $T((E))$-valued bilinear form defined by 
\[
\phi\otimes d\Psi(U,V)\triangleq\langle\phi,U\rangle\otimes\langle d\Psi,V\rangle
\]
where $U,V$ are vector fields on $M$ and the tensor product $\otimes$
is the multiplication in $T((E))$. The trace ${\rm Tr}(\phi\otimes d\Psi)$
becomes a $T((E))$-valued function on $M$. At each point $x\in M$,
one has 
\[
{\rm Tr}(\phi\otimes d\Psi)(x)=\sum_{i=1}^{d}\langle\phi,e_{i}\rangle(x)\otimes\langle d\Psi(t,x),e_{i}\rangle,
\]
where $\{e_{1},\cdots,e_{d}\}$ is any ONB of $T_{x}M.$ The above
expression is clearly independent of the choice of the ONB.\\
(iii) $\nabla\phi$ is the covariant derivative of $\phi$ with respect
to the Levi-Civita connection. The zeroth order term in (\ref{eq:PhiSigPDE})
is the $T((E))$-valued function defined by 
\[
{\rm Tr}\big((\nabla\phi+\phi\otimes\phi)\otimes\Psi\big)(x)=\sum_{i=1}^{d}\big(\langle\nabla_{e_{i}}\phi,e_{i}\rangle(x)+\langle\phi,e_{i}\rangle(x)\otimes\langle\phi,e_{i}\rangle(x)\big)\otimes\Psi(t,x).
\]
This expression is also independent of the choice of the ONB $\{e_{1},\cdots,e_{d}\}$
of $T_{x}M$.
\begin{example}
In the Euclidean case $M=\mathbb{R}^{d}$ with $\phi=(dx^{1},\cdots,dx^{d}),$
the term $\nabla\phi+\phi\otimes\phi$ appearing in the PDE (\ref{thm:BMSigPDE})
is constant in space. As a result, the PDE must also be spatially
homogeneous and thus reduces to the ODE 
\begin{equation}
\frac{d\Psi}{dt}=\frac{1}{2}\sum_{i=1}^{d}E_{i}\otimes E_{i}\otimes\Psi,\ \ \ \Psi(0,\cdot)={\bf 1},\label{eq:SigODERd}
\end{equation}
where $\{E_{1},\cdots,E_{d}\}$ is the canonical basis of $\mathbb{R}^{d}$.
Its explicit solution is the expected signature of Euclidean Brownian
motion:
\begin{equation}
\Psi(t,x)=\exp\big(\frac{t}{2}(E_{1}\otimes E_{1}+\cdots+E_{d}\otimes E_{d})\big).\label{eq:ESigRd}
\end{equation}
\end{example}

\subsubsection*{Proof of Theorem \ref{thm:BMSigPDE}}

We now proceed to prove Theorem \ref{thm:BMSigPDE}. The main observation
is that the horizontal Brownian motion on the OFB ${\cal O}(M)$ (cf.
Theorem \ref{thm:HorBM}) coupled with its signature process satisfies
an intrinsic SDE on the ${\cal O}(M)\times T((E))$. A benefit from
this is that one can easily identify the generator from the SDE and
write down the associated Kolmorogov's forward equation on the bundle
${\cal O}(M)$ (Feynman-Kac representation). The PDE on ${\cal O}(M)$
then naturally projects to the desired intrinsic PDE (\ref{eq:PhiSigPDE})
on the base manifold $M$. Some technical care is needed to implement
this idea precisely and we develop the steps carefully in what follows. 

First of all, one has the following standard fact. 
\begin{lem}
\label{lem:PDEUniqueness}The PDE (\ref{eq:PhiSigPDE}) with initial
condition $\Psi(0,\cdot)={\bf 1}$ has a unique smooth solution denoted
as $\hat{\Psi}(t,x)$ (i.e. every component of $\hat{\Psi}$ is smooth
in $(t,x)$).
\end{lem}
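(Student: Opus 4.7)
The plan is to exploit the graded (triangular) structure of the PDE in the tensor degree, reducing the $T((E))$-valued problem to a sequence of linear inhomogeneous heat equations on the compact manifold $M$, which are then handled by standard parabolic theory.

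First I would unpack how the right-hand side of (\ref{eq:PhiSigPDE}) interacts with the tensor degree. Writing $\Psi = (\Psi_0, \Psi_1, \Psi_2, \ldots)$ with $\Psi_n$ taking values in $E^{\otimes n}$, the Laplace-Beltrami term $\frac{1}{2}\Delta\Psi$ preserves the tensor degree (it acts componentwise). The form $\phi$ has tensor degree $1$ in $T((E))$, so $\mathrm{Tr}(\phi \otimes d\Psi)$ raises the degree by exactly $1$: its degree-$n$ component involves only $\Psi_{n-1}$. Similarly, $\nabla\phi$ is $E$-valued and contributes degree $1$, while $\phi\otimes\phi$ contributes degree $2$. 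Projecting the PDE onto level $n$ therefore produces
\begin{equation*}
\frac{\partial \hat{\Psi}_n}{\partial t} = \frac{1}{2}\Delta \hat{\Psi}_n + F_n(t,x),
\end{equation*}
where $F_n$ is a smooth differential expression depending only on $\hat{\Psi}_{n-1}$ and $\hat{\Psi}_{n-2}$ (with the convention that these terms are absent for $n=0,1$). The initial condition from $\Psi(0,\cdot) = \mathbf{1}$ projects to $\hat{\Psi}_0(0,\cdot) = 1$ and $\hat{\Psi}_n(0,\cdot) = 0$ for $n\geqslant 1$.

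Next I would proceed by induction on $n$. The base case $n=0$ is trivial: $\hat{\Psi}_0 \equiv 1$ solves the decoupled equation and is the unique smooth solution by the maximum principle on the compact manifold $M$. For the inductive step, assume $\hat{\Psi}_0,\ldots,\hat{\Psi}_{n-1}$ exist and are smooth in $(t,x)$. Then $F_n(t,x)$ is a smooth $E^{\otimes n}$-valued function on $[0,\infty)\times M$. The equation at level $n$ is a linear inhomogeneous heat equation on the compact Riemannian manifold $M$ with smooth forcing and smooth initial data. By standard parabolic theory (e.g., applying the heat semigroup $e^{t\Delta/2}$ componentwise and using Duhamel's formula),
\begin{equation*}
\hat{\Psi}_n(t,x) = \int_0^t e^{(t-s)\Delta/2}\bigl(F_n(s,\cdot)\bigr)(x)\,ds
\end{equation*}
produces a smooth solution, with uniqueness following from the maximum principle applied to each scalar component of the difference of two solutions (a solution of the homogeneous heat equation with zero initial data must vanish on a compact manifold). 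Assembling the levels yields a unique smooth $T((E))$-valued function $\hat{\Psi}$.

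The main technical point to verify carefully is the graded/triangular structure of the right-hand side, in particular checking that the differential operators $d$ and $\nabla$ as well as the trace $\mathrm{Tr}$ respect the tensor-degree grading (they do, since all these operators act on the $T_x^*M$-factors while the $E^{\otimes n}$ factor is merely a coefficient space). Once this is in place, the rest is a routine application of scalar parabolic theory applied componentwise, with no genuine obstacle beyond checking that the source terms $F_n$ are indeed smooth whenever $\hat{\Psi}_{n-1}$ and $\hat{\Psi}_{n-2}$ are, which is immediate since $\phi$ and $\nabla\phi$ are smooth on the compact manifold $M$.
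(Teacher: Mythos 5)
Your proof is correct and follows essentially the same route as the paper's: both exploit the triangular structure in the tensor degree to reduce (\ref{eq:PhiSigPDE}) to a sequence of scalar inhomogeneous heat equations on the compact manifold $M$, solved inductively via the heat kernel / Duhamel representation with uniqueness from standard parabolic theory. Your write-up is in fact somewhat more explicit than the paper's about why the grading is triangular (which levels each term of the right-hand side couples to), but the argument is the same.
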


\begin{proof}
Let $f_{0}\in{\cal C}(M)$ and $g\in{\cal C}([0,\infty)\times M)$
be given functions. Consider the following inhomogeneous (scalar)
Cauchy problem on $M$:
\begin{equation}
\begin{cases}
\frac{\partial u}{\partial t}=\frac{1}{2}\Delta u+g,\\
u(0,\cdot)=f_{0}.
\end{cases}\label{eq:StandardCauchy}
\end{equation}
Since $M$ is compact, it is standard that the above PDE has a unique
solution $u(t,x)$ which is smooth for all positive times. In fact,
the solution is explicitly given by 
\[
u(t,x)=\int_{M}p(t,x,y)f_{0}(y)dy+\int_{0}^{t}\int_{M}p(t-s,x,y)g(s,y)dyds.
\]
The integrals are well-defined due to the compactness of $M$ and
the continuity of $f_{0},g.$ The smoothness of $u$ follows from
the smoothness of the heat kernel $p(t,x,y)$. 

To prove the lemma, it is enough to observe that the PDE (\ref{eq:PhiSigPDE})
is completely decoupled into the (scalar) form of (\ref{eq:StandardCauchy})
when one rewrites it in terms of coordinate components of $\Psi$. In fact, one can solve the PDE (\ref{eq:StandardCauchy}) inductively on the
degree of signature where $g$ is given by lower degree components
which is presumed to be known by induction. 
\end{proof}

Next, we lift the solution $\hat{\Psi}(t,x)$ given by Lemma \ref{lem:PDEUniqueness}
to the bundle ${\cal O}(M)$. Let us define 
\begin{equation}
f(t,\xi)\triangleq\hat{\Psi}(t,\pi(\xi)),\ \ \ t\geqslant0,\xi\in{\cal O}(M),\label{eq:f(t,xi)}
\end{equation}
where $\pi:{\cal O}(M)\rightarrow M$ denotes the bundle projection.
We also denote $\Phi\triangleq\pi^{*}\phi$ as the pullback of $\phi$
on the bundle. Then one can establish the PDE for $f(t,\xi)$ on ${\cal O}(M)$.
\begin{lem}
\label{lem:PDEBundle}The function $f(t,\xi)$ satisfies the following
parabolic PDE:
\begin{equation}
\frac{\partial f}{\partial t}=\frac{1}{2}\sum_{i=1}^{d}H_{i}^{2}f+\sum_{i=1}^{d}\langle\Phi,H_{i}\rangle\otimes H_{i}f+\frac{1}{2}\sum_{i=1}^{d}\big(H_{i}\langle\Phi,H_{i}\rangle+\langle\Phi,H_{i}\rangle^{\otimes2}\big)\otimes f\label{eq:HorPDE}
\end{equation}
with initial condition $f(0,\xi)={\bf 1}.$
\end{lem}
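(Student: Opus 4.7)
The plan is to verify (\ref{eq:HorPDE}) by pulling back, term by term, the intrinsic PDE (\ref{eq:PhiSigPDE}) that $\hat{\Psi}$ is known to satisfy on $M$ by Lemma \ref{lem:PDEUniqueness}. Since $f=\pi^{*}\hat{\Psi}$, one has $\partial_{t}f=\pi^{*}(\partial_{t}\hat{\Psi})$ immediately, so the task reduces to matching the four spatial terms on $\mathcal{O}(M)$ with the three spatial terms on $M$ (after applying traces on the base).

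For the second-order part, I invoke the standard fact (cf.\ \cite[Proposition 3.1.2]{Hsu02}) that the horizontal Laplacian $\sum_{i=1}^{d}H_{i}^{2}$ on $\mathcal{O}(M)$ projects to the Laplace--Beltrami operator, which, applied componentwise, yields $\sum_{i=1}^{d}H_{i}^{2}f=\pi^{*}\Delta\hat{\Psi}$. For the first-order and algebraic terms, fix $\xi=(x,u)$ with $u=(e_{1},\cdots,e_{d})$ and use the defining property $(\pi_{*})_{\xi}H_{i}(\xi)=e_{i}$ of the fundamental horizontal vector fields to obtain the pointwise identities
\[
\langle\Phi,H_{i}\rangle(\xi)=\langle\phi,e_{i}\rangle(x),\qquad(H_{i}f)(\xi)=\langle d\hat{\Psi},e_{i}\rangle(x).
\]
Summing over $i$ then produces a sum over the ONB $\{e_{i}\}$ that is exactly a trace on $T_{x}M$, hence invariant under the $O(d)$-action on frames and therefore a genuine function of $x=\pi(\xi)$. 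This yields at once $\sum_{i}\langle\Phi,H_{i}\rangle\otimes H_{i}f=\pi^{*}\mathrm{Tr}(\phi\otimes d\hat{\Psi})$ and $\sum_{i}\langle\Phi,H_{i}\rangle^{\otimes2}=\pi^{*}\mathrm{Tr}(\phi\otimes\phi)$.

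The subtle step is the identification $\sum_{i}H_{i}\langle\Phi,H_{i}\rangle=\pi^{*}\mathrm{Tr}(\nabla\phi)$. For this I take a curve $\gamma:(-\varepsilon,\varepsilon)\to M$ with $\gamma_{0}=x$ and $\gamma_{0}'=e_{i}$, and let $\xi_{t}=(\gamma_{t},u_{t})$ be its horizontal lift starting at $\xi$, so the frame $u_{t}=(e_{1}(t),\cdots,e_{d}(t))$ is the parallel transport of $u$ along $\gamma$. Differentiating at $t=0$ gives
\[
H_{i}\langle\Phi,H_{i}\rangle(\xi)=\frac{d}{dt}\bigg|_{t=0}\langle\phi(\gamma_{t}),e_{i}(t)\rangle=\langle\nabla_{e_{i}}\phi,e_{i}\rangle(x)+\Big\langle\phi(x),\frac{De_{i}}{dt}\Big|_{t=0}\Big\rangle,
\]
and the second term vanishes since $u_{t}$ is parallel along $\gamma$. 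This is the geometric crux of the proof: horizontality of $H_{i}$ ensures the moving frame has zero covariant derivative, so the only surviving contribution is the intrinsic covariant derivative of $\phi$. Summing over $i$ delivers the claimed trace identity.

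Assembling the four identifications and invoking (\ref{eq:PhiSigPDE}) for $\hat{\Psi}$ yields (\ref{eq:HorPDE}); the initial condition $f(0,\xi)=\pi^{*}\mathbf{1}=\mathbf{1}$ is immediate. I expect the main obstacle to be the zeroth-order identification just described: one must be careful that no unexpected curvature-type correction appears when differentiating $\langle\Phi,H_{i}\rangle$ along the horizontal vector field $H_{i}$, and the crucial input is precisely the horizontality property that kills the frame-derivative term. Everything else amounts to bookkeeping using $O(d)$-invariance of the traces on $T_{x}M$.
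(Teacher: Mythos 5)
Your proposal is correct and follows essentially the same route as the paper's proof: a term-by-term identification of the pullback of the right-hand side of (\ref{eq:PhiSigPDE}) with the right-hand side of (\ref{eq:HorPDE}), using the projection of the horizontal Laplacian, the identity $(\pi_{*})_{\xi}H_{i}=e_{i}$ for the first-order and algebraic terms, and the horizontal-lift/parallel-transport argument for the zeroth-order trace ${\rm Tr}(\nabla\phi)$. No gaps.
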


\begin{proof}
Since $\hat{\Psi}(t,x)$ satisfies the PDE (\ref{eq:PhiSigPDE}) on
$M$ and $f$ is the pullback of $\hat{\Psi}$ on the bundle ${\cal O}(M)$,
it is sufficient to prove that the pullback of the right hand side
of (\ref{eq:PhiSigPDE}) is precisely the right hand side of (\ref{eq:HorPDE}).
We look at each individual term separately. 

\vspace{2mm}\noindent (i) The horizontal Laplacian $\sum_{i=1}^{d}H_{i}^{2}$
on ${\cal O}(M)$ is the pullback of the Laplacian $\Delta$ on $M$
(cf. \cite[Proposition 3.1.2]{Hsu02}):
\[
\sum_{i=1}^{d}H_{i}^{2}f=\sum_{i=1}^{d}H_{i}^{2}\pi^{*}\hat{\Psi}=\pi^{*}\Delta\hat{\Psi}.
\]
(ii) We claim that 
\[
\sum_{i=1}^{d}\langle\Phi,H_{i}\rangle\otimes H_{i}f=\pi^{*}{\rm Tr}(\phi\otimes d\hat{\Psi}).
\]
Indeed, let $\xi=(x,u)\in{\cal O}(M)$ with $u=\{e_{1},\cdots,e_{d}\}$.
Then one has 
\[
\langle\Phi,H_{i}\rangle(\xi)=\langle\pi^{*}\phi,H_{i}\rangle(\xi)=\langle\phi,(\pi_{*})_{\xi}H_{i}\rangle(x)=\langle\phi,e_{i}\rangle(x).
\]
Similarly, 
\[
(H_{i}f)(\xi)=\langle df,H_{i}\rangle(\xi)=\langle d\hat{\Psi},e_{i}\rangle(x).
\]
It follows that
\[
\big(\sum_{i=1}^{d}\langle\Phi,H_{i}\rangle\otimes H_{i}f\big)(\xi)=\big(\sum_{i=1}^{d}\langle\phi,e_{i}\rangle\otimes\langle d\hat{\Psi},e_{i}\rangle\big)(x)={\rm Tr}((\phi\otimes d\hat{\Psi}))(x).
\]
Note that the trace, as a bilinear form on the inner product space
$T_{x}M$, is independent of the choice of the ONB $u$. In a similar
way, one also has 
\[
\sum_{i=1}^{d}\langle\Phi,H_{i}\rangle^{\otimes2}=\pi^{*}{\rm Tr}(\phi\otimes\phi).
\]
(iii) It remains to show that 
\begin{equation}
\sum_{i=1}^{d}H_{i}\langle\Phi,H_{i}\rangle=\pi^{*}{\rm Tr}(\nabla\phi).\label{eq:0OrderTr}
\end{equation}
As before, let $\xi=(x,u)$ be given fixed with $u=\{e_{1},\cdots,e_{d}\}$.
Let $\gamma:(-\varepsilon,\varepsilon)\rightarrow M$ be a curve in
$M$ such that $\gamma_{0}=x$ and $\gamma'_{0}=e_{i}.$ Let $\xi_{t}=(\gamma_{t},u_{t})$
be the horizontal lifting of $\gamma$ with $\xi_{0}=\xi.$ Then one
has 
\begin{align*}
H_{i}\langle\Phi,H_{i}\rangle(\xi) & =\frac{d}{dt}\big|_{t=0}\langle\Phi,H_{i}\rangle(\xi_{t})=\frac{d}{dt}\big|_{t=0}\langle\phi,(\pi_{*})_{\xi_{t}}H_{i}\rangle(\gamma_{t})\\
 & =\frac{d}{dt}\big|_{t=0}\langle\phi(\gamma_{t}),e_{i}(t)\rangle=\langle\frac{D\phi(\gamma_{t})}{dt}\big|_{t=0},e_{i}\rangle+\langle\phi(x),\frac{De_{i}(t)}{dt}\big|_{t=0}\rangle,
\end{align*}
where $e_{i}(t)$ denotes the $i$-th component of $u_{t}$ and $\frac{D}{dt}$
is the covariant derivative along $\gamma$. Since $\xi_{t}$ is horizontal,
one has 
\[
\frac{De_{i}(t)}{dt}=0\ \ \ \forall t.
\]
As a result, one obtains that 
\[
H_{i}\langle\Phi,H_{i}\rangle(\xi)=\langle\nabla_{e_{i}}\phi,e_{i}\rangle(x)
\]
and the claim (\ref{eq:0OrderTr}) thus follows.
\end{proof}

Now we derive the Feynman-Kac representation for the PDE (\ref{eq:HorPDE})
(with no surprise, it is given in terms of the joint process defined
by the horizontal Brownian motion and its $\Phi$-signature process).
Recall that $\Xi^{\xi}$ is the horizontal Brownian motion defined
by the horizontal SDE (\ref{eq:HorBM}) on the bundle and let $\tilde{S}_{t}^{\Phi}(\Xi^{\xi})\in T((E))$
denote the signature path of the $E$-valued path 
\[
t\mapsto\int_{0}^{t}\Phi(\circ d\Xi_{s}^{\xi}).
\]

\begin{lem}
\label{lem:FKRepBundle}One has the following representation: 
\begin{equation}
f(t,\xi)=\mathbb{E}\big[\tilde{S}_{t}^{\Phi}(\Xi^{\xi})\big],\ \ \ (t,\xi)\in[0,\infty)\times{\cal O}(M).\label{eq:FKRepBundle}
\end{equation}
\end{lem}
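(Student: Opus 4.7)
The plan is a standard Feynman-Kac argument carried out on the product bundle ${\cal O}(M)\times T((E))$: using the PDE (\ref{eq:HorPDE}) satisfied by $f$ together with the SDE description of the joint process $(\Xi,\tilde S)$, I will construct a suitable martingale whose endpoints precisely encode both sides of (\ref{eq:FKRepBundle}).

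To set this up, observe that the joint process $(\Xi_{\cdot}^{\xi},\tilde{S}_{\cdot}^{\Phi}(\Xi^{\xi}))$ is governed by the Stratonovich SDE
\begin{equation*}
d\Xi_{s}=\sum_{i=1}^{d}H_{i}(\Xi_{s})\circ dB_{s}^{i},\qquad d\tilde{S}_{s}=\sum_{i=1}^{d}\tilde{S}_{s}\otimes\langle\Phi,H_{i}\rangle(\Xi_{s})\circ dB_{s}^{i},
\end{equation*}
driven by the product vector fields ${\cal V}_{i}(\xi,g)\triangleq\bigl(H_{i}(\xi),\,g\otimes\langle\Phi,H_{i}\rangle(\xi)\bigr)$ on ${\cal O}(M)\times T((E))$. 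The key computation is to apply the generator $\frac{1}{2}\sum_{i}{\cal V}_{i}^{2}$ to test functions of the form $F(\xi,g)\triangleq g\otimes h(\xi)$ with $h$ a smooth $T((E))$-valued function. Since $F$ is linear in $g$, a short direct calculation yields
\begin{equation*}
{\cal V}_{i}F=g\otimes\bigl(H_{i}h+\langle\Phi,H_{i}\rangle\otimes h\bigr),
\end{equation*}
and iterating once more gives
\begin{equation*}
{\cal V}_{i}^{2}F=g\otimes\bigl(H_{i}^{2}h+H_{i}\langle\Phi,H_{i}\rangle\otimes h+2\langle\Phi,H_{i}\rangle\otimes H_{i}h+\langle\Phi,H_{i}\rangle^{\otimes2}\otimes h\bigr).
\end{equation*}
Comparing with (\ref{eq:HorPDE}), this means $\frac{1}{2}\sum_{i}{\cal V}_{i}^{2}F=g\otimes\bigl(\text{RHS of (\ref{eq:HorPDE}) applied to }h\bigr)$.

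Now fix $t>0$ and set $F(s,\xi,g)\triangleq g\otimes f(t-s,\xi)$ for $s\in[0,t]$. Since $f$ satisfies (\ref{eq:HorPDE}) by Lemma \ref{lem:PDEBundle}, the above identity yields $\partial_{s}F+\frac{1}{2}\sum_{i}{\cal V}_{i}^{2}F=0$. Applying It\^o's formula level-by-level to $M_{s}\triangleq F(s,\Xi_{s}^{\xi},\tilde{S}_{s})=\tilde{S}_{s}\otimes f(t-s,\Xi_{s}^{\xi})$, the bounded variation part cancels and $M_{s}$ is thus a local martingale. Compactness of ${\cal O}(M)$ together with smoothness of $f$ implies that, at each tensor degree $n$, the coefficients in the level-$n$ component of $M$ are uniformly bounded; combined with standard $L^{p}$-bounds on $\pi_{n}\tilde{S}_{s}$ (which follow from BDG applied inductively since $\langle\Phi,H_{i}\rangle$ is bounded on ${\cal O}(M)$), this promotes $\pi_{n}M_{s}$ to a genuine martingale for every $n$. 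Comparing $\mathbb{E}[M_{0}]={\bf 1}\otimes f(t,\xi)=f(t,\xi)$ with $\mathbb{E}[M_{t}]=\mathbb{E}[\tilde{S}_{t}\otimes f(0,\Xi_{t}^{\xi})]=\mathbb{E}[\tilde{S}_{t}^{\Phi}(\Xi^{\xi})]$ (using $f(0,\cdot)={\bf 1}$) then yields the desired identity.

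The only subtlety is the tensor-algebra-valued nature of the argument, which I would handle by working one tensor degree at a time: at degree $n$, the process $\pi_{n}M_{s}$ depends only on the finitely many components $\pi_{\leqslant n}\tilde{S}_{s}$ and $\pi_{\leqslant n}f$, so the generator computation and the martingale property both reduce to finite-dimensional It\^o calculus with smooth bounded coefficients, where all integrability issues are routine.
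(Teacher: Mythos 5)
Your proposal is correct and follows essentially the same route as the paper: the same joint SDE for $(\Xi,\tilde S)$, the same computation of ${\cal V}_i^2$ acting on $F(\xi,g)=g\otimes f(\cdot,\xi)$, and the same Feynman--Kac conclusion. The only difference is that you unpack the step the paper labels ``standard'' by exhibiting the martingale $M_s=\tilde S_s\otimes f(t-s,\Xi_s^{\xi})$ and justifying its integrability level by level, which is a valid (and slightly more self-contained) way to finish.
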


\begin{proof}
Consider the Markov family 
\[
(\Xi,\tilde{S})\triangleq\big\{\big(\Xi_{t}^{\xi},g\otimes\tilde{S}_{t}^{\Phi}(\Xi^{\xi})\big):t\geqslant0,(\xi,g)\in{\cal O}(M)\times T((E))\big\}.
\]
This family is governed by the Stratonovich SDE (cf. (\ref{eq:SigDE}))
\begin{equation}
\begin{cases}
d\Xi_{t}=\sum_{i=1}^{d}H_{i}(\Xi_{t})\circ dB_{t}^{i},\\
d\tilde{S}_{t}=\sum_{i=1}^{d}\tilde{S}_{t}\otimes\langle\Phi,H_{i}\rangle(\Xi_{t})\circ dB_{t}^{i}
\end{cases}\label{eq:JointSDE}
\end{equation}
on the state space ${\cal O}(M)\times T((E))$. Let ${\cal L}$
be the generator of $(\Xi,\tilde{S})$. It is standard that ${\cal L}=\frac{1}{2}\sum_{i=1}^{d}{\cal V}_{i}^{2}$
where $\{{\cal V}_{1},\cdots,{\cal V}_{d}\}$ are the vector fields
for the SDE (\ref{eq:JointSDE}) and they are viewed as differential
operators on $C^{\infty}\big({\cal O}(M)\times T((E))\big)$. Respectively,
we also denote 
\[
\hat{{\cal L}}F\triangleq\frac{1}{2}\sum_{i=1}^{d}H_{i}^{2}F+\sum_{i=1}^{d}\langle\Phi,H_{i}\rangle\otimes H_{i}F+\frac{1}{2}\sum_{i=1}^{d}\big(H_{i}\langle\Phi,H_{i}\rangle+\langle\Phi,H_{i}\rangle^{\otimes2}\big)\otimes F
\]
for $F\in C^{\infty}\big({\cal O}(M)\times T((E))\big)$. Note that
the situation here is essentially finite dimensional since one can
consistently look at the truncations of the SDE (\ref{eq:JointSDE})
and PDE (\ref{eq:HorPDE}) up to any fixed level. 

Consider the function
\begin{equation}
F(t,(\xi,g))\triangleq g\otimes f(t,\xi),\label{eq:ExtendedF}
\end{equation}
where we recall that $f(t,\xi)$ is defined by (\ref{eq:f(t,xi)}).
We claim that ${\cal L}F=\hat{{\cal L}}F$. Indeed, note that the
vector fields for the SDE (\ref{eq:JointSDE}) are given by 
\[
{\cal V}_{i}\triangleq V_{i}\oplus W_{i},\ \ \ 1\leqslant i\leqslant d,
\]
where
\begin{equation}
V_{i}(\xi,g)=H_{i}(\xi),\ W_{i}(\xi,g)=g\otimes\langle\Phi,H_{i}\rangle(\xi).\label{eq:WVecField}
\end{equation}
We first compute ${\cal V}_{i}F$. By the definition (\ref{eq:ExtendedF})
of $F$, it is clear that 
\begin{equation}
(V_{i}F)(t,(\xi,g))=g\otimes(H_{i}f)(t,\xi).\label{eq:Vi}
\end{equation}
In addition, by the definition (\ref{eq:WVecField}) of $W_{i}$ one
has (recall $E=\mathbb{R}^{N}$)
\[
(W_{i}F)(t,(\xi,g))=\sum_{m\geqslant1}\sum_{i_{1},\cdots,i_{m}=1}^{N}g^{i_{1},\cdots,i_{m-1}}\langle\Phi^{i_{m}},H_{i}\rangle(\xi)\partial_{i_{1},\cdots,i_{m}}(g\otimes f(t,\xi)),
\]
where $g^{i_{1},\cdots,i_{m-1}}$ are the coordinate components of
the tensor $g$, $\Phi^{j}$ is the $j$-th component of $\Phi=(\Phi^{1},\cdots,\Phi^{N})$
as an $\mathbb{R}^{N}$-valued one-form and $\partial_{i_{1},\cdots,i_{m}}$
is the partial derivative in $g$ with respect to the coordinate $g^{i_{1},\cdots,i_{m}}.$
For any given word $J=(j_{1},\cdots,j_{l})$, the $J$-th component
of $W_{i}F$ is thus given by 
\begin{align*}
 & (W_{i}F)^{J}(t,(\xi,g))\\
 & =\sum_{m\geqslant1}\sum_{i_{1},\cdots,i_{m}=1}^{N}g^{i_{1},\cdots,i_{m-1}}\langle\Phi_{i_{m}},H_{i}\rangle(\xi)\partial_{i_{1},\cdots,i_{m}}\big(\sum_{s=1}^{l}g^{j_{1},\cdots,j_{s}}f(t,\xi)^{j_{s+1},\cdots,j_{l}}\big)\\
 & =\sum_{s=1}^{l}g^{j_{1},\cdots,j_{s-1}}\langle\Phi_{j_{s}},H_{i}\rangle(\xi)f(t,\xi)^{j_{s+1},\cdots,j_{l}}\\
 & =\big(g\otimes\langle\Phi,H_{i}\rangle\otimes f(t,\xi)\big)^{J}.
\end{align*}
In its equivalent tensor form, 
\begin{equation}
(W_{i}F)(t,(\xi,g))=g\otimes\langle\Phi,H_{i}\rangle(\xi)\otimes f(t,\xi).\label{eq:Wi}
\end{equation}
Combining (\ref{eq:Vi}) and (\ref{eq:Wi}), one finds that 
\[
({\cal V}_{i}F)(t,(\xi,g))=g\otimes\big((H_{i}f)(t,\xi)+\langle\Phi,H_{i}\rangle(\xi)\otimes f(t,\xi)\big).
\]
By applying ${\cal V}_{i}$ again, one arrives at
\begin{align}
{\cal V}_{i}^{2}F & =g\otimes H_{i}\big(H_{i}f+\langle\Phi,H_{i}\rangle\otimes f\big)+g\otimes\langle\Phi,H_{i}\rangle\otimes\big(H_{i}f+\langle\Phi,H_{i}\rangle\otimes f\big)\nonumber \\
 & =g\otimes\big(H_{i}^{2}f+H_{i}\langle\Phi,H_{i}\rangle\otimes f+2\langle\Phi,H_{i}\rangle\otimes H_{i}f+\langle\Phi,H_{i}\rangle^{\otimes2}\otimes f\big).\label{eq:GenSigBund}
\end{align}
This gives the relation ${\cal L}F=\hat{{\cal L}}F.$

It follows from the above relation and Lemma \ref{lem:PDEBundle}
that $F$ satisfies the PDE $\frac{\partial F}{\partial t}={\cal L}F$
with initial condition $F(0,\cdot)=\rho(\cdot),$ where $\rho:{\cal O}(M)\times T((E))\rightarrow T((E))$
denotes the projection onto the second component. It is now a standard
application of the Feynman-Kac representation that 
\[
g\otimes f(t,\xi)=F(t,(\xi,g))=\mathbb{E}\big[\rho\big(\Xi_{t}^{\xi},g\otimes\tilde{S}_{t}^{\Phi}(\Xi^{\xi})\big)\big]=g\otimes\mathbb{E}\big[\tilde{S}_{t}^{\Phi}(\Xi^{\xi})\big].
\]
 By cancelling out the $g$ on both sides, one arrives at the desired
relation (\ref{eq:FKRepBundle}).
\end{proof}
Finally, the lemma below completes the proof of Theorem \ref{thm:BMSigPDE}.
Recall that $\Psi(t,x)$ is the expected $\phi$-signature of the
Brownian motion (cf. (\ref{eq:DefESig})).
\begin{lem}
\label{lem:LiftPsi}$\Psi(t,x)=\hat{\Psi}(t,x)$ for all $t\geqslant0$
and $x\in M$. 
\end{lem}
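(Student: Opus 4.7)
The plan is to combine Lemma \ref{lem:FKRepBundle} with the defining property of the Eells--Elworthy--Malliavin construction, reducing the identity $\Psi = \hat{\Psi}$ to a simple pullback calculation. Recall that by (\ref{eq:f(t,xi)}) we have $f(t,\xi) = \hat{\Psi}(t,\pi(\xi))$, while by Lemma \ref{lem:FKRepBundle} we also have $f(t,\xi) = \mathbb{E}[\tilde S_t^{\Phi}(\Xi^\xi)]$. Since $\pi : \mathcal{O}(M) \to M$ is surjective, it therefore suffices to prove that
\[
\mathbb{E}\big[\tilde S_t^{\Phi}(\Xi^\xi)\big] \;=\; \Psi(t,\pi(\xi)), \qquad \xi \in \mathcal{O}(M),
\]
which would identify $\hat{\Psi}(t,\pi(\xi))$ with $\Psi(t,\pi(\xi))$ for all $\xi$ and hence give the lemma.

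The key observation is that the $\Phi$-signature of the horizontal Brownian motion agrees pathwise with the $\phi$-signature of its projection. Indeed, by Theorem \ref{thm:HorBM} the process $W^x_t \triangleq \pi(\Xi_t^\xi)$ is a Brownian motion on $M$ starting at $x = \pi(\xi)$. Since $\Phi = \pi^*\phi$, for each tangent vector $V \in T_\eta \mathcal{O}(M)$ one has $\langle \Phi_\eta, V\rangle = \langle \phi_{\pi(\eta)}, \pi_* V\rangle$. As Stratonovich integration respects the chain rule, this pullback identity extends directly to the horizontal Brownian semimartingale:
\[
\int_0^t \Phi(\circ d\Xi_s^\xi) \;=\; \int_0^t \phi(\circ d\pi(\Xi_s^\xi)) \;=\; \int_0^t \phi(\circ dW_s^x).
\]
Iterating this identity level by level (either by appealing to the Stratonovich SDE (\ref{eq:SigDE}) satisfied by the signature path and applying the Stratonovich chain rule componentwise, or by observing that both sides solve the same signature SDE driven by $\int_0^\cdot \phi(\circ dW^x)$), one obtains the pathwise identity
\[
\tilde S_t^{\Phi}(\Xi^\xi) \;=\; S_t^{\phi}(W^x).
\]

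Taking expectations gives $\mathbb{E}[\tilde S_t^{\Phi}(\Xi^\xi)] = \mathbb{E}[S_t^{\phi}(W^x)] = \Psi(t,x) = \Psi(t,\pi(\xi))$, which combined with Lemma \ref{lem:FKRepBundle} yields $\hat{\Psi}(t,\pi(\xi)) = \Psi(t,\pi(\xi))$ and completes the proof. The only non-routine point is the pathwise identity between the two signatures, but this is immediate from Stratonovich calculus since the $\phi$-signature is defined via iterated Stratonovich integrals against the one-form, and pullback of a one-form commutes with Stratonovich integration along a projected semimartingale; no technical estimates or regularity conditions beyond the smoothness of $\phi$ and compactness of $M$ are needed.
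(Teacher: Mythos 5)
Your argument is correct and follows essentially the same route as the paper's proof: both fix $\xi\in\pi^{-1}(x)$, invoke Theorem \ref{thm:HorBM} to identify $\pi(\Xi^{\xi})$ with a Brownian motion $W^{x}$, use the pullback identity $\langle\Phi,\circ d\Xi^{\xi}\rangle=\langle\phi,\circ dW^{x}\rangle$ to equate the two signature processes pathwise, and conclude via Lemma \ref{lem:FKRepBundle}. No discrepancies.
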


\begin{proof}
Fix $x\in M$ and let $\xi\in\pi^{-1}(x).$ According to Theorem \ref{thm:HorBM},
$W_{t}\triangleq\pi(\Xi_{t}^{\xi})$ is a Brownian motion starting
at $x$. In addition, by the definition of $\Phi$ one has 
\[
\langle\Phi,\circ d\Xi_{t}^{\xi}\rangle=\langle\pi^{*}\phi,\circ d\Xi_{t}^{\xi}\rangle=\langle\phi,\circ\pi_{*}d\Xi_{t}^{\xi}\rangle=\langle\phi,\circ dW_{t}\rangle.
\]
It follows from Lemma \ref{lem:FKRepBundle} that 
\[
\hat{\Psi}(t,x)=f(t,\xi)=\mathbb{E}\big[\tilde{S}_{t}^{\Phi}(\Xi^{\xi})\big]=\mathbb{E}[S_{t}^{\phi}(W)]=\Psi(t,x).
\]
Note that the above relation is independent of the choice of $\xi\in\pi^{-1}(x)$
since the law of $W$ is. 
\end{proof}
\begin{rem}
\label{rem:GenPhi}The proof of Lemma \ref{lem:FKRepBundle} does
not use the fact that $\Phi$ is the lifting of the one-form $\phi$.
Indeed, let $\Phi$ be any given $E$-valued one-form on ${\cal O}(M)$.
For each $\xi\in{\cal O}(M)$, let $\Xi_{t}^{\xi}$ be the horizontal
Brownian motion starting at $\xi$. Then one can show that the function
\[
(t,\xi)\mapsto\mathbb{E}[\tilde{S}_{t}^{\Phi}(\Xi^{\xi})]
\]
satisfies the PDE (\ref{eq:HorPDE}).
\end{rem}

\subsection{The Brownian bridge case}

Now let us consider the expected $\phi$-signature of the Brownian
bridge defined by 
\[
\psi(t,x,y)\triangleq\mathbb{E}\big[S^{\phi}(X^{t,x,y})\big],\ \ \ t\geqslant0,\ x,y\in M,
\]
where $X^{t,x,y}$ is the Brownian bridge from $x$ to $y$ with lifetime
$t$. By using the PDE (\ref{eq:PhiSigPDE}), it is not hard to derive
the associated PDE for $\psi(t,x,y)$.
\begin{thm}
\label{thm:BBSigPDE}For each fixed $y\in M,$ the function 
\[
(0,\infty)\times M\ni(t,x)\mapsto\psi(t,x,y)
\]
satisfies the following PDE:
\begin{align}
\frac{\partial\psi}{\partial t} & =\frac{1}{2}\Delta_{x}\psi+\langle\nabla_{x}\log p(t,x,y),\nabla_{x}\psi+\phi\otimes\psi\rangle+{\rm Tr}(\phi\otimes d_{x}\psi)\nonumber \\
 & \ \ \ +\frac{1}{2}{\rm Tr}\big((\nabla\phi+\phi\otimes\phi)\otimes\psi\big),\label{eq:BridgePDE}
\end{align}
where $p(t,x,y)$ is the heat kernel.
\end{thm}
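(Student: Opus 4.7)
The plan is to reduce this to the PDE for Brownian motion (Theorem \ref{thm:BMSigPDE}) via an $h$-transform argument. The key auxiliary object is the weighted expected signature
$$\tilde\psi(t,x,y) := p(t,x,y)\,\psi(t,x,y),$$
which is the density (with respect to the volume measure in $y$) of the $T((E))$-valued signed measure $A\mapsto\mathbb{E}[S_t^\phi(W^x)\mathbf{1}_A(W_t^x)]$ on $M$. Equivalently, for any smooth test function $h:M\to\mathbb{R}$,
$$u_h(t,x) := \mathbb{E}\bigl[S_t^\phi(W^x)\,h(W_t^x)\bigr] = \int_M h(y)\,\tilde\psi(t,x,y)\,dy.$$

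The first step is to show that for each fixed $h$, the function $u_h(t,x)$ satisfies the \emph{same} PDE (\ref{eq:PhiSigPDE}) as $\Psi(t,x)$ in Theorem \ref{thm:BMSigPDE}, only with modified initial condition $u_h(0,\cdot)=h\cdot{\bf 1}$. This is essentially a rerun of the proof of Theorem \ref{thm:BMSigPDE}: lift to the orthonormal frame bundle, consider the joint Markov process $(\Xi,\tilde S)$ of horizontal Brownian motion and its $\Phi$-signature, and apply the Feynman-Kac representation with an extra terminal observable $h(\pi(\Xi_t))$. The generator computation leading to the horizontal PDE of Lemma \ref{lem:PDEBundle} is unchanged, since $h$ only modifies the terminal observable and not the generator $\mathcal L$; projecting back to $M$ yields the intrinsic PDE for $u_h$. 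Since $h$ is arbitrary and $\tilde\psi$ is smooth on $(0,\infty)\times M\times M$, we conclude that $\tilde\psi(t,x,y)$ satisfies, as a function of $(t,x)$ for each fixed $y$, the $\Psi$-PDE
$$\partial_t\tilde\psi = \tfrac12\Delta_x\tilde\psi + \mathrm{Tr}\bigl(\phi\otimes d_x\tilde\psi + \tfrac12(\nabla\phi+\phi\otimes\phi)\otimes\tilde\psi\bigr).$$

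The second step is a routine Leibniz-type extraction. Writing $\psi=\tilde\psi/p$ and using the heat equation $\partial_t p=\tfrac12\Delta_x p$, one has
$$\partial_t\psi = \frac{\partial_t\tilde\psi}{p} - \psi\cdot\frac{\partial_t p}{p}, \qquad \frac{\Delta_x\tilde\psi}{p} = \frac{\Delta_x p}{p}\,\psi + 2\langle\nabla_x\log p,\nabla_x\psi\rangle + \Delta_x\psi.$$
The first-order signature term splits via $d_x\tilde\psi = (d_xp)\psi + p\,d_x\psi$, producing a genuine extra contribution $\langle\nabla_x\log p,\phi\otimes\psi\rangle$ after tracing. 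The zeroth-order tensor term is untouched since the scalar factor $p$ pulls through, and the $\tfrac12\Delta_x p/p$ piece from the Laplacian term cancels exactly against $\partial_t p/p$ from the Leibniz differentiation. Assembling the remaining pieces gives the PDE in the statement.

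The only technical subtlety worth flagging is the initial-time behaviour: as $t\to 0^+$, $\tilde\psi(t,x,y)$ concentrates into $\delta_y(x)$, so the PDE for $\tilde\psi$ holds classically only for $t>0$; this causes no trouble because both $p$ and $\psi$ are smooth there. Rather than a real obstacle, the main conceptual point is recognising that the generator calculation behind Lemma \ref{lem:PDEBundle} is completely insensitive to the insertion of a terminal observable $h$, which is what allows the BM signature PDE to transfer verbatim to the weighted density $\tilde\psi$ and, via the $h$-transform formula $\psi=\tilde\psi/p$, to produce the correct extra drift term $\langle\nabla_x\log p,\nabla_x\psi+\phi\otimes\psi\rangle$ in the bridge PDE.
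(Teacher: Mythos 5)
Your proposal is correct and follows essentially the same route as the paper: the paper also introduces $\bar{\psi}(t,x)=\mathbb{E}[S_{t}^{\phi}(W^{x})\delta_{y}(W_{t}^{x})]=p\,\psi$, observes that for any smooth terminal observable $f$ the function $\mathbb{E}[f(W_{t}^{x})S_{t}^{\phi}(W^{x})]$ satisfies the same PDE (\ref{eq:PhiSigPDE}) by rerunning the proof of Theorem \ref{thm:BMSigPDE}, and then performs the identical quotient/Leibniz computation with the heat equation to extract the drift term $\langle\nabla_{x}\log p,\nabla_{x}\psi+\phi\otimes\psi\rangle$. Your remark on passing from smooth $h$ to the delta function via the density $\tilde{\psi}$ is a slightly more careful rendering of a step the paper takes implicitly, but the argument is the same.
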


\begin{rem}
The term $\langle\nabla\log p,\phi\otimes\psi\rangle$ is understood
as $_{TM}\langle\nabla\log p,\phi\rangle_{T^{*}M}\otimes\psi\in T((E))$. 
\end{rem}

\begin{proof}
By the definition of Brownian bridge, one has 
\[
\psi(t,x,y)=\frac{\mathbb{E}[S_{t}^{\phi}(W^{x})\delta_{y}(W_{t}^{x})]}{\mathbb{E}[\delta_{y}(W_{t}^{x})]}=\frac{\mathbb{E}[S_{t}^{\phi}(W^{x})\delta_{y}(W_{t}^{x})]}{p(t,x,y)},
\]
where $W^{x}$ is the Brownian motion starting at $x$. Given any
smooth function $f:M\rightarrow\mathbb{R},$ exactly the same argument
as in the proof of Theorem \ref{thm:BMSigPDE} shows that the function
\[
(0,\infty)\times M\ni(t,x)\mapsto\mathbb{E}\big[f(W_{t}^{x})S_{t}^{\phi}(W^{x})\big]
\]
satisfies the same PDE (\ref{eq:PhiSigPDE}). In particular, the function
($y$ being fixed)
\[
\bar{\psi}(t,x)\triangleq\mathbb{E}[S_{t}^{\phi}(W^{x})\delta_{y}(W_{t}^{x})]
\]
also satisfies (\ref{eq:PhiSigPDE}).

Now let us write $\psi=\bar{\psi}/p$. By using the PDE for $\bar{\psi}$
and the heat equation (\ref{eq:HeatEq}) for $p$, one finds that
\begin{align*}
\frac{\partial\psi}{\partial t} & =\frac{1}{p}\frac{\partial\bar{\psi}}{\partial t}+\bar{\psi}\big(-\frac{1}{p^{2}}\frac{\partial p}{\partial t}\big)\\
 & =\frac{1}{p}\big(\frac{1}{2}\Delta\bar{\psi}+{\rm Tr}\big(\phi\otimes d\bar{\psi}+\frac{1}{2}(\nabla\phi+\phi\otimes\phi)\otimes\bar{\psi}\big)-\frac{\bar{\psi}}{2p}\Delta p\big)\\
 & =\frac{1}{p}\big(\frac{1}{2}\Delta(p\psi)+{\rm Tr}\big(\phi\otimes d(p\psi)\big)+\frac{1}{2}{\rm Tr}\big((\nabla\phi+\phi\otimes\phi)\otimes\psi\big)-\frac{\psi}{2p}\Delta p.
\end{align*}
Since
\[
\Delta(p\psi)=\Delta p\cdot\psi+p\cdot\Delta\psi+2\langle\nabla p,\nabla\psi\rangle,
\]
it follows that 
\[
\frac{\partial\psi}{\partial t}=\frac{1}{2}\Delta\psi+\frac{1}{p}\langle\nabla p,\nabla\psi\rangle+\frac{1}{p}{\rm Tr}(\phi\otimes\psi dp)+{\rm Tr}\big(\phi\otimes d\psi+\frac{1}{2}(\nabla\phi+\phi\otimes\phi)\otimes\psi\big).
\]
The result follows by further noting that 
\begin{align*}
{\rm Tr}(\phi\otimes\psi dp) & =\sum_{i=1}^{d}\langle\nabla p,e_{i}\rangle\phi(e_{i})\otimes\psi=\phi(\nabla p)\otimes\psi,
\end{align*}
where $\{e_{1},\cdots,e_{d}\}$ is any ONB of $T_{x}M$ and $p^{-1}\nabla p=\nabla\log p$.
\end{proof}
A basic point to make is that rich geometric information about the
underlying space $M$ is encoded in the asymptotic behaviour of the
function $\psi(t,x,y)$ as $t\rightarrow0^{+}$. This motivates our study in the next two sections.

\section{\label{sec:ReconRD}Signature asymptotics and reconstruction of Riemannian
distance}

In this section, we develop a method of reconstructing the Riemannian
distance $d(x,y)$ from the expected signature of the Brownian bridge
$X^{t,x,y}$ through an explicit asymptotics procedure, at least when
$d(x,y)$ is not too far from each other. 

\subsection{The main theorem }

Suppose that $F:M\rightarrow E=\mathbb{R}^{N}$ is a given fixed isometric
embedding (which always exists by Nash's embedding theorem). For each
$n\geqslant1$, we equip $E^{\otimes n}$ with the Hilbert-Schmidt
tensor norm $\|\cdot\|_{{\rm HS}}$. In other words, viewing $E$
as an Euclidean space, the space $E^{\otimes n}$ is equipped with an
inner product structure induced by 
\[
\langle v_{1}\otimes\cdots\otimes v_{n},w_{1}\otimes\cdots\otimes w_{n}\rangle_{{\rm HS}}\triangleq\langle v_{1},w_{1}\rangle_{E}\cdots\langle v_{n},w_{n}\rangle_{E},\ \ \ v_{i},w_{j}\in E.
\]
Recall that $X^{t,x,y}$ is the Brownian bridge from $x$ to $y$
with lifetime $t$ and $S^{dF}(X^{t,x,y})$ is its $\phi$-signature
with $\phi=dF$. Our main result in this section is stated as follows.
\begin{thm}
\label{thm:ReconRD}Let $x,y\in M$ be given fixed such that $d(x,y)<\rho_{M}/2$
where $\rho_{M}$ is the global injective radius of $M$. Then
there exist geometric constants $C,\kappa$ depending on $x,y,M$,
such that 
\begin{equation}
\big|\big(n!\big\|\mathbb{E}\big[\pi_{n}S^{dF}(X^{\kappa n^{-6},x,y})\big]\big\|_{{\rm HS}}\big)^{1/n}-d(x,y)\big|\leqslant\frac{C}{n},\ \ \ n\geqslant1.\label{eq:QuanRD}
\end{equation}
In particular, with $t_{n}=o(n^{-6})$ one has 
\begin{equation}
\lim_{n\rightarrow\infty}\big(n!\big\|\mathbb{E}\big[\pi_{n}S^{dF}(X^{t_{n},x,y})\big]\big\|_{{\rm HS}}\big)^{1/n}=d(x,y).\label{eq:ReconRDAsym}
\end{equation}
\end{thm}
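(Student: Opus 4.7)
My plan is to realise the four-step heuristic sketched in Section 1.4: geodesic concentration of the bridge together with the smooth-path length conjecture. The first task is to replace the bridge $X^{t,x,y}$ by the unique minimising geodesic $\gamma^{x,y}$ (unique because $d(x,y)<\rho_M$) and to control the error at the level of the $n$-th tensor. Working in a normal coordinate chart centred at $y$, the SDE (\ref{eq:BBSDE}) reads schematically
\[
dX^{t,x,y}_s = b(s,X^{t,x,y}_s)\,ds+\sigma(X^{t,x,y}_s)\,dB_s,
\]
where the drift contains $\nabla_x \log p(t-s,\cdot,y)$. Feeding in the Malliavin--Stroock expansion (\ref{eq:MSExp}) together with (\ref{eq:G0G1}), the leading part of this drift is $-\nabla_x d(\cdot,y)^2/\bigl(2(t-s)\bigr)$, which is precisely the ODE drift forcing a particle to travel from $x$ to $y$ along the geodesic in time $t$; the sub-leading terms are bounded. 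One therefore obtains a semimartingale decomposition $X^{t,x,y}_s = \gamma^{x,y}(s/t)+M^t_s+D^t_s$ with a small martingale part $M^t$ of quadratic variation $O(t)$ and a drift remainder $D^t$ of size $O(t)$. The hypothesis $d(x,y)<\rho_M/2$ guarantees the whole bridge stays in one normal neighbourhood with overwhelming probability, so that $F$ can be analysed chart-wise.

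The central step is then a dimension-uniform comparison estimate of the form
\[
\bigl\|\mathbb{E}\bigl[\pi_n S^{dF}(X^{t,x,y})\bigr]-\pi_n S^{dF}(\gamma^{x,y})\bigr\|_{\mathrm{HS}} \leqslant \frac{A(x,y)^n}{n!}\,\eta(t,n),
\]
where $A(x,y)$ is a geometric constant independent of $n$ and $\eta(t,n)\to 0$ as $t\to 0$. To prove this I would use the Chen--shuffle identity to expand $S^{dF}(X^{t,x,y})$ against $S^{dF}(\gamma^{x,y})$, writing the cross terms as iterated integrals of the remainder $F(X^{t,x,y})-F(\gamma^{x,y}(\cdot/t))$ interleaved with geodesic increments, and then apply moment estimates level-by-level. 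The crux, flagged in the introduction, is that the naive iterated Burkholder--Davis--Gundy bound produces constants scaling like $n^n$ at level $n$ (reflecting the tensor dimension $N^n$), which would force $t_n=O(e^{-Cn})$. To obtain a dimension-free bound I would invoke the Kallenberg--Sztencel \cite{KS91} inequality for Hilbert-space-valued martingales, whose constants are independent of the ambient dimension. Iterating this level-by-level gives $\eta(t,n)=O(n^{C_0}\,t^{1/2})$ for some fixed geometric $C_0$; tracking the exponents carefully in this iteration is where the threshold exponent $6$ in $t_n=\kappa n^{-6}$ is determined.

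Once the comparison estimate is in hand, (\ref{eq:QuanRD}) follows by combining it with the smooth-path length conjecture (\ref{eq:LC}), already a theorem for $\mathcal{C}^\infty$ paths such as $F\circ\gamma^{x,y}$. Since $F$ is isometric, the ambient length of $F\circ\gamma^{x,y}$ equals $d(x,y)$, and the quantitative form of (\ref{eq:LC}) for smooth paths gives $(n!\|\pi_n S^{dF}(\gamma^{x,y})\|_{\mathrm{HS}})^{1/n}=d(x,y)\bigl(1+O(1/n)\bigr)$. Choosing $t_n=\kappa n^{-6}$ calibrates $\eta(t_n,n)$ to remain a fixed fraction of $d(x,y)^n/n!$ after renormalisation, so taking $n$-th roots yields a multiplicative error of $1+O(1/n)$, which is exactly (\ref{eq:QuanRD}); the limit (\ref{eq:ReconRDAsym}) is then immediate since any $t_n=o(n^{-6})$ only tightens $\eta$. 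The principal obstacle throughout is the dimension-free martingale step: without Kallenberg--Sztencel the polynomial threshold $n^{-6}$ collapses to an exponentially small $e^{-Cn}$, which would make the reconstruction practically inaccessible.
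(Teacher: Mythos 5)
Your plan follows the paper's proof essentially step for step: localisation on a normal chart with the Malliavin--Stroock expansion identifying the geodesic-forcing drift, a semimartingale decomposition isolating the geodesic component, a word-by-word signature estimate made dimension-free via Kallenberg--Sztencel so that only the non-geodesic slots pick up a multiplicative constant (whence the polynomial threshold, which in the paper comes out of requiring $n^{3}\sqrt{t}=O(1)$, i.e.\ $t\asymp n^{-6}$), and finally the Hambly--Lyons asymptotics for the smooth geodesic. The one ingredient you leave implicit is the control of the contribution from paths that exit the chart: since $\pi_{n}S^{dF}$ is unbounded there, the paper must combine Hsu's large deviation bound with a separate global signature moment estimate (built on Stroock--Turetsky heat-kernel gradient bounds) so that the $e^{-K/qt}$ from the exit probability beats the $e^{L/pt}$ in the moment bound.
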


\begin{rem}
The assumption that $\phi=dF$ with some isometric embedding $F$
plays no essential role; the only relevant fact is that $d(x,y)$
is precisely the Euclidean length of the path $\int_{0}^{\cdot}\phi(d\gamma^{x,y})$
($\gamma^{x,y}$ is the unique minimising geodesic from $x$ to $y$).
For a general $\phi$, the theorem remains valid as long as one replaces
$d(x,y)$ with the length of the path $\int_{0}^{\cdot}\phi(d\gamma^{x,y})$
in $E$.
\end{rem}

\begin{rem}
We expect that the same type of result holds (possibly with a different
condition on $t_{n}$) even if $y$ is on the cut-locus of $x.$ This
may cause extra technical difficulties in the current argument due
to the non-uniqueness of minimising geodesics connecting $x,y$. 
\end{rem}

\subsection{\label{subsec:RDPf}Proof of Theorem \ref{thm:ReconRD}}

As we explained in the introduction, the asymptotics formula (\ref{eq:ReconRDAsym})
is largely inspired by the length conjecture (which is an actual theorem
for smooth paths such as the geodesic $\gamma^{x,y}$) as well as
the fact that $X^{t,x,y}\approx\gamma^{x,y}$ when $t$ is small.
However, there are several technical challenges to implement such
an idea precisely. Essentially, given a degree $n$ one wants to identify
a lifetime scale $t_{n}$ such that the (normalised) $n$-th level
expected signature of the Brownian bridge with lifetime $t_{n}$ yields
the desired limit $d(x,y)$ as $n\rightarrow\infty$. If one applies
standard signature and semimartingale estimates without much care,
one is led to requiring that $t_{n}=o(e^{-C n}),$ which is way too small
to be useful in practice. Improving it from exponential to polynomial
scale $n^{-6}$ is the most delicate point in the argument (cf. Section
\ref{subsec:KeyEst} below). 

In what follows, we develop the main ingredients for proving Theorem
\ref{thm:ReconRD} in a precise mathematical way. Basically, we localise
the problem on a nice coordinate chart (a normal chart) in which Euclidean
stochastic calculus can be applied. The part that the Brownian bridge
exits the chart before its lifetime is negligible (due to Hsu's large
deviation principle), while the part within the chart yields the main
contribution. 

Throughout the rest, $\phi\triangleq dF$ and $x,y\in M$ with $d(x,y)<\rho_{M}/2$
are all given fixed. 

\subsubsection{\label{subsec:NormChart}Computations under normal chart}

We will perform local calculations under normal coordinates.
For this purpose, we first collect some useful geometric properties
of normal charts.

We fix an orthonormal basis (ONB) $\{e_{i}(0):i=1,\cdots,d\}$ of
$T_{y}M$. Let $U\subseteq T_{y}M$ be a fixed open ball centered
at the origin with radius $r\leqslant\rho_{y}$ (the injective radius
at $y$) so that the exponential map $\exp_{y}:U\rightarrow M$ is
a diffeomorphism. This gives rise to a local parametrisation 
\[
{\bf x}=(x^{1},\cdots,x^{d})\mapsto\exp_{y}\big(x^{i}e_{i}(0)\big)
\]
of the ball $B(y,r)=\exp_{y}U=:V,$ which is known as the \textit{normal
coordinate chart} based at $y$. As a convention, a bold letter always
refers to an Euclidean vector (e.g. ${\bf x}=(x^{1},\cdots,x^{d})$)
that is also identified with an element in $U$, while a regular letter
(e.g. $x$) refers to a point on $M$. For instance, one can legally
write $x=\exp_{y}({\bf x}).$

The natural coordinate vector fields on $V$ are denoted as $\{\partial_{i}:i=1,\cdots,d\}$
as usual. It is useful to introduce another local frame of vector
fields $\{e_{i}:1,\cdots,d\}$ as follows. Given ${\bf x}\in U$,
we define $e_{i}({\bf x})$ to be the parallel translation of $e_{i}(0)$
along the geodesic $t\mapsto\exp_{y}(t{\bf x})$ to the endpoint ${\bf x}$
(at $t=1$). By varying ${\bf x}$, one obtains a vector field $e_{i}$
on $V$. It is clear that $\{e_{1}({\bf x}),\cdots,e_{d}({\bf x})\}$
is an ONB of $T_{\exp_{y}{\bf x}}M$ for every ${\bf x}\in U.$ Note
that $e_{i}(0)=\partial_{i}|_{{\bf x}=0}$. In general, one can write
$e_{i}=\sigma_{i}^{j}\partial_{j}$ where $(\sigma_{i}^{j})_{1\leqslant i,j\leqslant d}$
is a smooth matrix-valued function on $U$ that is everywhere invertible.

Recall that the metric tensor under the normal chart $V$ is defined
by $g_{ij}\triangleq\langle\partial_{i},\partial_{j}\rangle$. The
following fact will be useful to us later on (cf. \cite[Proposition 1.27 (iii)]{BGV04}).
\begin{lem}
\label{lem:NormChartLem}On the normal chart $V$, one has 
\[
x^{i}g_{ij}=x^{j},\ x^{i}g^{ij}=x^{j}.
\]
\end{lem}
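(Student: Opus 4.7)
The plan is to derive both identities from two classical facts about the normal chart based at $y$: (i) for every $\mathbf{x}\in U$ the radial line $t\mapsto t\mathbf{x}$ in the chart is a genuine $g$-geodesic (this is the defining property of normal coordinates via the exponential map), and (ii) $g_{ij}(0)=\delta_{ij}$ because the coordinate frame at the origin coincides with the chosen orthonormal basis $\{e_{i}(0)\}$ of $T_{y}M$.

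For fixed $\mathbf{x}\in U$ I would introduce the auxiliary function $h_{j}(t)\triangleq x^{i}g_{ij}(t\mathbf{x})$ on $[0,1]$. By (ii), $h_{j}(0)=\delta_{ij}x^{i}=x^{j}$, so it suffices to show that $h_{j}$ is constant on $[0,1]$. Differentiating gives $h_{j}'(t)=x^{i}x^{k}(\partial_{k}g_{ij})(t\mathbf{x})$. Using the standard identity $\partial_{k}g_{ij}=\Gamma_{j,ki}+\Gamma_{i,kj}$ for the lowered Christoffel symbols $\Gamma_{k,ij}\triangleq g_{kl}\Gamma^{l}_{ij}=\tfrac{1}{2}(\partial_{i}g_{jk}+\partial_{j}g_{ik}-\partial_{k}g_{ij})$, together with the symmetry of $x^{i}x^{k}$ and of $\Gamma_{\bullet,ki}$ in its lower pair, the two contributions coincide and one obtains
\[
h_{j}'(t)=2\,g_{jm}(t\mathbf{x})\,\Gamma^{m}_{ki}(t\mathbf{x})\,x^{i}x^{k}.
\]
At this point fact (i) kicks in: since $\gamma(t)=t\mathbf{x}$ is a geodesic with $\dot\gamma^{i}=x^{i}$ and $\ddot\gamma\equiv 0$, the geodesic equation $\ddot\gamma^{m}+\Gamma^{m}_{ki}(\gamma)\dot\gamma^{k}\dot\gamma^{i}=0$ forces $\Gamma^{m}_{ki}(t\mathbf{x})\,x^{i}x^{k}=0$ for every $t\in[0,1]$. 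Consequently $h_{j}'\equiv 0$, and $x^{i}g_{ij}(\mathbf{x})=h_{j}(1)=h_{j}(0)=x^{j}$, which is the first identity.

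The second identity then follows by pure linear algebra: contracting $x^{i}g_{ij}=x^{j}$ with $g^{jk}$ and using $g_{ij}g^{jk}=\delta_{i}^{k}$ yields $x^{k}=x^{j}g^{jk}$, i.e.\ $x^{i}g^{ij}=x^{j}$ after relabelling. No substantive obstacle is anticipated; the entire argument rests on the fact that radial lines in a normal chart are $g$-geodesics (what distinguishes the normal chart from an arbitrary chart centred at $y$), with the remaining work being careful index bookkeeping involving the Christoffel symbol identities. The displayed identity $x^{i}g_{ij}=x^{j}$ is in fact just the Gauss lemma expressed in coordinates.
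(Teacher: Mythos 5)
Your overall strategy is the right one: the first identity is the coordinate form of the Gauss lemma, to be derived from the facts that radial lines in the normal chart are geodesics and that $g_{ij}(\mathbf{0})=\delta_{ij}$, and your reduction of the second identity to the first by contracting with $g^{jk}$ is correct. (The paper itself offers no proof here; it simply cites \cite[Proposition 1.27 (iii)]{BGV04}.) However, there is a genuine error in your key computation. From $\partial_k g_{ij}=\Gamma_{i,kj}+\Gamma_{j,ki}$ you obtain
\[
h_j'(t)=x^ix^k\Gamma_{i,kj}(t\mathbf{x})+x^ix^k\Gamma_{j,ki}(t\mathbf{x}),
\]
and you assert that the two summands coincide, so that $h_j'=2g_{jm}\Gamma^m_{ki}x^ix^k$, which the geodesic equation then kills. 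They do not coincide: in $\Gamma_{j,ki}$ both contracted indices $i,k$ sit in the symmetric lower pair, whereas in $\Gamma_{i,kj}$ one of them is the lowered upper index, and no symmetry of the Christoffel symbol exchanges the lowered upper index with a lower one. Concretely, $x^ix^k\Gamma_{i,kj}=\tfrac12\,x^ix^k\partial_jg_{ik}(t\mathbf{x})$ (the two radial-derivative terms cancel after relabelling $i\leftrightarrow k$), and this transverse-derivative term is not annihilated by the geodesic equation. In fact, differentiating the identity $y^iy^kg_{ik}(\mathbf{y})=|\mathbf{y}|^2$ in $y^j$ and evaluating at $\mathbf{y}=t\mathbf{x}$ gives $x^ix^k\partial_jg_{ik}(t\mathbf{x})=\tfrac{2}{t}\big(x^j-h_j(t)\big)$, so the vanishing of this term is \emph{equivalent} to the conclusion of the lemma; your argument is circular at exactly this step.

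The repair is short and stays within your framework. Keep both summands: the geodesic equation for all radial lines gives $x^ix^k\Gamma_{j,ki}(t\mathbf{x})=0$, while constancy of geodesic speed gives $y^iy^kg_{ik}(\mathbf{y})=|\mathbf{y}|^2$ on $U$ and hence, as above, $x^ix^k\Gamma_{i,kj}(t\mathbf{x})=\tfrac1t\big(x^j-h_j(t)\big)$. Therefore $h_j'(t)=\tfrac1t\big(x^j-h_j(t)\big)$, i.e.\ $\frac{d}{dt}\big(t\,h_j(t)\big)=x^j$, and integrating from $0$ (where $t\,h_j(t)$ vanishes) yields $h_j(t)\equiv x^j$; taking $t=1$ gives $x^ig_{ij}(\mathbf{x})=x^j$. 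The rest of your argument then goes through unchanged.
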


It is well known that under the normal chart near the origin, the
Riemannian metric $g_{ij}$ agrees with Euclidean metric $\delta_{ij}$
up to the second order with error given in terms of curvature coefficients
(cf. \cite[Proposition 1.28]{BGV04}).
\begin{prop}
\label{prop:MetricExp}On the normal chart $V$, one has 
\[
g_{ij}({\bf x})=\delta_{ij}-\frac{1}{3}R_{ikjl}({\bf 0})x^{k}x^{l}+O(|{\bf x}|^{3})\ \ \ \text{as\ }{\bf x}=(x^{1},\cdots,x^{d})\rightarrow0,
\]
where $R_{ijkl}\triangleq\langle R(\partial_{k},\partial_{l})\partial_{j},\partial_{i}\rangle$
are the curvature coefficients in $V$.
\end{prop}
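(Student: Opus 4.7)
The plan is to compute $g_{ij}({\bf x})$ by realising the coordinate basis $\partial_i|_x$ at $x = \exp_y({\bf x})$ as values of Jacobi fields along the radial geodesic, then Taylor--expanding. Fixing ${\bf x} \in U$, the one--parameter family of geodesics $(s,t) \mapsto \exp_y(s({\bf x} + t e_i(0)))$ has variation vector field $J_i(s)$ along $\gamma(s) \triangleq \exp_y(s{\bf x})$ satisfying $J_i(0) = 0$ and $\nabla_s J_i(0) = e_i(0)$, with $J_i(1) = \partial_i|_x$ by the chain rule for the exponential map. Because parallel transport along $\gamma$ (which is precisely what defines the frame $\{e_k({\bf x})\}$ introduced in Section~\ref{subsec:NormChart}) is an isometry, $g_{ij}({\bf x}) = \langle J_i(1), J_j(1)\rangle$ can be evaluated in that parallel frame, so the whole computation reduces to linear algebra in $T_y M \cong \mathbb{R}^d$.

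The crucial step is the Taylor expansion of $J_i$ at $s=0$. From the Jacobi equation $\nabla_s^2 J_i = -R(J_i, \gamma')\gamma'$ together with $\nabla_s \gamma' = 0$, one reads off successively
\[
\nabla_s J_i(0) = e_i(0),\qquad \nabla_s^2 J_i(0) = 0,\qquad \nabla_s^3 J_i(0) = -R_y(e_i(0), {\bf x}){\bf x},
\]
the last by differentiating the Jacobi equation once and using $J_i(0) = 0$. Furthermore, $\nabla_s^4 J_i(0)$ involves $(\nabla_s R)$ evaluated at $y$ along $\gamma'(0) = {\bf x}$, which carries an extra factor of $|{\bf x}|$ and therefore contributes only at order $|{\bf x}|^3$. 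Hence, in the parallel frame,
\[
J_i(1) = e_i(0) - \tfrac{1}{6}R_y(e_i(0), {\bf x}){\bf x} + O(|{\bf x}|^3).
\]

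To finish, I would take the inner product $\langle J_i(1), J_j(1)\rangle$, observing that the product of the two $O(|{\bf x}|^2)$ corrections lies at order $|{\bf x}|^4$ and is absorbed into the remainder. Substituting ${\bf x} = x^k e_k(0)$ and unpacking $\langle R_y(e_i(0), e_k(0))e_l(0), e_j(0)\rangle$ via the paper's convention $R_{ijkl} = \langle R(\partial_k, \partial_l)\partial_j, \partial_i\rangle$, the two cross-terms coincide after symmetrising in $k, l$ and applying the pair-exchange symmetry $R_{abcd} = R_{cdab}$; their sum is exactly $-\tfrac{1}{3} R_{ikjl}({\bf 0}) x^k x^l$, giving the claim.

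The main (mild) obstacle is bookkeeping: carefully matching the ordering and signs of the curvature tensor against the paper's index convention, and verifying that the $s^4$ Taylor coefficient of $J_i$ really contributes only at order $|{\bf x}|^3$ to $J_i(1)$ (because each covariant $s$-derivative of $R$ along $\gamma$ pulls down one extra factor of $\gamma'(0) = {\bf x}$). Smoothness of the remainder on a neighbourhood of $\mathbf{0}$ follows from smoothness of $\exp_y$ and of the curvature tensor, justifying the Taylor expansion and the $O(|{\bf x}|^3)$ error term uniformly on compact subsets of $U$.
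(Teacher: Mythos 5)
Your argument is correct. Note that the paper itself does not prove this proposition at all --- it is quoted as a standard fact with a citation to \cite[Proposition 1.28]{BGV04} --- so any complete derivation is already more than the paper supplies. Your Jacobi-field route is the classical one and all the delicate points check out: $J_i(1)=(d\exp_y)_{\bf x}(e_i(0))=\partial_i|_x$ is exactly the coordinate vector field; the Taylor coefficients $\nabla_s^2J_i(0)=0$ and $\nabla_s^3J_i(0)=-R_y(e_i(0),{\bf x}){\bf x}$ follow from the Jacobi equation with $J_i(0)=0$; the fourth-order remainder is uniformly $O(|{\bf x}|^3)$ because every covariant $s$-derivative of $R$ and every slot filled by $\gamma'$ contributes a factor of $|{\bf x}|$; and the two cross-terms $\langle R(e_i,{\bf x}){\bf x},e_j\rangle$ and $\langle R(e_j,{\bf x}){\bf x},e_i\rangle$ are equal by the pair-exchange symmetry combined with the two antisymmetries, so their sum is $2\,R_{ikjl}({\bf 0})x^kx^l$ in the paper's convention, yielding the coefficient $-\tfrac13$. (A quick sanity check on $S^2$, where $J(s)=\tfrac{\sin(rs)}{r}E(s)$ gives $g_{22}=(1-\tfrac{r^2}{6})^2=1-\tfrac{r^2}{3}+O(r^4)$, confirms the sign.)
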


We now express $\Delta/2$ on $V$ as 
\begin{equation}
\frac{1}{2}\Delta=\frac{1}{2}a^{ij}\partial_{ij}^{2}+b^{i}\partial_{i}.\label{eq:LapChart}
\end{equation}
From the local expression (\ref{eq:LapLocal}), it is plain to check
that 
\begin{equation}
a^{ij}=g^{ij},\ b^{i}=\frac{g^{ij}}{4\det g}\partial_{j}\det g+\frac{1}{2}\partial_{j}g^{ij}.\label{eq:abLocal}
\end{equation}
We also recall that $G_{0}(x,y),G_{1}(x,y)$ are the first two terms
appearing in the Malliavin-Stroock expansion (\ref{eq:MSExp}). The
following explicit formulae for the local expressions of these quantities
will be needed for us. 
\begin{lem}
\label{lem:abGNormChart}On the normal chart $V$, one has 
\begin{equation}
a^{ij}({\bf 0})=\delta^{ij},\ b^{i}({\bf 0})=0;\label{eq:abInitial}
\end{equation}
\begin{equation}
\partial_{p}a^{ij}({\bf 0})=0,\ \partial_{i}b^{j}({\bf 0})=\frac{1}{3}R_{jppi},\ \partial_{pp}^{2}a^{ij}({\bf 0})=\frac{2}{3}R_{ipjp};\label{eq:abDer}
\end{equation}
\begin{equation}
\nabla_{x}G_{0}({\bf x},{\bf 0})=-x^{j}\partial_{j};\label{eq:G0Der}
\end{equation}
\begin{equation}
\partial_{j}G_{1}({\bf 0},{\bf 0})=0,\ \partial_{ij}^{2}G_{1}({\bf 0},{\bf 0})=\frac{1}{6}R_{pipj}.\label{eq:G1Local}
\end{equation}
Here $R_{ijkl}$ is the curvature coefficient as in Proposition \ref{prop:MetricExp}
and the derivatives in the last line are taken with respect to the
first variable in $G_{1}.$ 
\end{lem}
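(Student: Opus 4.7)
The proof proceeds by handling each formula separately, with all computations driven by two classical facts: the Taylor expansion of the metric tensor in normal coordinates given by Proposition \ref{prop:MetricExp}, and the special identity $x^i g^{ij} = x^j$ from Lemma \ref{lem:NormChartLem}. Starting from $g_{ij}(\mathbf{x}) = \delta_{ij} - \tfrac{1}{3} R_{ikjl}(\mathbf{0}) x^k x^l + O(|\mathbf{x}|^3)$ and inverting via $g^{-1} = I - (g - I) + O((g-I)^2)$ yields $g^{ij}(\mathbf{x}) = \delta^{ij} + \tfrac{1}{3} R_{ikjl}(\mathbf{0}) x^k x^l + O(|\mathbf{x}|^3)$. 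Since $a^{ij} = g^{ij}$, reading off Taylor coefficients at the origin immediately gives (\ref{eq:abInitial}) for $a^{ij}$, as well as $\partial_p a^{ij}(\mathbf{0}) = 0$ and $\partial^2_{pp} a^{ij}(\mathbf{0}) = \tfrac{2}{3} R_{ipjp}$.

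For the drift $b^i$, I would additionally expand $\det g(\mathbf{x}) = 1 - \tfrac{1}{3} {\rm Ric}_{kl}(\mathbf{0}) x^k x^l + O(|\mathbf{x}|^3)$ via $\det(I + A) = 1 + \mathrm{tr}\, A + O(A^2)$. Since both $\partial_j \det g$ and $\partial_j g^{ij}$ vanish at $\mathbf{0}$ (the leading corrections being quadratic), the formula (\ref{eq:abLocal}) gives $b^i(\mathbf{0}) = 0$. Differentiating once more,
\begin{equation*}
\partial_i b^j(\mathbf{0}) = \tfrac{1}{4} \partial_i \partial_j \det g(\mathbf{0}) + \tfrac{1}{2} \partial_i \partial_k g^{jk}(\mathbf{0}),
\end{equation*}
and substituting the two Taylor expansions while invoking the symmetries of the Riemann tensor (the antisymmetries in the first two and last two indices together with the pair-swap symmetry) to convert both contractions into the Ricci tensor yields $\partial_i b^j(\mathbf{0}) = -\tfrac{1}{3} {\rm Ric}_{ij}(\mathbf{0}) = \tfrac{1}{3} R_{jppi}(\mathbf{0})$.

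The formula (\ref{eq:G0Der}) follows from the defining property of normal coordinates that $d(\exp_y \mathbf{x}, y)^2 = |\mathbf{x}|^2$, so $G_0(\mathbf{x}, \mathbf{0}) = -\tfrac{1}{2}|\mathbf{x}|^2$ and $\partial_j G_0 = -x^j$; passing to the intrinsic gradient $\nabla_x G_0 = g^{ij} \partial_j G_0 \, \partial_i = -g^{ij} x^j \partial_i$ and then applying Lemma \ref{lem:NormChartLem} collapses this to $-x^i \partial_i$. For (\ref{eq:G1Local}), I would first reduce $G_1$ to the metric determinant via the identification
\begin{equation*}
G_1(\mathbf{x}, \mathbf{0}) = -\tfrac{1}{2} \log \det(d\exp_y)_{\mathbf{x}} = -\tfrac{1}{4} \log \det g(\mathbf{x}),
\end{equation*}
the second equality being justified by the fact that $d\exp_y$ maps the orthonormal basis $\{e_i(0)\}$ of $T_y M$ to the coordinate frame $\{\partial_i\}$ at $T_x M$, whose Gram matrix is $g_{ij}(\mathbf{x})$, so that the determinant between the two inner-product spaces has magnitude $\sqrt{\det g(\mathbf{x})}$. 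Feeding in the expansion of $\det g$ and using $\log(1 + u) = u + O(u^2)$ gives $G_1(\mathbf{x}, \mathbf{0}) = \tfrac{1}{12} {\rm Ric}_{kl}(\mathbf{0}) x^k x^l + O(|\mathbf{x}|^3)$, from which the two identities in (\ref{eq:G1Local}) follow by reading off Taylor coefficients and noting ${\rm Ric}_{ij}(\mathbf{0}) = R_{pipj}(\mathbf{0})$.

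The main obstacle, in my view, lies in two pieces of bookkeeping: pinning down the geometric identity $|\det(d\exp_y)_{\mathbf{x}}| = \sqrt{\det g(\mathbf{x})}$ with the sign/orientation convention consistent with (\ref{eq:G0G1}), and carrying through the symmetry-driven Riemann index manipulations that convert naked contractions such as $\sum_k R_{jkki}$ into $\pm {\rm Ric}_{ji}$. Once these are settled, the Lemma reduces to routine Taylor expansion on top of Proposition \ref{prop:MetricExp}.
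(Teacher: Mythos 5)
Your proof is correct and follows essentially the same route as the paper's: read off the Taylor coefficients of $g^{ij}$ and $\det g$ from Proposition \ref{prop:MetricExp} to get (\ref{eq:abInitial})--(\ref{eq:abDer}), use Lemma \ref{lem:NormChartLem} for (\ref{eq:G0Der}), and reduce $G_1$ to $-\tfrac14\log\det g$ via the Jacobian identity $\det(d\exp_y)_{\mathbf x}=\sqrt{\det g(\mathbf x)}$ for (\ref{eq:G1Local}). You in fact supply more detail than the paper (which dismisses (\ref{eq:abInitial})--(\ref{eq:abDer}) as "direct"), and your index bookkeeping, including $\sum_p R_{jppi}=-{\rm Ric}_{ij}$, is consistent with the stated formulae.
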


\begin{proof}
The curvature coefficients satisfy the following symmetries:
\begin{equation}
R_{ijkl}=-R_{jikl}=-R_{ijlk}=R_{klij}.\label{eq:CurSym}
\end{equation}
The relations (\ref{eq:abInitial}) and (\ref{eq:abDer}) follow directly
from Proposition \ref{prop:MetricExp} and (\ref{eq:abLocal}). In
addition, by the local expression of $\nabla$ and Lemma \ref{lem:NormChartLem}
one has 
\[
\nabla_{x}G_{0}({\bf x},{\bf 0})=-x^{i}g^{ij}\partial_{j}=-x^{j}\partial_{j}.
\]
This gives the relation (\ref{eq:G0Der}). Finally, recall that $G_{1}$
is defined by (\ref{eq:G0G1}). Standard geometric consideration shows
that 
\begin{equation}
\det(d\exp_{y})_{{\bf x}}=\sqrt{\det g({\bf x})}\label{eq:Derdexp}
\end{equation}
for any ${\bf x}=\exp_{y}^{-1}(x)\in U$ ($x\in V$). The relation
(\ref{eq:G1Local}) then easily follows from Proposition \ref{prop:MetricExp}
and (\ref{eq:Derdexp}).
\end{proof}
\begin{rem}
For the proof of Theorem \ref{thm:ReconRD}, only (\ref{eq:abInitial})
and (\ref{eq:G0Der}) are relevant. The relations (\ref{eq:abDer})
and (\ref{eq:G1Local}) will only be needed for the proof of Theorem
\ref{thm:RecCurv} in the next section. 
\end{rem}

\subsubsection{\label{subsec:KeyEst} Localisation I: the main contribution}

Our analysis is essentially localised on a normal chart around $y$.
Throughout the rest, we take $V\triangleq B(y,\rho_{M}/2-\varepsilon)$
where $\varepsilon\in(0,\rho_{M}/2-d(x,y))$ is fixed (note that $x\in V$).
Define 
\[
\tau\triangleq\inf\{s\in[0,t]:X_{s}^{t,x,y}\notin V\}
\]
and we set $\tau=t$ if such $s$ does not exist. We shall consider
the decomposition 
\[
\mathbb{E}\big[\pi_{n}S^{\phi}(X^{t,x,y})\big]=\mathbb{E}\big[\pi_{n}S^{\phi}(X^{t,x,y});\tau=t\big]+\mathbb{E}\big[\pi_{n}S^{\phi}(X^{t,x,y});\tau<t\big].
\]
One naturally expects that the first term provides the main contribution
and the second term is negligible. In this section, we establish the
main estimate for the first term. The second term will be handled
in the next section. 

Note that the term $\mathbb{E}\big[\pi_{n}S^{\phi}(X^{t,x,y});\tau=t\big]$
corresponds to the situation where $X^{t,x,y}$ does not leave the
normal chart $B$ around $y$ during its lifetime. For this part,
one can apply Euclidean stochastic calculus to the SDE (\ref{eq:BBSDE})
of the Brownian bridge. We break down the analysis into several steps.

\subsubsection*{A representation of $X^{t,x,y}$ }

In the Malliavin-Stroock expansion (\ref{eq:MSExp}), recall that
$G_{0}(x,y)=-d(x,y)^{2}/2$. By applying Lemma \ref{lem:abGNormChart}
on the normal chart $V$ defined previously, one can write (using
Euclidean variables on $U\triangleq\exp_{y}^{-1}V$)
\begin{equation}
\nabla_{z}\log p(u,{\bf z},{\bf 0})=-\frac{{\bf z}}{u}+Q(u,{\bf z}),\label{eq:Q}
\end{equation}
where $\nabla_{z}$ denotes the Euclidean vector $(g^{ij}\partial_{j})_{1\leqslant i\leqslant d}$
and $$Q(u,{\bf z})\triangleq\sum_{k=0}^{\infty}\nabla_{z}G_{k+1}({\bf z},{\bf 0})u^{k}$$
satisfies 
\[
\sup_{u\in[0,1],{\bf z}\in\bar{U}}|Q(u,z)|<\infty.
\]
The SDE (\ref{eq:BBSDE}) of $X^{t,x,y}$ on $V$ is thus expressed
as 
\begin{equation}
\begin{cases}
dX_{s}=\big(-\frac{X_{s}}{t-s}+b(X_{s})+Q(t-s,X_{s})\big)ds+\sigma(X_{s})dB_{s}, & s<\tau;\\
X_{0}={\bf x}\triangleq\exp_{y}^{-1}(x).
\end{cases}\label{eq:BridgeSDESimplified}
\end{equation}
where $(\sigma,b)$ are local coefficients of $\Delta/2$ satisfying
the relation (\ref{eq:LapChart}) with $a=\sigma^{T}\sigma.$ We give
a representation of $X_{s}$ that will be used frequently in the sequel.
\begin{lem}
\label{lem:ExpBridge}On the event $\{\tau=t\}$, one has 
\begin{equation}
\frac{X_{tr}}{1-r}={\bf x}+t\int_{0}^{tr}\frac{\sigma(X_{u})dB_{u}}{t-u}+t\int_{0}^{r}\frac{1}{1-\eta}\big(b(X_{t\eta})+Q(t(1-\eta),X_{t\eta})\big)d\eta,\ \ \ r\in[0,1).\label{eq:ExpBridge}
\end{equation}
\end{lem}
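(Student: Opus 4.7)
The plan is to verify the identity by applying It\^o's formula to the rescaled process $Y_s \triangleq X_s/(t-s)$, which is the natural move for removing the singular linear drift $-X_s/(t-s)$ appearing in the SDE~(\ref{eq:BridgeSDESimplified}). This is essentially a variation-of-parameters computation analogous to the one yielding the explicit solution of the Euclidean Brownian bridge in the example preceding Section~2.3.

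More precisely, on the event $\{\tau = t\}$, the SDE~(\ref{eq:BridgeSDESimplified}) holds for all $s \in [0, t)$. Applying It\^o's formula to the smooth function $(s, x) \mapsto x/(t-s)$ on $[0, t) \times U$, I would compute
\[
dY_s \;=\; \frac{dX_s}{t-s} + \frac{X_s}{(t-s)^2}\, ds.
\]
Substituting the expression for $dX_s$ from~(\ref{eq:BridgeSDESimplified}), the two linear terms involving $X_s/(t-s)^2$ cancel exactly, leaving
\[
dY_s \;=\; \frac{1}{t-s}\bigl(b(X_s) + Q(t-s, X_s)\bigr)\, ds + \frac{\sigma(X_s)}{t-s}\, dB_s.
\]
Since $Y_0 = \mathbf{x}/t$, integrating from $0$ to $tr$ and multiplying both sides by $t$ gives
\[
\frac{X_{tr}}{1-r} \;=\; \mathbf{x} + t\!\int_0^{tr}\!\frac{\sigma(X_u)\,dB_u}{t-u} + t\!\int_0^{tr}\!\frac{1}{t-u}\bigl(b(X_u) + Q(t-u, X_u)\bigr)\,du.
\]
Finally, the change of variables $u = t\eta$ in the drift integral converts the factor $1/(t-u)\cdot t\,du$ into $d\eta/(1-\eta)$, yielding the stated formula~(\ref{eq:ExpBridge}).

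There is no essential obstacle here: the drift and diffusion coefficients of~(\ref{eq:BridgeSDESimplified}) are smooth on the bounded set $\bar{U}$, the remainder $Q(u,\mathbf{z})$ is uniformly bounded on $[0,1]\times\bar U$ by the Malliavin--Stroock expansion (\ref{eq:MSExp}), and $1/(t-s)$ is bounded on any interval $[0, tr]$ with $r < 1$, so all the stochastic and Lebesgue integrals are well-defined. The only subtlety to flag is that the representation is restricted to $r \in [0,1)$ and to the event $\{\tau = t\}$, as the underlying SDE~(\ref{eq:BridgeSDESimplified}) is only valid up to the exit time $\tau$ from the normal chart; the integrability of the stochastic integral as $r \to 1^-$ (where $1/(t-u)$ blows up) is not needed for the present statement and will be addressed by the subsequent estimates in Section~\ref{subsec:KeyEst}.
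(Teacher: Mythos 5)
Your argument is correct and is essentially identical to the paper's proof: the paper likewise introduces the integrating factor $t-s$, sets $Z_s=X_s/(t-s)$, observes the cancellation of the singular drift, integrates from $0$ to $s=tr$, and rescales. The change of variables $u=t\eta$ in the drift integral and the remarks on well-posedness match the paper's treatment, so there is nothing to add.
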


\begin{proof}
We introduce the integrating factor $x_{s}\triangleq t-s$ ($s\in[0,t]$),
which satisfies 
\[
dx_{s}=-\frac{1}{t-s}x_{s}ds\in\mathbb{R}.
\]
Setting $Z_{s}\triangleq x_{s}^{-1}X_{s},$ one finds that 
\[
dZ_{s}=\frac{1}{t-s}\big(b(X_{s})+Q(t-s,X_{s})\big)ds+\frac{1}{t-s}\sigma(X_{s})dB_{s}.
\]
By integrating both sides from $0$ to $s$ and noting that $Z_{0}={\bf x}/t,$
one has 
\[
\frac{X_{s}}{t-s}=\frac{{\bf x}}{t}+\int_{0}^{s}\frac{\sigma(X_{u})dB_{u}}{t-u}+\int_{0}^{s}\frac{1}{t-u}\big(b(X_{u})+Q(t-u,X_{u})\big)du.
\]
The result follows by taking $s=tr$. 
\end{proof}

\subsubsection*{A semimartingale decomposition for $\phi(\circ dX_{tr})$}

On the event $\{\tau=t\}$, the signature component $\pi_{n}S^{\phi}(X^{t,x,y})$
is given by 
\begin{align}
\pi_{n}S^{\phi}(X^{t,x,y}) & =\int_{0<s_{1}<\cdots<s_{n}<t}\phi(\circ dX_{s_{1}})\otimes\cdots\otimes\phi(\circ dX_{s_{n}})\nonumber \\
 & =\int_{0<r_{1}<\cdots<r_{n}<1}\phi(\circ dX_{tr_{1}})\otimes\cdots\otimes\phi(\circ dX_{tr_{n}}),\label{eq:PinSig}
\end{align}
where $X_{s}$ satisfies the SDE (\ref{eq:BridgeSDESimplified}).
To relate it with the $\phi$-signature of the minimising geodesic
\[
\gamma_{r}^{x,y}\triangleq\exp((1-r){\bf x}),\ \ \ 0\leqslant r\leqslant1,
\]
an important step is to factor out the geodesic component in the semimartingale
decomposition of $\phi(\circ dX_{tr}).$ We summarise the main structure
in the lemma below. Let $(\Gamma_{r})_{0\leqslant r\leqslant1}$ be
the $E$-valued path defined by $\Gamma_{r}\triangleq\int_{0}^{r}\phi(d\gamma^{x,y})$. 
\begin{lem}
\label{lem:SemDecomp}On the event $\{\tau=t\}$, one has the following
decomposition of $\phi(\circ dX_{tr})$: 
\begin{equation}
\phi(\circ X_{tr})=\dot{\Gamma}_{r}dr+\sqrt{t}\big(A_{r}^{t}dr+dM_{r}^{t}\big),\ \ \ r\in[0,1),\label{eq:PhidXDecomp}
\end{equation}
where $\{A_{r}^{t},M_{r}^{t}:0\leqslant r<1\}$ are $E$-valued
stochastic processes satisfying the following properties:

\vspace{2mm}\noindent (i) $\{A_{r}^{t}\}$ is a continuous, $\{{\cal F}_{tr}^{B}\}$-adapted
semimartingale such that 
\begin{equation}
\|A_{r}^{t}\|_{p}\leqslant\frac{C\sqrt{p}}{\sqrt{1-r}},\ \ \ \forall r\in[0,1),\ p\geqslant2,\label{eq:ABound}
\end{equation}
where $\|\cdot\|_{p}$ denotes the $L^{p}$-norm.\\
(ii) $\{M_{r}^{t}\}$ is an $\{{\cal F}_{tr}^{B}\}$-martinagle
whose components satisfy
\begin{equation}
\langle M^{t,i}\rangle_{r}\leqslant Cr,\ \ \ \forall r\in[0,1),\ i=1,\cdots,N.\label{eq:QVMBound}
\end{equation}
In the above inequalities, $C$ denotes a geometric constant depending
only on $M,\phi$ and the localisation $V$.
\end{lem}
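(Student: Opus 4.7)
The plan is to build the decomposition directly from the representation (\ref{eq:ExpBridge}). Writing $N_r \triangleq \int_0^{tr} \frac{\sigma(X_u)dB_u}{t-u}$ and $D_r \triangleq \int_0^r \frac{b(X_{t\eta})+Q(t(1-\eta),X_{t\eta})}{1-\eta}d\eta$, Lemma \ref{lem:ExpBridge} yields $X_{tr} = (1-r){\bf x} + t(1-r)(N_r + D_r)$ on $\{\tau = t\}$. The key observation is that $(1-r){\bf x}$ is precisely the normal-coordinate expression of the geodesic $\gamma_r^{x,y}$, so $X_{tr}$ equals the geodesic plus a remainder $\delta_r^t \triangleq t(1-r)(N_r + D_r)$ whose diffusive part carries an explicit factor of $\sqrt{t}$ (through the rescaling $dB_{tr} = \sqrt{t}\,d\tilde{W}_r$).

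First I would differentiate this representation to obtain
\begin{equation*}
dX_{tr} = -{\bf x}\,dr + \sigma(X_{tr})\,dB_{tr} + t\bigl(b(X_{tr}) + Q(t(1-r),X_{tr}) - N_r - D_r\bigr)dr,
\end{equation*}
where the computation uses the cancellation $t(1-r)\cdot\frac{1}{t-tr} = 1$ in the martingale term and $t(1-r)\cdot\frac{1}{1-r} = t$ in the drift term. Then I would insert this into $\phi(X_{tr})\circ dX_{tr}$ and Taylor-expand $\phi(X_{tr}) = \phi(\gamma_r^{x,y}) + D\phi(\gamma_r^{x,y})\delta_r^t + O(|\delta_r^t|^2)$ to separate the pure geodesic contribution $\phi(\gamma_r^{x,y})(-{\bf x})\,dr = \dot{\Gamma}_r\,dr$ from everything else. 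The martingale part is $M_r^t \triangleq \int_0^r t^{-1/2}\phi(X_{tu})\sigma(X_{tu})\,dB_{tu}$, and all remaining terms (the Taylor correction to $\phi$ combined with $-{\bf x}\,dr$, the Stratonovich correction, and the $t$-scaled drift pieces of $dX_{tr}$) are gathered into $\sqrt{t}\,A_r^t\,dr$.

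The martingale bound (\ref{eq:QVMBound}) is essentially immediate, since $\langle M^{t,i}\rangle_r = \int_0^r |e_i^T \phi(X_{tu})\sigma(X_{tu})|^2\,du$ and the integrand is uniformly bounded on $\bar V$ by a geometric constant. The more delicate estimate is (\ref{eq:ABound}), which requires a moment bound on $A_r^t$ with the sharp $(1-r)^{-1/2}$ blow-up and the $\sqrt{p}$ scaling in $p$. The bounded terms $b(X_{tr}), Q(t(1-r),X_{tr})$ and the first-order Taylor correction $D\phi\cdot \sqrt{t}N_r$ together with the Itô/Stratonovich conversion contribute terms of order $\sqrt{t}$ or smaller uniformly, so the genuine obstacle is controlling $N_r$ and $D_r$ in $L^p$. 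For $N_r$ one has $\langle N\rangle_r \leqslant C r/(t(1-r))$, so the BDG inequality with sharp constant $C\sqrt{p}$ gives $\|\sqrt{t}\,N_r\|_p \leqslant C\sqrt{p}/\sqrt{1-r}$; for $D_r$ the deterministic bound $|D_r|\leqslant -C\log(1-r)$ is absorbed into the same envelope.

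The main obstacle is in fact the sharp $\sqrt{p}$-dependence in (\ref{eq:ABound}): later applications (explicitly the $n$-th level signature estimate leading to the polynomial scale $t_n = O(n^{-6})$) require a dimension-free BDG constant rather than the usual $C_p$, which is where the Kallenberg--Sztencel inequality referenced earlier becomes essential. Everything else is a careful but routine bookkeeping of which terms are $O(1)$, which are $O(\sqrt{t})$, and which are $O(t)$, together with the adaptedness and continuity properties which are inherited from the strong solution to (\ref{eq:BridgeSDESimplified}).
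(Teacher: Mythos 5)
Your construction is essentially the paper's: both substitute the representation of Lemma \ref{lem:ExpBridge} into the It\^o--Stratonovich form of the SDE (\ref{eq:BridgeSDESimplified}), peel off $-\phi_{i}((1-r){\bf x})x^{i}dr=\dot{\Gamma}_{r}dr$, put the rescaled stochastic integral into $M_{r}^{t}$ (making (\ref{eq:QVMBound}) immediate), and bound the contribution of $N_{r}$ to $A_{r}^{t}$ via the BDG inequality with constant $O(\sqrt{p})$ and that of $D_{r}$ via the deterministic $|\log(1-r)|$ bound. One step, however, does not survive as written: the second-order Taylor expansion $\phi(X_{tr})=\phi(\gamma_{r})+D\phi(\gamma_{r})\delta_{r}^{t}+O(|\delta_{r}^{t}|^{2})$. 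The quadratic remainder contains $t^{2}(1-r)^{2}N_{r}^{2}$, and since $\|N_{r}^{2}\|_{p}=\|N_{r}\|_{2p}^{2}\leqslant Cp\,r/(t(1-r))$, its contribution to $A_{r}^{t}$ (after dividing out the overall $\sqrt{t}$) has $L^{p}$-norm of order $p\sqrt{t}(1-r)$ --- linear in $p$, not $\sqrt{p}$ --- so (\ref{eq:ABound}) fails for $p\gtrsim t^{-1}$ if the constant is to be independent of $t$. The precise $\sqrt{p}$ growth is the entire point of the lemma (it feeds the $(\Lambda p)^{n-k}$ factor in Proposition \ref{prop:MainSigEst} and hence the polynomial scale $t_{n}$), so the quadratic term cannot be waved away. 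The repair is trivial: do not Taylor-expand, but write $\phi_{i}(X_{tr})-\phi_{i}((1-r){\bf x})=\big(\int_{0}^{1}\partial_{j}\phi_{i}((1-r){\bf x}+s\delta_{r}^{t})ds\big)\delta_{r}^{t,j}$, which is exactly linear in $\delta_{r}^{t}=t(1-r)(N_{r}+D_{r})$ with bounded coefficient and therefore contributes $O(\sqrt{p})$ in $L^{p}$. This is in effect what the paper does: it keeps the raw difference $\phi_{i}(X_{\tau\wedge tr})-\phi_{i}((1-r){\bf x})$ inside $A_{r}^{t}$ and estimates it directly.

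A smaller point: the Kallenberg--Sztencel inequality is not needed for this lemma. Here $M_{r}^{t}$ and $N_{r}$ are $\mathbb{R}^{N}$-valued with $N$ fixed and the constant may depend on $\phi$ (hence on $N$), so the one-dimensional BDG inequality with constant $2\sqrt{p}$ applied componentwise suffices. The dimension-free version only becomes indispensable in Lemma \ref{lem:IndJEst}, where the martingales are $E^{\otimes n}$-valued and no factor $C^{n}$ can be tolerated; your framing of it as relevant to the ``later applications'' is therefore accurate, but it is not where the $\sqrt{p}$ in (\ref{eq:ABound}) comes from.
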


\begin{proof}
We write $\phi=\phi_{i}dx^{i}$ on $V$ and also set $\hat{Q}(u,{\bf z})\triangleq b({\bf z})+Q(u,{\bf z})$
to ease notation. According to the SDE (\ref{eq:BridgeSDESimplified})
and the It\^o-Stratonovich conversion, one has 
\begin{align*}
\phi(\circ dX_{s}) & =-\phi_{i}(X_{s})\frac{X_{s}^{i}}{t-s}ds+\big(\phi_{i}(X_{s})\hat{Q}^{i}(t-s,X_{s})\\
 & \ \ \ +\frac{1}{2}\partial_{j}\phi_{i}(X_{s})a^{ij}(X_{s})\big)ds+\phi_{i}(X_{s})\sigma_{\alpha}^{i}(X_{s})dB_{s}^{\alpha}
\end{align*}
for $s\in[0,\tau).$ By taking $s=tr$ and substituting (\ref{eq:ExpBridge})
into the first term, it is easily seen that the decomposition (\ref{eq:PhidXDecomp})
holds with
\begin{align}
A_{r}^{t}\triangleq & -\phi_{i}(X_{\tau\wedge tr})\big(\sqrt{t}\int_{0}^{\tau\wedge tr}\frac{\sigma_{\alpha}^{i}(X_{u})dB_{u}^{\alpha}}{t-u}+\sqrt{t}\int_{0}^{(\tau/t)\wedge r}\frac{1}{1-\eta}\hat{Q}^{i}(t(1-\eta),X_{t\eta})d\eta\big)\label{eq:ALine1}\\
 & \ \ \ -\big(\phi_{i}(X_{\tau\wedge tr})-\phi_{i}((1-r){\bf x})\big)x^{i}+\sqrt{t}\big(\phi_{i}(X_{\tau\wedge tr})\hat{Q}^{i}(t(1-r),X_{\tau\wedge tr})\nonumber \\
 & \ \ \ +\frac{1}{2}\partial_{j}\phi_{i}(X_{\tau\wedge tr})a^{ij}(X_{\tau\wedge tr})\big)\label{eq:ALine2}
\end{align}
and 
\[
M_{r}^{t}\triangleq\int_{0}^{(\tau/t)\wedge r}\phi_{i}(X_{t\eta})\sigma_{\alpha}^{i}(X_{t\eta})dB_{\eta}^{t,\alpha},
\]
where $B_{\eta}^{t,\alpha}\triangleq B_{t\eta}^{\alpha}/\sqrt{t}$
is a rescaled Brownian motion. Since $\phi_{i},\sigma_{\alpha}^{i}\in C_{b}^{\infty}(\bar{U})$,
it is clear that the bound (\ref{eq:QVMBound}) holds. 

We now estimate each term of $A_{r}^{t}.$ Since $\hat{Q}^{i}$
is also uniformly bounded on $\bar{U}$, the entire expression of
(\ref{eq:ALine2}) is uniformly bounded by some deterministic constant
which is in turn trivially enlarged to the form of (\ref{eq:ABound}).
To estimate the first term of (\ref{eq:ALine1}), one notes from the
Burkholder-Davis-Gundy (BDG) inequality (cf. \cite[Theorem A]{CK91}
with BDG constant $2\sqrt{p}$) that 
\[
\big\|\sqrt{t}\int_{0}^{\tau\wedge tr}\frac{(\sigma dB)_{u}^{i}}{t-u}\big\|_{p}\leqslant2\sqrt{p}\cdot\big\|\sqrt{t\int_{0}^{tr}\frac{C}{(t-u)^{2}}du}\big\|_{p}=C'\sqrt{p}\cdot\sqrt{\frac{r}{1-r}},
\]
which is dominated by (\ref{eq:ABound}) since $r\leqslant1.$ The
second term of (\ref{eq:ALine1}) is uniformly bounded by $C|\log(1-r)|$
which is also trivially enlarged to (\ref{eq:ABound}). The desired
bound (\ref{eq:ABound}) thus follows.
\end{proof}

\subsubsection*{The signature remainder estimate}

In the decomposition (\ref{eq:PhidXDecomp}), let us denote 
\[
dY_{r}^{0}\triangleq\dot{\Gamma}_{r}dr,\ dY_{r}^{1}\triangleq A_{r}^{t}dr+dM_{r}^{t},
\]
where we omitted the script $t$ to ease notation. Then on the event
$\{\tau=t\}$ one can write 
\begin{align}
 & \int_{0<r_{1}<\cdots<r_{n}<1}\phi(\circ dX_{tr_{1}})\otimes\cdots\otimes\phi(\circ dX_{tr_{n}})\nonumber \\
 & =\sum_{I=(\omega_{1},\cdots,\omega_{n})}t^{\frac{n-|I|_{0}}{2}}\int_{0<r_{1}<\cdots<r_{n}<1}\circ dY_{r_{1}}^{\omega_{1}}\otimes\cdots\otimes\circ dY_{r_{n}}^{\omega_{n}},\label{eq:SigDecomp}
\end{align}
where the summation is taken over all words $I=(\omega_{1},\cdots,\omega_{n})$
with $\omega_{i}=0,1$ and $|I|_{0}$ denotes the number of zero's
in $I$ (the number of geodesic components). 

Given such a word $I,$ we set 
\begin{equation}
J_{n}(\rho;I)\triangleq\int_{0<r_{1}<\cdots<r_{n}<\rho}\circ dY_{r_{1}}^{\omega_{1}}\otimes\cdots\otimes\circ dY_{r_{n}}^{\omega_{n}},\ \ \ \rho\in[0,1].\label{eq:JnDef}
\end{equation}
A key step for proving Theorem \ref{thm:ReconRD} is a proper estimate
of $J_{n}(\rho;I)$ that respects the number of geodesic components
in $I$. The main result for this purpose is stated below. In what
follows, $|\cdot|$ denotes the Hilbert-Schmidt tensor norm and $\|\cdot\|_{p}\triangleq\mathbb{E}[|\cdot|^{p};\tau=t]^{1/p}$
denotes the $L^{p}$-norm on $\{\tau=t\}$ for tensor-valued random
variables. 
\begin{prop}
\label{prop:MainSigEst}There exists a positive constant $\Lambda$
depending only on $M,\phi,N$ and the localisation $V$, such that
the following estimate holds true
\begin{equation}
\big\| J_{n}(\rho;I)\big\|_{p}\leqslant\frac{(\Lambda p)^{n-k}}{\sqrt{(n-k)!}}\frac{\|\Gamma\|_{1\text{-var};[0,\rho]}^{k}}{k!}(1-\sqrt{1-\rho})^{\frac{n-k}{2}}\label{eq:MainSigEst}
\end{equation}
 for any $p\geqslant2,\rho\in[0,1],$ any $n\in\mathbb{N}$, $1\leqslant k\leqslant n$
and any word $I=(\omega_{1},\cdots,\omega_{n})$ with $|I|_{0}=k$. 
\end{prop}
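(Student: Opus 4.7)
The plan is to prove \ref{eq:MainSigEst} by induction on $n$, peeling off the right-most tensor factor so that
\[
J_n(\rho; I) = \int_0^\rho J_{n-1}(r; I_*) \otimes \circ dY_r^{\omega_n}, \qquad I_* = (\omega_1,\ldots,\omega_{n-1}).
\]
When $\omega_n = 0$ (a geodesic letter) the inductive bound applied pointwise in $r$, the monotonicity $(1-\sqrt{1-r})^{(n-k)/2} \le (1-\sqrt{1-\rho})^{(n-k)/2}$, and the identity $\int_0^\rho \|\Gamma\|_{1\text{-var};[0,r]}^{k-1}\,d\|\Gamma\|_{1\text{-var};[0,r]} = \|\Gamma\|_{1\text{-var};[0,\rho]}^k/k$ recover the claimed estimate immediately with no loss in the constant $\Lambda$.

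When $\omega_n = 1$ (a noise letter) I invoke the semimartingale decomposition $\circ dY_r^1 = A_r\,dr + dM_r$ from Lemma \ref{lem:SemDecomp} and split $J_n(\rho;I) = J^{\mathrm{mart}} + J^{\mathrm{drift}}$. The martingale piece $J^{\mathrm{mart}} = \int_0^\rho J_{n-1}(s; I_*) \otimes dM_s$ is viewed as a Hilbert-space-valued stochastic integral in $E^{\otimes n}$ and treated via the Kallenberg--Sztencel dimension-free BDG inequality. Its Hilbert--Schmidt quadratic variation equals $\int_0^\rho |J_{n-1}(s; I_*)|^2\,d\mathrm{tr}[M]_s \le C \int_0^\rho |J_{n-1}(s; I_*)|^2\,ds$ by \ref{eq:QVMBound}, so Kallenberg--Sztencel followed by Minkowski in $L^{p/2}$ gives
\[
\|J^{\mathrm{mart}}\|_p \le c\sqrt{p}\,\Big(\int_0^\rho \|J_{n-1}(s; I_*)\|_p^2\,ds\Big)^{1/2}.
\]
The inductive hypothesis turns the inner integral into $\int_0^\rho (1-\sqrt{1-s})^{n-1-k}\,ds$, which the change of variables $u = 1-\sqrt{1-s}$ evaluates to at most $\frac{2}{n-k}(1-\sqrt{1-\rho})^{n-k}$; the arithmetic identity $\sqrt{(n-k)!/(n-1-k)!}\cdot(n-k)^{-1/2} = 1$ then promotes $1/\sqrt{(n-1-k)!}$ to the desired $1/\sqrt{(n-k)!}$ and the remaining constants absorb into $\Lambda$.

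The drift contribution $J^{\mathrm{drift}} = \int_0^\rho J_{n-1}(r;I_*) \otimes A_r\,dr$ is the technical crux. I would further decompose $A_r$ in accordance with \ref{eq:ALine1}--\ref{eq:ALine2} into a uniformly bounded part (handled by Minkowski and dominated by $J^{\mathrm{mart}}$) and the singular stochastic-integral piece carrying the $1/\sqrt{1-r}$ blow-up. The latter is recast as an additional stochastic integral against $dB$ through an interchange of integration order on $\{\tau = t\}$; Kallenberg--Sztencel then applies once more, and the singular factor $1/(t-u)$ is integrated out via the same $u$-substitution to recover the required exponent $(1-\sqrt{1-\rho})^{(n-k)/2}$ with the correct factorials.

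The main obstacle is preserving both the linear-in-$p$ dependence $(\Lambda p)^{n-k}$ and the factorial gain $1/\sqrt{(n-k)!}$ through every induction step. A naive H\"older split $\|J_{n-1} \otimes A_r\|_p \le \|J_{n-1}\|_{2p} \|A_r\|_{2p}$ would double the parameter $p$ at every noise step and introduce a compounding $2^{(n-k)^2/2}$ factor that wipes out the polynomial lifetime scaling $t_n = O(n^{-6})$ announced in Theorem \ref{thm:ReconRD}; similarly, because $E^{\otimes n}$ has dimension $N^n$, any BDG constant that grows with the ambient Hilbert-space dimension would force $t_n$ to be exponentially small in $n$. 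The dimension-free Kallenberg--Sztencel bound together with the stochastic-integral reformulation of the drift are precisely the two tools that keep both obstructions at bay.
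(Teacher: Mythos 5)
Your overall architecture---induction on the word length, the geodesic/noise dichotomy, the dimension-free Kallenberg--Sztencel BDG bound for the martingale part, and the change of variables $u=1-\sqrt{1-s}$ that converts $1/\sqrt{(n-1-k)!}$ into $1/\sqrt{(n-k)!}$---coincides with the paper's proof, and your treatment of $J^{\mathrm{mart}}$ is correct. The gap is in the drift term, which you yourself call the crux. The singular piece of $A_r^t$ is $-\phi_i(X_{\tau\wedge tr})\sqrt{t}\int_0^{\tau\wedge tr}\frac{\sigma^i_\alpha(X_u)\,dB_u^\alpha}{t-u}$ (cf. (\ref{eq:ALine1})), so after interchanging the $dr$ and $dB_u$ integrations the would-be integrand at time $u$ is $\int_{u/t}^{\rho}J_{n-1}(r;I_*)\otimes\phi_i(X_{tr})\,dr$, which depends on the path on $[u,t\rho]$ and is therefore not adapted. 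The resulting object is not an It\^o integral, Kallenberg--Sztencel does not apply to it, and restoring adaptedness would require anticipating calculus, which does not obviously preserve the $(\Lambda p)^{n-k}/\sqrt{(n-k)!}$ structure. The paper's device here is different and essential: it bounds the drift contribution by $\|J_n(\eta;I')\|_q\|A_\eta^t\|_r$ with the asymmetric exponents $r=p(n+1-k)$ and $q=p(n+1-k)/(n-k)$. Since $\|A_\eta^t\|_r\leqslant C\sqrt{r}/\sqrt{1-\eta}$ grows only like $\sqrt{r}$ (Gaussian-type moments, (\ref{eq:ABound})), taking $r$ of order $pn$ costs only $\sqrt{p}\sqrt{n+1-k}$, while $q$ exceeds $p$ only by the factor $1+\frac{1}{n-k}$, whose compounded effect through the induction is bounded by $e$; the leftover $\sqrt{n+1-k}$ is absorbed by the $\frac{1}{n+2-k}$ gain from the time integral, yielding exactly the $1/\sqrt{(n+1-k)!}$ improvement. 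Without this (or an equivalent) device your drift estimate does not close, and the naive alternatives you correctly rule out (symmetric H\"older, dimension-dependent BDG) are not the only failure modes.

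A second, smaller omission: the outer integral defining $J_n(\rho;I)$ is Stratonovich, so writing $J_n=J^{\mathrm{mart}}+J^{\mathrm{drift}}$ with $J^{\mathrm{mart}}$ an It\^o integral drops the correction $\frac12\int_0^\rho dJ_{n-1}(\eta;I_*)\otimes dM^t_\eta$, which is nonzero whenever $\omega_{n-1}=1$ as well. The paper's Lemma \ref{lem:IndJEst} carries this as a third term bounded by $K_3\int_0^\rho\|J_{n-2}(\eta;I'')\|_p\,d\eta$, invoking the induction hypothesis two levels down; you need to include it.
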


\begin{rem}
A delicate point in the proof is to make sure that one only introduces
an exponential factor like $C^{n-k}\|\Gamma\|_{1\text{-var}}^{k}$
(as seen in (\ref{eq:MainSigEst})) instead of something like $C^{n}.$
This is the key point for reducing the lifetime scale $t_{n}$ to
polynomial dependence in $n$ in Theorem \ref{thm:ReconRD}; a more
``standard'' signature estimate would introduce a factor $C^{n}$
forcing $t_{n}$ to decay exponentially in order to make (\ref{eq:ReconRDAsym})
work. 
\end{rem}

To prove Proposition \ref{prop:MainSigEst}, we first recall a basic
result of Kallenberg-Sztencel (cf. \cite[Theorem 3.1]{KS91}) which
leads to a dimension-free BDG inequality. This also avoids the introduction
of $C^{n}$ when estimating moments of $E^{\otimes n}$-valued martingales. 
\begin{lem}
\label{lem:KaSz}Let $S=(S^{1},\cdots,S^{l})$ be a collection of
continuous local martingales. Then there exists a two-dimensional
martingale $T=(T^{1},T^{2})$ defined possibly on an extended filtered
probability space, such that 
\[
|S|=|T|,\ \langle S\rangle=\langle T\rangle,
\]
where
\[
|S|\triangleq\sqrt{(S^{1})^{2}+\cdots+(S^{l})^{2}},\ \langle S\rangle\triangleq\langle S^{1}\rangle+\cdots+\langle S^{l}\rangle
\]
and similarly for $T.$
\end{lem}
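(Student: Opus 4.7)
The plan is to build $T$ explicitly from the radial data of $S$, supplementing it with an auxiliary independent Brownian motion on an enlarged filtered probability space. The cases $l\le 2$ are trivial (take $T=S$, padded with a zero component if $l=1$), so assume $l\ge 3$. Set $R_t\triangleq|S_t|$ and $A_t\triangleq\langle S\rangle_t=\sum_{i=1}^l\langle S^i\rangle_t$. Applying It\^o's formula to $|S|^2=\sum_i(S^i)^2$, one obtains the continuous semimartingale decomposition $R_t^2=N_t+A_t$, where $N_t\triangleq 2\sum_i\int_0^tS^i_s\,dS^i_s$ is a continuous local martingale with $N_0=0$. A pointwise Cauchy--Schwarz argument applied to the Radon--Nikodym densities of $d\langle S^i,S^j\rangle_t$ with respect to $dA_t$ yields the crucial bound $d\langle N\rangle_t\le 4R_t^2\,dA_t$.

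On an enlarged probability space carrying a standard 1D Brownian motion $\beta$ independent of $S$, I would seek $T=(T^1,T^2)$ in polar form
$$T_t^1=R_t\cos\Theta_t,\qquad T_t^2=R_t\sin\Theta_t,$$
so that $|T|=R=|S|$ holds by construction. Let $\rho$ denote the continuous local martingale part of $R$, so that $2R\,d\rho=dN$ on $\{R>0\}$ and hence $d\langle\rho\rangle_t=d\langle N\rangle_t/(4R_t^2)\le dA_t$. The angular process $\Theta$ is taken to be a semimartingale whose drift cancels the It\^o correction terms (forcing $T^1,T^2$ to be local martingales) and whose martingale part is driven by $\beta$ with quadratic variation prescribed by
$$R_t^2\,d\langle\Theta\rangle_t=dA_t-d\langle\rho\rangle_t.$$
The right-hand side is a non-negative measure precisely because of the bound $d\langle N\rangle\le 4R^2\,dA$. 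A direct It\^o expansion then verifies both $(T_t^1)^2+(T_t^2)^2=R_t^2$ and $d\langle T^1\rangle_t+d\langle T^2\rangle_t=dA_t$, giving the two required identities on $\{R>0\}$.

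The main obstacle is the handling of the degenerate set $\mathcal{Z}\triangleq\{(t,\omega):R_t(\omega)=0\}$, where the polar parametrisation becomes singular and the integrand defining $\Theta$ blows up. The standard remedy is to localise: for each $\varepsilon>0$, perform the above construction up to the stopping time $\tau_\varepsilon\triangleq\inf\{t:R_t<\varepsilon\}$ to obtain a process $T^\varepsilon$, and then show that the family $\{T^\varepsilon\}_{\varepsilon>0}$ is Cauchy in a suitable topology as $\varepsilon\downarrow 0$, with the limit $T$ extended by zero on $\mathcal{Z}$. The uniform moment control needed for this passage to the limit is again supplied by $d\langle N\rangle\le 4R^2\,dA$, and the resulting $T$ is a continuous 2D local martingale on the enlarged filtration. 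Verifying that the identities $|T|=|S|$ and $\langle T\rangle=\langle S\rangle$ persist on $\mathcal{Z}$ constitutes the delicate measure-theoretic patching that is the content of \cite[Theorem 3.1]{KS91}.
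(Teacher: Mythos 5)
The paper does not actually prove this lemma: it is stated as a quoted result and attributed directly to Kallenberg--Sztencel \cite[Theorem 3.1]{KS91}, so there is no internal proof to compare against. Your sketch reconstructs what is essentially the standard skew-product argument behind that theorem, and the computations you give are sound: the bound $d\langle N\rangle_t\leqslant 4R_t^2\,dA_t$ does follow from nonnegative-definiteness of the density matrix $(d\langle S^i,S^j\rangle/dA_t)$ together with its unit trace, and on $\{R>0\}$ the requirements that $T^1=R\cos\Theta$ and $T^2=R\sin\Theta$ both be local martingales force exactly $\Theta$ driftless with $R^2\,d\langle\Theta\rangle=dA-d\langle\rho\rangle$, after which $d\langle T^1\rangle+d\langle T^2\rangle=d\langle\rho\rangle+R^2\,d\langle\Theta\rangle=dA$ as you claim.

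The one substantive defect is the final paragraph: you localise away from the zero set of $R$ and then state that the passage to the limit and the patching on $\{R=0\}$ "constitutes the content of \cite[Theorem 3.1]{KS91}". If your text is intended as a proof of the lemma, this is circular --- you cannot discharge the genuinely delicate step (the behaviour of the angular process and of $\langle T\rangle$ on the zero set, where $|S|$ is not a $C^2$ image of $S$ and the polar chart degenerates) by citing the very theorem being proved. Either carry out the limiting argument for $\{T^\varepsilon\}$ honestly (showing the limit is a local martingale and that $\langle T\rangle=A$ also on $\{R=0\}$, e.g.\ via the fact that $dA$ does not charge $\{R=0\}$ in the relevant sense, which itself requires justification), or do what the paper does and simply invoke \cite[Theorem 3.1]{KS91} for the whole statement rather than for its hardest part only.
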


Next, we present a lemma that provide the basic estimates for proving
Proposition \ref{prop:MainSigEst} by induction. 
\begin{lem}
\label{lem:IndJEst}There exist positive constants $K_{1},K_{2},K_{3}$
depending only on $M,\phi,N$ and the localisation $V$, such that
the following estimates hold true for any $p\geqslant2,n\in\mathbb{N}$
and any word $I=(\omega_{1},\cdots,\omega_{n+1})$:

\vspace{2mm}\noindent (i) If $\omega_{n+1}=0$, then one has 
\begin{equation}
\|J_{n+1}(\rho;I)\|_{p}\leqslant\int_{0}^{\rho}\|J_{n}(\rho;I')\|_{p}|\dot{\Gamma}_{\eta}|d\eta,\ \ \ \rho\in[0,1].\label{eq:LastGeoEst}
\end{equation}
(ii) If $\omega_{n+1}=1$, then one has 
\begin{align}
\|J_{n+1}(\rho;I)\|_{p} & \leqslant K_{1}\sqrt{r}\int_{0}^{\rho}\|J_{n}(\eta;I')\|_{q}\frac{1}{\sqrt{1-\eta}}d\eta+K_{2}\sqrt{p}\big(\int_{0}^{\rho}\|J_{n}(\eta;I')\|_{p}^{2}d\eta\big)^{1/2}\nonumber \\
 & \ \ \ +K_{3}\int_{0}^{\rho}\|J_{n-1}(\eta;I'')\|_{p}d\eta,\label{eq:LastNotGeoEst}
\end{align}
where $q,r$ are any positive numbers such that $1/p=1/q+1/r$ and
$I'$ (resp. $I''$) is the word obtained by removing the last entry
(resp. last two entries) of $I$. 
\end{lem}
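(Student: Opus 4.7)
The plan is to peel off the outermost integral in $J_{n+1}(\rho;I)$ and reduce the desired bound to an estimate on $J_n(\cdot;I')$ (or $J_{n-1}(\cdot;I'')$) by exploiting which type of one-form increment sits in the last slot. Throughout I will work with the processes stopped at $\tau$ (so that $M^t$ remains a genuine martingale on $[0,1]$), which is harmless since $\|\cdot\|_p$ integrates only on the event $\{\tau=t\}$ and hence is dominated by the corresponding full $L^p$-norm.

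For case (i), when $\omega_{n+1}=0$ the last differential is the deterministic drift $dY^0_\eta=\dot\Gamma_\eta\,d\eta$, so
\[
J_{n+1}(\rho;I)=\int_0^\rho J_n(\eta;I')\otimes\dot\Gamma_\eta\,d\eta .
\]
Because $\dot\Gamma_\eta$ is deterministic and bounded in Euclidean norm by $|\dot\Gamma_\eta|$, a direct application of Minkowski's inequality in $L^p$ (plus monotonicity of $\eta\mapsto\|J_n(\eta;I')\|_p$ to produce the bound stated with $\rho$ in the argument) yields (\ref{eq:LastGeoEst}).

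For case (ii), when $\omega_{n+1}=1$ the last increment is $\circ dY^1_\eta=A^t_\eta\,d\eta+\circ dM^t_\eta$. I first split the Stratonovich integral into the drift contribution, the It\^o martingale contribution and the Stratonovich-to-It\^o correction:
\[
J_{n+1}(\rho;I)=\underbrace{\int_0^\rho J_n(\eta;I')\otimes A^t_\eta\,d\eta}_{(D)}+\underbrace{\int_0^\rho J_n(\eta;I')\otimes dM^t_\eta}_{(\mathrm{It\hat o})}+\underbrace{\tfrac12\int_0^\rho d\bigl[J_n(\cdot;I'),M^t\bigr]_\eta}_{(C)} .
\]
The drift piece $(D)$ is handled by Minkowski followed by H\"older with exponents $1/p=1/q+1/r$ and then by the bound (\ref{eq:ABound}): $\|A^t_\eta\|_r\leqslant C\sqrt{r}/\sqrt{1-\eta}$. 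This produces the first term of (\ref{eq:LastNotGeoEst}). For the martingale piece $(\mathrm{It\hat o})$, a naive BDG applied to an $E^{\otimes(n+1)}$-valued martingale would incur a dimension factor blowing up like $C^n$; to avoid this I invoke the Kallenberg--Sztencel reduction (Lemma \ref{lem:KaSz}) which provides a dimension-free BDG constant $2\sqrt{p}$, giving
\[
\bigl\|(\mathrm{It\hat o})\bigr\|_p\leqslant 2\sqrt{p}\,\Bigl\|\sqrt{\textstyle\int_0^\rho|J_n(\eta;I')|^2\,d\langle M^t\rangle_\eta}\Bigr\|_p\leqslant K_2\sqrt{p}\,\Bigl(\int_0^\rho\|J_n(\eta;I')\|_p^{2}\,d\eta\Bigr)^{1/2},
\]
after an $L^{p/2}$ Minkowski and the bound $d\langle M^t\rangle_\eta\lesssim d\eta$ from (\ref{eq:QVMBound}). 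Finally, the correction $(C)$ is analysed by noting that $dJ_n(\cdot;I')=J_{n-1}(\cdot;I'')\otimes\circ dY^{\omega_n}_\eta$, hence $d[J_n,M^t]_\eta=J_{n-1}(\eta;I'')\otimes d[Y^{\omega_n},M^t]_\eta$; this bracket vanishes if $\omega_n=0$ (since $Y^0=\Gamma$ is of bounded variation) and equals $d\langle M^t\rangle_\eta$ if $\omega_n=1$, so in either case the integrand is bounded by $C|J_{n-1}(\eta;I'')|\,d\eta$, yielding the third term of (\ref{eq:LastNotGeoEst}).

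The main obstacle is the $(\mathrm{It\hat o})$ step: the whole point of the eventual polynomial (rather than exponential) dependence $t_n\asymp n^{-6}$ in Theorem \ref{thm:ReconRD} is that the BDG constant for the tensor-valued martingale must not depend on the ambient tensor dimension $N^{n+1}$. This is exactly why Kallenberg--Sztencel is indispensable; ordinary coordinate-wise BDG would introduce a factor growing in $n$ and propagate a $C^n$-type blow-up through the induction. Once this delicate point is handled, the three estimates combine to give (\ref{eq:LastNotGeoEst}) with geometric constants $K_1,K_2,K_3$ depending only on $M,\phi,N$ and the chart $V$ through the uniform bounds on $\phi_i,\sigma_\alpha^i$ and on $A^t,M^t$ provided by Lemma \ref{lem:SemDecomp}.
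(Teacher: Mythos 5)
Your proposal is correct and follows essentially the same route as the paper: the same splitting of the last increment into drift, It\^o martingale and Stratonovich correction, H\"older with $1/p=1/q+1/r$ against the bound (\ref{eq:ABound}) for the drift, the Kallenberg--Sztencel reduction for a dimension-free BDG constant on the tensor-valued martingale, and the observation that the correction term contributes $J_{n-1}(\cdot;I'')$ only when $\omega_n=1$. The only cosmetic difference is the precise BDG constant ($2\sqrt{p}$ versus the paper's $4\sqrt{2}\sqrt{p}$ obtained via the two-dimensional case), which is absorbed into $K_2$.
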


\begin{proof}
If $\omega_{n+1}=0,$ one has
\[
J_{n+1}(\rho;I)=\int_{0}^{\rho}J_{n}(\eta;I')\otimes\dot{\Gamma}_{\eta}d\eta
\]
and the estimate (\ref{eq:LastGeoEst}) follows immediately. If $\omega_{n+1}=1$,
one has 
\begin{align*}
J_{n+1}(\rho;I) & =\int_{0}^{\rho}J_{n}(\eta;I')\otimes A_{\eta}^{t}d\eta+\int_{0}^{\rho}J_{n}(\eta;I')\otimes\circ dM_{\eta}^{t}\\
 & =\int_{0}^{\rho}J_{n}(\eta;I')\otimes A_{\eta}^{t}d\eta+\int_{0}^{\rho}J_{n}(\eta;I')\otimes\cdot dM_{\eta}^{t}\\
 & \ \ \ +\frac{1}{2}\int_{0}^{\rho}\cdot dJ_{n}(\eta;I')\otimes\cdot dM_{\eta}^{t}\\
 & =:I_{n}^{1}(\rho)+I_{n}^{2}(\rho)+I_{n}^{3}(\rho).
\end{align*}
where $\cdot dM$ denotes It\^o's integral. We now estimate each
term on the right hand side separately. Note that the last term comes
from the It\^o-Stratonovich correction and is present only when $\omega_{n}=1$. 

\vspace{2mm}\noindent \textit{\uline{Estimation of }}\uline{\mbox{$I_{n}^{1}.$}}
According to (\ref{eq:ABound}) and Young's inequality, one has 
\[
\|I_{n}^{1}(\rho)\|_{p}\leqslant\int_{0}^{\rho}\|J_{n}(\eta;I')\|_{q}\|A_{\eta}^{t}\|_{r}d\eta\leqslant K_{1}\sqrt{r}\int_{0}^{\rho}\|J_{n}(\eta;I')\|_{q}\frac{1}{\sqrt{1-\eta}}d\eta.
\]
\textit{\uline{Estimation of}}\uline{ \mbox{$I_{n}^{2}.$}} We
are going to applying Lemma \ref{lem:KaSz} to the multidimensional
martingale 
\[
I_{n}^{2}(\rho)\triangleq\int_{0}^{\rho}J_{n}(\eta;I')\otimes\cdot dM_{\eta}^{t}\in(\mathbb{R}^{N})^{\otimes(n+1)}.
\]
By the definition of the Hilbert-Schmidt norm, one has 
\[
|I_{n}^{2}(\rho)|=\big(\sum_{i_{1},\cdots,i_{n+1}=1}^{N}\big(\int_{0}^{\rho}J_{n}(\eta;I)^{i_{1},\cdots,i_{n}}dM_{\eta}^{t,i_{n+1}}\big)^{2}\big)^{1/2}.
\]
According to (\ref{eq:QVMBound}), one also has 
\begin{align*}
\langle I_{n}^{2}\rangle_{\rho} & =\sum_{i_{1},\cdots,i_{n+1}=1}^{N}\int_{0}^{\rho}\big(J_{n}(\eta;I)^{i_{1},\cdots,i_{n}}\big)^{2}d\langle M^{t,i_{n+1}}\rangle_{\eta}\\
 & \leqslant NC\int_{0}^{\rho}\sum_{i_{1},\cdots,i_{n}=1}^{N}\big(J_{n}(\eta;I)^{i_{1},\cdots,i_{n}}\big)^{2}d\eta=NC\int_{0}^{\rho}\big|J_{n}(\eta;I)\big|^{2}d\eta.
\end{align*}
It follows from Lemma \ref{lem:KaSz} as well as the 2D BDG-inequality
that 
\[
\|I_{n}^{2}(\rho)\|_{p}\leqslant4\sqrt{2}\sqrt{p}\|\langle I_{n}^{2}\rangle_{\rho}^{1/2}\|_{p}\ \ \ \forall p\geqslant2.
\]
Here the constant $4\sqrt{2}\sqrt{p}$ is easily derived from the
1D case whose BDG constant is $2\sqrt{p}$ (cf. \cite[Theorem A]{CK91}).
Therefore, one finds that 
\[
\|I_{n}^{2}(\rho)\|_{p}\leqslant4\sqrt{2NC}\sqrt{p}\big\|\big(\int_{0}^{\rho}\big|J_{n}(\eta;I)\big|^{2}d\eta\big)^{1/2}\big\|_{p}.
\]
In addition, note that 
\[
\big\|\big(\int_{0}^{\rho}\big|J_{n}(\eta;I)\big|^{2}d\eta\big)^{1/2}\big\|_{p}=\big\|\int_{0}^{\rho}\big|J_{n}(\eta;I')\big|^{2}d\eta\big\|_{p/2}^{1/2}\leqslant\big(\int_{0}^{\rho}\big\| J_{n}(\eta;I')\big\|_{p}^{2}d\eta\big)^{1/2}.
\]
Consequently, one arrives at
\begin{equation}
\|I_{n}^{2}(\rho)\|_{p}\leqslant K_{2}\sqrt{p}\big(\int_{0}^{\rho}\big\| J_{n}(\eta;I')\big\|_{p}^{2}d\eta\big)^{1/2}\label{eq:MartEst}
\end{equation}
with $K_{2}\triangleq4\sqrt{2NC}.$

\vspace{2mm}\noindent \textit{\uline{Estimation of }}\uline{\mbox{$I_{n}^{3}$}.}
This is only relevant when $\omega_{n}=\omega_{n+1}=1$. In this case,
by (\ref{eq:QVMBound}) one has 
\[
\big|I_{n}^{3}(\rho)\big|\leqslant\frac{1}{2}\int_{0}^{\rho}\big|J_{n-1}(\eta;I'')\big|\cdot|d\langle M^{t},\otimes M^{t}\rangle_{\eta}\big|\leqslant K_{3}\int_{0}^{\rho}\big|J_{n-1}(\eta;I'')\big|d\eta.
\]
It follows that
\[
\|I_{n}^{3}(\rho)\|_{p}\leqslant K_{3}\int_{0}^{\rho}\big\| J_{n-1}(\eta;I'')\big\|_{p}d\eta.
\]
By adding up the previous three inequalities, one obtains the desired
estimate (\ref{eq:LastNotGeoEst}) and concludes the proof of the
lemma. 
\end{proof}
We are now in a position to prove Proposition \ref{prop:MainSigEst}.

\begin{proof}[Proof of Proposition \ref{prop:MainSigEst}]

We are going to prove the inequality (\ref{eq:MainSigEst}) by induction
on $n$. Consider first the base case $n=1$. If $I=(0)$, then one
has 
\[
\big|J_{1}(\rho;I)\big|\leqslant\int_{0}^{\rho}\big|\dot{\Gamma}_{\eta}\big|d\eta=\|\Gamma\|_{1\text{-var};[0,\rho]}
\]
and the estimate (\ref{prop:MainSigEst}) holds trivially with $n=k=1$.
If $I=(1)$, then 
\[
J_{1}(\rho;I)=\int_{0}^{\rho}A_{\eta}^{t}d\eta+M_{\rho}^{t}.
\]
The first term is estimated by (\ref{eq:ABound}) as 
\[
\big\|\int_{0}^{\rho}A_{r}^{t}dr\big\|_{p}\leqslant C\sqrt{p}\int_{0}^{\rho}\frac{d\eta}{\sqrt{1-\eta}}=2C\sqrt{p}(1-\sqrt{1-\rho}).
\]
The second term is estimated by the BDG inequality and (\ref{eq:QVMBound})
as 
\[
\|M_{\rho}^{t}\|_{p}\leqslant\sqrt{2NCp}\sqrt{\rho}.
\]
It follows that 
\[
\|J_{1}(\rho;I)\|_{p}\leqslant\big(2C(1-\sqrt{1-\rho})+\sqrt{2NC}\sqrt{\rho}\big)p.
\]
To make the estimate (\ref{eq:MainSigEst}) work with $n=1,k=0$,
it suffices to require $\Lambda$ to satisfy
\[
2C(1-\sqrt{1-\rho})+\sqrt{2NC}\sqrt{\rho}\leqslant\Lambda\big(1-\sqrt{1-\rho}\big)^{1/2}\ \ \ \forall\rho\in[0,1].
\]
Such a $\Lambda$ clearly exists. 

In what follows, we fix the choice of 
\begin{equation}
\Lambda\triangleq\max\big\{\sup_{\rho\in[0,1]}\frac{2C(1-\sqrt{1-\rho})+\sqrt{2NC}\sqrt{\rho}}{\big(1-\sqrt{1-\rho}\big)^{1/2}},4eK_{1}+\sqrt{2}K_{2}+4K_{3}\big\},\label{eq:LamDef}
\end{equation}
where $K_{1},K_{2},K_{3}$ are the constants appearing in Lemma \ref{lem:IndJEst}.
We just showed the base case $n=1$ and are going to establish the
induction step with the same $\Lambda$. Suppose that the estimate
(\ref{eq:MainSigEst}) is valid for all words with length $\leqslant n$.
Now let $I=(\omega_{1},\cdots,\omega_{n+1})$ be a given word of length
$n+1$ and let $k$ be the number of zero's in $I$. We write 
\[
L_{k}(\rho)\triangleq\int_{0<r_{1}<\cdots<r_{k}<\rho}\big|\dot{\Gamma}_{r_{1}}\big|\cdots\big|\dot{\Gamma}_{r_{k}}\big|dr_{1}\cdots dr_{k}=\frac{\|\Gamma\|_{1\text{-var};[0,\rho]}^{k}}{k!}.
\]
\textit{\uline{Case I: }}\uline{\mbox{$\omega_{n+1}=0.$} }

\vspace{2mm} According to (\ref{eq:LastGeoEst}) and the induction
hypothesis, one has
\begin{align*}
\|J_{n+1}(\rho;I)\|_{p} & \leqslant\int_{0}^{\rho}\frac{(\Lambda p)^{n-k+1}}{\sqrt{(n-k+1)!}}(1-\sqrt{1-\eta})^{\frac{n-k+1}{2}}L_{k}(\eta)\cdot\big|\dot{\Gamma}_{\eta}\big|d\eta\\
 & \leqslant\frac{(\Lambda p)^{n-k+1}}{\sqrt{(n-k+1)!}}(1-\sqrt{1-\rho})^{\frac{n-k+1}{2}}\int_{0}^{\rho}L_{k}(\eta)\cdot\big|\dot{\Gamma}_{\eta}\big|d\eta\\
 & =\frac{(\Lambda p)^{n-k+1}}{\sqrt{(n-k+1)!}}(1-\sqrt{1-\rho})^{\frac{n-k+1}{2}}L_{k+1}(\rho).
\end{align*}
\textit{\uline{Case II: \mbox{$\omega_{n+1}=1.$}}}

\vspace{2mm} In this case, we apply the induction hypothesis to each
term on the right hand side of (\ref{eq:LastNotGeoEst}). Recall that
$I'$ (resp. $I''$) is the word obtained by removing the last entry
(resp. last two entries) of $I$. Firstly, one has 
\begin{align*}
{\rm I} & \triangleq K_{1}\sqrt{r}\int_{0}^{\rho}\big\| J_{n}(\eta;I')\big\|_{q}\frac{1}{\sqrt{1-\eta}}d\eta\\
 & \leqslant K_{1}\sqrt{r}\int_{0}^{\rho}\frac{(\Lambda q)^{n-k}}{\sqrt{(n-k)!}}(1-\sqrt{1-\eta})^{\frac{n-k}{2}}L_{k}(\eta)\frac{1}{\sqrt{1-\eta}}d\eta\\
 & \leqslant K_{1}\sqrt{r}\frac{(\Lambda q)^{n-k}}{\sqrt{(n-k)!}}L_{k}(\rho)\int_{0}^{\rho}(1-\sqrt{1-\eta})^{\frac{n-k}{2}}\frac{d\eta}{\sqrt{1-\eta}}.
\end{align*}
By evaluating the last integral explicitly, it is easily seen that
\begin{equation}
{\rm I}\leqslant\frac{4K_{1}\sqrt{r}(\Lambda q)^{n-k}}{\sqrt{(n-k)!}(n+2-k)}(1-\sqrt{1-\rho})^{\frac{n+1-k}{2}}L_{k}(\rho).\label{eq:IEstInd}
\end{equation}
We now choose $r=p(n+1-k)$ and $q=\frac{p(n+1-k)}{n-k}$ (so that
$1/p=1/q+1/r$). It follows that 
\begin{align*}
\sqrt{r}q^{n-k} & =\sqrt{p(n+1-k)}\big(\frac{p(n+1-k)}{n-k}\big)^{n-k}\\
 & =p^{n-k+1/2}\sqrt{n+1-k}\big(1+\frac{1}{n-k}\big)^{n-k}\leqslant e\sqrt{n+1-k}p^{n+1-k}.
\end{align*}
By substituting this into (\ref{eq:IEstInd}), one obtains that 
\begin{equation}
{\rm I}\leqslant4eK_{1}\frac{\Lambda^{n-k}p^{n+1-k}}{\sqrt{(n+1-k)!}}(1-\sqrt{1-\rho})^{\frac{n+1-k}{2}}L_{k}(\rho).\label{eq:IEst}
\end{equation}

Next, we estimate the second term in (\ref{eq:LastNotGeoEst}); one
has 
\begin{align*}
{\rm II} & \triangleq K_{2}\sqrt{p}\big(\int_{0}^{\rho}\big\| J_{n}(\eta;I')\big\|_{p}^{2}d\eta\big)^{1/2}\\
 & \leqslant K_{2}\sqrt{p}\big(\int_{0}^{\rho}\big(\frac{(\Lambda p)^{n-k}}{\sqrt{(n-k)!}}(1-\sqrt{1-\eta})^{\frac{n-k}{2}}L_{k}(\eta)\big)^{2}d\eta\big)^{1/2}\\
 & \leqslant K_{2}\sqrt{p}\frac{(\Lambda p)^{n-k}}{\sqrt{(n-k)!}}L_{k}(\rho)\big(\int_{0}^{\rho}(1-\sqrt{1-\eta})^{n-k}d\eta\big)^{1/2}.
\end{align*}
The last integral is estimated as 
\begin{align*}
\int_{0}^{\rho}(1-\sqrt{1-\eta})^{n-k}d\eta & \leqslant\int_{0}^{\rho}(1-\sqrt{1-\eta})^{n-k}\frac{1}{\sqrt{1-\eta}}d\eta\\
 & =\frac{2}{n+1-k}(1-\sqrt{1-\rho})^{n+1-k}.
\end{align*}
Therefore, one obtains that 
\begin{align}
{\rm II} & \leqslant K_{2}\sqrt{p}\frac{(\Lambda p)^{n-k}}{\sqrt{(n-k)!}}L_{k}(\rho)\times\frac{\sqrt{2}}{\sqrt{n+1-k}}(1-\sqrt{1-\rho})^{\frac{n+1-k}{2}}\nonumber \\
 & \leqslant\sqrt{2}K_{2}\frac{\Lambda^{n-k}p^{n+1-k}}{\sqrt{(n+1-k)!}}(1-\sqrt{1-\rho})^{\frac{n+1-k}{2}}L_{k}(\rho).\label{eq:IIEst}
\end{align}

Finally, we estimate the third term in (\ref{eq:LastNotGeoEst}),
which is denoted as 
\[
{\rm III}\triangleq K_{3}\int_{0}^{\rho}\big\| J_{n-1}(\eta;I'')\big\|_{p}d\eta.
\]
Note that this term comes from It\^o-Stratonovich correction and
is present only when $\omega_{n}=\omega_{n+1}=1$. In particular,
there are $k$ geodesic components in $I''$ for this scenario. As
a result, by the induction hypothesis one has
\begin{align}
{\rm III} & \leqslant K_{3}\int_{0}^{\rho}\frac{(\Lambda p)^{n-1-k}}{\sqrt{(n-1-k)!}}(1-\sqrt{1-\eta})^{\frac{n-1-k}{2}}L_{k}(\eta)d\eta\nonumber \\
 & \leqslant4K_{3}\frac{\Lambda^{n-k}p^{n+1-k}}{\sqrt{(n+1-k)!}}(1-\sqrt{1-\rho})^{\frac{n+1-k}{2}}L_{k}(\rho).\label{eq:IIIEst}
\end{align}

By putting (\ref{eq:IEst}), (\ref{eq:IIEst}), (\ref{eq:IIIEst})
together, one sees that 
\[
\big\| J_{n+1}(\rho;I)\big\|_{p}\leqslant(4eK_{1}+\sqrt{2}K_{2}+4K_{3})\frac{\Lambda^{n-k}p^{n+1-k}}{\sqrt{(n+1-k)!}}(1-\sqrt{1-\rho})^{\frac{n+1-k}{2}}L_{k}(\rho).
\]
In view of the choice of $\Lambda$ given by (\ref{eq:LamDef}), the
right hand side of the above inequality is further bounded by 
\[
\frac{(\Lambda p)^{n+1-k}}{\sqrt{(n+1-k)!}}(1-\sqrt{1-\rho})^{\frac{n+1-k}{2}}L_{k}(\rho).
\]
This completes the induction step. 

\end{proof}

\subsubsection{\label{subsec:Remainder}Localisation II: the remainder }

Our aim here is to estimate the term $\mathbb{E}\big[\pi_{n}S^{\phi}(X^{t,x,y});\tau<t\big]$
and the main result is stated as follows.
\begin{prop}
\label{prop:LocEst}There exist constants $C_{1},C_{2},\delta>0$
depending only on $x,y,M$, such that 
\begin{equation}
\big\|\mathbb{E}\big[\pi_{n}S^{\phi}(X^{t,x,y});\tau<t\big]\big\|_{{\rm HS}}\leqslant\frac{C_{1}^{n}}{\sqrt{n!}}e^{-C_{2}/t}\label{eq:LocEst}
\end{equation}
for all $n\geqslant1$ and $t\in(0,\delta).$
\end{prop}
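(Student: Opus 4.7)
The plan is to decouple the expected signature contribution from the exit event via Cauchy--Schwarz, and then to bound the two resulting factors separately: a factorially decaying $L^{2}$-moment bound for the signature, and an exponential-in-$1/t$ upper bound on the exit probability coming from the large deviation principle of Hsu. Concretely, regarding $\pi_{n}S^{\phi}(X^{t,x,y})$ as an $E^{\otimes n}$-valued random variable and using the Hilbert--Schmidt norm (which comes from an inner product structure), Jensen's and Cauchy--Schwarz inequalities give
\[
\bigl\|\mathbb{E}\bigl[\pi_{n}S^{\phi}(X^{t,x,y});\tau<t\bigr]\bigr\|_{\rm HS}
\;\leqslant\;\mathbb{E}\bigl[\bigl\|\pi_{n}S^{\phi}(X^{t,x,y})\bigr\|_{\rm HS}^{2}\bigr]^{1/2}\cdot\mathbb{P}(\tau<t)^{1/2}.
\]

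For the signature moment factor, I would isometrically embed $M\hookrightarrow E=\mathbb{R}^{N}$ by $F$ and view $Y_{s}\triangleq F(X^{t,x,y}_{s})$ as an $E$-valued continuous semimartingale on $[0,t]$ with bounded diffusion coefficient and a drift that includes the singular bridge term $\nabla\log p(t-s,\cdot,y)$. I would then run a BDG-based induction on the signature level that closely mirrors Proposition \ref{prop:MainSigEst} (in particular using Lemma \ref{lem:KaSz} to keep the inequality dimension-free and avoid a spurious $C^{n}$), but carried out globally on $M$ rather than in the normal chart. The singular drift is integrable on the scale $\int_{0}^{t}(t-s)^{-1/2}\,ds=O(\sqrt{t})$, which (combined with compactness of $M$ and the boundedness of $dF$) delivers an estimate of the shape
\[
\mathbb{E}\bigl[\bigl\|\pi_{n}S^{\phi}(X^{t,x,y})\bigr\|_{\rm HS}^{2}\bigr]^{1/2}\;\leqslant\;\frac{C_{1}^{n}}{\sqrt{n!}},\qquad t\in(0,\delta),
\]
uniformly in $n$. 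This is the step I expect to carry the most technical weight; a defensive option is to split $[0,t]=[0,t/2]\cup[t/2,t]$ and exploit time-reversal symmetry of the bridge to move the singular endpoint, controlling each half by standard semimartingale signature estimates.

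For the exit probability factor, observe that by our choice $V=B(y,\rho_{M}/2-\varepsilon)$ with $\varepsilon\in(0,\rho_{M}/2-d(x,y))$, both endpoints $x,y$ and the entire minimising geodesic $\gamma^{x,y}$ stay in $V$ at a strictly positive distance $\eta>0$ from $\partial V$. Hsu's large deviation principle for Brownian bridges on a Riemannian manifold (\cite{Hsu90}) states that as $t\rightarrow0^{+}$, the law of the (time-rescaled) bridge $X^{t,x,y}$ concentrates on the energy-minimising geodesic with rate function $\tfrac{1}{2t}\int_{0}^{1}|\dot{h}_{s}|^{2}ds-\tfrac{1}{2t}d(x,y)^{2}$. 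Since the event $\{\tau<t\}$ forces a deviation of magnitude $\geqslant\eta$ from $\gamma^{x,y}$, the LDP yields a positive constant $c=c(x,y,\eta)>0$ and $\delta>0$ such that $\mathbb{P}(\tau<t)\leqslant\exp(-c/t)$ for all $t\in(0,\delta)$.

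Combining these two bounds with $C_{2}\triangleq c/2$ gives
\[
\bigl\|\mathbb{E}[\pi_{n}S^{\phi}(X^{t,x,y});\tau<t]\bigr\|_{\rm HS}\;\leqslant\;\frac{C_{1}^{n}}{\sqrt{n!}}\,e^{-C_{2}/t},
\]
which is exactly the claim of Proposition \ref{prop:LocEst}. The main obstacle is the $L^{2}$-moment bound, since the bridge drift is not square-integrable up to time $t$; the key point is to show that the singularity is integrable enough for a BDG-induction identical in structure to Section \ref{subsec:KeyEst} to close.
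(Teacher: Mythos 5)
Your overall architecture is exactly the paper's: decouple via an integral H\"older-type inequality, bound the exit probability by Hsu's LDP (your computation of the rate function infimum over paths with $D(w,y)\geqslant\rho_{M}/2-\varepsilon$ matches Lemma \ref{lem:TauEst} verbatim), and control the signature moments by a dimension-free BDG induction. The gap is in the moment factor. You assert
\[
\mathbb{E}\bigl[\|\pi_{n}S^{\phi}(X^{t,x,y})\|_{\rm HS}^{2}\bigr]^{1/2}\leqslant\frac{C_{1}^{n}}{\sqrt{n!}}\qquad\text{uniformly for }t\in(0,\delta),
\]
but the induction you describe does not deliver this. To close the BDG induction you must integrate the drift term $\bar\phi_{i}\,\partial_{i}\log p(t(1-r),\cdot,y)$, and by the Stroock--Turetsky estimate (\ref{eq:STEst}) this requires $L^{p}$ bounds on $d(X^{t,x,y}_{s},y)/(t-s)$. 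Any bound on moments of $d(X^{t,x,y}_{s},y)$ goes through the bridge transition density (\ref{eq:BBTran}), whose normalisation $1/p(t,x,y)$ is only controlled by the crude lower bound $p(t,x,y)\geqslant K_{1}e^{-K_{2}/t}$ of (\ref{eq:HKBound}); this injects a factor $e^{K_{2}/pt}$ into the moment estimate (Lemma \ref{lem:BBMoment}), and consequently the signature moment bound one actually obtains is $\|\pi_{n}S^{\phi}(X^{t,x,y})\|_{p}\leqslant(\Lambda p)^{n/2}(n!)^{-1/2}e^{L/pt}$ (Lemma \ref{lem:SigMoment}), with a blow-up in $1/t$ that your statement silently drops. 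Your ``defensive option'' of splitting $[0,t]$ and time-reversing does not remove this, since the $1/p(t,x,y)$ normalisation is present in the bridge law regardless of which endpoint carries the drift singularity.

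This matters because you have hard-wired Cauchy--Schwarz ($p=q=2$): combining the achievable moment bound with the LDP then gives $e^{L/2t}\cdot e^{-K/2t}$, and there is no reason for $K>L$ ($K$ comes from the geometry of the localisation, $L$ from global heat kernel constants). The fix is exactly the paper's final step: keep H\"older with free exponents $1/p+1/q=1$, obtain $(\Lambda p)^{n/2}(n!)^{-1/2}e^{L/pt}\cdot e^{-K/qt}$, and then choose $p$ large (and $q$ close to $1$) so that $K/q-L/p>0$. With that modification --- or, alternatively, with a genuinely uniform-in-$t$ $L^{2}$ moment bound, which would require sharper two-sided Gaussian heat kernel estimates for fixed nearby $x,y$ and which you have not supplied --- your argument goes through.
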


The main idea of the proof is to estimate the probability of the event
$\{\tau<t\}$ and moments of $\pi_{n}S^{\phi}(X^{t,x,y})$ separately.
For the first part, we rely on Hsu's large deviation principle for
the Brownian bridge. For the second part, we make use of several heat
kernel estimates in geometric analysis. In what follows, we
divide our proof of Proposition \ref{prop:LocEst} into several steps. 

\subsubsection*{The exit probability}

We first estimate the probability that the Brownian bridge exits the
chart $V$ before its lifetime $t$.
\begin{lem}
\label{lem:TauEst}There are constants $\delta,K>0$ depending only
on $x,y,$ such that 
\[
\mathbb{P}(\tau<t)\leqslant e^{-K/t}\ \ \ \forall t\in(0,\delta).
\]
\end{lem}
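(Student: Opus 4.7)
The plan is to reduce the estimate on $\mathbb{P}(\tau<t)$ to classical Gaussian bounds on the heat kernel. First I would apply the strong Markov property of the underlying Brownian motion $W=W^{x}$ at its exit time $\tau_{W}$ from $V$, combined with the conditional definition of the Brownian bridge, to derive the identity
\[
\mathbb{P}(\tau<t)\;=\;\frac{\mathbb{E}\bigl[\mathbf{1}_{\{\tau_{W}<t\}}\,p(t-\tau_{W},W_{\tau_{W}},y)\bigr]}{p(t,x,y)}.
\]
The numerator accounts for Brownian paths that first leave $V$ and subsequently reach $y$ at time $t$, while the denominator is the total density at $y$. The ratio should decay like $e^{-K/t}$ because any path contributing to the numerator is forced to travel an extra distance and therefore produces a genuinely larger Gaussian cost than the unconstrained denominator.

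Next I would invoke the standard Gaussian estimates for the heat kernel of a compact Riemannian manifold: for every $\eta>0$ there exist $C_{\eta},c_{\eta}>0$ such that
\[
p(s,z,w)\leq C_{\eta}s^{-d/2}\exp\!\Bigl(-\tfrac{d(z,w)^{2}}{2(1+\eta)s}\Bigr),\qquad s\in(0,1],
\]
and, for $(x,y)\in S_{M}$ and $s$ small, the matching Varadhan-type lower bound
\[
p(s,x,y)\geq c_{\eta}s^{-d/2}\exp\!\Bigl(-\tfrac{(1+\eta)\,d(x,y)^{2}}{2s}\Bigr).
\]
On $\{\tau_{W}<t\}$ we have $W_{\tau_{W}}\in\partial V$, hence $d(W_{\tau_{W}},y)=r_{0}:=\rho_{M}/2-\varepsilon$, and crucially $r_{0}>d(x,y)$ by the choice of $\varepsilon$. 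Applying the upper bound to the numerator and the lower bound to the denominator, together with the trivial estimate $\mathbb{P}(\tau_{W}<t)\leq 1$, would yield
\[
\mathbb{P}(\tau<t)\leq\frac{C_{\eta}}{c_{\eta}}\exp\!\biggl(-\frac{1}{2t}\Bigl(\frac{r_{0}^{2}}{1+\eta}-(1+\eta)\,d(x,y)^{2}\Bigr)\biggr),
\]
and choosing $\eta>0$ small enough would produce the desired positive constant $K$.

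The main subtlety I anticipate is the handling of the random argument $t-\tau_{W}$ in the heat kernel upper bound: a naive prefactor $(t-\tau_{W})^{-d/2}$ blows up as $\tau_{W}\uparrow t$. The way around this is the observation that the function $g(s):=s^{-d/2}\exp(-r_{0}^{2}/(2(1+\eta)s))$ is monotonically increasing on $(0,s^{\ast}]$ with $s^{\ast}=r_{0}^{2}/((1+\eta)d)$ and vanishes as $s\to 0^{+}$, so that for $t\leq s^{\ast}$ one has $g(t-\tau_{W})\leq g(t)$ uniformly on $\{\tau_{W}<t\}$; that is, the apparent singularity of the prefactor is defeated by the Gaussian factor. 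A conceptual alternative would be to invoke Hsu's large deviation principle for the Brownian bridge and note that the infimum of the rate function over paths exiting $V$ equals $2r_{0}(r_{0}-d(x,y))>0$ (obtained from the length-squared lower bound applied to paths of length at least $2r_{0}-d(x,y)$), but the heat kernel route is more elementary and bypasses topological subtleties in the LDP upper bound.
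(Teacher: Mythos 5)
Your argument is correct, but it follows a genuinely different route from the paper's. The paper proves this lemma by a direct application of Hsu's large deviation principle for the rescaled bridge measures $\mathbb{Q}^{t}_{x,y}$: it bounds $\mathbb{P}(\tau<t)$ by the $\mathbb{Q}^{t}_{x,y}$-measure of $\{D(w,y)\geqslant\rho_{M}/2-\varepsilon\}$ and then lower-bounds the rate function by $\tfrac12\big((\rho_M/2-\varepsilon)^2-d(x,y)^2\big)>0$ via Cauchy--Schwarz on the path length --- i.e.\ exactly the ``conceptual alternative'' you relegate to your last sentence (your refined length bound $2r_0-d(x,y)$ even yields a slightly better rate than the paper's $r_0$). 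Your main route instead goes through the $h$-transform identity $\mathbb{P}(\tau<t)=p(t,x,y)^{-1}\,\mathbb{E}[\mathbf{1}_{\{\tau_W<t\}}p(t-\tau_W,W_{\tau_W},y)]$ (which holds by optional stopping of the density martingale on $[0,s]$, $s<t$, plus monotone convergence --- worth a line in a write-up) combined with a Gaussian upper bound on the numerator and a Varadhan-type lower bound on the denominator; your monotonicity observation for $s\mapsto s^{-d/2}e^{-r_0^2/(2(1+\eta)s)}$ correctly disposes of the singular prefactor at the random time $t-\tau_W$. Two small caveats: the denominator estimate you need is the sharp Varadhan lower bound $\liminf_{t\to0}t\log p(t,x,y)\geqslant-d(x,y)^2/2$, not the crude bound $K_1e^{-K_2/t}$ quoted in (\ref{eq:HKBound}) (whose $K_2$ is uncontrolled), so you must import that classical fact separately; and the numerator could equally use the paper's own upper bound in (\ref{eq:HKBound}), which has the exact Gaussian exponent and only a harmless polynomial prefactor. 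What each approach buys: yours is more elementary and fully quantitative in $t$ (explicit $\delta$ and $K$ from the heat kernel constants), while the paper's is shorter once Hsu's LDP is taken as given and avoids any manipulation of the density at the stopping time.
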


\begin{proof}
Let $\mathbb{Q}_{x,y}^{t}$ denote the law of the process $\{X_{st}^{t,x,y}:s\in[0,1]\}$
on the path space 
\[
\Omega_{x,y}\triangleq\big\{ w:[0,1]\rightarrow M\big|w\ \text{continuous, }w_{0}=x,w_{1}=y\big\}.
\]
It is obvious that 
\[
\mathbb{P}(\tau<t)\leqslant\mathbb{Q}_{x,y}^{t}\big(\{w\in\Omega_{x,y}:D(w,y)\geqslant\rho_{M}/2-\varepsilon\}\big)
\]
where $D(w,y)\triangleq\sup\{d(w_{s},y):s\in[0,1]\}.$ According to
the large deviation principle for $\{\mathbb{Q}_{x,y}^{t}\}$ (cf.
\cite[Theorem 2.2]{Hsu90}), one has 
\begin{align*}
\underset{t\rightarrow0}{\overline{\lim}}\ t\log\mathbb{P}(\tau t) & \leqslant\underset{t\rightarrow0}{\overline{\lim}}\ t\log\mathbb{Q}_{x,y}^{t}\big(\{w\in\Omega_{x,y}:D(w,y)\geqslant\rho_{M}/2-\varepsilon\}\big)\\
 & \leqslant-\inf_{\substack{w\in\Omega_{x,y}\\
D(w,y)\geqslant\rho_{M}/2-\varepsilon
}
}J_{x,y}(w),
\end{align*}
where 
\[
J_{x,y}(w)\triangleq\begin{cases}
\dfrac{1}{2}\big(\int_{0}^{1}|\dot{w}_{s}|^{2}ds-d(x,y)^{2}\big), & |\dot{w}|\in L^{2}([0,1]);\\
\infty, & \text{otherwise}.
\end{cases}
\]
For those $w$ with $|\dot{w}|\in L^{2}([0,1])$ and $D(w,y)\geqslant\rho_{M}/2-\varepsilon$,
one has 
\[
\int_{0}^{1}|\dot{w}_{s}|^{2}ds\geqslant\big(\int_{0}^{1}|\dot{w}_{s}|ds\big)^{2}\geqslant(\rho_{M}/2-\varepsilon)^{2}.
\]
It follows that 
\[
\underset{t\rightarrow0}{\overline{\lim}}\ t\log\mathbb{P}(\tau<t)\leqslant\frac{1}{2}d(x,y)^{2}-\frac{1}{2}(\rho_{M}/2-\varepsilon)^{2}.
\]
The result follows by noting that the right hand side is a negative
number. 
\end{proof}

\subsubsection*{Signature moment estimates of Brownian bridge}

Next, we estimate moments of $\pi_{n}S^{\phi}(X^{t,x,y})$. The main
result is stated below. Here we also use the Hilbert-Schmidt norm
on tensors. 
\begin{lem}
\label{lem:SigMoment}There exist positive constants $\Lambda,L$
depending only on $M$ and $\phi$, such that 
\[
\big\|\pi_{n}S^{\phi}(X^{t,x,y})\big\|_{p}\leqslant\frac{(\Lambda p)^{n/2}}{\sqrt{n!}}e^{L/pt}
\]
for all $p\geqslant1,n\in\mathbb{N},t\in(0,1]$ and $x,y\in M$. 
\end{lem}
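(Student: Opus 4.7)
Our plan is to split the Brownian bridge at its midpoint $t/2$ via Chen's identity and to control each half separately by a Radon--Nikodym change of measure against the (unconditional) Brownian motion. Denote $S_1 \triangleq S^\phi_{[0,t/2]}(X^{t,x,y})$ and $S_2 \triangleq S^\phi_{[t/2,t]}(X^{t,x,y})$. Chen's identity gives $\pi_n S^\phi(X^{t,x,y}) = \sum_{k=0}^n \pi_k S_1 \otimes \pi_{n-k} S_2$, and since the Hilbert--Schmidt norm is multiplicative under tensor products, Minkowski together with H\"older (with conjugate exponents $(2p,2p)$) will yield
\[
\|\pi_n S^\phi(X^{t,x,y})\|_p \leqslant \sum_{k=0}^n \|\pi_k S_1\|_{2p}\,\|\pi_{n-k} S_2\|_{2p}.
\]

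For the first half, $\pi_k S_1$ is $\mathcal{F}_{t/2}$-measurable, and the Brownian bridge admits the classical Radon--Nikodym density $p(t/2, W^x_{t/2}, y)/p(t,x,y)$ against the law of $W^x$ restricted to $[0, t/2]$, so we can write
\[
\mathbb{E}\big[|\pi_k S_1|^{2p}\big] = \mathbb{E}\Big[|\pi_k S^\phi_{[0,t/2]}(W^x)|^{2p}\cdot \frac{p(t/2, W^x_{t/2}, y)}{p(t,x,y)}\Big].
\]
Aronson-type two-sided Gaussian heat kernel estimates on the compact manifold $M$ (upper bound $p(t/2,z,y)\leqslant C_1 t^{-d/2}$ and lower bound $p(t,x,y)\geqslant c_1 t^{-d/2} e^{-\kappa/t}$, using $d(x,y)\leqslant \operatorname{diam}(M)$) will then give the uniform ratio bound $p(t/2,z,y)/p(t,x,y)\leqslant C' e^{\kappa/t}$. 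For the Brownian signature factor, we treat $F(W^x)$ as an $\mathbb{R}^N$-valued semimartingale with smooth bounded coefficients (via the embedding) and apply the standard BDG--Picard iteration to the signature SDE $dS = S\otimes \circ dF(W^x)$, yielding the classical moment estimate $\|\pi_k S^\phi_{[0,t/2]}(W^x)\|_{2p}\leqslant (C_0 p t)^{k/2}/\sqrt{k!}$. Combined, $\|\pi_k S_1\|_{2p} \leqslant (C' e^{\kappa/t})^{1/(2p)}(C_0 pt)^{k/2}/\sqrt{k!}$.

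For the second half we invoke time reversal: $\tilde X_s \triangleq X^{t,x,y}_{t-s}$ is itself a Brownian bridge (from $y$ to $x$) by symmetry of the heat kernel together with the Markov property, and a direct coordinate calculation shows $S_m(\gamma^{\leftarrow})^{i_1\cdots i_m} = (-1)^m S_m(\gamma)^{i_m\cdots i_1}$, so path reversal preserves the Hilbert--Schmidt norm pathwise. Hence $\|\pi_{n-k} S_2\|_{2p} = \|\pi_{n-k} S^\phi_{[0,t/2]}(\tilde X)\|_{2p}$ and the first-half argument applies with $x,y$ swapped (uniformly over the compact $M$). Assembling everything and using $t \leqslant 1$,
\[
\|\pi_n S^\phi(X^{t,x,y})\|_p \leqslant (C')^{1/p} e^{\kappa/(pt)} (C_0 p)^{n/2} \sum_{k=0}^n \frac{1}{\sqrt{k!(n-k)!}}.
\]
The combinatorial sum is controlled by $4^{n/2}/\sqrt{n!}$ via $\sum_k \sqrt{\binom{n}{k}}\leqslant \sqrt{n+1}\cdot 2^{n/2}$ (Cauchy--Schwarz), and the remaining prefactors absorb into a single $\Lambda$ large enough to swallow $(C')^{1/p}\leqslant \max(1,C')$ uniformly in $p\geqslant 1$ for $n\geqslant 1$ (the case $n=0$ is trivial since $\pi_0 S = 1$). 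The extension from $p\geqslant 2$ (where BDG is natural) to $p\geqslant 1$ follows from Jensen $\|\cdot\|_p\leqslant \|\cdot\|_2$ with a minor adjustment of constants.

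The hardest step will be the clean uniform heat kernel ratio bound $p(t/2,z,y)/p(t,x,y)\leqslant C'e^{\kappa/t}$ across all $x,y\in M$ and $z$ in the support of $W^x_{t/2}$, which rests on well-known two-sided Gaussian estimates on compact Riemannian manifolds. The midpoint split is essential: expressing the full bridge signature as a single Radon--Nikodym expectation against $W^x$ would introduce a Dirac-like factor at the endpoint and fail to produce a finite bound, which is exactly why we pay the $e^{L/pt}$ price only once (through two $e^{\kappa/t}$ factors with exponent $1/(2p)$ each).
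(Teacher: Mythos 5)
Your argument is correct, and it takes a genuinely different route from the paper. The paper works directly with the bridge over the whole interval $[0,t]$: it writes the bridge extrinsically as an SDE whose drift is $\bar{\nabla}_x\log\bar{p}(t-s,\cdot,y)$, controls that drift via the Stroock--Turetsky gradient estimate (\ref{eq:STEst}) together with a moment bound for $d(X_s^{t,x,y},y)$ (Lemma \ref{lem:BBMoment}, which is where the heat kernel lower bound and hence the factor $e^{K_2/pt}$ enter), and then runs an induction on $n$ with the dimension-free BDG inequality and the Hölder exponents $p_1=p(n+1)$, $p_2=p(n+1)/n$. You instead never touch the singular drift: you split at $t/2$, use Chen's identity and Cauchy--Schwarz to decouple the two halves, kill the conditioning on each half by the $h$-transform density $p(t/2,W_{t/2},y)/p(t,x,y)$ (bounded by $C'e^{\kappa/t}$ via the same two-sided heat kernel bounds (\ref{eq:HKBound}), with the exponent $1/(2p)$ from Hölder producing exactly the $e^{L/pt}$ prefactor), handle the second half by time reversal and the antipode identity $\pi_mS(\overleftarrow{\gamma})^{i_1\cdots i_m}=(-1)^m\pi_mS(\gamma)^{i_m\cdots i_1}$ (which indeed preserves the Hilbert--Schmidt norm and is consistent with the paper's own use of reversal in Section \ref{subsec:MainThmRecCurv}), and reduce to the signature moment bound for the unconditioned Brownian motion. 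What your route buys is conceptual cleanliness: the price $e^{L/pt}$ is paid once and transparently through the density, and no gradient estimate for $\log p$ is needed. What it does not buy is a free lunch on the core stochastic estimate: the input $\|\pi_kS^{\phi}_{[0,s]}(W^x)\|_{q}\leqslant (C_0qs)^{k/2}/\sqrt{k!}$, which you cite as classical, still requires essentially the same BDG-plus-induction machinery the paper deploys (it is the drift-free sub-case of the paper's induction), so you should either cite a precise reference or sketch that induction. Two minor points worth a sentence each in a full write-up: the a.s.\ agreement of the Stratonovich iterated integrals under the mutually absolutely continuous bridge and Wiener measures on $\mathcal{F}_{t/2}$ (so that the Radon--Nikodym identity applies to the signature functional), and the fact that the reversal identity holds for the Stratonovich lift of the (reversed) bridge, which is again a semimartingale.
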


The argument is quite similar to the proof of Proposition \ref{prop:MainSigEst}.
Since the estimate here is global, we take an extrinsic perspective
and make use of a well-known estimate for the logarithmic derivative
of the heat kernel.

\subsubsection*{I. Extrinsic construction of the Brownian bridge}

Let $M$ be isometrically embedding in some Euclidean space $\mathbb{R}^{m}$
(which is always possible due to Nash's embedding theorem). A \textit{tubular
neighbourhood} of $M$ in $\mathbb{R}^{m}$ is defined by the normal
bundle 
\[
{\cal B}_{\varepsilon}\triangleq\bigsqcup_{x\in M}B^{\perp}(x,\varepsilon).
\]
Here $B^{\perp}(x,\varepsilon)\triangleq\{x'\in\mathbb{R}^{m}:\overrightarrow{xx'}\in(T_{x}M)^{\perp},|x'-x|_{\mathbb{R}^{m}}<\varepsilon\}$.
Since $M$ is compact, it is a standard fact in differential topology
(cf. \cite[Chap. 10]{Lee06}) that when $\varepsilon$ is small, ${\cal B}_{\varepsilon}$
is diffeomorphic to the $\varepsilon$-neighbourhood of $M$ in $\mathbb{R}^{m}.$
We fix such an $\varepsilon.$ There is a canonical projection $\pi:{\cal B}_{\varepsilon}\rightarrow M$
taking $x\in{\cal B}_{\varepsilon}$ onto its unique closest point
$\pi(x)$ in $M$. 

Functions on $M$ can be extended to $\mathbb{R}^{m}$ by using a
bump function on ${\cal B}_{\varepsilon}$. More precisely, let $\eta:[0,\infty)\rightarrow\mathbb{R}$
be a smooth bump function such that $0\leqslant\eta\leqslant1$, $\eta\equiv1$
on $[0,\varepsilon/2]$ and $\eta\equiv0$ on $(\varepsilon,\infty)$.
Given any function $f\in C^{\infty}(M),$ we define 
\[
\bar{f}(x)\triangleq\eta(d_{\mathbb{R}^{m}}(x,\pi(x)))f(\pi(x)),\ \ \ x\in{\cal B}_{\varepsilon}.
\]
It is clear that $\bar{f}\in C_{c}^{\infty}(\mathbb{R}^{m})$. Vector
fields and one-forms on $M$ are extended to $\mathbb{R}^{m}$ in
a similar way. We always use the notation $\bar{\cdot}$ to denote
the extended object. 

For each $x\in M$ and $1\leqslant\alpha\leqslant m,$ let $V_{\alpha}(x)$
be the tangent vector to $M$ defined by orthogonally projecting the
$\alpha$-th canonical basis vector of $\mathbb{R}^{m}$ onto $T_{x}M$.
Then $V_{\alpha}$ defines a smooth vector field on $M$ and we take
its extension $\bar{V}_{\alpha}$ to $\mathbb{R}^{m}$ as before.
Note that the Laplacian on $M$ can be expressed as $\Delta_{M}=\sum_{\alpha=1}^{m}V_{\alpha}^{2}$.
Given fixed $t\in(0,1]$ and $x,y\in\mathbb{R}^{m}$, let $\{\bar{X}_{s}^{t,x,y}:0\leqslant s<t\}$
be the solution to the SDE 
\[
d\bar{X}_{s}=\sum_{\alpha=1}^{m}\bar{V}_{\alpha}(X_{s})dB_{s}+\bar{\nabla}_{x}\bar{p}(t-s,\bar{X}_{s},y)ds,\ \ \ \bar{X}_{0}=x.
\]
Here $\bar{\nabla}$ denotes the Euclidean gradient in $\mathbb{R}^{m}$.
Equivalently, $\bar{X}_{\cdot}^{t,\cdot,y}$ is a Markov family with
generator 
\[
\bar{{\cal L}}_{s}^{t,y}=\frac{1}{2}\sum_{\alpha=1}^{m}\bar{V}_{\alpha}^{2}+\bar{\nabla}_{x}\bar{p}(t-s,\cdot,y)\cdot\bar{\nabla}.
\]
The following fact is of no surprise.
\begin{lem}
\label{lem:ExtBrid}Suppose that $x,y\in M$. Then with probability
one, the process $\bar{X}^{t,x,y}$ lives on $M$ for all time. In
this case, $\bar{X}_{\cdot}^{t,x,y}$ has the same law as the Brownian
bridge from $x$ to $y$ on $M$ with lifetime $t$. 
\end{lem}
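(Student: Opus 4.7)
My plan is to prove the two claims in sequence: first, that the extrinsic SDE solution $\bar{X}^{t,x,y}$ never leaves $M$ when started at $x\in M$, and second, that its law on $M$ coincides with that of the intrinsic Brownian bridge. Both facts will follow from the same underlying observation that both the diffusion vector fields and the drift, restricted to $M$, are tangent to $M$.

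\textbf{Step 1: Tangency of the coefficients on $M$.} The diffusion vector fields $\bar{V}_\alpha$ are tangent to $M$ at all points of $M$ directly from the definition: $\bar{V}_\alpha|_M = V_\alpha$, and $V_\alpha(x)$ is by construction the orthogonal projection of $e_\alpha$ onto $T_xM$, hence lies in $T_xM$. For the drift, one uses the key fact that the bump-extension renders $\bar{p}$ locally normal-constant: for $z\in M$ and any small $\nu\in(T_zM)^\perp$, the footpoint map satisfies $\pi(z+r\nu)=z+O(r^2)$, so that $\bar{p}(t-s,z+r\nu,y)=\eta(|r\nu|)p(t-s,\pi(z+r\nu),y)=p(t-s,z,y)+O(r^2)$ since $\eta\equiv 1$ near zero. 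Differentiating in $r$ at $r=0$ yields $\langle \bar{\nabla}_x\bar{p}(t-s,z,y),\nu\rangle_{\mathbb{R}^m}=0$, so the drift is tangent to $M$ at every point of $M$ as well.

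\textbf{Step 2: Invariance of $M$ and generator comparison.} Given the tangency, I would invoke the Stratonovich chain rule to deduce invariance: interpreting the SDE in the Stratonovich sense (which is consistent with the stated generator $\tfrac12\sum \bar{V}_\alpha^2$ involving operator composition), for any defining function $f$ locally cutting out $M$, the equation $df(\bar{X}_s)=\sum_\alpha \bar{V}_\alpha f(\bar{X}_s)\circ dB^\alpha_s+\langle \bar{\nabla}_x\bar{p}(t-s,\bar{X}_s,y),\bar{\nabla}f(\bar{X}_s)\rangle\,ds$ has coefficients that vanish on $\{f=0\}$ by Step 1. Hence $f(\bar{X}_s)\equiv 0$ a.s., so $\bar{X}^{t,x,y}_s\in M$ for all $s<t$. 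Once confined to $M$, the generator of $\bar{X}$ applied to a lifted test function $\bar{f}$ at a point $z\in M$ reduces, via the standard identity $\sum_\alpha V_\alpha^2=\Delta_M$ and the tangency of $\bar{\nabla}\bar{f}$ and $\bar{\nabla}_x\bar{p}$, to the intrinsic expression $\tfrac12\Delta_M f(z)+\langle\nabla_x\log p(t-s,z,y),\nabla f(z)\rangle_{T_zM}$, which is precisely the generator of the Brownian bridge as recorded after (\ref{eq:BBTran}). The well-posedness of the martingale problem for this (time-inhomogeneous, uniformly elliptic on the compact manifold $M$) operator then identifies the laws.

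\textbf{Main obstacle.} The subtlest point is the Stratonovich interpretation required for Step 2: an It\^o SDE with tangent coefficients does not in general preserve $M$, because the It\^o--Stratonovich correction introduces a normal second-fundamental-form drift. Handling this correctly — either by declaring the equation Stratonovich from the outset (as is implicit in the generator formula), or by carefully adding the corresponding compensation term and verifying its tangency — is the main technical care needed; once that convention is fixed, both invariance and the generator match follow from the tangency arguments outlined above.
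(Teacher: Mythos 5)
Your proof is correct in substance but reaches the invariance of $M$ by a genuinely different mechanism than the paper. The paper works entirely at the level of the generator: it takes the nonnegative function $F(z)=d_{\mathbb{R}^m}(z,M)^{2}$ and checks that $\bar{\cal L}_{s}^{t,y}F\equiv 0$ throughout the tubular neighbourhood ${\cal B}_{\varepsilon/2}$ (this is exactly what the bump-function extension is designed for: all coefficients are constant along normal fibres near $M$, while $\bar\nabla F=2\nu$ points along the fibre), so the martingale property gives $\mathbb{E}[F(\bar X_{s\wedge\sigma})]=F(x)=0$ and hence $\bar X\in M$ a.s. This sidesteps the It\^o-versus-Stratonovich ambiguity you correctly identify, because the object actually specified in the construction is the generator $\tfrac12\sum_\alpha\bar V_\alpha^2+\bar\nabla_x\bar p\cdot\bar\nabla$, not a particular stochastic differential; and, importantly, the identity $\bar{\cal L}F=0$ is verified on a full neighbourhood of $M$, which is what lets one run the expectation argument without knowing a priori that the process sits on $M$. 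Your route (tangency of the Stratonovich coefficients on $M$ plus invariance of submanifolds under Stratonovich SDEs with tangent vector fields) also works, and your Step 1 tangency computations are right, but two points need tightening: (i) the inference ``the coefficients of $df(\bar X_s)$ vanish on $\{f=0\}$, hence $f(\bar X_s)\equiv 0$'' is not a closed argument as stated --- the coefficients depend on $\bar X_s$, not merely on $f(\bar X_s)$, so you must invoke the actual invariance theorem (local flattening of $M$ and pathwise uniqueness), or a nonnegative Lyapunov function as the paper does; (ii) the It\^o correction $\tfrac12\sum_\alpha\tilde\nabla_{\bar V_\alpha}\bar V_\alpha$ restricted to $M$ equals $\tfrac{d}{2}H$, which is purely normal, so the normal drift you flag in your ``main obstacle'' is genuinely present in the It\^o form and is precisely what compensates the tangential fly-off; your resolution (fix the Stratonovich convention consistent with the generator) is the right one but must be stated as part of the proof rather than left as a caveat. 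The identification of the law via the generator restriction $\bar{\cal L}_{r}^{t,y}\bar f|_{M}=\tfrac12\Delta_M f+\nabla_x^M p(t-r,\cdot,y)\cdot\nabla^M f$ and the martingale-problem characterisation is essentially identical to the paper's second step.
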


\begin{proof}
Consider the function $F(z)\triangleq d_{\mathbb{R}^{m}}(z,M)^{2}$
on $\mathbb{R}^{m}$. For $x\in{\cal B}_{2\varepsilon},$ one has
$F(x)=0$ iff $x\in M$. Define 
\[
\sigma\triangleq\inf\{s\in[0,t):\bar{X}_{s}^{t,x,y}\notin{\cal B}_{\varepsilon/2}\}.
\]
By our choices of extensions, it is easily seen that $(\bar{{\cal L}}_{s}^{t,y}F)(\bar{X}_{s}^{t,x,y})=0$
on $[0,\sigma]$. It follows from the martingale characterisation
that 
\[
\mathbb{E}\big[F(\bar{X}_{s\wedge\sigma}^{t,x,y})\big]=F(x)=0\implies\bar{X}_{s\wedge\sigma}^{t,x,y}\in M\ \ \ \text{a.s. }
\]
By continuity, the process lives on $M$ for all time before $\sigma\wedge t$
which also indicates that $\sigma=t$ (i.e. no exiting ${\cal B}_{\varepsilon/2}$
occurs). For the second part of the lemma, one first notes that the
process 
\[
s\mapsto F(\bar{X}_{s}^{t,x,y})-F(x)-\int_{0}^{s}\bar{{\cal L}}_{r}^{t,y}F(\bar{X}_{r}^{t,x,y})dr
\]
is a martingale for all $F\in C_{c}^{2}(\mathbb{R}^{m})$. In addition,
the law of the Brownian bridge $X^{t,x,y}$ on $M$ is characterised
by the fact that 
\[
s\mapsto f(X_{s}^{t,x,y})-f(x)-\int_{0}^{s}\big(\frac{1}{2}\Delta_{M}f(X_{r}^{t,x,y})+\nabla_{x}^{M}p(t-r,X_{r}^{t,x,y},y)\cdot\nabla^{M}f(X_{r}^{t,x,y})\big)dr
\]
is a martingale for all $f\in C^{2}(M)$. The result follows from
the fact that $\bar{X}_{r}^{t,x,y}\in M$ as well as the relation
that 
\[
\bar{{\cal L}}_{r}^{t,y}\bar{f}\big|_{M}=\frac{1}{2}\Delta_{M}f+\nabla_{x}^{M}p(t-r,\cdot,y)\cdot\nabla^{M}f\ \ \ \forall f\in C^{2}(M).
\]
The proof of the lemma is thus complete. 
\end{proof}

\subsubsection*{II. Heat kernel bounds and a moment estimate for the Brownian bridge}

Our proof of Lemma \ref{lem:SigMoment} relies on a few heat kernel
estimates which we now recall.
\begin{prop}
Let $M$ be a compact Riemannian manifold.

\vspace{2mm} \noindent (i) (cf. \cite[Theorem 5.3.4]{Hsu02})
There are positive constants $K_{1},K_{2},K_{3}$ such that 
\begin{equation}
K_{1}e^{-K_{2}/t}\leqslant p(t,x,y)\leqslant\frac{K_{3}}{t^{d-1/2}}e^{-d(x,y)^{2}/2t}\ \ \ \forall(t,x,y)\in(0,1]\times M\times M.\label{eq:HKBound}
\end{equation}
(ii) (cf. \cite[Inequality (1.1)]{ST98}) There exists a positive
constant $K_{4},$ such that 
\begin{equation}
\big|\nabla_{x}\log p(t,x,y)\big|\leqslant K_{4}\big(\frac{1}{\sqrt{t}}+\frac{d(x,y)}{t}\big)\ \ \ \forall(t,x,y)\in(0,1]\times M\times M.\label{eq:STEst}
\end{equation}
\end{prop}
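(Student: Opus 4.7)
The plan is to deduce both bounds from the Minakshisundaram--Pleijel asymptotic expansion in a neighbourhood of the diagonal, combined with compactness arguments and a Chapman--Kolmogorov chaining trick to handle the far-diagonal regime.

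For part (i), I would split into two regimes. In the near-diagonal regime $d(x,y) < \rho_M/2$, the classical Minakshisundaram--Pleijel expansion recalled in the introduction gives
\[
p(t,x,y) = (2\pi t)^{-d/2} e^{-d(x,y)^2/2t}\bigl(u_0(x,y) + O(t)\bigr)
\]
uniformly on this compact set, and since $u_0(x,x) = 1$, continuity makes $u_0$ bounded above and below by positive constants on a smaller tube. Both stated inequalities follow directly (the exponent $d-1/2$ on the right of (\ref{eq:HKBound}) is loose compared to $d/2$ for $t \in (0,1]$ and is introduced to match the statement). In the far-diagonal regime $d(x,y) \geq \rho_M/2$, cover $M$ by finitely many balls of radius $\rho_M/4$, pick a chain of centers connecting $x$ to $y$, and iterate the Chapman--Kolmogorov identity $p(t,x,y) = \int_M p(s,x,z)\,p(t-s,z,y)\,dz$ together with the near-diagonal bound across the chain. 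Since $M$ is compact, finitely many steps suffice, producing both the upper Gaussian bound (with enlarged $K_3$) and the exponential lower bound $K_1 e^{-K_2/t}$.

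For part (ii), I would exploit the Malliavin--Stroock expansion (\ref{eq:MSExp}) together with the explicit formula (\ref{eq:G0G1}) for $G_0$. On $S_M$ the expansion gives $t\nabla_x \log p(t,x,y) = \nabla_x G_0(x,y) + O(t)$ uniformly on compact subsets of $S_M$, and since $G_0(x,y) = -d(x,y)^2/2$ one has $|\nabla_x G_0(x,y)| = d(x,y)$. Therefore
\[
|\nabla_x \log p(t,x,y)| \leq C\frac{d(x,y)}{t} + C,
\]
which is dominated by $K_4\bigl(1/\sqrt{t} + d(x,y)/t\bigr)$ for $t \in (0,1]$. For the complementary regime $d(x,y) \geq \rho_M/2$, differentiate the Chapman--Kolmogorov representation $p(t,x,y) = \int_M p(t/2,x,z)\,p(t/2,z,y)\,dz$ in $x$ and estimate the resulting integrand using the just-established near-diagonal gradient bound; since $d(x,y)$ is bounded above on a compact manifold, the error stays absorbed into the right-hand side of (\ref{eq:STEst}).

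The main obstacle is the uniformity of the gradient estimate off the diagonal, where the Malliavin--Stroock expansion ceases to apply. The cleanest route is to invoke a Li--Yau / Souplet--Zhang type parabolic gradient estimate, which directly yields the correct $1/\sqrt{t} + d(x,y)/t$ scaling globally on $M$; this is precisely the route taken by Sheu--Thalmaier in \cite{ST98}. Since both parts of the proposition are standard results quoted verbatim from \cite{Hsu02} and \cite{ST98}, the sketch above is intended only to indicate the mechanisms producing the stated forms rather than to give a self-contained proof.
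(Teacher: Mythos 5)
The paper does not actually prove this proposition: it is stated as a quotation of \cite[Theorem 5.3.4]{Hsu02} and \cite[Inequality (1.1)]{ST98}, so your closing move of deferring to those references is exactly what the authors do, and in that sense your "proof" and the paper's coincide. Two remarks on the heuristic sketch you attach. First, a small but real error of attribution: \cite{ST98} is Stroock--Turetsky, not Sheu--Thalmaier; their inequality (1.1) is precisely (\ref{eq:STEst}), obtained by a Hamilton-type gradient estimate for positive solutions of the heat equation combined with two-sided Gaussian bounds, which is indeed the "global parabolic gradient estimate" route you gesture at. Second, the intermediate step you propose for part (ii) in the far-diagonal regime --- differentiating $p(t,x,y)=\int_M p(t/2,x,z)p(t/2,z,y)\,dz$ in $x$ and "estimating the integrand using the just-established near-diagonal gradient bound" --- is circular as written: the integration is over all $z\in M$, including $z$ with $d(x,z)\geqslant\rho_M/2$, where the Malliavin--Stroock expansion (and hence your near-diagonal bound) is unavailable. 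A similar caveat applies to part (i): naive Chapman--Kolmogorov chaining does preserve the sharp Gaussian exponent $d(x,y)^2/2t$ in principle (via $\inf_z[d(x,z)^2/2s+d(z,y)^2/2(t-s)]=d(x,y)^2/2t$), but controlling the contribution of chains whose links leave the near-diagonal tube, and keeping the polynomial loss down to $t^{-(d-1/2)}$, is where the actual work in \cite{Hsu02} lies. Since the proposition is quoted rather than proved in the paper, these gaps do not affect anything downstream, but they mean your sketch should not be read as a self-contained argument --- which, to your credit, you say explicitly.
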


The following moment estimate of the Brownian bridge will also be
needed for us. It is not in the sharpest form but is enough for our
purpose. 
\begin{lem}
\label{lem:BBMoment}Let $\beta\in(0,1/2)$ be a given fixed parameter.
There exist positive constants $K_{5},K_{6}$ such that 
\[
\|d(X_{s}^{t,x,y},y)\|_{p}\leqslant K_{5}\sqrt{p}e^{K_{2}/pt}(t-s)^{\beta}
\]
for all $p\geqslant1$, $s<t\in(0,1]$ and $x,y\in M.$ Here $K_{2}$
is the same constant appearing in (\ref{eq:HKBound}).
\end{lem}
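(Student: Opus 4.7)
The plan is to exploit the explicit transition density of the Brownian bridge (cf.~(\ref{eq:BBTran})) together with the two-sided heat kernel bounds (\ref{eq:HKBound}). Starting from
\[
\mathbb{E}[d(X_s^{t,x,y}, y)^p] = \int_M d(z,y)^p \frac{p(s,x,z)\,p(t-s,z,y)}{p(t,x,y)}\,dz,
\]
I would bound the denominator below by $K_1 e^{-K_2/t}$ (this is exactly what will produce the factor $e^{K_2/(pt)}$ in the target estimate after taking $p$-th roots), and bound $p(t-s,z,y)$ above by $K_3 (t-s)^{-(d-1/2)} e^{-d(z,y)^2/(2(t-s))}$. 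The key pointwise estimate is the elementary one-variable maximization
\[
a^p e^{-a^2/(2\sigma)} \leq (p\sigma)^{p/2} e^{-p/2} \quad \text{(maximum attained at }a=\sqrt{p\sigma}\text{)},
\]
applied with $a = d(z,y)$ and $\sigma = t-s$. The remaining integrand involves only $p(s,x,z)$, whose integral over $M$ equals $1$ by conservativeness of the heat semigroup on a compact manifold.

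Combining these steps yields
\[
\mathbb{E}[d(X_s^{t,x,y}, y)^p] \leq \frac{K_3}{K_1}\, e^{K_2/t}\, (t-s)^{-(d-1/2)} (p(t-s))^{p/2} e^{-p/2},
\]
and taking $p$-th roots gives
\[
\|d(X_s^{t,x,y}, y)\|_p \leq C\, \sqrt{p}\, e^{K_2/(pt)}\, (t-s)^{1/2 - (d-1/2)/p}
\]
for some absolute constant $C$ (using $(K_3/K_1)^{1/p} \leq \max\{K_3/K_1, 1\}$ and absorbing $e^{-1/2}$).

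The main, albeit minor, obstacle is that the exponent $1/2 - (d-1/2)/p$ is smaller than $\beta$ for small $p$. To handle this, I would fix the threshold $p_0 \triangleq (d - 1/2)/(1/2 - \beta)$. For $p \geq p_0$, one has $1/2 - (d-1/2)/p \geq \beta$, and since $t-s \leq 1$ the factor $(t-s)^{1/2 - (d-1/2)/p}$ is dominated by $(t-s)^\beta$, giving the target inequality directly. For $1 \leq p < p_0$, invoke monotonicity of $L^p$ norms, $\|\cdot\|_p \leq \|\cdot\|_{p_0}$, and note that the right-hand side of the target bound, as a function of $p$, is itself increasing as $p$ decreases (because $\sqrt{p_0}\, e^{K_2/(p_0 t)} \leq \sqrt{p_0}\cdot \sqrt{p}\, e^{K_2/(pt)}$ whenever $p \in [1, p_0]$, since both $1/p \geq 1/p_0$ and $\sqrt{p_0/p} \leq \sqrt{p_0}$). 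Hence the bound at $p_0$ transfers to the bound at $p$ with the constant enlarged by a factor $\sqrt{p_0}$, completing the proof with $K_5 = \sqrt{p_0}\, C$.
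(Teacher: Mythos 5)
Your proof is correct. It shares the paper's skeleton — the transition density identity from (\ref{eq:BBTran}), the lower bound $p(t,x,y)\geqslant K_1e^{-K_2/t}$ and the Gaussian upper bound on $p(t-s,z,y)$ from (\ref{eq:HKBound}), and conservativeness of the heat semigroup — but the key estimate is genuinely different. The paper splits the spatial integral at the threshold $d(z,y)=\sqrt{p}\,(t-s)^{\beta}$: on the near region it bounds $d(z,y)^p$ by $p^{p/2}(t-s)^{\beta p}$ and uses Chapman--Kolmogorov to make the remaining integral exactly $1$, and on the far region it bounds $d(z,y)^p$ by $D_M^p$ and uses the Gaussian decay $e^{-p(t-s)^{2\beta-1}/2}$ to kill the polynomial singularity $(t-s)^{-(d-1/2)}$; this yields the $\beta$-dependent bound uniformly in $p\geqslant1$ with no case analysis. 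You instead apply the pointwise maximization $a^pe^{-a^2/(2\sigma)}\leqslant(p\sigma)^{p/2}e^{-p/2}$, which produces the sharper Gaussian-moment factor $\sqrt{p(t-s)}$ but leaves the prefactor $(t-s)^{-(d-1/2)/p}$, so you must trade exponents ($(t-s)^{1/2-(d-1/2)/p}\leqslant(t-s)^{\beta}$ only for $p\geqslant p_0$) and patch the range $1\leqslant p<p_0$ by monotonicity of $L^p$ norms; your verification that the target right-hand side only grows as $p$ decreases from $p_0$ to $1$ is the correct way to close that case. Net effect: your route is slightly sharper for large $p$ (exponent close to $1/2$ rather than $\beta$) at the cost of a two-case argument, while the paper's region-splitting is a soft version of the same Gaussian trade-off packaged to give the stated estimate in one stroke.
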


\begin{proof}
According to the transition density formula (\ref{eq:BBTran}), one
has 
\begin{align*}
\mathbb{E}\big[d(X_{s}^{t,x,y},y)^{p}\big] & =\frac{1}{p(t,x,y)}\int_{M}d(z,y)^{p}p(s,x,z)p(t-s,z,y)dz\\
 & =\frac{1}{p(t,x,y)}\big(\int_{\{z:d(z,y)\leqslant\sqrt{p}(t-s)^{\beta}\}}+\int_{\{z:d(z,y)\geqslant\sqrt{p}(t-s)^{\beta}\}}\big)\\
 & \ \ \ d(z,y)^{p}p(s,x,z)p(t-s,z,y)dz\\
 & \leqslant\sqrt{p}^{p}(t-s)^{\beta p}+D_{M}^{p}\cdot K_{1}^{-1}e^{K_{2}/t}\cdot\frac{K_{3}}{(t-s)^{d-1/2}}e^{-p(t-s)^{2\beta-1}/2},
\end{align*}
where $D_{M}$ is the diameter of $M$ and we also used the semigroup
property as well as the estimate (\ref{eq:HKBound}) to reach the
last inequality. One can assume without loss of generality that $K_{1}<1$
and $K_{3}>1$. Since $p\geqslant1,$ one obtains that 
\[
\big\| d(X_{s}^{t,x,y},y)\big\|_{p}\leqslant\sqrt{p}(t-s)^{\beta}+D_{M}K_{1}^{-1}K_{3}e^{K_{2}/pt}\cdot\frac{e^{-(t-s)^{2\beta-1}/2}}{(t-s)^{d-1/2}}.
\]
The desired estimate follows by noting that 
\[
\frac{e^{-(t-s)^{2\beta-1}/2}}{(t-s)^{d-1/2}}\leqslant C_{d,\beta}(t-s)^{\beta}
\]
for all $s<t\in(0,1].$
\end{proof}

\subsubsection*{III. Proof of Lemma \ref{lem:SigMoment}}

We now proceed to prove Lemma \ref{lem:SigMoment}. The argument is
similar to (but not as fine as) the proof of Proposition \ref{prop:MainSigEst}
and some repeated calculations will thus be omitted. Recall that $\beta\in(0,1/2)$
is a fixed number appearing in Lemma \ref{lem:BBMoment}. For simplicity,
we just write $X=X^{t,x,y}$ (which is also the process $\bar{X}\triangleq\bar{X}^{t,x,y}$
by Lemma \ref{lem:ExtBrid}). We postulate the following estimate
on signature moments of $\bar{X}$ and will prove it by induction
on $n$:
\begin{equation}
\big\|\int_{0<r_{1}<\cdots<r_{n}<\rho}\bar{\phi}(\circ d\bar{X}_{tr_{1}})\otimes\cdots\otimes\bar{\phi}(\circ d\bar{X}_{tr_{n}})\big\|_{p}\leqslant\frac{(\Lambda p)^{n/2}}{\sqrt{n!}}e^{K_{2}/pt}\big(1-(1-\rho)^{\beta}\big)^{n/2}\label{eq:IndHypoExt}
\end{equation}
for all $p\geqslant1,n\in\mathbb{N},t\in(0,1],$ $\rho\in[0,1].$
Here $K_{2}$ is the same constant appearing in Part II and $\Lambda$
is a universal constant that will be determined in the induction argument.
Lemma \ref{lem:SigMoment} follows immediately by taking $\rho=1.$
We set 
\[
J_{n}(\rho)\triangleq\int_{0<r_{1}<\cdots<r_{n}<\rho}\bar{\phi}(\circ d\bar{X}_{tr_{1}})\otimes\cdots\otimes\bar{\phi}(\circ d\bar{X}_{tr_{n}}).
\]
\textit{\uline{Semi-martingale decomposition}}:

\vspace{2mm}From the extrinsic perspective in Part I, one can write
\begin{align}
\bar{\phi}(\circ d\bar{X}_{tr}) & =\sqrt{t}\bar{\phi}_{i}(\bar{X}_{tr})\bar{V}^{i}(\bar{X}_{tr})dB_{r}^{t}+\frac{t}{2}\partial_{j}\bar{\phi}_{i}(\bar{X}_{tr})\bar{a}^{ij}(\bar{X}_{tr})dr\nonumber \\
 & \ \ \ +t\bar{\phi}_{i}(\bar{X}_{tr})\partial_{i}\log\bar{p}(t(1-r),\bar{X}_{tr},y)dr,\ \ \ r\in[0,1],\label{eq:SemDecompExt}
\end{align}
where $B_{r}^{t}\triangleq B_{tr}/\sqrt{t}$ and $\bar{a}\triangleq\bar{V}\cdot\bar{V}^{T}.$
Note that $\bar{\phi},\bar{V},\bar{a}\in C_{c}^{\infty}$ by our choice
of extension. In what follows, we will use $C_{i}$ to denote constants
depending only on $M,\phi,\beta$ and the embedding.

\vspace{2mm}\noindent \textit{\uline{Base step}}: 

\vspace{2mm}By using the BDG inequality, one has 
\begin{equation}
\big\|\sqrt{t}\int_{0}^{\rho}\bar{\phi}_{i}(\bar{X}_{tr})\bar{V}^{i}(\bar{X}_{tr})dB_{r}^{t}\big\|_{p}\leqslant C_{1}\sqrt{p}\cdot\sqrt{\rho}.\label{eq:I1a}
\end{equation}
It is also obvious that 
\begin{equation}
\big\|\int_{0}^{\rho}\frac{t}{2}\partial_{j}\bar{\phi}_{i}(\bar{X}_{tr})\bar{a}^{ij}(\bar{X}_{tr})dr\big\|_{p}\leqslant C_{2}\rho.\label{eq:I1b}
\end{equation}
For the last term of (\ref{eq:SemDecompExt}), by using the Stroock-Turestsky
estimate (\ref{eq:STEst}) and Lemma \ref{lem:BBMoment}, one has
\begin{align*}
 & \big\|\int_{0}^{\rho}t\bar{\phi}_{i}(\bar{X}_{tr})\partial_{i}\log\bar{p}(t(1-r),\bar{X}_{tr},y)dr\big\|_{p}\\
 & \leqslant C_{3}t\int_{0}^{\rho}K_{4}\big(\frac{1}{\sqrt{t(1-r)}}+\frac{\|d(\bar{X}_{tr},y)\|_{p}}{t(1-r)}\big)dr\\
 & \leqslant C_{3}K_{4}\int_{0}^{\rho}\frac{dr}{\sqrt{1-r}}+C_{3}K_{4}\int_{0}^{\rho}\frac{K_{5}\sqrt{p}e^{K_{2}/pt}(t(1-r))^{\beta}}{1-r}dr.
\end{align*}
After evaluating the above integrals explicitly and combining with
(\ref{eq:I1a}, \ref{eq:I1b}), one finds that 
\begin{equation}
\|J_{1}(\rho)\|_{p}\leqslant C_{4}\sqrt{p}e^{K_{2}/pt}\big(1-(1-\rho)^{\beta}\big)^{1/2}\label{eq:J1Est}
\end{equation}
with suitably chosen constant $C_{4}$.

\vspace{2mm}\noindent \textit{\uline{Induction step}}:

\vspace{2mm}Suppose that the induction hypothesis (\ref{eq:IndHypoExt})
holds. According to the decomposition (\ref{eq:SemDecompExt}), one
can write 
\begin{align*}
J_{n+1}(\rho) & =\int_{0}^{\rho}J_{n}(r)\otimes\bar{\phi}(\circ d\bar{X}_{tr})\\
 & =\sqrt{t}\int_{0}^{\rho}J_{n}(r)\otimes\bar{\phi}_{i}(\bar{X}_{tr})\bar{V}^{i}(\bar{X}_{tr})dB_{r}^{t}+\frac{t}{2}\int_{0}^{\rho}J_{n}(r)\otimes\partial_{j}\bar{\phi}_{i}(\bar{X}_{tr})\bar{a}^{ij}(\bar{X}_{tr})dr\\
 & \ \ \ +\frac{t}{2}\int_{0}^{\rho}J_{n-1}(r)\otimes\bar{\phi}_{i}(\bar{X}_{tr})\otimes\bar{\phi}_{j}(\bar{X}_{tr})\bar{a}^{ij}(\bar{X}_{tr})dr\\
 & \ \ \ +t\int_{0}^{\rho}J_{n}(r)\otimes\bar{\phi}_{i}(\bar{X}_{tr})\partial_{i}\log p(t(1-r),\bar{X}_{tr},y)dr\\
 & =:I_{n}^{1}(\rho)+I_{n}^{2}(\rho)+I_{n}^{3}(\rho)+I_{n}^{4}(\rho).
\end{align*}
By using the dimension-free BDG inequality (the same way leading to
(\ref{eq:MartEst})), one has
\[
\|I_{n}^{1}(\rho)\|_{p}\leqslant C_{5}\sqrt{p}\big(\int_{0}^{\rho}\|J_{n}(r)\|_{p}^{2}dr\big)^{1/2}.
\]
By substituting the estimate (\ref{eq:IndHypoExt}) into the right
hand side and evaluating the resulting $dr$-integral explicitly,
one obtains that 
\begin{equation}
\|I_{n}^{1}(\rho)\|_{p}\leqslant\frac{C_{6}\Lambda^{n/2}p^{\frac{n+1}{2}}}{\sqrt{(n+1)!}}\cdot e^{K_{2}/pt}\big(1-(1-\rho)^{\beta}\big)^{\frac{n+1}{2}}.\label{eq:I1Ext}
\end{equation}
The estimation of $I_{n}^{2}(\rho)$ and $I_{n}^{3}(\rho)$ follows
the same route and one finds that 
\begin{equation}
\|I_{n}^{2}(\rho)\|_{p}\vee\|I_{n}^{3}(\rho)\|_{p}\leqslant\frac{C_{7}(\Lambda p)^{n/2}}{\sqrt{(n+1)!}}\cdot e^{K_{2}/pt}\big(1-(1-\rho)^{\beta}\big)^{\frac{n+1}{2}}.\label{eq:I23Ext}
\end{equation}

We now consider $I_{n}^{4}(\rho).$ By using the gradient estimate
(\ref{eq:STEst}), one has 
\begin{equation}
\|I_{n}^{4}(\rho)\|_{p}\leqslant C_{8}\big(\int_{0}^{\rho}\frac{1}{\sqrt{1-r}}\|J_{n}(r)\|_{p}dr+\int_{0}^{\rho}\frac{1}{1-r}\|d(\bar{X}_{tr},y)\cdot J_{n}(r)\|_{p}dr\big).\label{eq:I4a}
\end{equation}
After applying the induction hypothesis (\ref{eq:IndHypoExt}), the
first integral is majorised as
\[
\frac{2}{\beta}\frac{(\Lambda p)^{n/2}}{\sqrt{(n+1)!}}e^{K_{2}/pt}\big(1-(1-\rho)^{\beta}\big)^{\frac{n+1}{2}}.
\]
For the second integral in (\ref{eq:I4a}), one applies H\"older's
inequality
\[
\|d(\bar{X}_{tr},y)\cdot J_{n}(r)\|_{p}\leqslant\|d(\bar{X}_{tr},y)\|_{p_{1}}\cdot\|J_{n}(r)\|_{p_{2}}
\]
with $p_{1}\triangleq p(n+1)$ and $p_{2}\triangleq p(n+1)/n$. After
further applying Lemma \ref{lem:BBMoment} and the induction hypothesis,
it follows that
\[
\int_{0}^{\rho}\frac{1}{1-r}\|d(\bar{X}_{tr},y)\cdot J_{n}(r)\|_{p}dr\leqslant\frac{2K_{5}\sqrt{e}}{\beta}\cdot\frac{\Lambda^{n/2}p^{\frac{n+1}{2}}}{\sqrt{(n+1)!}}\cdot e^{K_{2}/pt}\big(1-(1-\rho)^{\beta}\big)^{\frac{n+1}{2}}.
\]
Therefore, one arrives at 
\begin{equation}
\|I_{n}^{4}(\rho)\|_{p}\leqslant\frac{C_{9}\Lambda^{n/2}p^{\frac{n+1}{2}}}{\sqrt{(n+1)!}}\cdot e^{K_{2}/pt}\big(1-(1-\rho)^{\beta}\big)^{\frac{n+1}{2}}.\label{eq:I4Ext}
\end{equation}

Putting the estimates (\ref{eq:I1Ext}), (\ref{eq:I23Ext}), (\ref{eq:I4Ext})
together, one concludes that 
\[
\|J_{n+1}(\rho)\|_{p}\leqslant(C_{6}+2C_{7}+C_{9})\cdot\frac{\Lambda^{n/2}p^{\frac{n+1}{2}}}{\sqrt{(n+1)!}}\cdot e^{K_{2}/pt}\big(1-(1-\rho)^{\beta}\big)^{\frac{n+1}{2}}.
\]
By taking the base step estimate (\ref{eq:J1Est}) into account, one
only needs to choose 
\[
\Lambda=\max\{C_{4}^{2},C_{6}+2C_{7}+C_{9}\}.
\]
It then follows that 
\[
\|J_{1}(\rho)\|_{p}\leqslant(\Lambda p)^{1/2}e^{K_{2}/pt}\big(1-(1-\rho)^{\beta}\big)^{1/2}
\]
and 
\[
\|J_{n+1}(\rho)\|_{p}\leqslant\frac{(\Lambda p)^{\frac{n+1}{2}}}{\sqrt{(n+1)!}}\cdot e^{K_{2}/pt}\big(1-(1-\rho)^{\beta}\big)^{\frac{n+1}{2}}.
\]
This finishes the induction argument. 

\subsubsection*{Proof of Proposition \ref{prop:LocEst}}

We now complete the proof of Proposition \ref{prop:LocEst}. Let $p,q>1$
be such that $1/p+1/q=1.$ According to Lemma \ref{lem:TauEst}, Lemma
\ref{lem:SigMoment} and H\"older's inequality, one has 
\begin{align*}
\big\|\mathbb{E}\big[\pi_{n}S^{\phi}(X^{t,x,y});\tau<t\big]\big\|_{{\rm HS}} & \leqslant\big\|\pi_{n}S^{\phi}(X^{t,x,y})\big\|_{p}\cdot\mathbb{P}(\tau<t)^{1/q}\\
 & \leqslant\frac{(\Lambda p)^{n/2}}{\sqrt{n!}}\cdot e^{L/pt}\cdot e^{-K/qt}=\frac{(\Lambda p)^{n/2}}{\sqrt{n!}}e^{-(K/q-L/p)/t}.
\end{align*}
One can choose (and fix) $p,q$ so that $K/q-L/p>0.$ The desired
estimate (\ref{eq:LocEst}) thus follows with $C_{1}\triangleq\sqrt{\Lambda p}$
and $C_{2}\triangleq K/q-L/p.$ 

\subsubsection{Completing the proof of Theorem \ref{thm:ReconRD}}

Putting together all previously developed ingredients, we are now
in a position to prove Theorem \ref{thm:ReconRD}. We will continue
to use the notation introduced earlier. 

Let us define 
\[
b_{n}(t)\triangleq n!\big\|\mathbb{E}\big[\pi_{n}S^{\phi}(X^{t,x,y})\big]\big\|_{{\rm HS}},\ L\triangleq\|\Gamma\|_{1\text{-var};[0,1]}.
\]
Recall that in the isometric embedding context (i.e. $\phi=dF$) one
has $L=d(x,y)$. Our goal is to estimate $b_{n}(t)^{1/n}-L$. To this
end, one writes 
\begin{equation}
b_{n}(t)^{1/n}-L=L\big(\exp\big(\frac{1}{n}\log\frac{b_{n}(t)}{L^{n}}\big)-1\big).\label{eq:Bn-L}
\end{equation}
The proof of Theorem \ref{thm:ReconRD} will be completed as long
as the following lemma is established. 
\begin{lem}
\label{lem:FinalLem}There exist positive constants $\kappa,K$, $R_{1},R_{2}$,
such that 
\[
\frac{b_{n}(t)}{L^{n}}\in[R_{1},R_{2}]
\]
for all $n\geqslant K$ and $t\in(0,\kappa n^{-6}].$ 
\end{lem}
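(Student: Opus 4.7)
The plan is to expand $b_n(t) = n!\|\mathbb{E}[\pi_n S^\phi(X^{t,x,y})]\|_{\mathrm{HS}}$ into a deterministic main term plus two small remainders. First decompose
$$\mathbb{E}[\pi_n S^\phi(X^{t,x,y})] = \mathbb{E}[\pi_n S^\phi(X^{t,x,y})\mathbf{1}_{\{\tau=t\}}] + \mathbb{E}[\pi_n S^\phi(X^{t,x,y})\mathbf{1}_{\{\tau<t\}}].$$
On $\{\tau=t\}$ apply the signature expansion (\ref{eq:SigDecomp}) and separate out the all-geodesic word $I_0 = (0,\ldots,0)$, which gives the deterministic $n$-th signature $S_n(\Gamma)$ of the smooth, constant-speed curve $\Gamma = F\circ \gamma^{x,y}$ (of Euclidean length $L = d(x,y)$). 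This yields
$$\mathbb{E}[\pi_n S^\phi(X^{t,x,y})\mathbf{1}_{\{\tau=t\}}] = \mathbb{P}(\tau=t)\,S_n(\Gamma) + R_n(t), \quad R_n(t) \triangleq \sum_{I\neq I_0} t^{(n-|I|_0)/2}\mathbb{E}[J_n(1;I)\mathbf{1}_{\{\tau=t\}}].$$

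For the upper bound $b_n(t)\leq R_2 L^n$, the main term is controlled by the standard estimate $n!\|S_n(\Gamma)\|_{\mathrm{HS}} \leq n!\|S_n(\Gamma)\|_{\mathrm{proj}} \leq L^n$. For $R_n(t)$, apply Jensen's inequality together with Proposition \ref{prop:MainSigEst} at $p=2$, $\rho=1$: a word $I$ with $k = |I|_0$ zeros satisfies $\|\mathbb{E}[J_n(1;I)\mathbf{1}_{\{\tau=t\}}]\|_{\mathrm{HS}} \leq (2\Lambda)^{n-k} L^k/(k!\sqrt{(n-k)!})$. Summing over the $\binom{n}{k}$ such words, multiplying by $n!$, setting $j=n-k$ and using the crude bound $\binom{n}{j}\cdot n!/(n-j)! \leq n^{2j}/j!$ yields
$$n!\|R_n(t)\|_{\mathrm{HS}} \leq L^n \sum_{j=1}^n \frac{(2\Lambda n^2\sqrt{t}/L)^j}{(j!)^{3/2}}.$$
Under $t\leq \kappa n^{-6}$, the series parameter is bounded by $2\Lambda\sqrt{\kappa}/(nL)$, so the sum is $O(L^n/n)$. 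Finally, Proposition \ref{prop:LocEst} gives $n!\,\|\mathbb{E}[\pi_n S^\phi(X^{t,x,y})\mathbf{1}_{\{\tau<t\}}]\|_{\mathrm{HS}} \leq \sqrt{n!}\, C_1^n\, e^{-C_2 n^6/\kappa}$, which is super-exponentially small. Combining these, $b_n(t)\leq 2 L^n$ for $n$ sufficiently large.

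For the lower bound $b_n(t)\geq R_1 L^n$, the reverse triangle inequality gives $b_n(t) \geq \mathbb{P}(\tau=t)\cdot n!\|S_n(\Gamma)\|_{\mathrm{HS}} - n!\|R_n(t)\|_{\mathrm{HS}} - n!\,\|\mathbb{E}[\pi_n S^\phi;\tau<t]\|_{\mathrm{HS}}$. The probability factor is essentially $1$ (by Lemma \ref{lem:TauEst}, $\mathbb{P}(\tau=t)\geq 1-e^{-Kn^6/\kappa}$), and the two residuals are of order $O(L^n/n)$ and super-exponentially small respectively. The crucial input is therefore the uniform sharp estimate
$$n!\|S_n(\Gamma)\|_{\mathrm{HS}} \geq c\, L^n, \qquad n\geq K_0,$$
for the smooth embedded geodesic $\Gamma$, with some constant $c>0$ independent of $n$. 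Granted this, we conclude $b_n(t)\geq (c/2)L^n$ for $n$ large, completing the lemma.

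The main obstacle is precisely this uniform lower bound on $n!\|S_n(\Gamma)\|_{\mathrm{HS}}$: it is strictly stronger than the length conjecture (\ref{eq:LC}), which only controls the $n$-th root limit, and must be extracted from refined signature asymptotics of smooth curves in the spirit of \cite{HL10} and \cite{CMT23}. The polynomial scale $t\leq \kappa n^{-6}$ is calibrated so that $n^2\sqrt{t} = O(1/n)$, which is exactly what makes the non-geodesic residual $\|R_n(t)\|_{\mathrm{HS}}$ of order $O(L^n/n)$; a weaker decay of $t$ such as $\kappa n^{-4}$ would leave the residual comparable to the main term $c L^n$ and destroy the positive lower bound.
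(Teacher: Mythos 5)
Your proposal is correct and follows essentially the same route as the paper: localise on $\{\tau=t\}$ versus $\{\tau<t\}$, control the non-geodesic words with Proposition \ref{prop:MainSigEst} (your bound $\sum_{j\geqslant 1}(n^{2}x)^{j}/(j!)^{3/2}=O(1/n)$ under $t\leqslant\kappa n^{-6}$ reaches the same $O(1/n)$ conclusion that the paper gets by showing the terms $a_{r}$ are decreasing and bounding by $na_{1}=n^{3}x$), dispose of the exit part with Proposition \ref{prop:LocEst} and Lemma \ref{lem:TauEst}, and reduce the lemma to two-sided control of $n!\|S_{n}(\Gamma)\|_{{\rm HS}}/L^{n}$. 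The one item you flag as ``the main obstacle'' --- the uniform lower bound $n!\|S_{n}(\Gamma)\|_{{\rm HS}}\geqslant cL^{n}$ --- is precisely the Hambly--Lyons asymptotics (\ref{eq:HLAsymp}) (\cite[Theorem 8]{HL10}, see also \cite[Theorem 22]{CMT23}), which holds for smooth paths such as the geodesic and is invoked in the paper by direct citation, so it is an available theorem rather than something left to be established.
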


\begin{proof}
One begins by expressing
\begin{equation}
\frac{b_{n}(t)}{L^{n}}=\frac{n!\big\|\pi_{n}S^{\phi}(\gamma^{x,y})\big\|_{{\rm HS}}}{L^{n}}+\frac{b_{n}(t)-n!\big\|\pi_{n}S^{\phi}(\gamma^{x,y})\big\|_{{\rm HS}}}{L^{n}}.\label{eq:DecompQuot}
\end{equation}
Since $\gamma^{x,y}$ is smooth, the following asymptotics for the
first term is a direct consequence of \cite[Theorem 8]{HL10} (see
also \cite[Theorem 22]{CMT23}): 

\begin{equation}
\lim_{n\rightarrow\infty}\frac{n!\big\|\pi_{n}S^{\phi}(\gamma^{x,y})\big\|_{{\rm HS}}}{L^{n}}=c(\Gamma),\label{eq:HLAsymp}
\end{equation}
where $c(\Gamma)>0$ is some explicit constant depending on the geometry
of $\Gamma$. 

Our task is now reduced to estimating the second term on the right
hand side of (\ref{eq:DecompQuot}). Trivially, one has 
\begin{align}
\big|\frac{b_{n}(t)-n!\big\|\pi_{n}S^{\phi}(\gamma^{x,y})\big\|_{{\rm HS}}}{L^{n}}\big| & \leqslant\frac{n!}{L^{n}}\big\|\mathbb{E}\big[\pi_{n}S^{\phi}(X^{t,x,y})\big]-\pi_{n}S^{\phi}(\gamma^{x,y})\big\|_{{\rm HS}}\nonumber \\
 & =:A_{n}(t)+B_{n}(t),\label{eq:En}
\end{align}
where 
\begin{align*}
A_{n}(t) & \triangleq\frac{n!}{L^{n}}\big\|\mathbb{E}\big[\pi_{n}S^{\phi}(X^{t,x,y})-\pi_{n}S^{\phi}(\gamma^{x,y});\tau<t\big]\big\|_{{\rm HS}},\\
B_{n}(t) & \triangleq\frac{n!}{L^{n}}\big\|\mathbb{E}\big[\pi_{n}S^{\phi}(X^{t,x,y})-\pi_{n}S^{\phi}(\gamma^{x,y});\tau=t\big]\big\|_{{\rm HS}}.
\end{align*}
According to Proposition \ref{prop:LocEst} and Lemma \ref{lem:TauEst},
one has 
\begin{align}
A_{n}(t) & \leqslant\frac{n!}{L^{n}}\big\|\mathbb{E}\big[\pi_{n}S^{\phi}(X^{t,x,y});\tau<t\big]\big\|_{{\rm HS}}+\frac{n!}{L^{n}}\big\|\pi_{n}S^{\phi}(\gamma^{x,y})\big\|_{{\rm HS}}\cdot\mathbb{P}(\tau<t)\nonumber \\
 & \leqslant\frac{n!}{L^{n}}\cdot\frac{C_{1}^{n}}{\sqrt{n!}}e^{-C_{2}/t}+\frac{n!}{L^{n}}\cdot\frac{L^{n}}{n!}\cdot e^{-C_{3}/t}\nonumber \\
 & \leqslant\big(\sqrt{n!}(C_{1}/L)^{n}+1\big)e^{-C_{4}/t}\label{eq:E'Est}
\end{align}
for all $n\geqslant1$ and $t\in(0,\delta)$ with some positive constant
$\delta.$ 

To estimate $B_{n}(t),$ we first recall from (\ref{eq:SigDecomp})
that on the event $\{\tau=t\}$ one has the decomposition 
\[
\pi_{n}S^{\phi}(X^{t,x,y})=\pi_{n}S^{\phi}(\gamma^{x,y})+\sum_{k=0}^{n-1}\sum_{I\in{\cal I}_{n}(k)}t^{\frac{n-k}{2}}J_{n}(1;I),
\]
where ${\cal I}_{n}(k)$ denotes the set of length-$n$ words $I$
with $|I|_{0}=k$ and $J_{n}(1;I)$ is defined by (\ref{eq:JnDef}).
According to Proposition \ref{prop:MainSigEst} and (\ref{eq:SigDecomp}),
one finds that 
\begin{align*}
B_{n}(t) & \leqslant\frac{n!}{L^{n}}\sum_{k=0}^{n-1}\sum_{I\in{\cal I}_{n}(k)}t^{\frac{n-k}{2}}\big\| J_{n}(1;I)\big\|_{2}\\
 & \leqslant\frac{n!}{L^{n}}\sum_{k=0}^{n-1}\sum_{I\in{\cal I}_{n}(k)}\frac{(2\Lambda)^{n-k}}{\sqrt{(n-k)!}}\cdot\frac{L^{k}}{k!}\cdot t^{\frac{n-k}{2}}=n!\sum_{k=0}^{n-1}{n \choose k}\frac{1}{\sqrt{(n-k)!}k!}\big(\frac{2\Lambda\sqrt{t}}{L}\big)^{n-k}.
\end{align*}
To analyse the last expression, let us define 
\[
F_{n}(x)\triangleq n!\sum_{k=0}^{n-1}{n \choose k}\frac{x^{n-k}}{\sqrt{(n-k)!}k!}=\sum_{r=1}^{n}{n \choose r}\frac{n!x^{r}}{(n-r)!\sqrt{r!}}
\]
where one takes $x=2\Lambda\sqrt{t}/L$ in our context. Setting 
\[
a_{r}\triangleq{n \choose r}\frac{n!x^{r}}{(n-r)!\sqrt{r!}},
\]
one easily sees that 
\[
\frac{a_{r+1}}{a_{r}}=x\cdot\frac{(n-r)^{2}}{(r+1)^{3/2}}\leqslant n^{2}x
\]
for all $r=1,\cdots,n-1$. By requiring 
\begin{equation}
n^{2}x=\frac{2\Lambda n^{2}\sqrt{t}}{L}\leqslant1\iff t\leqslant\big(\frac{L}{2\Lambda}\big)^{2}n^{-4},\label{eq:TReq1}
\end{equation}
one concludes that $\{a_{r}\}_{r=1}^{n}$ is a decreasing sequence,
thus yielding that 
\[
F_{n}(x)\leqslant na_{1}=n^{3}x=\frac{2\Lambda n^{3}\sqrt{t}}{L}.
\]
Therefore, one has 
\begin{equation}
B_{n}(t)\leqslant\frac{2\Lambda n^{3}\sqrt{t}}{L}\label{eq:E''Est}
\end{equation}
for all $n$ and $t$ satisfying the relation (\ref{eq:TReq1}). 

By substituting the estimates (\ref{eq:E'Est}) and (\ref{eq:E''Est})
into (\ref{eq:En}), one obtains that 
\[
\big|\frac{b_{n}(t)-n!\big\|\pi_{n}S^{\phi}(\gamma^{x,y})\big\|_{{\rm HS}}}{L^{n}}\big|\leqslant\big(\sqrt{n!}(C_{1}/L)^{n}+1\big)e^{-C_{4}/t}+\frac{2\Lambda n^{3}\sqrt{t}}{L}
\]
for all $n,t$ provided that (\ref{eq:TReq1}) holds. Note that the
right hand side is of constant magnitude if $t=O(n^{-6})$. More specifically,
there exist positive constants $\kappa,K$ such that 
\begin{equation}
\big|\frac{b_{n}(t)-n!\big\|\pi_{n}S^{\phi}(\gamma^{x,y})\big\|_{{\rm HS}}}{L^{n}}\big|\leqslant\frac{1}{2}c(\Gamma)\label{eq:RemainEst}
\end{equation}
for all $n\geqslant K$ and $t\in(0,\kappa n^{-6}]$ (the relation
(\ref{eq:TReq1}) is also satisfied with such choice of $t$). Here
$c(\Gamma)$ is the Hambly-Lyons limit appearing in (\ref{eq:HLAsymp}).
Now the conclusion of the lemma follows from the decomposition (\ref{eq:DecompQuot}),
the asymptotics (\ref{eq:HLAsymp}) and the inequality (\ref{eq:RemainEst})
(with enlarged $K$ if necessary). 
\end{proof}
\begin{proof}[Proof of Theorem \ref{thm:ReconRD}]

Under the situation of Lemma \ref{lem:FinalLem}, one has 
\[
\big|\log\frac{b_{n}(t)}{L^{n}}\big|\leqslant\max\big\{|\log R_{1}|,|\log R_{2}|\big\}.
\]
for all $n,t$ such that $n\geqslant K,t\leqslant\kappa n^{-6}.$
It follows from the relation (\ref{eq:Bn-L}) that
\[
\big|b_{n}(t)^{1/n}-L\big|=L\big|\exp\big(\frac{1}{n}\log\frac{b_{n}(t)}{L^{n}}\big)-1\big|\leqslant\frac{C}{n}
\]
with a suitable constant $C$. The proof of Theorem \ref{thm:ReconRD}
is now complete.  

\end{proof}

\section{\label{sec:ReconK}Small-time expansion and its connection with curvature
properties}

Theorem \ref{thm:ReconRD} shows that the Riemannian distance can
be recovered from suitable signature asymptotics of the Brownian bridge.
In this section, we are going to show that curvature properties of
the manifold $M$ (both intrinsic and extrinsic) can also be recovered
explicitly from the small-time expansion of the signature. 

\subsection{\label{subsec:MainThmRecCurv}The main theorem}

From now on, we consider the Brownian loop based at $x\in M$ (i.e.
the Brownian bridge $X^{t,x,x}$). Recall that an isometric embedding
$F:M\rightarrow E=\mathbb{R}^{N}$ is given fixed. For each $n\geqslant1$,
we define (with $\phi=dF$ here)
\[
\psi_{n}(t,x)\triangleq\mathbb{E}\big[\pi_{n}S^{\phi}(X^{t,x,x})\big]\in E^{\otimes n},\ \ \ t>0,x\in M.
\]
This is the $n$-th level expected $\phi$-signature of the Brownian
loop $X^{t,x,x}.$ 

Let us denote $S_{n}\triangleq\pi_{n}S^{\phi}(X^{t,x,x})$. Since
$\int_{0}^{s}\phi(dX^{t,x,x})=F(X_{s}^{t,x,x})-F(x)$, it is clear
that 
\[
S_{1}=F(X_{t}^{t,x,x})-F(x)=F(x)-F(x)=0\implies\psi_{1}(t,x)=0.
\]
From the symmetry of the heat kernel, it is easily seen that $X^{t,x,x}$
has the same law as its time reversal $s\mapsto\overleftarrow{X}_{s}^{t,x,x}\triangleq X_{t-s}^{t,x,x}$.
As a result, the paths $\Gamma_{\cdot}\triangleq F(X_{\cdot}^{t,x,x})-F(x)$
and its reversal $\overleftarrow{\Gamma}$ have the same law. Since
the signature of $\overleftarrow{\Gamma}$ is the inverse of the signature
of $\Gamma$ (in the tensor algebra $T((E))$), it follows that 
\begin{equation}
S^{\phi}\big(X^{t,x,x}\big)^{-1}\stackrel{{\rm law}}{=}S^{\phi}(X^{t,x,x}).\label{eq:RevSym}
\end{equation}
By taking its second level projection, one has 
\[
-S_{2}+S_{1}\otimes S_{1}\stackrel{{\rm law}}{=}S_{2}.
\]
But one already knows $S_{1}=0.$ Therefore, $\psi_{2}(t,x)=\mathbb{E}[S_{2}]=0.$
Similarly, the third level projection of (\ref{eq:RevSym}) gives
\[
-S_{3}=-S_{3}+S_{1}\otimes S_{2}+S_{2}\otimes S_{1}-S_{1}^{\otimes3}\stackrel{{\rm law}}{=}S_{3}\implies\psi_{3}(t,x)=\mathbb{E}[S_{3}]=0.
\]

The above calculation suggests that, in order to extract nontrivial geometric information,
one has to at least consider $\psi_{4}(t,x)$. A similar calculation
shows that 
\begin{equation}
\psi_{4}(t,x)=\mathbb{E}[S_{4}]=\frac{1}{2}\mathbb{E}[S_{2}\otimes S_{2}].\label{eq:2MomRel}
\end{equation}
This needs not be a zero tensor; indeed it contains the second moment
of L\'evy areas of the path $\Gamma$.

In this section, we study the small-time expansion of the function
$\psi_{4}(t,x)$. We first define a contraction on $4$-tensors to
reduce dimension. Let $\xi\in E^{\otimes4}$ be a given tensor. Being
viewed as a $4$-linear map $\xi:E\times E\times E\times E\rightarrow\mathbb{R}$,
we define $\mathfrak{C}\xi:E\times E\rightarrow\mathbb{R}$ to be
the $2$-tensor obtained from $\xi$ by taking trace on the $(2,4)$-positions.
More precisely, 
\begin{equation}
(\mathscr{\mathfrak{C}}\xi)(\cdot,\cdot)\triangleq\sum_{\alpha=1}\xi(\cdot,\varepsilon_{\alpha},\cdot,\varepsilon_{\alpha}),\label{eq:24Trace}
\end{equation}
where $\{\varepsilon_{\alpha}:\alpha=1,\cdots,N\}$ is any ONB of
$E$. One also needs to recall the basic curvature quantities introduced
in Section \ref{subsec:Curv}. Our main theorem for this part is stated
as follows. 
\begin{thm}
\label{thm:RecCurv}One has the following small-time expansion: 
\[
\mathfrak{C}\psi_{4}(t,x)=\Theta_{x}\cdot t^{2}+\Xi_{x}\cdot t^{3}+O(t^{4})\ \ \ \text{as }t\rightarrow0^{+}.
\]
In addition, the coefficients $\Theta_{x},\Xi_{x}\in E^{\otimes2}$
are symmetric $2$-tensors that admit the following explicit representation.

\vspace{2mm}\noindent (i) The tensor $\Theta_{x}$ recovers the Riemannian
metric tensor $g_{x}$. More precisely, one has
\begin{equation}
\Theta_{x}(v,w)=\frac{d-1}{24}\langle\pi_{x}v,\pi_{x}w\rangle_{T_{x}M},\ \ \ v,w\in E,\label{eq:t2Coef}
\end{equation}
where $\pi_{x}:E\rightarrow T_{x}M$ denotes the orthogonal projection
onto the tangent space $T_{x}M.$ 

\vspace{2mm}\noindent (ii) The tensor $\Xi_{x}$ encodes both intrinsic
(Ricci curvature) and extrinsic (second fundamental form) curvature
properties of $M$ at $x$. More precisely, its restriction to $T_{x}M\times T_{x}M$
as a symmetric bilinear form on $T_{x}M$ is explicitly given by 
\begin{equation}
\Xi_{x}|_{T_{x}M\times T_{x}M}=\frac{{\rm S}_{x}-18d^{2}|H_{x}|_{E}^{2}}{8640}g_{x}+\frac{49d-20}{8640}{\rm Ric}_{x}+\frac{(5-4d)d}{480}\langle B_{x},H_{x}\rangle_{E}.\label{eq:t3Coef}
\end{equation}
Here $g_{x}$ is the metric tensor, ${\rm Ric}_{x},{\rm S}_{x}$ are
the Ricci tensor and scalar curvature, $B_{x},H_{x}$ are the second
fundamental form and the mean curvature vector, and $\langle\cdot,\cdot\rangle_{E}$
is the Euclidean inner product in $E$. 
\end{thm}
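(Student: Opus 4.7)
The plan is to reduce Theorem \ref{thm:RecCurv} to an asymptotic computation of $\tfrac{1}{2}\mathbb{E}[S_{2}\otimes S_{2}]$, exploiting the time-reversal identity $\psi_{4}(t,x) = \tfrac{1}{2}\mathbb{E}[S_{2}\otimes S_{2}]$ derived above. The entire computation will be carried out in a normal coordinate chart centred at $x$: by Lemma \ref{lem:TauEst} the probability of the loop exiting the chart is $O(e^{-c/t})$, which is negligible against any polynomial order in $t$. Inside the chart, I would introduce the parabolic rescaling $Y_{r}\triangleq X_{tr}/\sqrt{t}$ for $r\in[0,1]$ and expand the coefficients of the bridge SDE (\ref{eq:BridgeSDESimplified}) in powers of $\sqrt{t}$ via Lemma \ref{lem:abGNormChart}. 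In particular, the second-order data $\partial^{2}_{pp}a^{ij}(\mathbf{0}) = \tfrac{2}{3}R_{ipjp}$, $\partial_{i}b^{j}(\mathbf{0}) = \tfrac{1}{3}R_{jppi}$ and $\partial^{2}_{ij}G_{1}(\mathbf{0},\mathbf{0}) = \tfrac{1}{6}R_{pipj}$ will furnish all the intrinsic curvature ingredients, while at leading order $Y$ converges in law to a standard Euclidean Brownian bridge $Z$ on $\mathbb{R}^{d}$.

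Next I would Taylor expand the embedding as $F(\exp_{x}\mathbf{y}) - F(x) = (\partial_{i}F)_{x}\,y^{i} + \tfrac{1}{2}(\partial_{ij}F)_{x}\,y^{i}y^{j} + O(|\mathbf{y}|^{3})$. Two geometric facts are essential: since $F$ is isometric, $\{(\partial_{i}F)_{x}\}$ is an orthonormal basis of $T_{x}M \subset E$, and in normal coordinates the Christoffel symbols vanish at the origin, so the tangential component of $(\partial_{ij}F)_{x}$ is zero while its normal component equals the second fundamental form $B_{x}(\partial_{i},\partial_{j})$. Substituting both expansions into $S_{2} = \int_{0 < s < u < t} dF(X_{s})\otimes dF(X_{u})$ and reorganising produces a formal $\sqrt{t}$-series $S_{2} = t\,\Sigma^{(0)} + t^{3/2}\Sigma^{(1)} + t^{2}\Sigma^{(2)} + \cdots$, whose coefficients are iterated Stratonovich integrals in $Z$ tensored with geometric data read off from $\partial_{i}F$, $B_{x}$, the curvature tensor $R_{ijkl}$ and the Hessian of $G_{1}$. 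Taking the second moment then yields $\mathbb{E}[S_{2}\otimes S_{2}] = t^{2}A_{0} + t^{3}A_{1} + O(t^{4})$, with odd half-integer powers dropping out by the parity of the Brownian-bridge polynomials (each $\Sigma^{(k)}$ has definite parity in $Z$).

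The leading coefficient $A_{0} = \mathbb{E}[\Sigma^{(0)}\otimes\Sigma^{(0)}]$ is a second moment of the L\'evy area of the flat bridge $Z$; using the covariance $\mathbb{E}[Z_{r}^{i}Z_{s}^{j}] = (r \wedge s)(1 - r \vee s)\delta^{ij}$ and applying the $(2,4)$-contraction $\mathfrak{C}$ collapses this tensor to the form (\ref{eq:t2Coef}), with the constant $\tfrac{d-1}{24}$ arising from the L\'evy-area variance after one free index is contracted. The $t^{3}$ coefficient requires the contributions from $\Sigma^{(0)}\otimes\Sigma^{(2)}$, $\Sigma^{(2)}\otimes\Sigma^{(0)}$ and $\Sigma^{(1)}\otimes\Sigma^{(1)}$, which in turn reduce to fourth-order Gaussian moments of iterated integrals of $Z$ against polynomial weights in $Z_{r}$; all such moments are computable by Wick's theorem and evaluate to explicit rational multiples. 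The main obstacle will be the tensor bookkeeping at this level: after substituting the normal-chart data from Lemma \ref{lem:abGNormChart} and collecting the resulting expressions in $R_{ijkl}$ and $B_{ij}^{\alpha}$ (where $\alpha$ indexes a basis of $(T_{x}M)^{\perp}$), the curvature symmetries $R_{ijkl} = -R_{jikl} = R_{klij}$ together with $\sum_{p}R_{ipjp} = \mathrm{Ric}_{ij}$, $\sum_{i}\mathrm{Ric}_{ii} = \mathrm{S}_{x}$ and $d\,H_{x} = \sum_{i}B_{x}(\partial_{i},\partial_{i})$ must collapse the raw output into the four irreducible pieces $|H_{x}|_{E}^{2}g_{x}$, $\mathrm{S}_{x}g_{x}$, $\mathrm{Ric}_{x}$ and $\langle B_{x},H_{x}\rangle_{E}$, thereby pinning down the exact rational constants in (\ref{eq:t3Coef}). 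Sanity-checking on a model space such as $M = S^{d} \subset \mathbb{R}^{d+1}$, where all of ${\rm Ric}, \mathrm{S}, B, H$ have closed-form expressions, provides a decisive consistency test for the final coefficients.
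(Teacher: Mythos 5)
Your plan is correct and follows essentially the same route as the paper: reduce $\psi_{4}$ to $\tfrac{1}{2}\mathbb{E}[S_{2}\otimes S_{2}]$, localise to a normal chart (the exit event being exponentially negligible by the large-deviation estimate), expand the bridge SDE in powers of $\sqrt{t}$ using the normal-coordinate data of Lemma \ref{lem:abGNormChart}, evaluate the resulting Gaussian moments of the (rescaled, asymptotically flat) bridge, and collapse the output via the orthogonality $\partial_{ij}^{2}F(\mathbf{0})\perp T_{x}M$ and the curvature symmetries --- exactly the strategy of Section \ref{sec:ReconK}, including the observation that the $t^{5/2}$ term vanishes for parity reasons. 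The only caveat is that you stop short of the actual $t^{3}$ bookkeeping (the exhaustive enumeration of all contributing combinations and the evaluation of their fourth-order Gaussian moments), which is where the paper resorts to computer assistance and which is the only place the specific constants such as $\tfrac{49d-20}{8640}$ can come from; your proposed $S^{d}$ sanity check is sensible but, since all four irreducible tensors degenerate to multiples of $g_{x}$ there, it only constrains rather than determines the coefficients.
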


\begin{rem}
It can be shown that the $4$-tensor $\psi_{4}(t,x)$ is anti-symmetric
on the first-two components and also on the last-two components. Therefore,
the operation (\ref{eq:24Trace}) is basically the only nontrivial
way of contraction into a $2$-tensor. 
\end{rem}

To prove Theorem \ref{thm:RecCurv}, we will actually compute the third-order expansion of the $4$-tensor
\begin{equation}
\psi_{4}(t,x)=\hat{\Theta}_{x}t^{2}+\hat{\Xi}_{x}t^{3}+O(t^{4})\label{eq:Psi4Exp}
\end{equation}
with explicit tensor coefficients $\hat{\Theta}_{x},\hat{\Xi}_{x}$
(cf. Propositions \ref{prop:t2Coef}, \ref{prop:t3Ceof} below). By
some elementary algebraic manipulation, it is possible to recover
all the quantities ${\rm Ric}_{x},{\rm S}_{x},\langle B_{x},H_{x}\rangle_{E}$
and $|H_{x}|_{E}$ in (\ref{eq:t3Coef}) separately from the coefficients
$\hat{\Theta}_{x},\hat{\Xi}_{x}$ (cf. Remark \ref{rem:SepCurvRec}
for a discussion). But the corresponding formulae are more involved.
For simplicity, we choose to present the formula in the form of (\ref{eq:t3Coef})
through the contraction $\mathfrak{C}\psi_{4}(t,x)$. 

In the following sections, we develop the proof of Theorem \ref{thm:RecCurv}.
Our strategy follows the same spirit as the proof of Theorem \ref{thm:ReconRD};
we perform local calculations in the normal chart around $x$ based
on the local SDE representation of $X^{t,x,x}$. The actual calculation
turns out to be so involved that it is  impossible to achieve entirely
by hand. Therefore, our full calculation is computer-assisted (by Wolfram
Mathematica). We will develop the explicit analysis for a few representative
cases in the computation of the coefficients $\hat{\Theta}_{x},\hat{\Xi}_{x}$
and leave all other parallel cases to the appendix with documented
Mathematica codes provided in GitHub. 

\subsection{Some basic expansions}

As in the proof of Theorem \ref{thm:ReconRD}, we will localise the
analysis in a normal chart $V$ around $x$ (cf. Section \ref{subsec:NormChart}
for the construction with $x=y$ in the current situation). As we
have seen in Section \ref{subsec:Remainder}, the case that $X^{t,x,x}$
leaves the chart $V$ before its lifetime gives a negligible contribution
(it is of order $e^{-C/t}$ which is smaller than any power of $t$;
cf. Proposition \ref{prop:LocEst}). Therefore, one only needs consider
the situation that $X^{t,x,x}$ remains in the chart $V$ for all
time (and we will always assume this is the case). In this case, the
analysis reduces to Euclidean stochastic calculus. 

We will use the relation (\ref{eq:2MomRel}) to compute $\psi_{4}(t,x).$
Namely, we write 
\begin{equation}
\psi_{4}(t,x)=\frac{1}{2}\mathbb{E}[\Pi_{t}^{x}\otimes\Pi_{t}^{x}],\label{eq:Psi4}
\end{equation}
 where 
\begin{align*}
\Pi_{t}^{x} & \triangleq\int_{0}^{t}\big(\int_{0}^{v}\phi(\circ dX_{u}^{t,x,x})\big)\otimes\circ\phi\big(\circ dX_{v}^{t,x,x}\big)\\
 & =\int_{0}^{1}\big(\int_{0}^{\rho}\phi(\circ dX_{tr}^{t,x,x})\big)\otimes\circ\phi(\circ dX_{t\rho}^{t,x,x}),
\end{align*}
and we made the change of variables $v=t\rho,u=tr$ to normalise the
processes on the unit interval. To evaluate the expectation (\ref{eq:Psi4})
(in particular, its expansion in $t$), one needs to compute the semimartingale
decomposition of $\phi(\circ dX_{tr}^{t,x,x})$ in the chart $V$. 

\subsubsection{SDE representation of $X^{t,x,x}$}

Recall that the SDE representation of $X^{t,x,x}$ in $V$ (in terms
of coordinates in $U\triangleq\exp_{x}^{-1}V$) is given by (\ref{eq:BridgeSDESimplified})
with $Q(u,{\bf z})$ defined in terms of the Malliavin-Stroock expansion
(\ref{eq:MSExp}) (cf. (\ref{eq:Q})). For our purpose of computing
small-time expansion, one needs to extract one more term from $Q(u,{\bf z})$,
i.e. by writing
\begin{equation}
\nabla_{z}\log p(u,{\bf z},0)=-\frac{{\bf z}}{u}+\nabla_{z}G_{1}({\bf z},0)+\bar{Q}(u,{\bf z}),\label{eq:BarQ}
\end{equation}
where $\bar{Q}(u,{\bf z})\triangleq\sum_{k=1}^{\infty}\nabla_{z}G_{k+1}({\bf z},{\bf 0})u^{k}$
satisfies 
\begin{equation}
\big|\bar{Q}(u,{\bf z})\big|\leqslant Cu\ \ \ \forall u\in[0,1],{\bf z}\in\bar{U}\label{eq:UEstBarQ}
\end{equation}
with some geometric constant $C$ depending on the localisation $V.$
Correspondingly, the SDE (\ref{eq:BridgeSDESimplified}) for $X^{t,x,x}$
is rewritten as
\begin{equation}
dX_{s}=\big(-\frac{X_{s}}{t-s}+\bar{b}(X_{s})+\bar{Q}(t-s,X_{s})\big)ds+\sigma(X_{s})dB_{s},\label{eq:SDEBL}
\end{equation}
where 
\begin{equation}
\bar{b}({\bf z})\triangleq b({\bf z})+\nabla_{z}G_{1}({\bf z},{\bf 0}).\label{eq:barb}
\end{equation}

\subsubsection{Decomposition of $\Pi_{t}$}

By using the SDE (\ref{eq:SDEBL}), one can easily compute the semimartingale
decomposition of $\phi(\circ dX^{t,x,x})$ and thus an associated
decomposition of $\Pi_{t}$. This is summarised in the lemma below.
Recall that $\phi=\phi_{i}dx^{i}$ and $a=\sigma\sigma^{T}.$ 
\begin{lem}
\label{lem:PiDecomp}Under the chart $V$, one has
\[
\phi(\circ dX_{tr})=dI_{tr}+dJ_{tr}+dK_{tr}+dL_{tr},
\]
where 
\begin{equation}
dI_{tr}\triangleq-\phi_{i}(X_{tr})\frac{X_{tr}^{i}}{1-r}dr,\ dJ_{tr}\triangleq t\varphi(X_{tr})dr,\label{eq:IJDef}
\end{equation}
\[
dK_{tr}\triangleq\sqrt{t}\phi_{i}(X_{tr})\sigma_{\alpha}^{i}(X_{tr})dB_{r}^{t,\alpha},\ dL_{tr}\triangleq t\bar{Q}(t(1-r),X_{tr})dr
\]
with
\begin{equation}
\varphi({\bf z})\triangleq\phi_{i}({\bf z})\bar{b}^{i}({\bf z})+\frac{1}{2}a^{ij}({\bf z})\partial_{j}\phi_{i}({\bf z})\label{eq:varphi}
\end{equation}
and $B_{r}^{t}\triangleq B_{tr}/\sqrt{t}$ being the rescaled Brownian
motion. In addition, 
\begin{align}
\Pi_{t}^{x} & =\int_{0}^{1}\big(\int_{0}^{\rho}dI_{tr}+dJ_{tr}+dK_{tr}+dL_{tr}\big)\otimes\cdot\big(dI_{t\rho}+dJ_{t\rho}+dK_{t\rho}+dL_{t\rho}\big)\nonumber \\
 & \ \ \ +\frac{t}{2}\int_{0}^{1}\phi_{i}(X_{t\rho})\otimes\phi_{j}(X_{t\rho})a^{ij}(X_{t\rho})d\rho,\label{eq:PiDecomp}
\end{align}
where the ``$\cdot$'' in the above double integral means It\^o's
integral (of course, only the $dK$-term is relevant).
\end{lem}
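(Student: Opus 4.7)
The plan is to apply the It\^o--Stratonovich conversion at two different levels: first at the one-form level to decompose $\phi(\circ dX_{tr})$ into the four pieces $dI,dJ,dK,dL$, and then at the outer level to convert the double Stratonovich integral defining $\Pi_{t}^{x}$ into its It\^o form, which is the source of the bracket correction term in (\ref{eq:PiDecomp}).

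For the first conversion, writing $\phi=\phi_{i}dx^{i}$ locally and using the SDE (\ref{eq:SDEBL}), one has
\[
\phi(\circ dX_{s})=\phi_{i}(X_{s})\circ dX_{s}^{i}=\phi_{i}(X_{s})\,dX_{s}^{i}+\tfrac{1}{2}\,d\langle\phi_{i}(X),X^{i}\rangle_{s},
\]
where only the diffusion part of $X$ contributes to the bracket, so $d\langle\phi_{i}(X),X^{i}\rangle_{s}=\partial_{j}\phi_{i}(X_{s})a^{ij}(X_{s})\,ds$ with $a=\sigma\sigma^{T}$. I then substitute the drift and diffusion from (\ref{eq:SDEBL}), grouping into four natural pieces: the singular drift $-X_{s}^{i}/(t-s)$ paired with $\phi_{i}$; the regular drift $\bar{b}^{i}$ together with the It\^o--Stratonovich correction $\tfrac{1}{2}a^{ij}\partial_{j}\phi_{i}$, which combine into the scalar $\varphi$ defined in (\ref{eq:varphi}); the Malliavin--Stroock remainder drift $\bar{Q}$; and the diffusion $\sigma\,dB$. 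Performing the rescaling $s=tr$ (so $ds=t\,dr$ and $dB_{s}^{\alpha}=\sqrt{t}\,dB_{r}^{t,\alpha}$) matches these four groups with $dI_{tr}, dJ_{tr}, dL_{tr}, dK_{tr}$ respectively.

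For the outer conversion, I would apply the identity
\[
\int_{0}^{1}Y_{\rho}\otimes\circ dZ_{\rho}=\int_{0}^{1}Y_{\rho}\otimes\cdot dZ_{\rho}+\tfrac{1}{2}\int_{0}^{1}d\langle Y,Z\rangle_{\rho}
\]
with $Y_{\rho}=\int_{0}^{\rho}\phi(\circ dX_{tr})$ and $dZ_{\rho}=\phi(\circ dX_{t\rho})$. Because $Y$ and $Z$ share the same martingale part $K$ while the remaining pieces $dI,dJ,dL$ are of bounded variation and contribute nothing to the bracket, the cross-variation reduces to $d\langle K,K\rangle_{\rho}$, which is read off directly from the diffusion coefficient as $t\,\phi_{i}(X_{t\rho})\otimes\phi_{j}(X_{t\rho})\,a^{ij}(X_{t\rho})\,d\rho$. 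This gives precisely the explicit correction term in (\ref{eq:PiDecomp}). Expanding the remaining It\^o integral by bilinearity produces sixteen pairings of $dI,dJ,dK,dL$, of which only those carrying a $dK_{t\rho}$ on the right are genuinely stochastic, as noted in the statement.

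The proof is essentially careful bookkeeping rather than requiring any substantive new idea; the main thing to track is that the $\phi_{i}$-contraction is carried consistently through each term. The only auxiliary point I would verify is that the extracted remainder $\bar{Q}$ of (\ref{eq:BarQ}) still satisfies the uniform bound (\ref{eq:UEstBarQ}), which is immediate from the Malliavin--Stroock expansion. This ensures $dL_{tr}=O(t)$ and, together with the explicit $\sqrt{t}$ and $t$ prefactors in $dK_{tr}$ and $dJ_{tr}$, will drive the order-counting in the subsequent small-time expansion of $\psi_{4}(t,x)$ needed for Theorem \ref{thm:RecCurv}.
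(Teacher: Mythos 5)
Your proposal is correct and follows essentially the same route as the paper, whose proof is simply the remark that the lemma is ``explicit It\^o's calculus based on the semimartingale decomposition (\ref{eq:SDEBL}) together with the change of variables''; you have merely written out the two It\^o--Stratonovich conversions and the rescaling $s=tr$, $dB_{tr}^{\alpha}=\sqrt{t}\,dB_{r}^{t,\alpha}$ that the authors leave implicit. The identification of the outer correction term with $\tfrac{t}{2}\phi_{i}\otimes\phi_{j}a^{ij}\,d\rho$ via the bracket of the $K$-parts is exactly what (\ref{eq:PiDecomp}) encodes, so nothing is missing.
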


\begin{proof}
This is explicit It\^o's calculus based on the semimartingale decomposition
(\ref{eq:SDEBL}) together with the change of variables $v=t\rho,u=tr$.
\end{proof}

\subsubsection{Expansions of $\frac{X_{tr}}{1-r}$ and $f(X_{tr})$}

In order to compute $\mathbb{E}[\Pi_{t}^{x}\otimes\Pi_{t}^{x}]$,
in views of the decomposition (\ref{eq:PiDecomp}) one needs to know
suitable expansions of $\frac{X_{tr}}{1-r}$ and $f(X_{tr})$ with
respect to $t$. These are summarised in the lemma below.
\begin{lem}
\label{lem:BasicExp}Under the notation in Lemma \ref{lem:PiDecomp},
let us define 
\[
G_{r}^{t}\triangleq\sqrt{t}\int_{0}^{r}\frac{1}{1-\eta}\sigma(X_{t\eta})dB_{\eta}^{t},\ H_{r}^{t}\triangleq t\int_{0}^{r}\frac{1}{1-\eta}\bar{b}(X_{t\eta})d\eta.
\]
Then one has 
\begin{equation}
\frac{X_{tr}}{1-r}=G_{r}^{t}+H_{r}^{t}+{\cal E}^{t}(r)\label{eq:TypeIExp}
\end{equation}
and for any $f\in C_{b}^{\infty}(\bar{U})$ one has 
\begin{align}
f(X_{tr}) & =f({\bf 0})+\partial_{k}f({\bf 0})\cdot(1-r)\big(G_{r}^{t,k}+H_{r}^{t,k}\big)\nonumber \\
 & \ \ \ \frac{1}{2}\partial_{kl}^{2}f({\bf 0})\cdot(1-r)^{2}G_{r}^{t,k}G_{r}^{l,k}+{\cal E}_{f}^{t}(r).\label{eq:TypeIIExp}
\end{align}
Here the remainders ${\cal E}^{t}(r),{\cal E}_{f}^{t}(r)$ satisfy
the following estimates:
\begin{equation}
\sup_{0\leqslant r\leqslant1}\|{\cal E}^{t}(r)\|_{L^{p}}\leqslant Ct^{2},\ \sup_{0\leqslant r\leqslant1}\|{\cal E}_{f}^{t}(r)\|_{L^{p}}\leqslant Ct^{3/2}\label{eq:ExpError}
\end{equation}
for all $p\geqslant1,$ $t\in(0,1]$. The above constant $C$ depends
on $p,f$ and the localisation $V$. 
\end{lem}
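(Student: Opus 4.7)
The plan is to derive both expansions directly from Lemma~\ref{lem:ExpBridge}. In the loop case $x=y$, the normal chart is based at $x$ so $\mathbf{x}=\exp_x^{-1}(x)=0$, and applying Lemma~\ref{lem:ExpBridge} together with the change of variables $u=t\eta$ (noting $dB_{t\eta}=\sqrt{t}\,dB^t_\eta$) yields, on $\{\tau=t\}$,
$$\frac{X_{tr}}{1-r}=\sqrt{t}\int_0^r\frac{\sigma(X_{t\eta})\,dB^t_\eta}{1-\eta}+t\int_0^r\frac{b(X_{t\eta})+Q(t(1-\eta),X_{t\eta})}{1-\eta}\,d\eta.$$
Splitting $Q$ via the refined Malliavin--Stroock decomposition (\ref{eq:BarQ}) and invoking (\ref{eq:barb}) identifies the first two terms on the right as $G_r^t$ and $H_r^t$, and leaves the explicit remainder ${\cal E}^t(r)=t\int_0^r(1-\eta)^{-1}\bar{Q}(t(1-\eta),X_{t\eta})\,d\eta$. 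The extra factor $t(1-\eta)$ in the pointwise bound (\ref{eq:UEstBarQ}) on $\bar{Q}$ cancels the singular weight $1/(1-\eta)$, giving the deterministic bound $|{\cal E}^t(r)|\leqslant Ct^2$, hence the first half of (\ref{eq:ExpError}) trivially.

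For the second expansion I would first pin down uniform $L^p$-sizes of the building blocks. The BDG inequality applied to $G_r^t$, using boundedness of $\sigma$ on $\bar{U}$, gives $\|G_r^t\|_{L^p}\leqslant C_p\sqrt{t\cdot r/(1-r)}$, so that $\|(1-r)G_r^t\|_{L^p}\leqslant C_p\sqrt{t}$ uniformly on $[0,1]$. Boundedness of $\bar{b}$ together with the elementary inequality $(1-r)|\log(1-r)|\leqslant 1/e$ yields a deterministic bound $|(1-r)H_r^t|\leqslant Ct$. Combined with $|(1-r){\cal E}^t(r)|\leqslant Ct^2$, this gives $\|X_{tr}\|_{L^p}\leqslant C_p\sqrt{t}$ uniformly in $r\in[0,1]$.

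Then I would apply second-order Taylor's theorem to $f\in C_b^\infty(\bar{U})$ at the origin, with cubic Lagrange remainder $R_f$ satisfying $|R_f(\mathbf{z})|\leqslant C|\mathbf{z}|^3$. Substituting (\ref{eq:TypeIExp}) into the linear and quadratic Taylor terms, the principal part is precisely the right-hand side of (\ref{eq:TypeIIExp}); the leftover terms are (a)~the linear contribution of ${\cal E}^t$ (size $t^2$), (b)~the quadratic cross terms $(1-r)^2 GH$, $(1-r)^2 HH$, and those involving ${\cal E}^t$ (sizes $t^{3/2}$, $t^2$ and higher respectively, by H\"older), and (c)~the cubic Taylor remainder, controlled via $\|X_{tr}\|_{L^{3p}}^3\leqslant C_p t^{3/2}$. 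All absorb into ${\cal E}_f^t(r)$ with $\|{\cal E}_f^t(r)\|_{L^p}\leqslant Ct^{3/2}$. The one subtle point throughout is the apparent singularity $1/(1-\eta)$ appearing in the integrands defining $G_r^t$, $H_r^t$ and ${\cal E}^t$; in every case a compensating factor -- the explicit $(1-r)$ prefactor, the logarithmic damping $(1-r)|\log(1-r)|$, or the $\sqrt{r/(1-r)}$ emerging from the BDG quadratic variation -- tames this singularity cleanly. No real obstacle is expected beyond careful bookkeeping of these cancellations.
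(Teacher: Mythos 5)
Your proposal is correct and follows essentially the same route as the paper: the first expansion with explicit remainder ${\cal E}^{t}(r)=t\int_{0}^{r}(1-\eta)^{-1}\bar{Q}(t(1-\eta),X_{t\eta})\,d\eta$ bounded via (\ref{eq:UEstBarQ}), then a second-order Taylor expansion of $f$ controlled by the key estimates $(1-r)\|G_{r}^{t}\|_{L^{p}}\leqslant C_{p}\sqrt{t}$ (via BDG) and $(1-r)|H_{r}^{t}|\leqslant Ct(1-r)|\log(1-r)|\leqslant Ct$. Your bookkeeping of the cross and cubic terms (in particular bounding the Lagrange remainder by $\|X_{tr}\|_{L^{3p}}^{3}\leqslant Ct^{3/2}$) is if anything slightly more explicit than the paper's.
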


\begin{proof}
The first expansion (\ref{eq:TypeIExp}) follows easily from Lemma
\ref{lem:ExpBridge}; in fact, one explicitly has 
\[
{\cal E}^{t}(r)=t\int_{0}^{r}\frac{1}{1-\eta}\bar{Q}(t(1-\eta),X_{t\eta})d\eta,
\]
where we recall that $\bar{Q}$ is defined through (\ref{eq:BarQ}).
The first estimate in (\ref{eq:ExpError}) follows immediately from
(\ref{eq:UEstBarQ}). The second expansion (\ref{eq:TypeIIExp}) follows
from the standard Taylor approximation of $f$, where 
\begin{align*}
\big|{\cal E}_{f}^{t}(r)\big| & \leqslant C\|f\|_{C_{b}^{3}(\bar{U})}\big[|{\cal E}^{t}(r)|+(1-r)^{2}\big(|G_{r}^{t}|+|H_{r}^{t}|\big)\cdot|H_{r}^{t}|\\
 & \ \ \ +(1-r)^{3}\big(|G_{r}^{t}|+|H_{r}^{t}|+|{\cal E}^{t}(r)|\big)\big].
\end{align*}
Note that 
\begin{align}
(1-r)\|G_{r}^{t}\|_{L^{p}} & =(1-r)\big\|\sqrt{t}\int_{0}^{r}\frac{1}{1-\eta}\sigma(X_{t\eta})dB_{\eta}^{t}\big\|_{L^{p}}\nonumber \\
 & \leqslant C_{p}\sqrt{t}(1-r)\sqrt{\int_{0}^{r}\frac{d\eta}{(1-\eta)^{2}}}\leqslant C_{p}'\sqrt{t}\label{eq:GOrder}
\end{align}
and 
\[
(1-r)|H_{r}^{t}|=(1-r)\big|t\int_{0}^{r}\frac{1}{1-\eta}\bar{b}(X_{t\eta})d\eta\big|\leqslant Ct(1-r)|\log(1-r)|\leqslant C't.
\]
The second estimate in (\ref{eq:ExpError}) thus follows. 
\end{proof}
\begin{rem}
\label{rem:OrderCount}The point of Lemma \ref{lem:BasicExp} is to
collect those specific terms in the expansions of $\frac{X_{tr}}{1-r}$
and $f(X_{tr})$ that have order $\leqslant t$ (this is enough for
our purpose of computing the expansion of $\psi_{4}(t,x)$ up to order
$t^{3}$). Note that the term $G_{r}^{t}$ has order $\sqrt{t}$ and
the term $H_{r}^{t}$ has order $t$. Both ${\cal E}^{t}(r),{\cal E}_{f}^{t}(r)$
are remainders that will be ignored in the asymptotic evaluation.
We also remark that although $G_{r}^{t}$ and $H_{r}^{t}$ contain
singularities as $r\nearrow1$, they will always be eliminated after
performing integration in (\ref{eq:Psi4}).
\end{rem}

\subsubsection{Summary of structure and order-counting}

The computation of the coefficients $\hat{\Theta}_{x},\hat{\Xi}_{x}$
in (\ref{eq:Psi4Exp}) essentially boils down to selecting different
terms in the decomposition (\ref{eq:PiDecomp}) with specific orders
in $t$. For this purpose, it is useful to recapture the basic structure
we already derived. First of all, the decomposition of $\Pi_{t}^{x}\otimes\Pi_{t}^{x}$
has the structure 
\begin{align}
\Pi\otimes\Pi= & \big[\int\big(\int dI+dJ+dK+dL\big)\otimes(dI+dJ+dK+dL)+P\big]\nonumber \\
 & \ \ \ \otimes\big[\int\big(\int dI+dJ+dK+dL\big)\otimes(dI+dJ+dK+dL)+P\big],\label{eq:PiPiExp}
\end{align}
where $I,J,K,L$ are as in Lemma \ref{lem:PiDecomp} and 
\[
P=P_{t}\triangleq\frac{t}{2}\int_{0}^{1}\phi_{i}(X_{t\rho})\otimes\phi_{j}(X_{t\rho})a^{ij}(X_{t\rho})d\rho
\]
is the last term (the It\^o-Stratonovich correction) in (\ref{eq:PiDecomp}). 

The key observation is that the individual terms $I,J,K,L,P$ have
the following $t$-orders respectively:
\begin{equation}
I\sim\sqrt{t},\ J\sim t,\ K\sim\sqrt{t},\ L\sim t^{2},\ P\sim t.\label{eq:IJKLPOrder}
\end{equation}
This is clear from Lemma \ref{lem:PiDecomp} and Lemma \ref{lem:BasicExp}.
As a result, in order to compute $\hat{\Theta}_{x}$ and $\hat{\Xi}_{x}$
one only needs to collect and combine terms in the expansion of (\ref{eq:PiPiExp})
that have total orders of $t^{2}$ and $t^{3}$ respectively. The
principle here is easy and mechanical, but the computation is extremely
huge (we have to rely on computer assistance for computing $\hat{\Xi}_{x}$). 

\subsection{Computation of the $t^{2}$-coefficient $\hat{\Theta}_{x}$ }

In view of (\ref{eq:PiPiExp}) and (\ref{eq:IJKLPOrder}), the $t^{2}$-term
in the expansion of $\psi_{4}(t,x)$ precisely comes from the following
$25$ combinations:
\begin{equation}
\begin{array}{c}
(II;II),(II;IK),(II;KI),(II;KK),\\
(IK;II),(IK;IK),(IK;KI),(IK;KK)\\
(KI;II),(KI;IK),(KI;KI),(KI;KK),\\
(KK;II),(KK;IK),(KK;KI),(KK;KK)\\
(II;P),(IK;P),(KI;P),(KK;P)\\
(P;II),(P;IK),(P;KI),(P;KK),(P;P).
\end{array}\label{eq:t2Cases}
\end{equation}
Here e.g. $(IK;KI)$ means picking $dI\otimes dK$ in the first line
of (\ref{eq:PiPiExp}) (i.e. the first $\Pi$ in $\Pi\otimes\Pi$)
and $dK\otimes dI$ in the second line of (\ref{eq:PiPiExp}) (i.e.
the second $\Pi$ in $\Pi\otimes\Pi$); this produces a term $dI\otimes dK\otimes dK\otimes dI$
of order $\sqrt{t}^{4}=t^{2}$. Similarly, $(P;II)$ means picking
$P$ in the first $\Pi$ and $dI\otimes dI$ in the second $\Pi$;
this produces a term of order $t\times\sqrt{t}^{2}=t^{2}.$ The $(P;P)$
means picking $P$ in both $\Pi$'s which produces a term of order
$t\times t=t^{2}$. 
\begin{notation}
\label{Not:t2Eq}In what follows, we write $X_{t}\stackrel{2}{=}Y_{t}$
for random variables $X_{t},Y_{t}$ if $\mathbb{E}[X_{t}-Y_{t}]=o(t^{2})$
as $t\rightarrow0^{+}.$ In fact, in our problem it will always be
the case that $|\mathbb{E}[X_{t}-Y_{t}]|\leqslant Ct^{3}$ (one does
not see the $t^{5/2}$-term because the expectation of an It\^o integral
is zero) and $\|X_{t}-Y_{t}\|_{L^{p}}\leqslant C_{p}t^{5/2}$. This
notation is used to only keep track of terms up to the desired order
$t^{2}$ and ignore all higher order terms in the expansion. 
\end{notation}

\subsubsection{The $(II;II)$ term}

Here we demonstrate the computation of the $(II;II)$-term in detail.
According to Lemma \ref{lem:PiDecomp} (the expression of $dI_{tr}$),
the $t^{2}$-coefficient of this particular term comes from the expectation
of 
\begin{align}
A_{II;II} & \triangleq\int_{0}^{1}\big(\int_{0}^{\rho}\phi_{i}(X_{tr})\frac{X_{tr}^{i}}{1-r}dr\big)\otimes\phi_{j}(X_{t\rho})\frac{X_{t\rho}^{j}}{1-\rho}d\rho\nonumber \\
 & \ \ \ \otimes\int_{0}^{1}\big(\int_{0}^{\theta}\phi_{k}(X_{t\delta})\frac{X_{t\delta}^{k}}{1-\delta}d\delta\big)\otimes\phi_{l}(W_{t\theta})\frac{X_{t\theta}^{l}}{1-\theta}d\theta.\label{eq:A1111}
\end{align}
Since we are extracting terms of order $t^{2}$, in the expansions
(\ref{eq:TypeIExp}, \ref{eq:TypeIIExp}) for the $\phi$'s and $X$'s
the only possibility is freezing all the $\phi$'s at the origin and
taking the $G^{t}$-term for the $X$'s (recall from (\ref{eq:GOrder})
and Remark \ref{rem:OrderCount} that $G^{t}$ is of order $\sqrt{t}$).
Namely, using Notation \ref{Not:t2Eq} one has 
\begin{align}
A_{II;II} & \stackrel{2}{=}\phi_{i}({\bf 0})\otimes\phi_{j}({\bf 0})\otimes\phi_{k}({\bf 0})\otimes\phi_{l}({\bf 0})\nonumber \\
 & \ \ \ \times\int_{0}^{1}\big(\int_{0}^{\rho}G_{r}^{t,i}dr\big)G_{\rho}^{t,j}d\rho\times\int_{0}^{1}\big(\int_{0}^{\theta}G_{\delta}^{t,k}d\delta\big)G_{\theta}^{t,l}d\theta\nonumber \\
 & =\phi_{i}({\bf 0})\otimes\phi_{j}({\bf 0})\otimes\phi_{k}({\bf 0})\otimes\phi_{l}({\bf 0})\nonumber \\
 & \ \ \ \times\int_{\substack{0<r<\rho<1\\
0<\delta<\theta<1
}
}G_{r}^{t,i}G_{\rho}^{t,j}G_{\delta}^{t,k}G_{\theta}^{t,l}drd\rho d\delta d\theta.\label{eq:4IntA1111}
\end{align}
To compute the $t^{2}$-coefficient of $\mathbb{E}[A_{II;II}]$, one
has to evaluate $\mathbb{E}\big[G_{r}^{t,i}G_{\rho}^{t,j}G_{\delta}^{t,k}G_{\theta}^{t,l}\big].$
This is contained in the lemma below.
\begin{lem}
\label{lem:UniversalF}Consider the function 
\[
F(a,b,c,d;i,j,k,l)\triangleq\mathbb{E}\big[G_{a}^{t,i}G_{b}^{t,j}G_{c}^{t,k}G_{d}^{t,l}\big]
\]
where $0\leqslant a\leqslant b\leqslant c\leqslant d\leqslant1$ and
$i,j,k,l$ are arbitrary coordinate indices. Then one has 
\begin{align}
F(a,b,c,d;i,j,k,l) & =t^{2}\big[\frac{ab}{(1-a)(1-b)}\big(\delta^{ij}\delta^{kl}+\delta^{ik}\delta^{jl}+\delta^{il}\delta^{jk}\big)\nonumber \\
 & \ \ \ +\frac{a}{1-a}\frac{c-b}{(1-b)(1-c)}\delta^{ij}\delta^{kl}\big]+o(t^{2})\ \ \ \text{as }t\rightarrow0^{+}.\label{eq:UniversalF}
\end{align}
\end{lem}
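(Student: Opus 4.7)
My plan is to reduce the computation to a Gaussian 4-point function by freezing the diffusion coefficient of the Brownian bridge SDE at the origin, then invoking Isserlis' theorem. The point is that $G_r^{t,i}$ is a Brownian stochastic integral whose leading behavior in $t$ is Gaussian, and the coefficient of $t^2$ in $F$ only sees this frozen, Gaussian part.

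First, I will introduce the linearized process
\[
\tilde{G}_r^{t,i}\triangleq\sqrt{t}\int_0^r\frac{\sigma^i_\alpha({\bf 0})\,dB^{t,\alpha}_\eta}{1-\eta},\qquad 0\leqslant r<1,
\]
which is a centered Gaussian process. Using the smoothness of $\sigma$ on $\bar{U}$, the bound $\sigma(X_{t\eta})-\sigma({\bf 0})=O(|X_{t\eta}|)$, and the a priori $L^p$-estimate $\|X_{t\eta}\|_{L^p}=O\big(\sqrt{t}(1-\eta)\big)$ which is a direct consequence of the decomposition $X_{t\eta}/(1-\eta)=G_\eta^t+H_\eta^t+\mathcal{E}^t(\eta)$ in Lemma \ref{lem:BasicExp}, It\^o's isometry gives
\[
\|G_r^t-\tilde{G}_r^t\|_{L^p}\leqslant C_p\sqrt{t}\Big(\int_0^r\frac{\|X_{t\eta}\|_{L^{2p}}^2}{(1-\eta)^2}\,d\eta\Big)^{1/2}=O(t)
\]
uniformly in $r\in[0,1)$. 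This is the only place where the special structure of the normal chart, via $a({\bf 0})=\sigma\sigma^T({\bf 0})=I$ from Lemma \ref{lem:abGNormChart}, gets used.

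Next, expanding the 4-fold product into the $2^4$ monomials in $\tilde{G}$ and $\Delta\triangleq G-\tilde{G}$, I note that every term other than $\tilde{G}_a^i\tilde{G}_b^j\tilde{G}_c^k\tilde{G}_d^l$ carries at least one factor of $\Delta=O_{L^p}(t)$ and at most three factors of $\tilde{G}=O_{L^p}(\sqrt{t})$; by H\"older its expectation is thus $O(t^{5/2})=o(t^2)$. Hence the task reduces to computing the Gaussian expectation $\mathbb{E}[\tilde{G}_a^i\tilde{G}_b^j\tilde{G}_c^k\tilde{G}_d^l]$.

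Finally, since $\tilde{G}$ is a centered Gaussian vector, Isserlis' theorem gives
\[
\mathbb{E}[\tilde{G}_a^i\tilde{G}_b^j\tilde{G}_c^k\tilde{G}_d^l]=\mathbb{E}[\tilde{G}_a^i\tilde{G}_b^j]\mathbb{E}[\tilde{G}_c^k\tilde{G}_d^l]+\mathbb{E}[\tilde{G}_a^i\tilde{G}_c^k]\mathbb{E}[\tilde{G}_b^j\tilde{G}_d^l]+\mathbb{E}[\tilde{G}_a^i\tilde{G}_d^l]\mathbb{E}[\tilde{G}_b^j\tilde{G}_c^k],
\]
and each pairwise covariance is supplied by It\^o's isometry combined with $a^{ij}({\bf 0})=\sigma^i_\alpha\sigma^j_\alpha({\bf 0})=\delta^{ij}$:
\[
\mathbb{E}[\tilde{G}_a^i\tilde{G}_b^j]=t\,\delta^{ij}\int_0^{a\wedge b}\frac{d\eta}{(1-\eta)^2}=t\,\delta^{ij}\cdot\frac{a\wedge b}{1-(a\wedge b)}.
\]
Substituting under $a\leqslant b\leqslant c\leqslant d$ produces the closed form
\[
t^2\Big[\tfrac{ac}{(1-a)(1-c)}\delta^{ij}\delta^{kl}+\tfrac{ab}{(1-a)(1-b)}\big(\delta^{ik}\delta^{jl}+\delta^{il}\delta^{jk}\big)\Big]+o(t^2),
\]
and the elementary identity $\tfrac{c}{1-c}-\tfrac{b}{1-b}=\tfrac{c-b}{(1-b)(1-c)}$ rewrites the $\delta^{ij}\delta^{kl}$ coefficient as $\tfrac{ab}{(1-a)(1-b)}+\tfrac{a(c-b)}{(1-a)(1-b)(1-c)}$, recovering the form in \eqref{eq:UniversalF}. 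The main (in fact, the only) technical point is the uniform $O(t)$ control of $G-\tilde{G}$ in the first step; everything else is a Gaussian Wick calculation plus bookkeeping.
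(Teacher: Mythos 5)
Your proposal follows essentially the same route as the paper: freeze $\sigma$ at the origin (using $a({\bf 0})={\rm Id}$ from Lemma \ref{lem:abGNormChart}), reduce to a product of four Gaussian stochastic integrals, and apply the Wick/Isserlis formula together with the covariance $\frac{\rho\wedge\theta}{1-\rho\wedge\theta}\delta^{pq}$; your final algebra matching \eqref{eq:UniversalF} is correct. The paper simply asserts the freezing step via its $\stackrel{2}{=}$ notation, whereas you supply an explicit error analysis — a worthwhile addition — but one intermediate bound in it is too strong: the decomposition of Lemma \ref{lem:BasicExp} together with \eqref{eq:GOrder} gives $\|X_{t\eta}\|_{L^{p}}=O\big(\sqrt{t(1-\eta)}\big)$ (consistent with the Euclidean bridge, whose variance at time $t\eta$ is $t\eta(1-\eta)$), not $O\big(\sqrt{t}\,(1-\eta)\big)$. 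With the corrected bound your integral becomes $t\log\frac{1}{1-r}$ rather than $O(t)$, so the estimate $\|G_{r}^{t}-\tilde{G}_{r}^{t}\|_{L^{p}}=O(t)$ is not uniform in $r\in[0,1)$; it still holds for each fixed $r<1$ (with constant depending on $r$), which is all the pointwise statement of the lemma requires, so the argument survives after this repair.
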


\begin{proof}
Since we are only considering the $t^{2}$-coefficient and $\sigma({\bf 0})={\rm Id}$
(cf. (\ref{eq:abInitial}) and also note that $a=\sigma^{T}\sigma$),
one has 
\begin{align*}
G_{a}^{t,i}G_{b}^{t,j}G_{c}^{t,k}G_{d}^{t,l}\stackrel{2}{=} & \sqrt{t}^{4}\big(\int_{0}^{a}\frac{1}{1-\eta}dB_{\eta}^{t,i}\big)\big(\int_{0}^{b}\frac{1}{1-\eta}dB_{\eta}^{t,j}\big)\\
 & \ \ \ \times\big(\int_{0}^{c}\frac{1}{1-\eta}dB_{\eta}^{t,k}\big)\big(\int_{0}^{d}\frac{1}{1-\eta}dB_{\eta}^{t,l}\big).
\end{align*}
The right hand side is just the product of four Gaussian variables
whose covariance function is easily calculated from the relation that
\[
\mathbb{E}\big[\big(\int_{0}^{\rho}\frac{1}{1-\eta}dB_{\eta}^{t,p}\big)\big(\int_{0}^{\theta}\frac{1}{1-\eta}dB_{\eta}^{t,q}\big)\big]=\delta^{pq}\int_{0}^{\rho\wedge\theta}\frac{d\eta}{(1-\eta)^{2}}=\frac{\rho\wedge\theta}{1-\rho\wedge\theta}\delta^{pq}.
\]
The result (\ref{eq:UniversalF}) follows immediately from the standard
formula for $4$-th moments of Gaussian vectors. 
\end{proof}
\begin{lem}
One has 
\begin{align}
\mathbb{E}[A_{II;II}] & =t^{2}\phi_{i}({\bf 0})\otimes\phi_{j}({\bf 0})\otimes\phi_{k}({\bf 0})\otimes\phi_{l}({\bf 0})\nonumber \\
 & \ \ \ \times\big(\frac{1}{4}\delta^{ij}\delta^{kl}+\frac{1}{3}\delta^{ik}\delta^{jl}+\frac{1}{6}\delta^{il}\delta^{jk}\big)+o(t^{2})\ \ \ \text{as }t\rightarrow0^{+}.\label{eq:A1111Coef}
\end{align}
\end{lem}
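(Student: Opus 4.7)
The plan is to start from the reduced expression (\ref{eq:4IntA1111}) for $A_{II;II}$ and evaluate the Gaussian fourfold integral using Wick's theorem (which is essentially the content of Lemma \ref{lem:UniversalF}). Since the four $\phi$-factors are already frozen at the origin and absorbed as an outer tensor factor, the scalar content reduces to evaluating
\[
\mathcal{I}^{ijkl} \;\triangleq\; \int_{\substack{0<r<\rho<1\\ 0<\delta<\theta<1}}\!\!\mathbb{E}\big[G_r^{t,i} G_\rho^{t,j} G_\delta^{t,k} G_\theta^{t,l}\big]\, dr\, d\rho\, d\delta\, d\theta
\]
up to order $t^{2}$. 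From the covariance computation in the proof of Lemma \ref{lem:UniversalF}, namely $\mathbb{E}[G_a^{t,p} G_b^{t,q}] = t\delta^{pq} f(a\wedge b) + o(t)$ with $f(x)\triangleq x/(1-x)$, Wick's formula yields the three-pairing decomposition
\[
\mathbb{E}\big[G_r^{t,i} G_\rho^{t,j} G_\delta^{t,k} G_\theta^{t,l}\big] = t^{2}\big[\delta^{ij}\delta^{kl} f(r) f(\delta) + \delta^{ik}\delta^{jl} f(r\wedge\delta) f(\rho\wedge\theta) + \delta^{il}\delta^{jk} f(r\wedge\theta) f(\rho\wedge\delta)\big] + o(t^{2}),
\]
where I have simplified using the constraints $r<\rho$ and $\delta<\theta$ in the first pairing. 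This reduces the problem to three separate scalar integrals over the domain $D\triangleq\{0<r<\rho<1\}\times\{0<\delta<\theta<1\}$.

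Next I would handle each coefficient. The $\delta^{ij}\delta^{kl}$ integral factorizes across the two simplices, yielding $\int_0^1\!r\,dr \cdot \int_0^1\!\delta\,d\delta = \tfrac14$ after the factors $(1-\rho)$ and $(1-\theta)$ from the outer integrations cancel the poles in $f(r)$ and $f(\delta)$. The $\delta^{ik}\delta^{jl}$ and $\delta^{il}\delta^{jk}$ integrals contain nested minima, so I would partition $D$ into the six sub-regions obtained by completing the constraints $r<\rho$ and $\delta<\theta$ to a total ordering of $\{r,\rho,\delta,\theta\}$; on each piece the minima resolve to a single variable and the integrand becomes a product of two $f$-values at specific arguments. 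The symmetry $(r,\rho)\leftrightarrow(\delta,\theta)$ (which swaps $i\leftrightarrow k,\ j\leftrightarrow l$ but leaves each of the three pairing-structures invariant) pairs the six sub-regions into three equivalence classes, so only three distinct polynomial integrals need to be evaluated for each coefficient.

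Concretely, on each sub-region the inner (iterated) integration with respect to the rightmost variables produces factors of the form $(1-\cdot)^{m}/m!$ which cancel the poles in $f(\cdot)=\cdot/(1-\cdot)$, leaving elementary polynomial integrands like $r(1-r)(r+2)$ and $r(1-r)(2r+1)$ on $[0,1]$. Summing the three distinct contributions (each doubled by symmetry) would give $\tfrac13$ as the $\delta^{ik}\delta^{jl}$-coefficient and $\tfrac16$ as the $\delta^{il}\delta^{jk}$-coefficient. Combining with the factorizable $\tfrac14$, the statement (\ref{eq:A1111Coef}) follows after pulling back the outer tensor $\phi_i({\bf 0})\otimes\phi_j({\bf 0})\otimes\phi_k({\bf 0})\otimes\phi_l({\bf 0})$.

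The main obstacle is purely bookkeeping: correctly enumerating the six sub-regions, tracking which variable realizes each minimum on each region, and verifying that the pole-cancellation from outer integrations produces convergent polynomial integrands before hand-computing them. There is no conceptual difficulty once Wick's formula is in place, and the error bound $o(t^{2})$ (in fact $O(t^{5/2})$, per Notation \ref{Not:t2Eq}) is controlled uniformly by the $L^p$-estimates in (\ref{eq:ExpError}) combined with dominated convergence, since all Taylor-remainder terms in $\phi$ and $X_{tr}$ produce strictly higher powers of $\sqrt{t}$.
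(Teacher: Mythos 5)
Your proposal is correct and follows essentially the same route as the paper: both rest on the Gaussian fourth-moment (Wick) formula — which the paper packages as Lemma \ref{lem:UniversalF} — combined with decomposing the domain $\{0<r<\rho<1\}\times\{0<\delta<\theta<1\}$ into the total orderings of the four time variables and exploiting the $(r,\rho)\leftrightarrow(\delta,\theta)$ symmetry; you merely apply Wick before the ordering decomposition rather than after. Your stated coefficients $\tfrac14$, $\tfrac13$, $\tfrac16$ check out against the paper's representative sub-region computation (\ref{eq:A1111OneOrder}).
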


\begin{proof}
This is obtained by further decomposing the integral (\ref{eq:4IntA1111})
according to the actual orderings of $(r,\rho,\delta,\theta)$ and
applying Lemma \ref{lem:UniversalF} to each scenario. For instance,
the integral over the region $r<\delta<\theta<\rho$ gives 
\begin{align}
 & \int_{0<r<\delta<\theta<\rho<1}F(r,\delta,\theta,\rho;i,k,l,j)\nonumber \\
 & =t^{2}\int_{0<r<\delta<\theta<\rho<1}\big(\frac{r\delta}{(1-r)(1-\delta)}\epsilon^{ijkl}+\frac{r}{1-r}\frac{\theta-\delta}{(1-\delta)(1-\theta)}\delta^{ik}\delta^{jl}\big)+o(t^{2})\nonumber \\
 & =t^{2}\big(\frac{1}{36}\epsilon^{ijkl}+\frac{1}{24}\delta^{ik}\delta^{jl}\big)+o(t^{2}),\label{eq:A1111OneOrder}
\end{align}
where $\epsilon^{ijkl}\triangleq\delta^{ij}\delta^{kl}+\delta^{ik}\delta^{jl}+\delta^{il}\delta^{jk}.$
Other scenarios corresponding to different orderings of $(r,\rho,\delta,\theta)$
are obtained from (\ref{eq:A1111OneOrder}) by symmetry. Combining
and simplifying all these expressions gives the relation (\ref{eq:A1111Coef}).
\end{proof}

\subsubsection{Final result}

All the remaining $24$ cases in (\ref{eq:t2Cases}) are treated by
very similar kind of calculations (elementary It\^o-calculus). We
will not present the separate results here and leave them to Appendix
\ref{sec:t2}. The final result is summarised as follows. To ease
notation, we denote 
\begin{equation}
\phi_{ijkl}\triangleq\phi_{i}({\bf 0})\otimes\phi_{j}({\bf 0})\otimes\phi_{k}({\bf 0})\otimes\phi_{l}({\bf 0}).\label{eq:PhiijklNot}
\end{equation}
 
\begin{prop}
\label{prop:t2Coef}The $t^{2}$-coefficient $\hat{\Theta}_{x}$ in
the expansion of (\ref{eq:Psi4Exp}) is given by 
\[
\hat{\Theta}_{x}=\frac{1}{24}(\phi_{ijij}-\phi_{ijji})\in E^{\otimes4}.
\]
\end{prop}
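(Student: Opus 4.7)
The strategy is to expand $\Pi_t^x \otimes \Pi_t^x$ using Lemma~\ref{lem:PiDecomp} and retain only the terms contributing at order $t^2$. The order-counting (\ref{eq:IJKLPOrder}) shows that a contribution at this order can only arise by pairing an order-$t$ factor from each copy of $\Pi_t^x$, and these factors are precisely the five listed in (\ref{eq:t2Cases}): the double integrals $(II),(IK),(KI),(KK)$, and the It\^o--Stratonovich correction $P$. This gives exactly $5 \times 5 = 25$ cases. The case $(II;II)$ has been worked out in (\ref{eq:A1111})--(\ref{eq:A1111Coef}); I process each of the remaining 24 cases in exactly the same fashion.

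For each case, Lemma~\ref{lem:BasicExp} permits freezing the coefficients $\phi_i(X_{tr})$ and $\sigma_\alpha^i(X_{tr})$ at ${\bf 0}$, since any higher-order Taylor contribution would push the total order past $t^2$. Using $\sigma({\bf 0}) = {\rm Id}$ from Lemma~\ref{lem:abGNormChart}, the leading random contributions reduce to $dI_{tr} \approx -\phi_i({\bf 0})\, G_r^{t,i}\,dr$ and $dK_{tr} \approx \sqrt{t}\,\phi_i({\bf 0})\,dB_r^{t,i}$, while $P$ reduces to $\tfrac{t}{2}\phi_i({\bf 0})\otimes \phi_j({\bf 0})\,\delta^{ij}$ times the integration in $\rho$. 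Taking expectation, the resulting fourth-order Gaussian moments evaluate via Lemma~\ref{lem:UniversalF} (or a simpler second-order variant when $P$ appears), and after integrating out the time orderings each contribution reduces to a rational linear combination of the three basic index pairings $\phi_{iijj}$, $\phi_{ijij}$, $\phi_{ijji}$.

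The genuine obstacle is the bookkeeping: each of the 25 cases produces a sum over permutations of the four time variables and an enumeration of Wick pairings of Gaussian indices. Two structural features, however, give both guidance and a consistency check on the final answer. First, the time-reversal invariance (\ref{eq:RevSym}) forces $\psi_4(t,x)$ to be antisymmetric under the exchange of positions $(1,2)$ as well as $(3,4)$, which compels the $\phi_{iijj}$-coefficient to cancel in the final sum and the $\phi_{ijij}$- and $\phi_{ijji}$-coefficients to be opposite in sign. Second, a direct algebraic computation yields $\mathfrak{C}(\phi_{ijij} - \phi_{ijji}) = (d-1)\,g_x$, so the contracted form of the claimed $\hat{\Theta}_x$ recovers $\tfrac{d-1}{24}g_x$, matching the metric prediction of Theorem~\ref{thm:RecCurv}(i); this pins down the numerical factor $\tfrac{1}{24}$ once the antisymmetric scalar pattern has been established. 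The detailed evaluations of all remaining cases are deferred to the appendix.
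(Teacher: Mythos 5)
Your proposal follows essentially the same route as the paper: the decomposition (\ref{eq:PiPiExp}) together with the order-counting (\ref{eq:IJKLPOrder}) isolates the same $25$ combinations (\ref{eq:t2Cases}); each case is reduced by freezing the coefficients at the origin (using $\sigma({\bf 0})={\rm Id}$) and evaluating the resulting Gaussian fourth moments via Lemma \ref{lem:UniversalF}; and the contributions are summed and halved via (\ref{eq:Psi4}). One caution: your claim that matching $\mathfrak{C}\hat{\Theta}_{x}$ against Theorem \ref{thm:RecCurv}(i) ``pins down'' the factor $\tfrac{1}{24}$ is circular, since the formula $\Theta_{x}=\tfrac{d-1}{24}g_{x}$ is itself deduced from this very proposition. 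The antisymmetry argument from (\ref{eq:RevSym}) does legitimately reduce the answer to a single unknown scalar multiple of $\phi_{ijij}-\phi_{ijji}$, and that is a useful structural check, but the scalar must still be extracted from the explicit case-by-case computation (or from at least one independently evaluated contraction of it), exactly as is done in Appendix \ref{sec:t2}.
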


\begin{proof}
By summing up the expressions (\ref{eq:t21}--\ref{eq:t214}) in
Appendix \ref{sec:t2} as well as (\ref{eq:A1111Coef}), one finds
that 
\begin{equation}
\mathbb{E}[\Pi_{t}^{x}\otimes\Pi_{t}^{x}]=\frac{t^{2}}{12}(\phi_{ijij}-\phi_{ijji})+o(t^{2}).\label{eq:PPt2Coef}
\end{equation}
The result thus follows from the relation (\ref{eq:Psi4}).
\end{proof}
\begin{rem}
\label{rem:IntrIntt2}Although $\hat{\Theta}_{x}$ is computed in
terms of local coordinates, it is apparently an intrinsic quantity
(since $\psi_{4}(t,x)$ is). Its intrinsic meaning is described as
follows. Since $\phi$ is an $E$-valued one-form on $M$, $\phi(x)\otimes\phi(x)\otimes\phi(x)\otimes\phi(x)$
can be viewed as an $E^{\otimes4}$-valued $4$-tensor on $T_{x}M$.
Consider the $E^{\otimes4}$-valued bilinear form on $(T_{x}M)^{\otimes2}$
defined by
\begin{align*}
{\cal T}_{x}: & (T_{x}M)^{\otimes2}\times(T_{x}M)^{\otimes2}\rightarrow E^{\otimes4}\\
 & (u\otimes v,w\otimes z)\mapsto\langle\phi(x),u\rangle\otimes\langle\phi(x),v\rangle\otimes[\langle\phi(x),w\rangle,\langle\phi(x),z\rangle],
\end{align*}
where $\langle\cdot,\cdot\rangle$ denotes the cotangent-tangent pairing
and $[a,b]\triangleq a\otimes b-b\times a$ for $a,b\in E$. Then
$\hat{\Theta}_{x}$ is the trace of ${\cal T}_{x}$ with respect to
the Hilbert-Schmidt structure on $(T_{x}M)^{\otimes2}$ induced from
the Riemannian metric on $T_{x}M$. The geometric significance of
$\hat{\Theta}_{x}$ will become clearer in the case when $\phi=dF$ and
after applying the contraction operator $\mathfrak{C}$ defined by
(\ref{eq:24Trace}) (in fact, $\hat{\Theta}_{x}$ encodes the entire
Riemannian metric tensor at $x$; cf. equation (\ref{eq:t2Coef})
and Remark \ref{rem:RecMetric} below).
\end{rem}

\subsection{Computation of the $t^{3}$-coefficient $\hat{\Xi}_{x}$}

From Proposition \ref{prop:t2Coef} and Remark \ref{rem:IntrIntt2},
the $4$-tensor $\hat{\Theta}_{x}$ encodes information about the
metric tensor at the location $x$ (the first fundamental form at
$x$). However, it does not encode any curvature properties as the
expression does not involve differentiation of the metric tensor.
To recover curvature quantities, one has to look at higher order terms
in the expansion. 

The basic principle of calculating higher order coefficients in the
$t$-expansion of $\psi_{4}(t,x)$ is the same as the computation
of $\hat{\Theta}_{x}$, which is in turn based on the decomposition
(\ref{eq:PiPiExp}) and order counting. However, the calculation becomes
much lengthier and at some point unmanageable by hand (we thus have
to rely on computer assistance). It is quite straight forward that
the $t^{5/2}$-coefficient of $\mathbb{E}[\Pi_{t}^{x}\otimes\Pi_{t}^{x}]$
is exactly zero for the obvious reason that the expected value of
an It\^o integral is zero. Therefore, our next task is to compute
the $t^{3}$-coefficient $\hat{\Xi}_{x}$ of the expansion of $\psi_{4}(t,x)$.
As we will see, $\hat{\Xi}_{x}$ encodes an interesting amount of
explicit information about both intrinsic (Ricci) and extrinsic (second
fundamental form) curvature properties at $x$. 

In the decomposition (\ref{eq:PiPiExp}) of $\Pi_{t}^{x}\otimes\Pi_{t}^{x}$,
we recall that the (lowest) $t$-orders of the terms $I,J,K,L,P$
are given by (\ref{eq:IJKLPOrder}) respectively. Accordingly, we
just denote their degrees to be 
\[
\deg I=\deg K=0.5,\ \deg J=\deg P=1,\ \deg L=2.
\]
Note that each of them also contain higher order terms; for instance
one can further expand $\phi_{i}(X_{tr})$ in $I$ (cf. (\ref{eq:IJDef}))
by using (\ref{eq:TypeIIExp}) to get terms in $I$ that have orders
$t,t^{3/2},\cdots.$ When one expands the product from (\ref{eq:PiPiExp}),
in order to extract the $t^{3}$-term in the asymptotic expansion
there are three main scenarios to consider:
\begin{enumerate}
\item \uline{Combinations with total degree \mbox{$3$}}. In this case,
one extracts leading coefficients and there is no need to further
expand any of the individual terms. For instance, consider the combination
$JI;P$. This means picking $dJ\otimes dI$ in the first line of (\ref{eq:PiPiExp})
and $P$ in the second. The resulting total degree is $1+0.5+1=3$.
Therefore, one only needs to compute the leading coefficient in the
corresponding expectation without further expanding any of the $I,J,K$'s.
\item \uline{Combinations with total degree \mbox{$2.5$}}. In this case,
one needs to further extract the $\sqrt{t}$-term in the resulting
product (because an extra order of $\sqrt{t}$ is needed to yield
a total order of $t^{3}$). For instance, consider the combination
$JI;IK$ whose total degree is $2.5$. One then needs to compute its
$\sqrt{t}$-order term which can come from a further $\sqrt{t}$-expansion
from any of the $I,J,K$'s; e.g. taking the $\sqrt{t}$-order term
\[
\partial_{k}\varphi({\bf 0})\cdot(1-r)\big(G_{r}^{t,k}+H_{r}^{t,k}\big)
\]
in the expansion of $\varphi(X_{tr})$ in $J$ (cf. (\ref{eq:IJDef}),
(\ref{eq:varphi}) and (\ref{eq:TypeIIExp})) will yield such a term
and there are several other possibilities. 
\item \uline{Combinations with total degree \mbox{$2$}}. In this case,
one needs to further extract the $t$-term in the resulting product.
For instance, consider the combination $II;IK$ whose total degree
is $2$. One then needs to compute its $t$-order term which can come
from many possibilities, e.g. by expanding the first $I$ to order
$t^{3/2}$ (note that $I$ itself already has order $\sqrt{t}$) or
by expanding the second $I$ and last $K$ each to the order of $t$
but there are many other possibilities as well. 
\end{enumerate}
In view of the above three scenarios, we aim at obtaining an expansion
of the form 
\begin{equation}
\mathbb{E}[\Pi_{t}^{x}\otimes\Pi_{t}^{x}]=\frac{t^{2}}{12}(\phi_{ijij}-\phi_{ijji})+({\cal S}_{1}+{\cal S}_{2}+{\cal S}_{3})t^{3}+o(t^{3}),\label{eq:PPt23Exp}
\end{equation}
where the $t^{2}$-coefficient in (\ref{eq:PPt23Exp}) was derived
in (\ref{eq:PPt2Coef}) and ${\cal S}_{i}$ is the total coefficient
to be computed under the above $i$-th scenario ($i=1,2,3$). We now
proceed to derive the explicit formulae for ${\cal S}_{1},{\cal S}_{2},{\cal S}_{3}$.
Since the analysis is quite elementary and mechanical, as before we
will only discuss the details in one representative combination in
each scenario and leave the case-by-case discussion to the appendix.
We first introduce two convenient sets of notation.
\begin{notation}
\label{Not:t3Eq}We write $X_{t}\stackrel{3}{=}Y_{t}$ for random
variables $X_{t},Y_{t}$ if $\mathbb{E}[X_{t}]$ and $\mathbb{E}[Y_{t}]$
have the same $t^{3}$-terms in their $t$-expansions as $t\rightarrow0^{+}.$ 
\end{notation}

\begin{notation}
\label{not:PhiNot}To state the final results in a more compact form,
we will introduce notation like
\begin{align*}
\phi_{ijkl} & \triangleq\phi_{i}({\bf 0})\otimes\phi_{j}({\bf 0})\otimes\phi_{k}({\bf 0})\otimes\phi_{l}({\bf 0}),\\
\phi_{i,p|jk|l,q} & \triangleq\partial_{p}\phi_{i}({\bf 0})\otimes\phi_{j}({\bf 0})\otimes\phi_{k}({\bf 0})\otimes\partial_{q}\phi_{l}({\bf 0})\\
\phi_{ij|k,pq|l} & \triangleq\phi_{i}({\bf 0})\otimes\phi_{j}({\bf 0})\otimes\partial_{pq}^{2}\phi_{k}({\bf 0})\otimes\phi_{l}({\bf 0})\ \text{etc.}
\end{align*}
This kind of notation (without the derivatives) was already used in
the statement of Proposition \ref{prop:t2Coef} before. We will also
write $\partial_{i}f\triangleq\partial_{i}f({\bf 0}).$
\end{notation}

\subsubsection{Total degree $=3$ }

All possible combinations in the decomposition (\ref{eq:PiPiExp})
for this scenario is listed below: 
\footnotesize
\begin{equation}
\begin{array}{c}
(JI;JI),(IJ;JI),(JI;IJ),(IJ;IJ),(JI;JK),(IJ;JK),\\
(JI;KJ),(IJ;KJ),(JK;JI),(KJ;JI),(JK;IJ),(KJ;IJ),\\
(JK;JK),(KJ;JK),(JK;KJ),(KJ;KJ),(JJ;II),(JJ;IK),\\
(JJ;KI),(JJ;KK),(II;JJ),(IK;JJ),(KI;JJ),(KK;JJ)(JJ;P),(P;JJ).
\end{array}\label{eq:D3List}
\end{equation}
\normalsize

\subsubsection*{A representative case: $JI;JI$}

As a representative case, let us consider the combination $JI;JI$
in (\ref{eq:D3List}). The resulting product is 
\begin{align*}
B_{JI;JI} & \triangleq\int_{0}^{1}\big(t\int_{0}^{\rho}\varphi(X_{tr})dr\big)\otimes\phi_{i}(X_{t\rho})\frac{X_{t\rho}^{i}}{1-\rho}d\rho\\
 & \ \ \ \otimes\int_{0}^{1}\big(t\int_{0}^{\theta}\varphi(X_{t\delta})d\delta\big)\otimes\phi_{j}(X_{t\theta})\frac{X_{t\theta}^{i}}{1-\theta}d\theta,
\end{align*}where $\varphi$ is the function defined by (\ref{eq:varphi}).
Since $B_{JI;JI}$ already has order $t^{3}$, to extract its coefficient
the only possibility is freezing $\varphi,\phi_{i},\varphi,\phi_{j}$
at the origin and replacing $\frac{X_{t\rho}^{i}}{1-\rho},\frac{X_{t\theta}^{j}}{1-\theta}$
by their leading terms $G_{\rho}^{t,i},G_{\theta}^{t,j}$ from the
expansion (\ref{eq:TypeIExp}). This leads to 
\begin{align*}
B_{JI;JI} & \stackrel{3}{=}\varphi({\bf 0})\otimes\phi_{i}({\bf 0})\otimes\varphi({\bf 0})\otimes\phi_{j}({\bf 0})\\
 & \ \ \ \times\int_{0}^{1}\big(t\int_{0}^{\rho}dr\big)G_{\rho}^{t,i}d\rho\times\int_{0}^{1}\big(t\int_{0}^{\theta}d\delta\big)G_{\theta}^{t,j}d\theta.
\end{align*}
It is straight forward to check that 
\begin{equation}
\mathbb{E}[G_{\rho}^{t,i}G_{\theta}^{t,j}]=\frac{\rho\wedge\theta}{1-\rho\wedge\theta}\delta^{ij}t+o(t)\ \ \ \text{as }t\rightarrow0^{+}.\label{eq:2ProdG}
\end{equation}
Therefore, one has
\begin{align*}
 & \mathbb{E}\big[\int_{0}^{1}\big(t\int_{0}^{\rho}dr\big)G_{\rho}^{t,i}d\rho\times\int_{0}^{1}\big(t\int_{0}^{\theta}d\delta\big)G_{\theta}^{t,j}d\theta\big]\\
 & =t^{2}\int_{[0,1]^{2}}\rho\theta\mathbb{E}[G_{\rho}^{t,i}G_{\theta}^{t,j}]d\rho d\theta=\frac{7}{12}\delta^{ij}t^{3}+o(t^{3}).
\end{align*}
In view of the definition (\ref{eq:varphi}) of $\varphi$ and Lemma
\ref{lem:abGNormChart}, one also has $\varphi({\bf 0})=1/2\partial_{k}\phi_{k}.$
As a consequence, with Notation \ref{not:PhiNot} in mind one finds
that 
\[
\mathbb{E}[B_{JI;JI}]=\frac{7t^{3}}{48}\phi_{i,i|j|k,k|j}+o(t^{3}).
\]

\subsubsection*{Summary of result }

The results for all other cases in (\ref{eq:D3List}) are summarised
in Appendix \ref{sec:t3}. By adding up all these expressions, one arrives
at the following formula. 
\begin{lem}
\label{lem:Total3}The total $t^{3}$-coefficient ${\cal S}_{1}$
(cf. (\ref{eq:PPt23Exp}) for the notation) of $\mathbb{E}[\Pi_{t}^{x}\otimes\Pi_{t}^{x}]$
coming from all the $26$ cases in (\ref{eq:D3List}) is equal to
\[
{\cal S}_{1}=\frac{1}{48}\phi_{i,i|j|k,k|j}+{\color{red}{\color{black}\frac{1}{48}}}{\color{black}\phi_{i|j,j|i|k,k}-\frac{1}{48}\phi_{i,i|jj|k,k}-\frac{1}{48}}\phi_{i|j,j|k,k|i}.
\]
\end{lem}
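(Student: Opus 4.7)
The plan is to process each of the 26 combinations listed in (\ref{eq:D3List}) by the same mechanical procedure already illustrated for the representative case $JI;JI$, and then to sum the resulting expressions. The key structural observation is that every combination in this scenario already has total degree $3$, so no secondary Taylor expansion of any coefficient function is needed: every factor $\phi_i$, $\varphi$, $\phi_i \phi_j a^{ij}$ can be frozen at $\mathbf{0}$, and every factor $X_{tr}^i/(1-r)$ can be replaced by its leading term $G_r^{t,i}$ from the expansion (\ref{eq:TypeIExp}). This reduces each case to a deterministic integral against a Gaussian moment.

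The concrete steps for a single case proceed as follows. First, write out the iterated integral encoded by the combination, freezing all coefficient functions at $\mathbf{0}$ and using Lemma \ref{lem:abGNormChart} together with the definition (\ref{eq:varphi}) to identify $\varphi(\mathbf{0}) = \tfrac{1}{2}\partial_k\phi_k(\mathbf{0})$ and $a^{ij}(\mathbf{0}) = \delta^{ij}$ (the latter simplifies every $P$-contribution to $\tfrac{t}{2}\int_0^1 \phi_i \otimes \phi_i\, d\rho$ at leading order). Second, replace each surviving $X_{tr}^i/(1-r)$ by $G_r^{t,i}$ modulo $o(t^{1/2})$. Third, take expectation and invoke the two-point formula (\ref{eq:2ProdG}) for combinations carrying two $I$/$K$-factors (such as $JI;JI$, $JJ;II$, $JJ;P$), and the four-point formula of Lemma \ref{lem:UniversalF} for combinations carrying four $I$/$K$-factors (such as $JK;JK$, $KJ;KJ$, $JK;KJ$). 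Finally, evaluate the resulting elementary simplex integrals over $[0,1]^k$ explicitly, and record the answer in the compact tensor notation of Notation \ref{not:PhiNot}.

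Many cases in (\ref{eq:D3List}) can be grouped by symmetry to cut down on redundant work: transposition symmetries relate pairs such as $(JI;JI)\leftrightarrow(IJ;IJ)$, $(JI;IJ)\leftrightarrow(IJ;JI)$, and $(JK;JI)\leftrightarrow(KJ;IJ)$; the eight cases of the form $(JJ;\ast)$ with $\ast \in \{II, IK, KI, KK, P\}$ together with their mirrors fit a single template differing only by contraction pattern; and the four cases $(JK;JK), (KJ;JK), (JK;KJ), (KJ;KJ)$ are generated from one four-point Gaussian moment. Grouping in this way reduces the explicit bookkeeping to a manageable handful of template computations.

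The main obstacle is not any conceptual step but the combinatorial bookkeeping: one must track the ordering of tensor factors (index positions do not commute), the signs from the $-X_{tr}^i/(1-r)$ appearing in $dI_{tr}$, the simplex constraints on the time variables in each case, and the specific tensor contractions produced by the Gaussian pairings. Subtle cancellations occur among contributions like $\phi_{i|i|j,k|j,k}$ and $\phi_{i,j|i|j,k|k}$ coming from distinct cases, and isolating the surviving four-term combination $\tfrac{1}{48}\bigl(\phi_{i,i|j|k,k|j} + \phi_{i|j,j|i|k,k} - \phi_{i,i|jj|k,k} - \phi_{i|j,j|k,k|i}\bigr)$ requires careful symbolic simplification. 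For this reason I would tabulate all 26 contributions individually (as will be done in Appendix \ref{sec:t3}) and then perform the summation with computer-algebra assistance to certify the final formula for ${\cal S}_1$.
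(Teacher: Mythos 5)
Your proposal follows the paper's own route for Lemma \ref{lem:Total3}: each of the 26 degree-$3$ combinations in (\ref{eq:D3List}) is reduced by freezing every coefficient function at the origin (using $\varphi({\bf 0})=\tfrac12\partial_k\phi_k$ and $a^{ij}({\bf 0})=\delta^{ij}$) and replacing each $X_{tr}/(1-r)$ by its leading Gaussian term $G_r^{t}$, the expectations are evaluated by elementary It\^o calculus and the covariance relation (\ref{eq:2ProdG}), symmetric cases are paired off, and the individual contributions are tabulated (as in Appendix \ref{sec:t3}) and summed. One minor correction: no combination in the degree-$3$ list carries four stochastic factors --- e.g.\ $(JK;JK)$ contains only two $K$-factors, and any term with four $I/K$-factors would have total degree $2$ --- so the four-point moment formula of Lemma \ref{lem:UniversalF} is never needed for ${\cal S}_1$; only two-point covariances (and the vanishing of expectations of pure It\^o integrals, which kills the $(JJ;IK)$, $(JJ;KK)$-type cases) arise.
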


\subsubsection{Total degree $=2.5$}

All possible combinations in the decomposition (\ref{eq:PiPiExp})
for this scenario include the following $20$ cases \begin{equation}
\begin{array}{c}
(JI;P),(IJ;P),(JK;P),(KJ;P),\\
(JI;II),(IJ;II),(JK;II),(KJ;II),\\
(JI;IK),(IJ;IK),(JK;IK),(KJ;IK),\\
(JI;KI),(IJ;KI),(JK;KI),(KJ;KI),\\
(JI;KK),(IJ;KK),(JK;KK),(KJ;KK),
\end{array}\label{eq:D2.5List}
\end{equation}
as well as the corresponding $20$ cases obtained by swapping the $(1,2)$ and $(3,4)$ tensor slots for each case in (\ref{eq:D2.5List}) (e.g. $(P;JI)$ etc.)

\subsubsection*{A representative case: $JI;P$}

We consider one representative case from (\ref{eq:D2.5List}): the
combination $JI;P$. The resulting product is 
\begin{align*}
C_{JI;P} & \triangleq-t^{2}\int_{0}^{1}\big(\int_{0}^{\rho}\varphi(X_{tr})dr\big)\otimes\phi_{i}(X_{t\rho})\frac{X_{t\rho}^{i}}{1-\rho}d\rho\\
 & \ \ \ \otimes\frac{1}{2}\int_{0}^{1}\phi_{k}(X_{t\theta})\otimes\phi_{l}(X_{t\theta})a^{kl}(X_{t\theta})d\theta.
\end{align*}
The order of $C_{JI;P}$ is $t^{2.5}$ and the extra order of $\sqrt{t}$
comes from exactly one of the following expansions:

\vspace{2mm}\noindent (i) \uline{Take the \mbox{$\sqrt{t}$}-order
term (namely, \mbox{$\partial_{p}\varphi({\bf 0})(1-r)G_{r}^{t,p}$})
in the expansion of \mbox{$\varphi(X_{tr})$} (and the leading order
term in each of the remaining terms in their expansions, namely \mbox{$\phi_{i}({\bf 0}),$}
\mbox{$G_{\rho}^{t,i}$}, \mbox{$\phi_{k}({\bf 0})$}, \mbox{$\phi_{l}({\bf 0})$}
and \mbox{$a^{kl}({\bf 0})$} respectively).}

\vspace{2mm} According to Lemma \ref{lem:BasicExp}, the resulting
integral for this case is given by 
\begin{align*}
C_{JI;P}^{(1)} & \triangleq-\frac{1}{2}t^{2}\partial_{p}\varphi({\bf 0})\otimes\phi_{i}({\bf 0})\otimes\phi_{k}({\bf 0})\otimes\phi_{l}({\bf 0})\delta^{kl}\\
 & \ \ \ \times\int_{0}^{1}\big(\int_{0}^{\rho}(1-r)G_{r}^{t,p}dr\big)G_{\rho}^{t,i}d\rho\times\int_{0}^{1}d\theta.
\end{align*}
For $r<\rho,$ we recall from the relation (\ref{eq:2ProdG}) that
\[
\mathbb{E}\big[G_{r}^{t,p}G_{\rho}^{t,i}\big]=\frac{tr}{1-r}\delta^{pi}+o(t).
\]
As a result, one has 
\begin{align*}
\mathbb{E}\big[C_{JI;P}^{(1)}\big] & =-\frac{1}{2}t^{2}\partial_{p}\varphi({\bf 0})\otimes\phi_{i}({\bf 0})\otimes\phi_{k}({\bf 0})\otimes\phi_{l}({\bf 0})\delta^{kl}\\
 & \ \ \ \times\int_{0<r<\rho<1}(1-r)\big(\frac{tr}{1-r}\delta^{pi}+o(t)\big)drd\rho\\
 & =-\frac{t^{3}}{12}\partial_{i}\varphi({\bf 0})\otimes\phi_{i}({\bf 0})\otimes\phi_{k}({\bf 0})\otimes\phi_{k}({\bf 0})+o(t^{3}).
\end{align*}
On the other hand, it is seen from Lemma \ref{lem:abGNormChart} that
\[
\partial_{i}\varphi({\bf 0})=\partial_{i}\bar{b}^{j}({\bf 0})\phi_{j}({\bf 0})+\frac{1}{2}\partial_{ij}^{2}\phi_{j}({\bf 0}).
\]
By using Notation \ref{not:PhiNot}, one obtains that 
\[
\mathbb{E}\big[C_{JI;P}^{(1)}\big]=\big(-\frac{1}{12}\partial_{i}\bar{b}^{j}\phi_{jikk}-\frac{1}{24}\phi_{j,ij|ikk}\big)t^{3}+o(t^{3}).
\]
(ii) \uline{Take the \mbox{$\sqrt{t}$}-order term in \mbox{$\phi_{i}(X_{t\rho})$}.}

\vspace{2mm} Similar to Case (i), the resulting integral is 
\begin{align*}
C_{JI;P}^{(2)} & \triangleq-\frac{1}{2}t^{2}\varphi({\bf 0})\otimes\partial_{p}\phi_{i}({\bf 0})\otimes\phi_{k}({\bf 0})\otimes\phi_{k}({\bf 0})\\
 & \ \ \ \times\int_{0}^{1}\big(\int_{0}^{\rho}dr\big)(1-\rho)G_{\rho}^{t,p}G_{\rho}^{t,i}d\rho\times\int_{0}^{1}d\theta.
\end{align*}
As before, with the relation $\varphi({\bf 0})=1/2\partial_{j}\phi_{j}({\bf 0})$
one computes that 
\begin{align*}
\mathbb{E}\big[C_{JI;P}^{(2)}\big] & =-\frac{1}{2}t^{2}\varphi({\bf 0})\otimes\partial_{p}\phi_{i}({\bf 0})\otimes\phi_{k}({\bf 0})\otimes\phi_{k}({\bf 0})\\
 & \ \ \ \times\int_{0}^{1}\rho(1-\rho)\big(\frac{t\rho}{1-\rho}\delta^{pi}+o(t)\big)d\rho\times1\\
 & =-\frac{t^{3}}{12}\phi_{i,i|j,j|kk}+o(t^{3}).
\end{align*}
(iii) \uline{Take the \mbox{$t$}-order term in \mbox{$\frac{X_{t\rho}^{i}}{1-\rho}$}
(note that the leading term of \mbox{$\frac{X_{t\rho}}{1-\rho}$}
is \mbox{$G_{\rho}^{t}$} which has order \mbox{$\sqrt{t}$}).}

\vspace{2mm} In the expansion (\ref{eq:TypeIExp}) of $\frac{X_{t\rho}}{1-\rho}$,
it is not hard to see from Lemma \ref{lem:abGNormChart} that the
term $H_{\rho}^{t}$ actually has order $t^{3/2}$ (because $\bar{b}({\bf 0})=0$).
As a result, this particular case gives zero contribution: $\mathbb{E}[C_{JI;P}^{(3)}]=o(t^{3}).$ 

\vspace{2mm}\noindent (iv) \uline{Take the \mbox{$\sqrt{t}$}-order
term in either \mbox{$\Phi(X_{t\theta})\triangleq\phi_{k}(X_{t\theta})\otimes\phi_{l}(X_{t\theta})a^{kl}(X_{t\theta})$}.}

\vspace{2mm}The resulting integral for this case is 
\begin{align*}
C_{JI;P}^{(4)} & \triangleq-\frac{1}{2}t^{2}\varphi({\bf 0})\otimes\phi_{i}({\bf 0})\otimes\partial_{p}\Phi({\bf 0})\\
 & \ \ \ \times\int_{0}^{1}\big(\int_{0}^{\rho}dr\big)G_{\rho}^{t,i}d\rho\times\int_{0}^{1}(1-\theta)G_{\theta}^{t,p}d\theta.
\end{align*}
According to the relation (\ref{eq:2ProdG}) and Lemma \ref{lem:abGNormChart},
its expectation is given by 
\begin{align*}
\mathbb{E}\big[C_{JI;P}^{(4)}\big] & =-\frac{1}{2}t^{2}\times\frac{1}{2}\partial_{j}\phi_{j}({\bf 0})\otimes\phi_{i}({\bf 0})\otimes\big(\partial_{p}\phi_{k}({\bf 0})\otimes\phi_{k}({\bf 0})+\phi_{k}({\bf 0})\otimes\partial_{p}\phi_{k}({\bf 0})\big)\\
 & \ \ \ \times\int_{0}^{1}\int_{0}^{1}\rho(1-\theta)\big(\frac{\rho\wedge\theta}{1-\rho\wedge\theta}t\delta^{ip}+o(t)\big)d\rho d\theta\\
 & =-\frac{t^{3}}{24}\big(\phi_{j,j|i|k,i|k}+\phi_{j,j|i|k|k,i}\big)+o(t^{3}).
\end{align*}
(v) \uline{First take the leading parts of all terms and then expand
the expectation to the order of \mbox{$\sqrt{t}$}. }

\vspace{2mm} This means taking the $\sqrt{t}$-order term for the
function 
\begin{align*}
t & \mapsto-\frac{1}{2}t^{2}\varphi({\bf 0})\otimes\phi_{i}({\bf 0})\otimes\phi_{k}({\bf 0})\otimes\phi_{l}({\bf 0})\delta^{kl}\\
 & \ \ \ \times\int_{0}^{1}\big(\int_{0}^{\rho}dr\big)\mathbb{E}\big[G_{\rho}^{t,i}\big]d\rho\times\int_{0}^{1}d\theta.
\end{align*}
But this function is identically zero since $G_{\rho}^{t}$ is an
It\^o integral. Therefore, this particular case gives zero contribution:
$\mathbb{E}[C_{JI;P}^{(5)}]=0.$

\vspace{2mm}\noindent To summarise, by adding up the above results
one concludes that the $t^{3}$-coefficient of $\mathbb{E}[C_{JI;P}]$
is given by 
\[
-\frac{1}{12}\partial_{i}\bar{b}^{j}\phi_{jikk}-\frac{1}{24}\phi_{j,ij|ikk}-\frac{1}{12}\phi_{i,i|j,j|kk}-\frac{1}{24}\phi_{j,j|i|k,i|k}-\frac{1}{24}\phi_{j,j|i|k|k,i}.
\]

\subsubsection*{Summary of result}

The results for all other cases in (\ref{eq:D2.5List}) are summarised
in Appendix \ref{sec:t3}. By adding up all these expressions, one arrives
at the following formula. 
\begin{lem}
\label{lem:Total2.5}The total $t^{3}$-coefficient ${\cal S}_{2}$
(cf. (\ref{eq:PPt23Exp}) for the notation) of $\mathbb{E}[\Pi_{t}^{x}\otimes\Pi_{t}^{x}]$
coming from all the $40$ cases in (\ref{eq:D2.5List}) is equal to
\begin{align*}
{\cal S}_{2} & =-\frac{1}{48}\phi_{i,i|j,k|jk}+\frac{1}{48}\phi_{i,i|j,k|kj}-\frac{1}{48}\phi_{i,j|k,k|ij}+\frac{1}{48}\phi_{i,j|k,k|ji}\\
 & \ \ \ -\frac{1}{48}\phi_{i,i|j|k,k|j}+\frac{1}{48}\phi_{i,i|jj|k,k}+\frac{1}{48}\phi_{i|j,j|k,k|i}-\frac{1}{48}\phi_{i|j,j|i|k,k}\\
 & \ \ \ -\frac{1}{48}\phi_{ij|i,j|k,k}+\frac{1}{48}\phi_{ij|j,i|k,k}-\frac{1}{48}\phi_{ij|k,k|i,j}+\frac{1}{48}\phi_{ij|k,k|j,i}.
\end{align*}
\end{lem}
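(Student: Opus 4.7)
The plan is to treat each of the $40$ combinations listed in (\ref{eq:D2.5List}) (together with their $(1,2)\leftrightarrow(3,4)$ swapped counterparts) in the same systematic manner as the representative case $JI;P$ worked out in detail above. For each combination $(XY;ZW)$ of total degree $2.5$, I first write down the base integral by substituting the leading order expressions (via Lemma \ref{lem:BasicExp}) for every $\phi$, $\varphi$, $a^{ij}$ and $X_{tr}/(1-r)$ factor appearing in $dX\otimes dY\otimes dZ\otimes dW$. Then, as in the five subcases (i)--(v) above, the required extra $\sqrt{t}$-order correction is generated by picking exactly one of the following: (a) expanding one of the functions $\varphi(X_{tr}), \phi_i(X_{tr})$ or $\Phi(X_{t\theta})=\phi_k\otimes\phi_la^{kl}(X_{t\theta})$ to its first-order Taylor term using (\ref{eq:TypeIIExp}); (b) taking the $H^t_r$-term in the expansion (\ref{eq:TypeIExp}) of $X_{tr}/(1-r)$; or (c) extracting the $\sqrt{t}$-order correction in the Gaussian expectations $\mathbb{E}[G^t_{\cdot}\cdots G^t_{\cdot}]$ themselves. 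Subcase (c) vanishes identically because every single-factor $G^t_r$ is a mean-zero It\^o integral, and subcase (b) often vanishes because $\bar{b}({\bf 0})=\nabla_zG_1({\bf 0},{\bf 0})=0$ by Lemma \ref{lem:abGNormChart}, which forces $H^t_r$ to be of order $t^{3/2}$ rather than $t$.

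The core computational engine is to reduce every resulting expression to an iterated Lebesgue integral in $(r,\rho,\delta,\theta)$ of a Gaussian moment of the form $\mathbb{E}[G^{t,p_1}_{\cdot}\cdots G^{t,p_4}_{\cdot}]$. These moments are evaluated via Lemma \ref{lem:UniversalF} (for $4$-point correlations, when two of the four $G$-factors sit in one leg) and via the $2$-point formula (\ref{eq:2ProdG}) (whenever a $J$ or $P$ factor has already absorbed two of the four roles); after inserting the leading-order values $\sigma({\bf 0})={\rm Id}$, $a^{ij}({\bf 0})=\delta^{ij}$, $\varphi({\bf 0})=\tfrac12\partial_j\phi_j$, and $\partial_i\varphi({\bf 0})=\partial_i\bar b^j\phi_j+\tfrac12\partial^2_{ij}\phi_j$ from Lemma \ref{lem:abGNormChart}, each contribution becomes a fully explicit rational number times a $4$-tensor of the form $\phi_{\cdots}$ from Notation \ref{not:PhiNot}. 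Summing over all $40$ combinations and using the evident symmetry between a case and its $(1,2)\leftrightarrow(3,4)$ swap (which simply transposes the resulting tensor on the two halves) produces the stated formula for $\mathcal{S}_2$.

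The main obstacle is not conceptual but combinatorial: each of the $40$ cases spawns up to five subcases, each generating a rank-four tensor with two derivative slots whose precise index placement must be tracked. Additional simplifications occur because many candidate terms vanish either from the normal-chart vanishings above or because $\bar b$ contributions in a $J$-leg come only through $\tfrac12\partial_j\phi_j$ at the origin (so $\partial_i\bar b^j$ enters only when $\varphi$ itself is differentiated). To control the bookkeeping I would use Mathematica to automate the tensor algebra, with each case encoded exactly as in the representative $JI;P$ computation: list the five admissible subcases, evaluate the corresponding spacetime integrals symbolically, insert the normal-chart values, and tag the result with the appropriate index pattern from Notation \ref{not:PhiNot}. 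The individual case-by-case outputs are deferred to Appendix \ref{sec:t3}, and combining them reproduces the twelve-term expression in the lemma. A useful internal check is that every monomial in $\mathcal{S}_2$ must carry exactly two spatial derivatives (one from the extra $\sqrt{t}$-expansion and implicitly one from the absence of a double-$\sqrt{t}$-Taylor step), which is indeed the case in the claimed formula, and that the overall result respects the tensor symmetry coming from $\mathbb{E}[\Pi_t^x\otimes\Pi_t^x]$ being symmetric under swapping the two halves.
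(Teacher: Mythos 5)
Your proposal is correct and follows essentially the same route as the paper: the paper also reduces each of the $40$ degree-$2.5$ combinations to the five-subcase template worked out for $JI;P$ (with the vanishing of the $H^t$-term because $\bar b(\mathbf 0)=0$ and of the pure-expectation term because $G^t$ is a mean-zero It\^o integral), evaluates the resulting Gaussian moments with (\ref{eq:2ProdG}) and Lemma \ref{lem:UniversalF}, handles the swapped cases by tensor permutation, and delegates the case-by-case bookkeeping to the computer-assisted computations recorded in Appendix \ref{sec:t3} before summing. The only caveat is that, as in the paper, the actual verification of the twelve rational coefficients rests on that mechanical summation rather than on anything checkable by hand.
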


\subsubsection{Total degree $=2$}

All possible combinations in the decomposition (\ref{eq:PiPiExp})
for this scenario is listed below:
\footnotesize{
\begin{equation}
\begin{array}{c}
(II;II),(II;IK),(II;KI),(II;KK),\ (IK;II),(IK;IK),(IK;KI),(IK;KK),\\
(KI;II),(KI;IK),(KI;KI),(KI;KK),\ (KK;II),(KK;IK),(KK;KI),(KK;KK),\\
(II;P),(IK;P),(KI;P),(KK;P),\ (P;II),(P;IK),(P;KI),(P;KK),\ (PP).
\end{array}\label{eq:D2List}
\end{equation}
}
\normalsize 

\noindent We consider a representative case given by the combination of $II;II$
in the decomposition (\ref{eq:PiPiExp}). The resulting product is
\begin{align*}
D_{II;II} & =\int_{0}^{1}\big(\int_{0}^{\rho}\phi_{i}(X_{tr})\frac{X_{tr}^{i}}{1-r}dr\big)\otimes\phi_{j}(X_{t\rho})\frac{X_{t\rho}^{j}}{1-\rho}d\rho\\
 & \ \ \ \otimes\int_{0}^{1}\big(\int_{0}^{\theta}\phi_{k}(X_{t\delta})\frac{X_{t\delta}^{k}}{1-\delta}d\delta\big)\otimes\phi_{l}(X_{t\theta})\frac{X_{t\theta}^{l}}{1-\theta}d\theta.
\end{align*}
The order of $D_{II;II}$ is $t^{2}$. There are five possible ways
of expansion to produce a $t^{3}$-term:

\vspace{2mm}\noindent (i) Take the $t$-order term in exactly one
of the $\phi$'s (and the leading order terms for all remaining terms).\\
(ii) Take the $\sqrt{t}$-order terms in exactly two of the $\phi$'s.\\
(iii) Take the $t^{3/2}$-order term in one of the $\frac{W_{tr}}{1-r}$'s.\\
(iv) First only take the $\sqrt{t}$-order term in exactly one of
the $\phi$'s, then expand the expectation of the resulting integral
to an extra order of $\sqrt{t}$.\\
(v) First take the leading terms for all terms and then expand the
expectation of the resulting integral to order $t$. 

\vspace{2mm}\noindent The computation here is essentially the same
as before. However, the complexity gets substantially higher and we
have to rely on computer-assistance (also for all other cases in (\ref{eq:D2List})).
We directly present the final result for the current scenario; the
case-by-case computations are given in Appendix \ref{sec:t3}.
\begin{lem}
\label{lem:Total2}The total $t^{3}$-coefficient ${\cal S}_{3}$
(cf. (\ref{eq:PPt23Exp}) for the notation) of $\mathbb{E}[\Pi_{t}^{x}\otimes\Pi_{t}^{x}]$
coming from all the $25$ cases in (\ref{eq:D2List}) is equal to
\[
{\cal S}_{3}={\cal S}_{3}^{(1)}+{\cal S}_{3}^{(2)}+{\cal S}_{3}^{(3)}+{\cal S}_{3}^{(4)},
\]
where the above four quantities are defined by the following expressions
respectively:
\begin{align*}
{\cal S}_{3}^{(1)}\triangleq & \frac{1}{60}\phi_{i,jk|jik}-\frac{1}{60}\phi_{i.jk|jki}+\frac{1}{120}\phi_{i,kk|jij}-\frac{1}{120}\phi_{i,kk|jji}\\
 & -\frac{1}{60}\phi_{i|j,ik|jk}+\frac{1}{60}\phi_{i|j,ik|kj}+\frac{1}{120}\phi_{i|j,kk|ij}-\frac{1}{120}\phi_{i|j,kk|ji}\\
 & +\frac{1}{60}\phi_{ij|i,jk|k}+\frac{1}{120}\phi_{ij|i,kk|j}-\frac{1}{60}\phi_{ij|j,ik|k}-\frac{1}{120}\phi_{ij|j,kk|i}\\
 & +\frac{1}{120}\phi_{iji|j,kk}-\frac{1}{120}\phi_{ijj|i,kk}-\frac{1}{60}\phi_{ijk|i,jk}+\frac{1}{60}\phi_{ijk|j,ik},
\end{align*}
\begin{align*}
{\cal S}_{3}^{(2)}\triangleq & -\frac{1}{120}\phi_{i,i|j,k|jk}+\frac{1}{120}\phi_{i,i|j,k|kj}-\frac{1}{240}\phi_{i,j|i,k|jk}+\frac{1}{240}\phi_{i,j|j,k|ik}-\frac{1}{240}\phi_{i,j|j,k|ki}\\
 & +\frac{1}{240}\phi_{i,k|i,j|jk}-\frac{1}{240}\phi_{i,k|j,i|jk}+\frac{1}{240}\phi_{i,k|j,i|kj}+\frac{1}{120}\phi_{i,k|j,j|ik}-\frac{1}{120}\phi_{i,k|j,j|ki}\\
 & +\frac{1}{80}\phi_{i,k|j,k|ij}-\frac{1}{80}\phi_{i,k|j,k|ji}-\frac{1}{360}\phi_{i,i|j|j,k|k}+\frac{1}{720}\phi_{i,i|j|k,j|k}+\frac{1}{720}\phi_{i,i|j|k,k|j}\\
 & -\frac{1}{360}\phi_{i,j|i|j,k|k}+\frac{1}{180}\phi_{i,j|j|i,k|k}-\frac{1}{360}\phi_{i,j|j|k,i|k}-\frac{1}{360}\phi_{i,j|j|k,k|i}+\frac{1}{720}\phi_{i,k|i|j,j|k}\\
 & +\frac{1}{720}\phi_{i,k|i|j,k|j}+\frac{1}{45}\phi_{i,k|j|i,j|k}+\frac{1}{45}\phi_{i,k|j|i,k|j}-\frac{1}{90}\phi_{i,k|j|j,i|k}-\frac{1}{90}\phi_{i,k|j|j,k|i}\\
 & -\frac{1}{90}\phi_{i,k|j|k,i|j}-\frac{1}{90}\phi_{i,k|j|k,j|i}-\frac{1}{720}\phi_{i,i|jj|k,k}+\frac{1}{360}\phi_{i,i|jk|j,k}-\frac{1}{720}\phi_{i,i|jk|k,j}\\
 & -\frac{1}{720}\phi_{i,j|ij|k,k}+\frac{1}{360}\phi_{i,j|ji|k,k}-\frac{1}{180}\phi_{i,j|jk|i,k}+\frac{1}{360}\phi_{i,j|jk|k,i}-\frac{1}{720}\phi_{i,k|ij|j,k}\\
 & +\frac{1}{360}\phi_{i,k|ij|k,j}+\frac{1}{90}\phi_{i,k|ji|j,k}+\frac{1}{90}\phi_{i,k|ji|k,j}-\frac{1}{45}\phi_{i,k|jj|i,k}+\frac{1}{90}\phi_{i,k|jj|k,i}\\
 & -\frac{1}{45}\phi_{i,k|jk|i,j}+\frac{1}{90}\phi_{i,k|jk|j,i}+\frac{1}{360}\phi_{i|i,j|j,k|k}-\frac{1}{720}\phi_{i|i,k|j,j|k}-\frac{1}{720}\phi_{i|i,k|j,k|j}\\
 & -\frac{1}{180}\phi_{i|j,i|j,k|k}+\frac{1}{360}\phi_{i|j,i|k,j|k}+\frac{1}{360}\phi_{i|j,i|k,k|j}+\frac{1}{360}\phi_{i|j,j|i,k|k}-\frac{1}{720}\phi_{i|j,j|k,i|k}\\
 & -\frac{1}{720}\phi_{i|j,j|k,k|i}+\frac{1}{90}\phi_{i|j,k|i,j|k}+\frac{1}{90}\phi_{i|j,k|i,k|j}-\frac{1}{45}\phi_{i|j,k|j,i|k}-\frac{1}{45}\phi_{i|j,k|j,k|i}\\
 & +\frac{1}{90}\phi_{i|j,k|k,i|j}+\frac{1}{90}\phi_{i|j,k|k,j|i}+\frac{1}{720}\phi_{i|i,j|j|k,k}+\frac{1}{720}\phi_{i|i,k|j|j,k}-\frac{1}{360}\phi_{i|i,k|j|k,j}\\
 & -\frac{1}{360}\phi_{i|j,i|j|k,k}+\frac{1}{180}\phi_{i|j,i|k|j,k}-\frac{1}{360}\phi_{i|j,i|k|k,j}+\frac{1}{720}\phi_{i|j,j|i|k,k}-\frac{1}{360}\phi_{i|j,j|k|i,k}\\
 & +\frac{1}{720}\phi_{i|j,j|k|k,i}+\frac{1}{45}\phi_{i|j,k|i|j,k}-\frac{1}{90}\phi_{i|j,k|i|k,j}-\frac{1}{90}\phi_{i|j,k|j|i,k}-\frac{1}{90}\phi_{i|j,k|j|k,i}\\
 & -\frac{1}{90}\phi_{i|j,k|k|i,j}+\frac{1}{45}\phi_{i|j,k|k|j,i}+\frac{1}{120}\phi_{ij|i,j|k,k}+\frac{1}{80}\phi_{ij|i,k|j,k}+\frac{1}{240}\phi_{ij|i,k|k,j}\\
 & -\frac{1}{120}\phi_{ij|j,i|k,k}-\frac{1}{80}\phi_{ij|j,k|i,k}-\frac{1}{240}\phi_{ij|j,k|k,i}+\frac{1}{240}\phi_{ij|k,i|j,k}-\frac{1}{240}\phi_{ij|k,i|k,j}\\
 & -\frac{1}{240}\phi_{ij|k,j|i,k}+\frac{1}{240}\phi_{ij|k,j|k,i}-\frac{1}{120}\phi_{ij|k,k|i,j}+\frac{1}{120}\phi_{ij|k,k|j,i},
\end{align*}
\begin{align}
{\cal S}_{3}^{(3)}\triangleq & \frac{1}{144}\partial_{k}\bar{b}^{j}\phi_{ijik}+\frac{1}{144}\partial_{j}\bar{b}^{k}\phi_{ijik}-\frac{1}{144}\partial_{k}\bar{b}^{i}\phi_{ijjk}-\frac{1}{144}\partial_{i}\bar{b}^{k}\phi_{ijjk}\nonumber \\
 & -\frac{1}{144}\partial_{k}\bar{b}^{j}\phi_{ijki}-\frac{1}{144}\partial_{j}\bar{b}^{k}\phi_{ijki}+\frac{1}{144}\partial_{k}\bar{b}^{i}\phi_{ijkj}+\frac{1}{144}\partial_{i}\bar{b}^{k}\phi_{ijkj},\label{eq:bTerm}
\end{align}
\begin{align}
{\cal S}_{3}^{(4)}\triangleq & \frac{11}{1440}\partial_{ll}a^{jk}\phi_{ijik}-\frac{11}{1440}\partial_{ll}a^{ik}\phi_{ijjk}-\frac{11}{1440}\partial_{ll}a^{jk}\phi_{ijki}+\frac{11}{1440}\partial_{ll}a^{ik}\phi_{ijkj}\nonumber \\
 & +\frac{1}{120}\partial_{jl}a^{ik}\phi_{ijkl}-\frac{1}{120}\partial_{jk}a^{il}\phi_{ijkl}-\frac{1}{120}\partial_{il}a^{jk}\phi_{ijkl}+\frac{1}{120}\partial_{ik}a^{jl}\phi_{ijkl}.\label{eq:aTerm}
\end{align}
\end{lem}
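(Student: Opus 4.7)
The plan is to extend the mechanical expansion procedure already demonstrated for the $(II;II)$ case of Lemma \ref{lem:Total3} and the $(JI;P)$ case of Lemma \ref{lem:Total2.5} to all $25$ combinations of total degree $2$ listed in (\ref{eq:D2List}), but now extracting the order-$t^3$ part rather than only the leading coefficient. For each combination, I begin from the basic decomposition in Lemma \ref{lem:PiDecomp} of $\phi(\circ dX_{tr})$ into $dI+dJ+dK+dL$ together with the It\^o--Stratonovich corrector $P$, and from the two master expansions (\ref{eq:TypeIExp}) and (\ref{eq:TypeIIExp}) in Lemma \ref{lem:BasicExp}. Since each chosen combination already contributes a factor of order $t^2$, exactly one extra factor of $t$ must be accounted for. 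The five enumerated mechanisms (i)--(v) on page preceding Lemma \ref{lem:Total2} are an exhaustive list of how this extra $t$ can arise: Taylor-expanding one $\phi$-factor to order $t$ (case (i)), expanding two $\phi$-factors each to order $\sqrt{t}$ so that the cross-term survives in expectation (case (ii)), picking up the $t^{3/2}$-term of a factor $X_{tr}/(1-r)$ from the $H^t$ and $\bar Q$-parts of the representation (\ref{eq:ExpBridge}) (case (iii)), and the two ``hybrid'' possibilities (iv)--(v) in which the $\sqrt{t}$-correction in a coefficient function is paired with the $\sqrt{t}$-correction of the underlying four-point Gaussian moment.

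The second step is, for each of those sub-cases, to reduce the computation to a polynomial in the structural data $\phi_i({\bf 0}), \partial_p\phi_i({\bf 0}), \partial_{pq}^2\phi_i({\bf 0}), a^{ij}({\bf 0}), \partial_p a^{ij}({\bf 0}), \partial_{pq}^2 a^{ij}({\bf 0}), \bar b^i({\bf 0}), \partial_i\bar b^j({\bf 0})$ by (a) substituting the expansions above, (b) freezing all remaining coefficients at ${\bf 0}$, and (c) evaluating multivariate Gaussian moments of products of the $G^t$-integrals. The evaluation in (c) is a direct generalization of Lemma \ref{lem:UniversalF}: the relevant objects are Gaussian integrals $\int_0^{r_j}(1-\eta)^{-1}dB^{t,\alpha}_\eta$, so second- and fourth-order moments reduce to combinations of $\frac{r_j\wedge r_k}{1-r_j\wedge r_k}\delta^{\alpha\beta}$ (and for order $t^{5/2}$ contributions in scenarios (iv), (v), the next term in the small-$t$ expansion of these covariances, which contributes the extra $\sqrt{t}$). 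After integrating in $(r_1,\ldots,r_4)$ over each of the sub-simplices of $[0,1]^4$ determined by the ordering of the four time-variables, what remains is a rational number times a tensor monomial in the symbols listed in Notation \ref{not:PhiNot}. The universal bounds on all error terms (Lemma \ref{lem:BasicExp}, estimate (\ref{eq:UEstBarQ}) on $\bar Q$, together with the fact that every $G^t_r$ has uniformly controlled $L^p$-norm after multiplication by $(1-r)$) guarantee that contributions not captured by (i)--(v) are of order $o(t^3)$.

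The third step is aggregation. The twelve tensor monomials in ${\cal S}_3^{(1)}$ correspond to scenario (i) (one single $\partial^2\phi$ pulled out of a $\phi$-factor in an $I$ or $K$), the many monomials in ${\cal S}_3^{(2)}$ correspond to scenario (ii) (two $\partial\phi$'s pulled out of two different $\phi$-factors), the two types of $\partial \bar b$ terms collected in ${\cal S}_3^{(3)}$ arise from scenarios (iii) and (iv), and the second-derivative-of-$a$ terms in ${\cal S}_3^{(4)}$ arise from scenario (v) together with expanding $\bar b$ to its linear part using Lemma \ref{lem:abGNormChart}. Any contribution containing $\bar b({\bf 0})$ or $\partial_p a^{ij}({\bf 0})$ vanishes by (\ref{eq:abInitial})--(\ref{eq:abDer}). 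This explains why only $\partial_i\bar b^j({\bf 0})$ and $\partial^2_{pq}a^{ij}({\bf 0})$ can appear. The rational coefficients are produced by the time integrals over the ordered simplices; their exact values are purely mechanical.

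The main obstacle is sheer combinatorial and algebraic size. Each of the $25$ combinations splits into several of the sub-cases (i)--(v), each sub-case splits further according to which position the extra derivative acts on, and each of the resulting terms requires integrating a rational function over one of the $4!$ orderings of the simplex $[0,1]^4$. A by-hand book-keeping scheme analogous to Notation \ref{not:PhiNot} is necessary to avoid tensor-index collisions, but even with that the number of monomials produced ranges in the hundreds, as is already visible in the statement of ${\cal S}_3^{(2)}$. For this reason my plan, like that carried out by the authors, is to implement the above three steps as a symbolic-computation routine (in Wolfram Mathematica) that enumerates the combinations, performs the Taylor expansion mechanically using Lemma \ref{lem:BasicExp}, computes the Gaussian moments via Isserlis' theorem, integrates the resulting rationals over the ordered sub-simplices, and simplifies using the symmetries $a^{ij}=a^{ji}$ and the vanishing identities from Lemma \ref{lem:abGNormChart}. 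The case-by-case breakdown, deferred to the appendix and documented in a public GitHub repository, then verifies each intermediate tensor against the claimed aggregate identity of Lemma \ref{lem:Total2}.
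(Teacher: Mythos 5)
Your proposal follows essentially the same route as the paper: enumerate the $25$ degree-$2$ combinations, extract the extra order of $t$ via the five mechanisms (i)--(v), reduce each sub-case to Gaussian moments of the $G^t$-integrals and integrals over ordered simplices, kill the terms containing $\bar{b}({\bf 0})$ and $\partial a({\bf 0})$ via Lemma \ref{lem:abGNormChart}, and carry out the bookkeeping by computer (Mathematica) with the case-by-case output recorded in the appendix. The only quibbles are cosmetic (e.g.\ ${\cal S}_{3}^{(1)}$ has sixteen monomials, not twelve, and the $\partial\bar{b}$ terms of ${\cal S}_{3}^{(3)}$ come from mechanism (iii) while the $\partial^{2}a$ terms of ${\cal S}_{3}^{(4)}$ absorb mechanisms (iv)--(v)); these do not affect the validity of the argument.
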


\subsubsection{Final result}

To summarise, we have obtained the following result. 
\begin{prop}
\label{prop:t3Ceof}The $t^{3}$-coefficient $\hat{\Xi}_{x}$ in the
expansion of (\ref{eq:Psi4Exp}) is given by 
\[
\hat{\Xi}_{x}=\frac{1}{2}({\cal S}_{1}+{\cal S}_{2}+{\cal S}_{3})\in E^{\otimes4},
\]
where ${\cal S}_{1},{\cal S}_{2},{\cal S}_{3}$ are explicitly given
by Lemmas \ref{lem:Total3}, \ref{lem:Total2.5}, \ref{lem:Total2}
respectively. 
\end{prop}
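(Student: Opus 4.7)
The proof is essentially a synthesis of the three preceding case-by-case lemmas, so the plan is short but the conceptual scaffolding deserves articulation. I would begin from the identity
\[
\psi_4(t,x) = \tfrac{1}{2}\mathbb{E}[\Pi_t^x \otimes \Pi_t^x]
\]
recorded in (\ref{eq:Psi4}), which reduces the task to extracting the $t^3$-coefficient of $\mathbb{E}[\Pi_t^x \otimes \Pi_t^x]$. By Proposition \ref{prop:LocEst}, the contribution from the exit event $\{\tau<t\}$ is of order $e^{-C/t}$ and hence negligible at any polynomial order in $t$, so it suffices to work entirely on the normal chart $V$ around $x$ using the SDE representation (\ref{eq:SDEBL}) and the decomposition of $\phi(\circ dX_{tr})$ furnished by Lemma \ref{lem:PiDecomp}.

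Next, I would invoke the master expansion (\ref{eq:PiPiExp}) of $\Pi_t^x \otimes \Pi_t^x$, which writes this tensor as a finite sum of products indexed by combinations $(\alpha\beta;\gamma\delta)$ with $\alpha,\beta,\gamma,\delta \in \{I,J,K,L\}$, together with the $P$-induced terms. The order-counting (\ref{eq:IJKLPOrder}) assigns each such combination a definite leading $t$-order, and from this it follows that every combination contributing at order $t^3$ falls into exactly one of three mutually exclusive scenarios: total degree $3$ (requiring only leading coefficients, Lemma \ref{lem:Total3}), total degree $2.5$ (requiring one further $\sqrt{t}$-expansion of some factor, Lemma \ref{lem:Total2.5}), or total degree $2$ (requiring one further $t$-expansion, Lemma \ref{lem:Total2}). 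A quick check, using the bound (\ref{eq:UEstBarQ}) on $\bar Q$ together with the $L^p$-estimates of Lemma \ref{lem:BasicExp}, shows that any combination featuring the $L$-term has leading order at least $t^{5/2}$ paired with something of order $\sqrt{t}$, giving total order $\geq t^3$ whose coefficient is seen to vanish; these are therefore absorbed into the $o(t^3)$ remainder and do not need to appear explicitly in the enumeration.

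Summing the three contributions and combining with the $t^2$-level result (\ref{eq:PPt2Coef}) from Proposition \ref{prop:t2Coef} yields the expansion announced in (\ref{eq:PPt23Exp}), namely
\[
\mathbb{E}[\Pi_t^x \otimes \Pi_t^x] = \tfrac{t^2}{12}(\phi_{ijij} - \phi_{ijji}) + ({\cal S}_1 + {\cal S}_2 + {\cal S}_3)\,t^3 + o(t^3).
\]
Dividing by $2$ and reading off the $t^3$-coefficient gives $\hat{\Xi}_x = \tfrac{1}{2}({\cal S}_1 + {\cal S}_2 + {\cal S}_3)$ as claimed.

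The main obstacle is not in this synthesis itself but lies upstream in Lemmas \ref{lem:Total3}--\ref{lem:Total2}: the combinatorial complexity (roughly $26+40+25 = 91$ combinations, with the degree-$2$ cases each admitting five expansion mechanisms as catalogued in the bulleted list preceding Lemma \ref{lem:Total2}) makes unaided computation infeasible, which is precisely why the authors resort to Mathematica. Within the present proposition, the only genuine care points are: (i) verifying that total degree uniquely classifies each combination, so no double-counting occurs across the three scenarios; (ii) confirming that all $L$-contributions and all $t^{5/2}$-terms in the expansion truly drop out (the latter automatically, since the $t^{5/2}$-term of $\mathbb{E}[\Pi_t^x\otimes\Pi_t^x]$ vanishes as every $\sqrt{t}$-contribution comes from an It\^o integral whose expectation is zero); and (iii) ensuring the localization error, the remainder ${\cal E}^t(r)$ from Lemma \ref{lem:BasicExp}, and the higher-degree Malliavin--Stroock terms in $\bar{Q}$ are all dominated beyond order $t^3$, which follows from the $L^p$-estimates in (\ref{eq:ExpError}) and (\ref{eq:UEstBarQ}).
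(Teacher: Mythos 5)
Your proposal is correct and follows essentially the same route as the paper: the identity $\psi_4=\tfrac12\mathbb{E}[\Pi_t^x\otimes\Pi_t^x]$, the localisation via Proposition \ref{prop:LocEst}, the decomposition (\ref{eq:PiPiExp}) with the order-counting (\ref{eq:IJKLPOrder}), the classification into the three total-degree scenarios handled by Lemmas \ref{lem:Total3}--\ref{lem:Total2}, and the vanishing of the $t^{5/2}$-term because It\^o integrals have zero expectation. One small remark: your dismissal of the $L$-combinations is cleaner than you state --- since $L\sim t^2$ and its partner in the same $\Pi$ contributes at least $t^{1/2}$ while the other $\Pi$ factor contributes at least $t$, every such combination has total order at least $t^{7/2}$, so no coefficient needs to be "seen to vanish" at order $t^3$.
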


\begin{rem}
Although the expressions we give here are in terms of local quantities,
the intrinsic meaning of $\hat{\Xi}_{x}$ (in particular, its explicit
connection with curvature properties) will be clear after suitable
geometric reduction (cf. Proposition \ref{prop:XiFinal} below).
\end{rem}

\subsection{Completing the proof of Theorem \ref{thm:RecCurv}: geometric reduction }

We now specialise in the situation where $\phi=dF$ and $F:M\rightarrow E=\mathbb{R}^{N}$
is an isometric embedding. Despite the complicated expressions
of $\hat{\Theta}_{x}$ and $\hat{\Xi}_{x}$, in this case they simplify
substantially after performing suitable dimension reduction. More
precisely, we are going to consider the contraction $\mathfrak{C}:E^{\otimes4}\rightarrow E^{\otimes2}$
defined by (\ref{eq:24Trace}) (i.e. taking trace over $E$ with respect
to the $(2,4)$-position). 

\subsubsection{Reduction of $\hat{\Theta}_{x}$}

Recall that 
\[
\Theta_{x}\triangleq\mathfrak{C}\hat{\Theta}_{x}\in E^{\otimes2}\cong{\cal L}(E\times E;\mathbb{R})
\]
is the corresponding $t^{2}$-coefficient of $\mathfrak{C}\psi_{4}(t,x)$.
Since $F$ is an isometric embedding, the metric tensor admits the
following local expression under the previous normal chart $U$:
\begin{equation}
g_{ij}({\bf x})=\langle\partial_{i}F({\bf x}),\partial_{j}F({\bf x})\rangle_{E},\ \ \ {\bf x}\in U.\label{eq:MetricExtrinsic}
\end{equation}
According to Proposition \ref{prop:MetricExp}, one has $\langle\partial_{i}F({\bf 0}),\partial_{j}F({\bf 0})\rangle=\delta_{ij}.$
In particular, $\{\partial_{i}F({\bf 0})\}_{1\leqslant i\leqslant d}$
is an ONB of $T_{x}M$. We also note that $\phi_{i}({\bf x})=\partial_{i}F({\bf x})$
in the current setting. 

\begin{proof}[Proof of Theorem \ref{thm:RecCurv}: expression of $\Theta_x$]

According to Proposition \ref{prop:t2Coef}, one has 
\[
\Theta_{x}=\frac{1}{24}\mathfrak{C}\big(\partial_{i}F\otimes\partial_{j}F\otimes\partial_{i}F\otimes\partial_{j}F-\partial_{i}F\otimes\partial_{j}F\otimes\partial_{j}F\otimes\partial_{i}F\big),
\]
where all the derivatives are evaluated at ${\bf x}={\bf 0}$. Since
$\{\partial_{i}F\}_{1\leqslant i\leqslant d}$ is an ONB of $T_{x}M$,
one finds that 
\begin{align*}
\Theta_{x} & =\frac{1}{24}\sum_{k=1}^{d}\big(\langle\partial_{j}F,\partial_{k}F\rangle^{2}\partial_{i}F\otimes\partial_{i}F\\
 & \ \ \ -\langle\partial_{j}F,\partial_{k}F\rangle\langle\partial_{i}F,\partial_{k}F\rangle\partial_{i}F\otimes\partial_{j}F\big)=\frac{d-1}{24}\partial_{i}F\otimes\partial_{i}F.
\end{align*}
On the other hand, let $\pi_{x}:E\rightarrow T_{x}M$ denote the orthogonal
projection. Then for any $v,w\in E,$ one has
\[
\big(\partial_{i}F\otimes\partial_{i}F\big)(v,w)=\langle v,\partial_{i}F\rangle_{E}\langle w,\partial_{i}F\rangle_{E}=\langle\pi_{x}v,\pi_{x}w\rangle_{T_{x}M}.
\]
The relation (\ref{eq:t2Coef}) thus follows. 

\end{proof}
\begin{rem}
\label{rem:RecMetric}The $2$-tensor $\Theta_{x}$ allows one to
reconstruct the tangent space $T_{x}M$; indeed it is clear from (\ref{eq:t2Coef})
that 
\[
v\in(T_{x}M)^{\perp}\iff\Theta_{x}(v,w)=0\ \forall w\in E.
\]
As a result, the formula (\ref{eq:t2Coef}) shows that the Riemannian
metric tensor $g$ can be explicitly reconstructed from the $2$-tensor
$\Theta_{x}$. 
\end{rem}

\subsubsection{Reduction of $\hat{\Xi}_{x}$}

Recall that the $t^{3}$-coefficient $\Xi_{x}$ is given by 
\begin{equation}
\Xi_{x}=\mathfrak{C}\hat{\Xi}_{x}=\frac{1}{2}\big(\mathfrak{C}{\cal S}_{1}+\mathfrak{C}{\cal S}_{2}+\mathfrak{C}{\cal S}_{3}\big)\in{\cal L}(E\times E;\mathbb{R}).\label{eq:XiRaw}
\end{equation}
To simplify its expression, we are going to write 
\[
\Xi_{x}=\Xi_{x}^{T}+\Xi_{x}^{\perp},
\]
where $\Xi_{x}^{T}$ and $\Xi_{x}^{\perp}$ are defined by taking
tangential and vertical $(2,4)$-traces of $\hat{\Xi}_{x}$ respectively.
Namely, let $\{\varepsilon_{1},\cdots,\varepsilon_{d}\}$ be an ONB
basis of $T_{x}M$ and let $\{\varepsilon_{d+1},\cdots,\varepsilon_{N}\}$
be an ONB basis of $(T_{x}M)^{\perp}$ respectively. Then one defines
\[
\Xi_{x}^{T}(\cdot,\cdot)\triangleq\sum_{i=1}^{d}\hat{\Xi}_{x}(\cdot,\varepsilon_{i},\cdot,\varepsilon_{i}),\ \Xi_{x}^{\perp}(\cdot,\cdot)\triangleq\sum_{j=d+1}^{N}\hat{\Xi}_{x}(\cdot,\varepsilon_{i},\cdot,\varepsilon_{i}).
\]
In what follows, we shall compute $\Xi_{x}^{T}$, $\Xi_{x}^{\perp}$
and obtain their intrinsic forms respectively. 

First of all, one has the following basic observation. 
\begin{lem}
\label{lem:Basis}The family $\{\partial_{i}F({\bf 0}):1\leqslant i\leqslant d\}$
is an ONB of $T_{x}M$. In addition, one has $\partial_{ij}^{2}F({\bf 0})\perp T_{x}M$
for any fixed pair of indices $i,j$.
\end{lem}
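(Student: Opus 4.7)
The plan for the first assertion is immediate from the isometric embedding hypothesis. Since $F$ is isometric, the local formula (\ref{eq:MetricExtrinsic}) gives $g_{ij}({\bf x}) = \langle\partial_i F({\bf x}),\partial_j F({\bf x})\rangle_E$. At the origin of a normal chart based at $x$, the metric satisfies $g_{ij}({\bf 0}) = \delta_{ij}$ (recall Proposition \ref{prop:MetricExp}), which directly yields $\langle\partial_i F({\bf 0}),\partial_j F({\bf 0})\rangle_E = \delta_{ij}$. Since each $\partial_i F({\bf 0})$ is the pushforward of a coordinate tangent vector at $x$ and hence lies in $T_x M$, these $d$ mutually orthonormal vectors form an ONB of $T_x M$.

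For the second assertion, the plan is to reinterpret $\partial_{ij}^2 F({\bf 0})$ through the second fundamental form. Viewing $\partial_i F,\partial_j F$ as smooth vector fields tangent to $F(M)\subset E$ (and extended arbitrarily to a neighbourhood in $E$), the Euclidean Levi-Civita connection just gives $\tilde{\nabla}_{\partial_i F}\partial_j F = \partial_{ij}^2 F$. On the intrinsic side, $\nabla_{\partial_i}\partial_j = \Gamma_{ij}^k \partial_k$, and the defining property of normal coordinates based at $x$ is that all Christoffel symbols vanish at the origin, i.e.\ $\Gamma_{ij}^k({\bf 0})=0$. Plugging both pieces into the formula (\ref{eq:2FundForm}) for the second fundamental form yields
\[
B_x\big(\partial_i|_{{\bf 0}},\partial_j|_{{\bf 0}}\big) = \partial_{ij}^2 F({\bf 0}) - (\nabla_{\partial_i}\partial_j)({\bf 0}) = \partial_{ij}^2 F({\bf 0}).
\]
Since $B_x$ takes values in $(T_x M)^\perp$ by construction, the claim $\partial_{ij}^2 F({\bf 0}) \perp T_x M$ follows.

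There is no real obstacle here; both parts are short, essentially bookkeeping exercises combining the isometry condition, the characterising property of normal coordinates ($g_{ij}({\bf 0})=\delta_{ij}$ and $\Gamma_{ij}^k({\bf 0})=0$), and the extrinsic interpretation of the Euclidean connection. The only point that deserves a line of justification is the identity $\tilde{\nabla}_{\partial_i F}\partial_j F = \partial_{ij}^2 F$ at ${\bf 0}$, which follows because the local flow of $\partial_i F$ can be realised by Euclidean translation of the extended vector field and the Euclidean connection is flat.
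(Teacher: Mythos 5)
Your proof is correct, and the second half takes a genuinely different route from the paper's. For the first assertion you argue exactly as the paper does (isometry plus $g_{ij}({\bf 0})=\delta_{ij}$ from Proposition \ref{prop:MetricExp}). For the second assertion the paper proceeds by brute force: it writes the three product-rule identities for $\partial_k\langle\partial_iF,\partial_jF\rangle_E$, $\partial_i\langle\partial_jF,\partial_kF\rangle_E$, $\partial_j\langle\partial_iF,\partial_kF\rangle_E$, observes that all left-hand sides vanish at ${\bf 0}$ because the first derivatives of the metric vanish in normal coordinates, and solves the resulting linear system to conclude $\langle\partial_{ij}^2F,\partial_kF\rangle_E({\bf 0})=0$. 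You instead invoke the Gauss decomposition: $\partial_{ij}^2F=\tilde{\nabla}_{\partial_iF}\partial_jF$, its tangential part is $\nabla_{\partial_i}\partial_j=\Gamma_{ij}^k\partial_k$, and $\Gamma_{ij}^k({\bf 0})=0$ at the centre of a normal chart, so $\partial_{ij}^2F({\bf 0})=B_x(\partial_i,\partial_j)\in(T_xM)^\perp$. The two arguments rest on the same fact ($\partial_kg_{ij}({\bf 0})=0$, which is equivalent to the vanishing of the Christoffel symbols there), but yours is more conceptual and gives slightly more: it identifies $\partial_{ij}^2F({\bf 0})$ as the second fundamental form evaluated on coordinate vectors, an identification the paper itself uses later in the proof of Lemma \ref{lem:InnerDerF} (cf.\ (\ref{eq:XiTInd3-1})). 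The paper's version is more self-contained in that it never needs the well-definedness of $B_x$ or the vanishing of the Christoffel symbols, only the metric expansion it has already quoted. One small remark: your closing justification of $\tilde{\nabla}_{\partial_iF}\partial_jF=\partial_{ij}^2F$ via ``Euclidean translation of the local flow'' is phrased awkwardly; the clean statement is simply that the Euclidean connection is componentwise differentiation, so differentiating the $E$-valued map ${\bf x}\mapsto\partial_jF({\bf x})$ along a curve with velocity $\partial_iF$ gives $\partial_i\partial_jF$ by the chain rule.
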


\begin{proof}
The first part was already seen before. For the second part, let $i,j,k$
be given fixed indices. Then one has the following relations on the
normal chart $U$:
\begin{equation}
\begin{cases}
\partial_{k}\langle\partial_{i}F,\partial_{j}F\rangle_{E}({\bf x})=\langle\partial_{ik}^{2}F,\partial_{j}F\rangle_{E}({\bf x})+\langle\partial_{i}F,\partial_{jk}^{2}F\rangle_{E}({\bf x}),\\
\partial_{i}\langle\partial_{j}F,\partial_{k}F\rangle_{E}({\bf x})=\langle\partial_{ij}^{2}F,\partial_{k}F\rangle_{E}({\bf x})+\langle\partial_{j}F,\partial_{ik}^{2}F\rangle_{E}({\bf x}),\\
\partial_{j}\langle\partial_{i}F,\partial_{k}F\rangle_{E}({\bf x})=\langle\partial_{ij}^{2}F,\partial_{k}F\rangle_{E}({\bf x})+\langle\partial_{i}F,\partial_{jk}^{2}F\rangle_{E}({\bf x}).
\end{cases}\label{eq:2ndDerNormal}
\end{equation}
According to (\ref{eq:MetricExtrinsic}) and Proposition \ref{prop:MetricExp},
the left hand side of (\ref{eq:2ndDerNormal}) is zero at ${\bf x}={\bf 0}$.
Therefore, 
\[
\langle\partial_{ij}^{2}F,\partial_{k}F\rangle_{E}({\bf 0})=\langle\partial_{ik}^{2}F,\partial_{j}F\rangle_{E}({\bf 0})=\langle\partial_{jk}^{2}F,\partial_{i}F\rangle_{E}({\bf 0})=0.
\]
Since $T_{x}M$ is spanned by $\{\partial_{k}F({\bf 0}):1\leqslant k\leqslant d\}$,
one concludes that $\partial_{ij}^{2}F({\bf 0})\perp T_{x}M.$
\end{proof}
Next, we compute intermediate expressions of $\Xi_{x}^{T}$, $\Xi_{x}^{\perp}$
in the lemma below. Throughout the rest, unless otherwise stated all
quantities are evaluated at ${\bf x}={\bf 0}$ and this will be omitted
to ease notation (e.g. $\partial_{ij}^{2}F$ means $\partial_{ij}^{2}F({\bf 0})$).
We will also omit the subscript $E$ for the inner product on $E$. 
\begin{lem}
\label{lem:InterXiTP}One has 
\begin{align}
\Xi_{x}^{T}= & \big(\frac{1}{8640}{\rm S}_{x}+\frac{1}{120}\langle\partial_{j}F,\partial_{jkk}^{3}F\rangle\big)\partial_{i}F\otimes\partial_{i}F\nonumber \\
 & \ \ \ +\big(\frac{d+34}{8640}{\rm Ric}_{ij}-\frac{1}{240}\langle\partial_{i}F,\partial_{jkk}^{3}F\rangle-\frac{1}{240}\langle\partial_{j}F,\partial_{ikk}^{3}F\rangle\big)\partial_{i}F\otimes\partial_{j}F\nonumber \\
 & \ \ \ +\frac{d-2}{1440}\partial_{ii}^{2}F\otimes\partial_{jj}^{2}F+\frac{8d-7}{1440}\partial_{ij}^{2}F\otimes\partial_{ij}^{2}F\nonumber \\
 & \ \ \ +\frac{d-1}{240}\big(\partial_{i}F\otimes\partial_{ijj}^{3}F+\partial_{ijj}^{3}F\otimes\partial_{i}F\big),\label{eq:XiT}
\end{align}
where ${\rm Ric}_{ij}$ are the Ricci curvature coefficients and ${\rm S}_{x}$
is the scalar curvature at $x$. Respectively, one also has
\begin{align}
\Xi_{x}^{\perp}= & \big(\frac{1}{1440}\langle\partial_{jj}^{2}F,\partial_{kk}^{2}F\rangle+\frac{1}{180}\langle\partial_{jk}^{2}F,\partial_{jk}^{2}\rangle\big)\partial_{i}F\otimes\partial_{i}F\nonumber \\
 & \ \ \ -\big(\frac{7}{1440}\langle\partial_{ik}^{2}F,\partial_{jk}^{2}F\rangle+\frac{1}{720}\langle\partial_{ij}^{2}F,\partial_{kk}^{2}F\rangle\big)\partial_{i}F\otimes\partial_{j}F.\label{eq:XiPerp}
\end{align}
\end{lem}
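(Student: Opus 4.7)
The plan is to apply the contraction $\mathfrak{C}$ term-by-term to the explicit expressions for ${\cal S}_1, {\cal S}_2, {\cal S}_3$ provided by Lemmas \ref{lem:Total3}, \ref{lem:Total2.5}, \ref{lem:Total2}, then split each resulting trace via $E = T_xM \oplus (T_xM)^\perp$ into its tangential and normal components $\Xi_x^T$ and $\Xi_x^\perp$. Since $\phi_i({\bf 0}) = \partial_i F({\bf 0})$, every rank-$4$ tensor $\phi_{\alpha_1\alpha_2\alpha_3\alpha_4}$ in ${\cal S}_j$ has factors that are either first derivatives $\partial_i F$ (tangential, by Lemma \ref{lem:Basis}), second derivatives $\partial_{ij}^2 F$ (normal, by Lemma \ref{lem:Basis}), or third derivatives $\partial_{ijk}^3F$ (mixed). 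This dichotomy yields an immediate triage: when a factor of $\partial_i F$ sits at trace-position $2$ or $4$, only the tangential ONB contributes, giving a factor $\delta_{ij}$; when a factor of $\partial_{ij}^2F$ sits there, only the normal ONB contributes, giving an inner product $\langle \partial_{ij}^2F, \partial_{kl}^2F\rangle$; mixed contributions from $\partial_{ijk}^3F$ split according to the tangential/normal decomposition. As a consequence, terms containing only first-derivative factors at positions $2,4$ feed only $\Xi_x^T$; terms with second-derivative factors at positions $2,4$ feed only $\Xi_x^\perp$; and mixed terms must be handled individually.

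Before doing the contractions I would rewrite the Christoffel-type coefficients $\partial_i \bar b^j({\bf 0})$ appearing in ${\cal S}_3^{(3)}$ and $\partial_{kl}^2 a^{ij}({\bf 0})$ appearing in ${\cal S}_3^{(4)}$ in terms of curvature. Using $\bar b = b + \nabla_z G_1(\cdot,{\bf 0})$ and Lemma \ref{lem:abGNormChart}, one gets $\partial_i\bar b^j({\bf 0}) = \tfrac{1}{3} R_{jppi} + \tfrac{1}{6} R_{pipj}$ and $\partial_{pp}^2 a^{ij}({\bf 0}) = \tfrac{2}{3} R_{ipjp}$, and then the Riemann symmetries (\ref{eq:CurSym}) and the defining relation ${\rm Ric}_{ij} = R_{kikj}$ consolidate these into Ricci and scalar-curvature terms. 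To pass to the intrinsic form of the remaining products of $\partial F$ derivatives, I would exploit three families of identities obtained by repeatedly differentiating the isometric-embedding relation $\langle \partial_i F, \partial_j F\rangle = g_{ij}$ and evaluating at ${\bf 0}$: the zeroth-order relation $\langle \partial_iF, \partial_jF\rangle({\bf 0}) = \delta_{ij}$; the first-order relation $\langle \partial_{ik}^2F, \partial_jF\rangle({\bf 0}) = 0$ (recovering Lemma \ref{lem:Basis}); and the second-order relation
\[
\langle \partial_{ikl}^3F, \partial_jF\rangle + \langle \partial_{jkl}^3F, \partial_iF\rangle + \langle \partial_{ik}^2F, \partial_{jl}^2F\rangle + \langle \partial_{il}^2F, \partial_{jk}^2F\rangle = \partial_{kl}^2 g_{ij}({\bf 0}) = -\tfrac{1}{3}(R_{ikjl} + R_{iljk}),
\]
the last equality being Proposition \ref{prop:MetricExp}. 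These identities let one convert the $\partial^2 F \otimes \partial^2 F$ and $\partial F \otimes \partial^3 F$ couplings that appear inside $\Xi_x^T$ into a combination of curvature terms plus irreducible extrinsic pieces, producing exactly the intrinsic form (\ref{eq:XiT})--(\ref{eq:XiPerp}).

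The main obstacle is the sheer combinatorial bookkeeping: ${\cal S}_2$ and ${\cal S}_3^{(2)}$ together contain on the order of a hundred distinct $\phi$-tensor monomials, each producing several $2$-tensor contributions after contraction, many of which cancel or combine in non-obvious ways only after invoking Riemann symmetries, the Bianchi identity, and the contraction identities above. To keep this tractable, I would first exploit two structural reductions before handling individual terms: (i) the antisymmetry of $\hat\Xi_x$ on the $(1,2)$ and $(3,4)$ pairs (inherited from $\psi_4(t,x) = \tfrac12 \mathbb{E}[S_2\otimes S_2]$ together with the antisymmetry of the L\'evy area) to collapse monomials into equivalence classes, and (ii) the $O(d)$-invariance of the contraction traces, which forces the output to be a linear combination of just a short list of basis tensors (the metric $\delta_{ij}$, ${\rm Ric}_{ij}$, scalar curvature times $\delta_{ij}$, $\langle \partial_{ij}^2F, \partial_{kk}^2F\rangle$, $\langle \partial_{ik}^2F, \partial_{jk}^2F\rangle$, and analogous $\partial F\otimes \partial^3 F$ pairings). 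Matching coefficients between the raw sum coming from $\mathfrak{C}({\cal S}_1+{\cal S}_2+{\cal S}_3)$ and this basis yields the stated formulas; following the authors' strategy, the full expansion is carried out by computer algebra, with the identities above implemented as rewrite rules.
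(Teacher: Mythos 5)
Your proposal is correct and follows essentially the same route as the paper: contract $\mathfrak{C}({\cal S}_1+{\cal S}_2+{\cal S}_3)$ term by term, use Lemma \ref{lem:Basis} to separate the tangential and normal traces (first derivatives of $F$ tangential, second derivatives normal, third derivatives mixed), convert the $\partial_i\bar b^j$ and $\partial_{pq}^2 a^{ij}$ terms in ${\cal S}_3^{(3)},{\cal S}_3^{(4)}$ into curvature via Lemma \ref{lem:abGNormChart}, and delegate the bookkeeping to computer algebra. The only minor point is that the further identities from differentiating $\langle\partial_iF,\partial_jF\rangle=g_{ij}$ twice belong to the subsequent step (Lemma \ref{lem:InnerDerF} and Proposition \ref{prop:XiFinal}) rather than to this lemma, whose statement deliberately retains the raw couplings $\langle\partial^2F,\partial^2F\rangle$ and $\partial F\otimes\partial^3F$.
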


\begin{proof}
This follows from explicit calculation based on the formulae for ${\cal S}_{1},{\cal S}_{2},{\cal S}_{3}$
provided by Lemmas \ref{lem:Total3}, \ref{lem:Total2.5}, \ref{lem:Total2}
respectively as well as Lemma \ref{lem:Basis}. The curvature quantities
appear because there are terms like $\partial_{i}b,\partial_{ij}^{2}G_{1},\partial_{ij}^{2}a$
in ${\cal S}_{3}^{(3)},$${\cal S}_{3}^{(4)}$ inside ${\cal S}_{3}$
(cf. (\ref{eq:bTerm}), (\ref{eq:aTerm}) for their definitions and
also recall (\ref{eq:barb}) for the definition of $\bar{b}$). These
quantities are explicitly related to curvature coefficients by Lemma
\ref{lem:abGNormChart}. We will leave the routine and lengthy details
to the patient reader. 
\end{proof}
To further simplify the expressions of $\Xi_{x}^{T}$ and $\Xi_{x}^{\perp}$,
we first prepare a lemma which computes the inner products of certain
derivatives of $F.$
\begin{lem}
\label{lem:InnerDerF} (i) One has 
\begin{equation}
\langle\partial_{jj}^{2}F,\partial_{kk}^{2}F\rangle=d^{2}|H_{x}|^{2}\label{eq:LapNorm}
\end{equation}
and 
\begin{equation}
\langle\partial_{jk}^{2}F,\partial_{jk}^{2}F\rangle=-{\rm S}_{x}+d^{2}|H_{x}|^{2},\ \langle\partial_{j}F,\partial_{jkk}^{3}F\rangle=\frac{2}{3}{\rm S}_{x}-d^{2}|H_{x}|^{2},\label{eq:InnerDer1}
\end{equation}
where $H_{x}$ is the mean curvature vector at $x$ (cf. (\ref{eq:MeanCur})). 

\vspace{2mm}\noindent (ii) One also has 
\begin{equation}
\langle\partial_{ij}^{2}F,\partial_{kk}^{2}F\rangle=d\cdot\langle B_{x}(\partial_{i}F,\partial_{j}F),H_{x}\rangle,\label{eq:InnerDer2}
\end{equation}
\begin{equation}
\langle\partial_{i}F,\partial_{jkk}^{3}F\rangle=\frac{2}{3}{\rm Ric}_{ij}-d\cdot\langle B_{x}(\partial_{i}F,\partial_{j}F),H_{x}\rangle,\label{eq:InnerDer3}
\end{equation}
\begin{equation}
\langle\partial_{ik}^{2}F,\partial_{jk}^{2}F\rangle=-{\rm Ric}_{ij}+d\cdot\langle B_{x}(\partial_{i}F,\partial_{j}F),H_{x}\rangle,\label{eq:InnerDer4}
\end{equation}
where $B_{x}$ is the second fundamental form at $x$ (cf. (\ref{eq:2FundForm})).
\end{lem}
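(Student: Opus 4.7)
The plan is to express every inner product on the list in terms of two primitive pieces of data---the Hessian $\partial_{ij}^2 F(\mathbf{0})$ and the mean curvature $H_x$---and then to import intrinsic curvature information via the Gauss equation and Proposition \ref{prop:MetricExp}. The structural identity driving everything is that, in any normal chart, the Christoffel symbols vanish at the origin, so the decomposition $\tilde\nabla_{\partial_i F}\partial_j F = \Gamma_{ij}^k\partial_k F + B_x(\partial_i F,\partial_j F)$ from \eqref{eq:2FundForm} collapses at $\mathbf{0}$ to
\[
\partial_{ij}^2 F(\mathbf{0}) = B_x(\partial_i F,\partial_j F).
\]
Combined with $\Delta F = d\, H$ from \eqref{eq:Lap=00003DH} and the fact that $\Delta F(\mathbf{0}) = \sum_k\partial_{kk}^2 F(\mathbf{0})$ in normal coordinates, this yields $\sum_k\partial_{kk}^2 F(\mathbf{0}) = d\, H_x$, from which \eqref{eq:LapNorm} and \eqref{eq:InnerDer2} follow by direct inspection.

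For \eqref{eq:InnerDer4} I would appeal to Gauss's equation. Since the ambient connection on $E=\mathbb{R}^N$ is flat, the tangential part of the Gauss formula reduces to
\[
\langle R^M(X,Y)Z,W\rangle = \langle B_x(X,W),B_x(Y,Z)\rangle - \langle B_x(X,Z),B_x(Y,W)\rangle,
\]
which, under the paper's convention $R_{ijkl} = \langle R(\partial_k,\partial_l)\partial_j,\partial_i\rangle$, becomes $R_{ijkl} = \langle\partial_{ki}^2 F,\partial_{lj}^2 F\rangle - \langle\partial_{kj}^2 F,\partial_{li}^2 F\rangle$ at $\mathbf{0}$. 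Specialising to $R_{ajai}$, summing over $a$, and using both the definition $\mathrm{Ric}_{ij} = \sum_a R_{ajai}$ and $\sum_a\partial_{aa}^2 F = dH_x$ isolates $\sum_a\langle\partial_{ia}^2 F,\partial_{ja}^2 F\rangle$ exactly in the form required by \eqref{eq:InnerDer4}. Tracing once more with $i=j$ and using $\mathrm{S}_x = \sum_i\mathrm{Ric}_{ii}$ produces the first formula of \eqref{eq:InnerDer1}.

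For \eqref{eq:InnerDer3} I would write
\[
\sum_k\langle\partial_i F,\partial_{jkk}^3 F\rangle = \partial_j\Big\langle\partial_i F,\sum_k\partial_{kk}^2 F\Big\rangle\Big|_{\mathbf{0}} - \Big\langle B_x(\partial_i F,\partial_j F),\,dH_x\Big\rangle,
\]
so that the second term is $d\langle B_x,H_x\rangle$ by the first step. Using the elementary Leibniz identity $\langle\partial_i F,\partial_{kk}^2 F\rangle = \partial_k g_{ik} - \tfrac{1}{2}\partial_i g_{kk}$, the interior pairing reads $\sum_k\partial_k g_{ik} - \tfrac{1}{2}\partial_i(\mathrm{tr}\,g)$. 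Differentiating once more in $x^j$ at the origin and applying Proposition \ref{prop:MetricExp} together with the symmetries \eqref{eq:CurSym} reduces the resulting sums $\sum_k R_{ikkj}$ and $\sum_k R_{kikj}$ to signed multiples of $\mathrm{Ric}_{ij}$, ultimately yielding $\tfrac{2}{3}\mathrm{Ric}_{ij}$ for $\partial_j\langle\partial_i F,\sum_k\partial_{kk}^2 F\rangle|_{\mathbf{0}}$. This gives \eqref{eq:InnerDer3}; tracing $i=j$ then produces the remaining formula of \eqref{eq:InnerDer1}.

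The principal obstacle is purely bookkeeping. Each step hinges on matching signs---in the paper's convention for $R_{ijkl}$, in the tangential projection of Gauss's formula, and in the various double-index contractions $\sum_k R_{\cdot k\cdot k}$ that must collapse to the correct multiple of $\mathrm{Ric}_{ij}$ or $\mathrm{S}_x$ via the symmetries \eqref{eq:CurSym}. Once these sign conventions are carefully pinned down, the entire lemma reduces to routine index manipulation using only the four ingredients above: the vanishing of the Christoffel symbols at the origin of a normal chart, the relation $\Delta F = d\,H$, Gauss's equation, and the curvature expansion of the metric from Proposition \ref{prop:MetricExp}.
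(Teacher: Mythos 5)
Your proposal is correct, and it reaches \eqref{eq:InnerDer4} and the first identity of \eqref{eq:InnerDer1} by a genuinely different route. The paper never invokes the Gauss equation: it obtains part (i) by solving a small linear system in the three unknowns $X=\langle\partial_jF,\partial_{jkk}^3F\rangle$, $Y=\langle\partial_{jk}^2F,\partial_{jk}^2F\rangle$, $Z=\langle\partial_{jj}^2F,\partial_{kk}^2F\rangle$, where the two equations come from computing $\partial_{kk}^2 g_{jj}$ and $\partial_{jk}^2 g_{jk}$ via Proposition \ref{prop:MetricExp} and the third piece of data is $Z=|\Delta F|^2=d^2|H_x|^2$; and it obtains \eqref{eq:InnerDer4} by one more application of the Leibniz trick, $\langle\partial_{ik}^2F,\partial_{jk}^2F\rangle=\partial_k\langle\partial_iF,\partial_{jk}^2F\rangle-\langle\partial_iF,\partial_{jkk}^3F\rangle$, again fed by the metric expansion. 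You instead promote the observation behind Lemma \ref{lem:Basis} to the full identity $\partial_{ij}^2F(\mathbf{0})=B_x(\partial_iF,\partial_jF)$ (valid because the Christoffel symbols vanish at the origin of a normal chart) and then trace the Gauss equation; I checked that with the paper's convention $R(X,Y)Z=\nabla_X\nabla_YZ-\nabla_Y\nabla_XZ-\nabla_{[X,Y]}Z$ your sign $R_{ijkl}=\langle\partial_{ki}^2F,\partial_{lj}^2F\rangle-\langle\partial_{kj}^2F,\partial_{li}^2F\rangle$ is right, and contracting over $a$ in $R_{ajai}$ does produce $-{\rm Ric}_{ij}+d\langle B_x(\partial_iF,\partial_jF),H_x\rangle$ for $\sum_a\langle\partial_{ia}^2F,\partial_{ja}^2F\rangle$, with the two formulae of \eqref{eq:InnerDer1} then following by taking traces of \eqref{eq:InnerDer4} and \eqref{eq:InnerDer3}. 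Your treatment of \eqref{eq:LapNorm}, \eqref{eq:InnerDer2} and \eqref{eq:InnerDer3} coincides with the paper's. The trade-off: your route makes the geometric mechanism transparent (the Hessian of $F$ \emph{is} the second fundamental form, and intrinsic curvature enters only through Gauss), at the cost of importing one external theorem and its sign conventions; the paper's route is self-contained, using only Proposition \ref{prop:MetricExp} and repeated differentiation of $g_{ij}=\langle\partial_iF,\partial_jF\rangle$, but derives the polarised identity \eqref{eq:InnerDer4} by a chain of Leibniz steps rather than as a trace of a single tensorial equation. Both are valid proofs.
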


\begin{proof}
(i) Let us denote 
\[
X\triangleq\langle\partial_{j}F,\partial_{jkk}^{3}F\rangle,\ Y\triangleq\langle\partial_{jk}^{2}F,\partial_{jk}^{2}F\rangle,\ Z\triangleq\langle\partial_{jj}^{2}F,\partial_{kk}^{2}F\rangle.
\]
Direct calculation shows that 
\begin{equation}
\partial_{kk}^{2}\langle\partial_{j}F,\partial_{j}F\rangle=2X+2Y,\ \partial_{jk}^{2}\langle\partial_{j}F,\partial_{k}F\rangle=2X+Y+Z.\label{eq:InnerDF}
\end{equation}
According to Proposition \ref{prop:MetricExp}, one has (at ${\bf x}={\bf 0}$)
\[
\partial_{kk}^{2}\langle\partial_{j}F,\partial_{j}F\rangle=\partial_{kk}^{2}g_{jj}=-\frac{2}{3}R_{jkjk}=-\frac{2}{3}{\rm S}_{x}
\]
and 
\[
\partial_{jk}^{2}\langle\partial_{j}F,\partial_{k}F\rangle=\partial_{jk}^{2}g_{jk}=-\frac{1}{3}R_{jkkj}=\frac{1}{3}{\rm S}_{x}.
\]
By substituting this back into (\ref{eq:InnerDF}), one finds that
\[
X=\frac{2}{3}{\rm S}_{x}-Z,\ Y=-{\rm S}_{x}+Z.
\]
The relation (\ref{eq:InnerDer1}) thus follows from the fact that
$Z=|\Delta F|^{2}=d^{2}|H_{x}|^{2}$ (cf. (\ref{eq:Lap=00003DH})).

\vspace{2mm}\noindent (ii) To obtain the relation (\ref{eq:InnerDer2}),
recall that $\partial_{kk}^{2}F({\bf 0})=d\cdot H_{x}\in(T_{x}M)^{\perp}$.
It follows that 
\begin{align}
\langle\partial_{ij}^{2}F,\partial_{kk}^{2}F\rangle & =\langle\tilde{\nabla}_{\partial_{i}F}^{E}\partial_{j}F,d\cdot H_{x}\rangle\nonumber \\
 & =\langle\tilde{\nabla}_{\partial_{i}F}^{E}\partial_{j}F-\nabla_{\partial_{i}F}^{M}\partial_{j}F,d\cdot H_{x}\rangle\nonumber \\
 & =\langle B_{x}(\partial_{i}F,\partial_{j}F),d\cdot H_{x}\rangle,\label{eq:XiTInd3-1}
\end{align}
where $\tilde{\nabla}^{E}$ (respectively, $\nabla^{M}$) is the Euclidean
connection on $E$ (respectively, Riemannian connection on $M$) and
the last equality follows from the definition (\ref{eq:2FundForm})
of the second fundamental form. This gives (\ref{eq:InnerDer2}).

For the relation (\ref{eq:InnerDer3}), one first writes 
\begin{equation}
\langle\partial_{i}F,\partial_{jkk}^{3}F\rangle=\partial_{j}\langle\partial_{i}F,\partial_{kk}^{2}F\rangle-\langle\partial_{ij}^{2}F,\partial_{kk}^{2}F\rangle.\label{eq:InnerDerPf1}
\end{equation}
The second term is just given by (\ref{eq:InnerDer2}). By using the
relation 
\[
\langle\partial_{i}F,\partial_{kk}^{2}F\rangle=\partial_{k}g_{ik}-\frac{1}{2}\partial_{i}g_{kk}
\]
as well as Proposition \ref{prop:MetricExp}, the first term on the
right hand side of (\ref{eq:InnerDerPf1}) is computed as
\[
\partial_{j}\langle\partial_{i}F,\partial_{kk}^{2}F\rangle=\frac{2}{3}{\rm Ric}_{ij}.
\]
The relation (\ref{eq:InnerDer3}) thus follows. 

Similarly, for the last relation (\ref{eq:InnerDer4}) one first writes
\begin{equation}
\langle\partial_{ik}^{2}F,\partial_{jk}^{2}F\rangle=\partial_{k}\langle\partial_{i}F,\partial_{jk}^{2}\rangle-\langle\partial_{i}F,\partial_{jkk}^{3}F\rangle.\label{eq:InnerDerPf2}
\end{equation}
The second term is given by (\ref{eq:InnerDer3}). To compute the
first term, one has 
\[
\langle\partial_{i}F,\partial_{jk}^{2}F\rangle=\frac{1}{2}\big(\partial_{k}g_{ij}+\partial_{j}g_{ik}-\partial_{i}g_{jk}\big).
\]
It then follows from Proposition \ref{prop:MetricExp} that (at ${\bf x}={\bf 0}$)
\begin{align*}
\partial_{k}\langle\partial_{i}F,\partial_{jk}^{2}\rangle & =\frac{1}{2}\big(\partial_{k}^{2}g_{ij}+\partial_{jk}^{2}g_{ik}-\partial_{ik}^{2}g_{jk}\big)\\
 & =\frac{1}{2}\big(-\frac{2}{3}{\rm Ric}_{ij}+\frac{1}{3}{\rm Ric}_{ij}-\frac{1}{3}{\rm Ric}_{ij}\big)=-\frac{1}{3}{\rm Ric}_{ij}.
\end{align*}
By substituting this and (\ref{eq:InnerDer3}) into (\ref{eq:InnerDerPf2}),
one obtains the relation (\ref{eq:InnerDer4}).
\end{proof}
Finally, we consider the restriction of $\Xi_{x}^{T}$ and $\Xi_{x}^{\perp}$
on $T_{x}M\times T_{x}M$ and derive their intrinsic expressions based
on Lemma \ref{lem:InterXiTP}. 
\begin{prop}
\label{prop:XiFinal}The restricted $2$-tensors $\Xi_{x}^{T},\Xi_{x}^{\perp}\in{\cal L}(T_{x}M\times T_{x}M,\mathbb{R})$
are given by 
\begin{equation}
\Xi_{x}^{T}=\frac{49d-62}{8640}{\rm Ric}_{x}+\big(\frac{49{\rm S}_{x}}{8640}-\frac{d^{2}}{120}|H_{x}|^{2}\big)g_{x}-\frac{(d-2)d}{120}\langle B_{x},H_{x}\rangle\label{eq:XiTFinal}
\end{equation}
and 
\begin{equation}
\Xi_{x}^{\perp}=\frac{9d^{2}|H_{x}|^{2}-8S_{x}}{1440}g_{x}+\frac{7}{1440}{\rm Ric}_{x}-\frac{d}{160}\langle B_{x},H_{x}\rangle.\label{eq:XiPerpFinal}
\end{equation}
\end{prop}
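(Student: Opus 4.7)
The plan is to take the intermediate local expressions for $\Xi_{x}^{T}$ and $\Xi_{x}^{\perp}$ from Lemma \ref{lem:InterXiTP}, restrict them as bilinear forms to $T_{x}M\times T_{x}M$, and then convert every surviving inner-product expression to an intrinsic/extrinsic curvature quantity using Lemma \ref{lem:InnerDerF}. The restriction step kills several tensor factors for free: by Lemma \ref{lem:Basis}, $\partial_{ij}^{2}F\in(T_{x}M)^{\perp}$ for every $i,j$, so the tensor $\partial_{ij}^{2}F\otimes\partial_{ij}^{2}F$ vanishes on $T_{x}M\times T_{x}M$; moreover $\sum_{i}\partial_{ii}^{2}F=\Delta F=d\cdot H_{x}$ by \eqref{eq:Lap=00003DH} (the Christoffel symbols vanish at the origin of a normal chart), so $\partial_{ii}^{2}F\otimes\partial_{jj}^{2}F=d^{2}H_{x}\otimes H_{x}$ also vanishes on $T_{x}M\times T_{x}M$ since $H_{x}\perp T_{x}M$. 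This immediately disposes of the $\frac{d-2}{1440}$ and $\frac{8d-7}{1440}$ terms in \eqref{eq:XiT}.

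The next step treats the third-derivative cross-terms $\partial_{i}F\otimes\partial_{ijj}^{3}F$ and $\partial_{ijj}^{3}F\otimes\partial_{i}F$. Since only the tangential component of $\partial_{ijj}^{3}F$ matters when paired against a vector in $T_{x}M$, the coefficient $\langle\partial_{k}F,\partial_{ijj}^{3}F\rangle$ is governed by formula \eqref{eq:InnerDer3} (after relabeling the contracted index). This yields, for $v,w\in T_{x}M$,
\[
\big(\partial_{i}F\otimes\partial_{ijj}^{3}F\big)(v,w)=\tfrac{2}{3}\mathrm{Ric}_{x}(v,w)-d\,\langle B_{x}(v,w),H_{x}\rangle,
\]
and the transposed term contributes the same by symmetry of $\mathrm{Ric}_{x}$ and $B_{x}$. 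Combined with the standard identities $\sum_{i}\partial_{i}F\otimes\partial_{i}F=g_{x}$ and $\mathrm{Ric}_{ij}\,\partial_{i}F\otimes\partial_{j}F=\mathrm{Ric}_{x}$ on $T_{x}M$, plus the scalar reductions $\langle\partial_{j}F,\partial_{jkk}^{3}F\rangle=\frac{2}{3}\mathrm{S}_{x}-d^{2}|H_{x}|^{2}$ and $\langle\partial_{i}F,\partial_{jkk}^{3}F\rangle=\frac{2}{3}\mathrm{Ric}_{ij}-d\langle B_{x}(\partial_{i}F,\partial_{j}F),H_{x}\rangle$ from Lemma \ref{lem:InnerDerF}, all of \eqref{eq:XiT} is expressed in terms of $g_{x}$, $\mathrm{Ric}_{x}$, $\mathrm{S}_{x}$, $|H_{x}|^{2}$ and $\langle B_{x},H_{x}\rangle$. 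Collecting coefficients then produces \eqref{eq:XiTFinal}: the $g_{x}$-coefficient becomes $\frac{1}{8640}\mathrm{S}_{x}+\frac{1}{120}(\frac{2}{3}\mathrm{S}_{x}-d^{2}|H_{x}|^{2})=\frac{49\mathrm{S}_{x}}{8640}-\frac{d^{2}}{120}|H_{x}|^{2}$, the $\mathrm{Ric}_{x}$-coefficient becomes $\frac{d+34}{8640}-\frac{4}{3\cdot 240}+\frac{2(d-1)}{3\cdot 240}=\frac{49d-62}{8640}$, and the $\langle B_{x},H_{x}\rangle$-coefficient becomes $\frac{d}{120}-\frac{d(d-1)}{120}=-\frac{d(d-2)}{120}$.

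For $\Xi_{x}^{\perp}$ the reduction is simpler because no third-derivative terms appear; one just substitutes the four identities \eqref{eq:LapNorm}--\eqref{eq:InnerDer4} directly into \eqref{eq:XiPerp}. The $g_{x}$-coefficient becomes $\frac{d^{2}|H_{x}|^{2}}{1440}+\frac{1}{180}(-\mathrm{S}_{x}+d^{2}|H_{x}|^{2})=\frac{9d^{2}|H_{x}|^{2}-8\mathrm{S}_{x}}{1440}$, the $\mathrm{Ric}_{x}$-coefficient is $\frac{7}{1440}$, and the $\langle B_{x},H_{x}\rangle$-coefficient is $-\frac{7d}{1440}-\frac{d}{720}=-\frac{d}{160}$, which is exactly \eqref{eq:XiPerpFinal}.

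The conceptual work is minimal; the only real obstacle is bookkeeping arithmetic with the eight-plus terms in \eqref{eq:XiT}, and this is manageable because the restriction to $T_{x}M$ eliminates precisely those terms (the pure normal tensors $\partial_{ij}^{2}F\otimes\partial_{ij}^{2}F$ and $\partial_{ii}^{2}F\otimes\partial_{jj}^{2}F$) that would otherwise produce independent quartic combinations of $B_{x}$ and $H_{x}$, leaving only the four intrinsic building blocks $g_{x}$, $\mathrm{Ric}_{x}$, $\mathrm{S}_{x}$, $\langle B_{x},H_{x}\rangle$ (with $|H_{x}|^{2}$) on the right-hand side.
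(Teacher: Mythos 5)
Your proposal is correct and follows essentially the same route as the paper's proof: restrict the intermediate expressions of Lemma \ref{lem:InterXiTP} to $T_{x}M\times T_{x}M$, kill the purely normal second-derivative tensors via Lemma \ref{lem:Basis}, convert the surviving inner products with Lemma \ref{lem:InnerDerF}, and collect coefficients (you even carry out the $\Xi_{x}^{\perp}$ computation that the paper leaves to the reader). The only blemish is a typo in your displayed ${\rm Ric}_{x}$ bookkeeping for $\Xi_{x}^{T}$: the last summand should be $\frac{4(d-1)}{3\cdot 240}$ (two copies of $\frac{d-1}{240}\cdot\frac{2}{3}$), not $\frac{2(d-1)}{3\cdot 240}$ --- with that correction the sum does equal $\frac{49d-62}{8640}$ as you claim.
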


\begin{proof}
Let $v,w\in T_{x}M$ be given vectors. We first evaluate $\Xi_{x}^{T}(v,w)$
by using (\ref{eq:XiT}). This is a simple task based on Lemma \ref{lem:InnerDerF}
and the following basic relations. 

\vspace{2mm}\noindent (i) It is obvious that 
\[
(\partial_{i}F\otimes\partial_{i}F)(v,w)=\langle v,\partial_{i}F\rangle\langle w,\partial_{i}F\rangle=\langle v,w\rangle.
\]
In other words, one has $\partial_{i}F\otimes\partial_{i}F=g_{x}.$
In addition, one also has 
\[
{\rm Ric}_{ij}\partial_{i}F\otimes\partial_{j}F(v,w)={\rm Ric}(v,w).
\]
(ii) Since $v,w\in T_{x}M,$ it is a direct consequence of Lemma \ref{lem:Basis}
that 
\[
(\partial_{ii}F\otimes\partial_{jj}F)(v,w)=(\partial_{ij}F\otimes\partial_{ij}F)(v,w)=0.
\]
(iii) According to Lemma \ref{lem:InnerDerF}, one has 
\[
\langle\partial_{i}F,\partial_{jkk}^{3}F\rangle(\partial_{i}F\otimes\partial_{j}F)(v,w)=\frac{2}{3}{\rm Ric}(v,w)-d\cdot\langle B_{x}(v,w),H_{x}\rangle
\]
and
\[
(\partial_{i}F\otimes\partial_{ijj}^{3}F)(v,w)=\frac{2}{3}{\rm Ric}(v,w)-d\cdot\langle B_{x}(v,w),H_{x}\rangle.
\]
By substituting all the above relations as well as (\ref{eq:InnerDer1})
into (\ref{eq:XiT}), one obtains the expression (\ref{eq:XiTFinal})
for $\Xi_{x}^{T}$. The derivation of (\ref{eq:XiPerpFinal}) for
$\Xi_{x}^{\perp}$ is similar and is left to the patient reader. 
\end{proof}
\begin{proof}[Proof of Theorem \ref{thm:RecCurv}: expression of $\Xi_x|_{T_xM\times T_xM}$]

The relation (\ref{prop:t3Ceof}) follows by adding up the two expressions
(\ref{eq:XiTFinal}) and (\ref{eq:XiPerpFinal}).

\end{proof}

Now the proof of Theorem \ref{thm:RecCurv} is complete. 
\begin{rem}
\label{rem:SepCurvRec}Recall that 
\[
{\rm Tr}g_{x}=d,\ {\rm Tr}{\rm Ric}_{x}={\rm S}_{x},\ {\rm Tr}B_{x}=d\cdot H_{x},
\]
where ${\rm Tr}$ here means taking trace over $T_{x}M$. After taking
trace on both (\ref{eq:XiTFinal}) and (\ref{eq:XiPerpFinal}) one
obtains a linear system for the variables ${\rm S}_{x}$ and $|H_{x}|^{2}$.
By solving such a system, one obtains the representations of ${\rm S}_{x}$
and $|H_{x}|$ in terms of ${\rm Tr}\Xi_{x}^{T}$ and ${\rm Tr}\Xi_{x}^{\perp}$.
Once they become known, one can then view (\ref{eq:XiTFinal}) and
(\ref{eq:XiPerpFinal}) as a linear system for the (tensor) variables
${\rm Ric}_{x}$ and $\langle B_{x},H_{x}\rangle$ (recall that $g_{x}$
is already known from $\Theta_{x}$). Solving this system gives corresponding
representations of these two tensors. As a consequence, the quantities
\[
g_{x},{\rm Ric}_{x},{\rm S}_{x},\langle B_{x},H_{x}\rangle,|H_{x}|
\]
can all be reconstructed explicitly from the the tensors $\Theta_{x},\Xi_{x}^{T},\Xi_{x}^{\perp}$.
We will not present these formulae here. 
\end{rem}

\begin{rem}
One can of course consider higher level signatures and higher order
expansions. We stopped at level four and order $t^{3}$ because (i)
the computation is already highly involved and (ii) the encoded curvature
properties are simple and rich enough to reveal in this case. Higher
order expansions will contain more complicated mixtures of covariant derivatives
of the curvature tensor and of the embedding map $F$, whose information
becomes harder to unwind. It is not unreasonable to expect that
the entire Riemnannian curvature tensor and the second fundamental
form are both encoded in the small-time expansion of the expected
signature $\mathbb{E}[S^{dF}(X^{t,x,x})]$. This is an interesting question to investigate, which is not obvious at all due to the complicated
nature of the current computation. 
\end{rem}

\addtocontents{toc}{\protect\setcounter{tocdepth}{1}}

\begin{appendices}

\section{Remaining cases for $t^2$-coefficient}\label{sec:t2}

Here we present the results for all remaining cases listed in (\ref{eq:t2Cases}).
We continue to use Notation \ref{Not:t2Eq} and (\ref{eq:PhiijklNot}).
Also recall that $B_{r}^{t}\triangleq B_{tr}/\sqrt{t}$ which is again
a Brownian motion. 

\subsection{The $(II;IK)$ and $(IK;II)$ terms}

We first consider 
\begin{align*}
A_{II;IK} & \triangleq-\sqrt{t}\int_{0}^{1}\big(\int_{0}^{\rho}\phi_{i}(X_{tr})\frac{X_{tr}^{i}}{1-r}dr\big)\otimes\phi_{j}(X_{t\rho})\frac{X_{t\rho}^{j}}{1-\rho}d\rho\\
 & \ \ \ \otimes\int_{0}^{1}\big(\int_{0}^{\theta}\phi_{k}(X_{t\delta})\frac{X_{t\delta}^{k}}{1-\delta}d\delta\big)\otimes\phi_{l}(X_{t\theta})(\sigma(X_{t\theta})dB_{\theta}^{t})^{l}.
\end{align*}
By applying the expansions (\ref{eq:TypeIExp}) and (\ref{eq:TypeIIExp}),
one has 
\begin{align}
A_{II;IK} & \stackrel{2}{=}-\sqrt{t}^{4}\phi_{ijkl}\times\int_{0<r<\rho<1}\big(\int_{0}^{r}\frac{dB_{u}^{t,i}}{1-u}\big)\big(\int_{0}^{\rho}\frac{dB_{v}^{t,j}}{1-v}\big)drd\rho\nonumber \\
 & \ \ \ \times\int_{0}^{1}\big(\int_{0}^{\theta}\big(\int_{0}^{\delta}\frac{dB_{\eta}^{t,k}}{1-\eta}\big)d\delta\big)dB_{\theta}^{t,l}.\label{eq:AIIIK}
\end{align}
Computing the expectation of the right hand side of (\ref{eq:AIIIK})
is just routine It\^o calculus. One finds that

\begin{equation}
\mathbb{E}[A_{II;IK}]=\big(-\frac{5}{24}\delta^{ik}\delta^{jl}-\frac{1}{24}\delta^{il}\delta^{jk}\big)\phi_{ijkl}t^{2}+o(t^{2}).\label{eq:ResultIIIK}
\end{equation}

To compute the $(IK;II)$ case, one can make use of the symmetry

\[
A_{IK;II}=P(A_{II;IK}),
\]
where $P:E^{\otimes4}\rightarrow E^{\otimes4}$ is the tensor permutation
induced by 
\[
P(v_{1}\otimes v_{2}\otimes w_{1}\otimes w_{2})\triangleq w_{1}\otimes w_{2}\otimes v_{1}\otimes v_{2},\ \ \ w_{i},v_{j}\in E.
\]
By applying this permutation to (\ref{eq:ResultIIIK}), one immediately
obtains that (after suitable renaming of indices)
\begin{equation}
\mathbb{E}[A_{IK;II}]=\mathbb{E}[A_{II;IK}]=\big(-\frac{5}{24}\delta^{ik}\delta^{jl}-\frac{1}{24}\delta^{il}\delta^{jk}\big)\phi_{ijkl}t^{2}+o(t^{2}).\label{eq:t21}
\end{equation}

\subsection{The $(II;KI)$ and $(KI;II)$ terms}

Target:

\begin{align*}
A_{II;KI} & \triangleq-\sqrt{t}\int_{0}^{1}\big(\int_{0}^{\rho}\phi_{i}(X_{tr})\frac{X_{tr}^{i}}{1-r}dr\big)\otimes\phi_{j}(X_{t\rho})\frac{X_{t\rho}^{j}}{1-\rho}d\rho\\
 & \ \ \ \otimes\int_{0}^{1}\big(\int_{0}^{\theta}\phi_{k}(X_{t\delta})(\sigma(X_{t\delta})dB_{\delta}^{t})^{k}\big)\otimes\phi_{l}(X_{t\theta})\frac{X_{t\theta}^{l}}{1-\theta}d\theta.
\end{align*}
Reduction:
\begin{align*}
A_{II;KI}\stackrel{2}{=} & -\sqrt{t}^{4}\phi_{ijkl}\times\int_{0<r<\rho<1}\big(\int_{0}^{r}\frac{dB_{u}^{t,i}}{1-u}\big)\big(\int_{0}^{\rho}\frac{dB_{v}^{t,j}}{1-v}\big)drd\rho\times\int_{0}^{1}B_{\theta}^{t,k}\big(\int_{0}^{\theta}\frac{dB_{\eta}^{t,l}}{1-\eta}\big)d\theta.
\end{align*}
\textit{\uline{Result}}:
\begin{equation}
\mathbb{E}[A_{II;KI}]=\mathbb{E}[A_{KI;II}]=\big(-\frac{1}{2}\delta^{ij}\delta^{kl}-\frac{11}{24}\delta^{ik}\delta^{jl}-\frac{7}{24}\delta^{il}\delta^{jk}\big)\phi_{ijkl}t^{2}+o(t^{2}).\label{eq:t22}
\end{equation}

\subsection{The $(II;KK)$ and $(KK;II)$ terms}

Target:

\begin{align*}
A_{II;KK} & \triangleq\sqrt{t}^{2}\int_{0}^{1}\big(\int_{0}^{\rho}\phi_{i}(X_{tr})\frac{X_{tr}^{i}}{1-r}dr\big)\otimes\phi_{j}(X_{t\rho})\frac{X_{t\rho}^{j}}{1-\rho}d\rho\\
 & \ \ \ \otimes\int_{0}^{1}\big(\int_{0}^{\theta}\phi_{k}(X_{t\delta})(\sigma(X_{t\delta})dB_{\delta}^{t})^{k}\big)\otimes\phi_{l}(X_{t\theta})(\sigma(X_{t\theta})dB_{\theta}^{t})^{l}.
\end{align*}
Reduction:
\begin{align*}
A_{II;KK}\stackrel{2}{=} & \sqrt{t}^{4}\phi_{ijkl}\times\int_{0<r<\rho<1}\big(\int_{0}^{r}\frac{dB_{u}^{t,i}}{1-u}\big)\big(\int_{0}^{\rho}\frac{dB_{v}^{t,j}}{1-v}\big)drd\rho\times\int_{0}^{1}B_{\theta}^{t,k}dB_{\theta}^{t,l}.
\end{align*}
\textit{\uline{Result}}:
\begin{equation}
\mathbb{E}[A_{II;KK}]=\mathbb{E}[A_{KK;II}]=\big(\frac{3}{8}\delta^{ik}\delta^{jl}+\frac{1}{8}\delta^{il}\delta^{jk}\big)\phi_{ijkl}t^{2}+o(t^{2}).\label{eq:t23}
\end{equation}

\subsection{The $(IK;KI)$ and $(KI;IK)$ terms}

Target:

\begin{align*}
A_{IK;KI} & \triangleq\sqrt{t}^{2}\int_{0}^{1}\big(\int_{0}^{\rho}\phi_{i}(X_{tr})\frac{X_{tr}^{i}}{1-r}dr\big)\otimes\phi_{j}(X_{t\rho})(\sigma(X_{t\rho})dB_{\rho}^{t})^{j}\\
 & \ \ \ \otimes\int_{0}^{1}\big(\int_{0}^{\theta}\phi_{k}(X_{t\delta})(\sigma(X_{t\delta})dB_{\delta}^{t})^{k}\big)\otimes\phi_{l}(X_{t\theta})\frac{X_{t\theta}^{l}}{1-\theta}d\theta.
\end{align*}
Reduction:
\begin{align*}
A_{IK;KI}\stackrel{2}{=} & \sqrt{t}^{4}\phi_{ijkl}\times\int_{0}^{1}\big(\int_{0}^{\rho}\big(\int_{0}^{r}\frac{dB_{u}^{t,i}}{1-u}\big)dr\big)dB_{\rho}^{t,j}\times\int_{0}^{1}B_{\theta}^{t,k}\big(\int_{0}^{\theta}\frac{dB_{v}^{t,l}}{1-v}\big)d\theta.
\end{align*}
\textit{\uline{Result}}:
\begin{equation}
\mathbb{E}[A_{IK;KI}]=\mathbb{E}[A_{KI;IK}]=\big(\frac{1}{4}\delta^{ik}\delta^{jl}+\frac{1}{12}\delta^{il}\delta^{jk}\big)\phi_{ijkl}t^{2}+o(t^{2}).\label{eq:t24}
\end{equation}

\subsection{The $(IK;KK)$ and $(KK;IK)$ terms}

Target:

\begin{align*}
A_{IK;KK} & \triangleq-\sqrt{t}^{3}\int_{0}^{1}\big(\int_{0}^{\rho}\phi_{i}(X_{tr})\frac{X_{tr}^{i}}{1-r}dr\big)\otimes\phi_{j}(X_{t\rho})(\sigma(X_{t\rho})dB_{\rho}^{t})^{j}\\
 & \ \ \ \otimes\int_{0}^{1}\big(\int_{0}^{\theta}\phi_{k}(X_{t\delta})(\sigma(X_{t\delta})dB_{\delta}^{t})^{k}\big)\otimes\phi_{l}(X_{t\theta})(\sigma(X_{t\theta})dB_{\theta}^{t})^{l}.
\end{align*}
Reduction:
\begin{align*}
A_{IK;KK}\stackrel{2}{=}- & \sqrt{t}^{4}\phi_{ijkl}\times\int_{0}^{1}\big(\int_{0}^{\rho}\big(\int_{0}^{r}\frac{dB_{u}^{t,i}}{1-u}\big)dr\big)dB_{\rho}^{t,j}\times\int_{0}^{1}B_{\theta}^{t,k}dB_{\theta}^{t,l}.
\end{align*}
\textit{\uline{Result}}:
\begin{equation}
\mathbb{E}[A_{IK;KK}]=\mathbb{E}[A_{KK;IK}]=-\frac{1}{4}\delta^{ik}\delta^{jl}\phi_{ijkl}t^{2}+o(t^{2}).\label{eq:t25}
\end{equation}

\subsection{The $(KI;KK)$ and $(KK;KI)$ terms}

Target:

\begin{align*}
A_{KI;KK} & \triangleq-\sqrt{t}^{3}\int_{0}^{1}\big(\int_{0}^{\rho}\phi_{i}(X_{tr})(\sigma(X_{tr})dB_{r}^{t})^{i}\big)\otimes\phi_{j}(X_{t\rho})\frac{X_{t\rho}^{j}}{1-\rho}d\rho\\
 & \ \ \ \otimes\int_{0}^{1}\big(\int_{0}^{\theta}\phi_{k}(X_{t\delta})(\sigma(X_{t\delta})dB_{\delta}^{t})^{k}\big)\otimes\phi_{l}(X_{t\theta})(\sigma(X_{t\theta})dB_{\theta}^{t})^{l}.
\end{align*}
Reduction:
\begin{align*}
A_{KI;KK}\stackrel{2}{=}- & \sqrt{t}^{4}\phi_{ijkl}\times\int_{0}^{1}B_{\rho}^{t,i}\big(\int_{0}^{\rho}\frac{dB_{r}^{t,j}}{1-r}\big)d\rho\times\int_{0}^{1}B_{\theta}^{t,k}dB_{\theta}^{t,l}.
\end{align*}
\textit{\uline{Result}}:
\begin{equation}
\mathbb{E}[A_{KI;KK}]=\mathbb{E}[A_{KK;KI}]=\big(-\frac{1}{2}\delta^{ik}\delta^{jl}-\frac{1}{4}\delta^{il}\delta^{jk}\big)\phi_{ijkl}t^{2}+o(t^{2}).\label{eq:t26}
\end{equation}

\subsection{The $(IK;IK)$ term}

Target:

\begin{align*}
A_{IK;IK} & \triangleq\sqrt{t}^{2}\int_{0}^{1}\big(\int_{0}^{\rho}\phi_{i}(X_{tr})\frac{X_{tr}^{i}}{1-r}dr\big)\otimes\phi_{j}(X_{t\rho})(\sigma(X_{t\rho})dB_{\rho}^{t})^{j}\\
 & \ \ \ \otimes\int_{0}^{1}\big(\int_{0}^{\theta}\phi_{k}(X_{t\delta})\frac{X_{t\delta}^{k}}{1-\delta}d\delta\big)\otimes\phi_{l}(X_{t\theta})(\sigma(X_{t\theta})dB_{\theta}^{t})^{l}.
\end{align*}
Reduction:
\begin{align*}
A_{IK;IK}\stackrel{2}{=} & \sqrt{t}^{4}\phi_{ijkl}\times\int_{0}^{1}\big(\int_{0}^{\rho}\big(\int_{0}^{r}\frac{dB_{u}^{t,i}}{1-u}\big)dr\big)dB_{\rho}^{t,j}\\
 & \ \ \ \times\int_{0}^{1}\big(\int_{0}^{\theta}\big(\int_{0}^{\delta}\frac{dB_{v}^{t,k}}{1-v}\big)d\delta\big)dB_{\theta}^{t,l}.
\end{align*}
\textit{\uline{Result}}:
\begin{equation}
\mathbb{E}[A_{IK;IK}]=\frac{1}{6}\delta^{ik}\delta^{jl}\phi_{ijkl}t^{2}+o(t^{2}).\label{eq:t27}
\end{equation}

\subsection{The $(KI;KI)$ term}

Target:

\begin{align*}
A_{KI;KI} & \triangleq\sqrt{t}^{2}\int_{0}^{1}\big(\int_{0}^{\rho}\phi_{i}(X_{tr})(\sigma(X_{tr})dB_{r}^{t})^{i}\big)\otimes\phi_{j}(X_{t\rho})\frac{X_{t\rho}^{j}}{1-\rho}d\rho\\
 & \ \ \ \otimes\int_{0}^{1}\big(\int_{0}^{\theta}\phi_{k}(X_{t\delta})(\sigma(X_{t\delta})dB_{\delta}^{t})^{k}\big)\otimes\phi_{l}(X_{t\theta})\frac{X_{t\theta}^{l}}{1-\theta}d\theta.
\end{align*}
Reduction:
\begin{align*}
A_{KI;KI}\stackrel{2}{=} & \sqrt{t}^{4}\phi_{ijkl}\times\int_{0}^{1}B_{\rho}^{t,i}\big(\int_{0}^{\rho}\frac{dB_{u}^{t,j}}{1-u}\big)d\rho\times\int_{0}^{1}B_{\theta}^{t,k}\big(\int_{0}^{\theta}\frac{dB_{v}^{t,l}}{1-v}\big)d\theta.
\end{align*}
\textit{\uline{Result}}:
\begin{equation}
\mathbb{E}[A_{KI;KI}]=\big(\delta^{ij}\delta^{kl}+\frac{2}{3}\delta^{ik}\delta^{jl}+\frac{1}{2}\delta^{il}\delta^{jk}\big)\phi_{ijkl}t^{2}+o(t^{2}).\label{eq:t28}
\end{equation}

\subsection{The $(KK;KK)$ term}

Target:

\begin{align*}
A_{KK;KK} & \triangleq\sqrt{t}^{4}\int_{0}^{1}\big(\int_{0}^{\rho}\phi_{i}(X_{tr})(\sigma(X_{tr})dB_{r}^{t})^{i}\big)\otimes\phi_{j}(X_{t\rho})(\sigma(X_{t\rho})dB_{\rho}^{t})^{j}\\
 & \ \ \ \otimes\int_{0}^{1}\big(\int_{0}^{\theta}\phi_{k}(X_{t\delta})(\sigma(X_{t\delta})dB_{\delta}^{t})^{k}\big)\otimes\phi_{l}(X_{t\theta})(\sigma(X_{t\theta})dB_{\theta}^{t})^{l}.
\end{align*}
Reduction:
\begin{align*}
A_{KK;KK}\stackrel{2}{=} & \sqrt{t}^{4}\phi_{ijkl}\times\int_{0}^{1}B_{\rho}^{t,i}dB_{\rho}^{t,j}\times\int_{0}^{1}B_{\theta}^{t,k}dB_{\theta}^{t,l}.
\end{align*}
\textit{\uline{Result}}:
\begin{equation}
\mathbb{E}[A_{KK;KK}]=\frac{1}{2}\delta^{ik}\delta^{jl}\phi_{ijkl}t^{2}+o(t^{2}).\label{eq:t29}
\end{equation}

\subsection{The $(II;P)$ and $(P;II)$ terms}

Target: 
\begin{align*}
A_{II;P} & \triangleq\frac{1}{2}t\int_{0}^{1}\big(\int_{0}^{\rho}\phi_{i}(X_{tr})\frac{X_{tr}^{i}}{1-r}dr\big)\otimes\phi_{j}(X_{t\rho})\frac{X_{t\rho}^{j}}{1-\rho}d\rho\\
 & \ \ \ \otimes\int_{0}^{1}\phi_{k}(X_{t\theta})\otimes\phi_{l}(X_{t\theta})a^{kl}(X_{t\theta})d\theta.
\end{align*}
Reduction:
\[
A_{II;P}\stackrel{2}{=}\frac{1}{2}t^{2}\phi_{ijkl}\times\int_{0<r<\rho<1}\big(\int_{0}^{r}\frac{dB_{u}^{t,i}}{1-u}\big)\big(\int_{0}^{\rho}\frac{dB_{v}^{t,j}}{1-v}\big)drd\rho\times\delta^{kl}.
\]
\textit{\uline{Result}}: 
\begin{equation}
\mathbb{E}[A_{II;P}]=\mathbb{E}[A_{P;II}]=\frac{1}{4}\delta^{ij}\delta^{kl}\phi_{ijkl}t^{2}+o(t^{2}).\label{eq:t210}
\end{equation}

\subsection{The $(IK;P)$ and $(P;IK)$ terms}

Target: 
\begin{align*}
A_{IK;P} & \triangleq-\frac{1}{2}t^{3/2}\int_{0}^{1}\big(\int_{0}^{\rho}\phi_{i}(X_{tr})\frac{X_{tr}^{i}}{1-r}dr\big)\otimes\phi_{j}(X_{t\rho})(\sigma(X_{t\rho})dB_{\rho}^{t})^{j}\\
 & \ \ \ \otimes\int_{0}^{1}\phi_{k}(X_{t\theta})\otimes\phi_{l}(X_{t\theta})a^{kl}(X_{t\theta})d\theta.
\end{align*}
Reduction:
\[
A_{IK;P}\stackrel{2}{=}-\frac{1}{2}t^{2}\phi_{ijkl}\times\int_{0}^{1}\big(\int_{0}^{\rho}\big(\int_{0}^{r}\frac{dB_{u}^{t,i}}{1-u}\big)dr\big)dB_{\rho}^{t,j}\times\delta^{kl}.
\]
\textit{\uline{Result}}: 
\begin{equation}
\mathbb{E}[A_{IK;P}]=\mathbb{E}[A_{P;IK}]=o(t^{2}).\label{eq:t211}
\end{equation}

\subsection{The $(KI;P)$ and $(P;KI)$ terms}

Target: 
\begin{align*}
A_{KI;P} & \triangleq-\frac{1}{2}t^{3/2}\int_{0}^{1}\big(\int_{0}^{\rho}\phi_{i}(X_{tr})(\sigma(X_{tr})dB_{r}^{t})^{i}\big)\otimes\phi_{j}(X_{t\rho})\frac{X_{t\rho}^{j}}{1-\rho}d\rho\\
 & \ \ \ \otimes\int_{0}^{1}\phi_{k}(X_{t\theta})\otimes\phi_{l}(X_{t\theta})a^{kl}(X_{t\theta})d\theta.
\end{align*}
Reduction:
\[
A_{KI;P}\stackrel{2}{=}-\frac{1}{2}t^{2}\phi_{ijkl}\times\int_{0}^{1}B_{\rho}^{t,i}\big(\int_{0}^{\rho}\frac{dB_{u}^{t,j}}{1-u}\big)d\rho\times\delta^{kl}.
\]
\textit{\uline{Result}}: 
\begin{equation}
\mathbb{E}[A_{KI;P}]=\mathbb{E}[A_{P;KI}]=-\frac{1}{2}\delta^{ij}\delta^{kl}\phi_{ijkl}t^{2}+o(t^{2}).\label{eq:t212}
\end{equation}

\subsection{The $(KK;P)$ and $(P;KK)$ terms}

Target: 
\begin{align*}
A_{KK;P} & \triangleq\frac{1}{2}t^{2}\int_{0}^{1}\big(\int_{0}^{\rho}\phi_{i}(X_{tr})(\sigma(X_{tr})dB_{r}^{t})^{i}\big)\otimes\phi_{j}(X_{t\rho})(\sigma(X_{t\rho})dB_{\rho}^{t})^{j}\\
 & \ \ \ \otimes\int_{0}^{1}\phi_{k}(X_{t\theta})\otimes\phi_{l}(X_{t\theta})a^{kl}(X_{t\theta})d\theta.
\end{align*}
Reduction:
\[
A_{KK;P}\stackrel{2}{=}-\frac{1}{2}t^{2}\phi_{ijkl}\times\int_{0}^{1}B_{\rho}^{t,i}dB_{\rho}^{t,j}\times\delta^{kl}.
\]
\textit{\uline{Result}}: 
\begin{equation}
\mathbb{E}[A_{KK;P}]=\mathbb{E}[A_{P;KK}]=o(t^{2}).\label{eq:t213}
\end{equation}

\subsection{The $(P;P)$ term}

Target:
\begin{align*}
A_{P;P} & \triangleq\frac{1}{4}t^{2}\int_{0}^{1}\phi_{i}(X_{t\rho})\otimes\phi_{j}(X_{t\rho})a^{ij}(X_{t\rho})d\rho\\
 & \ \ \ \otimes\int_{0}^{1}\phi_{k}(X_{t\theta})\otimes\phi_{l}(X_{t\theta})a^{kl}(X_{t\theta})d\theta.
\end{align*}
Reduction:
\[
A_{P;P}\stackrel{2}{=}\frac{1}{4}t^{2}\phi_{ijkl}\delta^{ij}\delta^{kl}.
\]
\textit{\uline{Result}}:
\begin{equation}
\mathbb{E}[A_{P;P}]=\frac{1}{4}\delta^{ij}\delta^{kl}\phi_{ijkl}t^{2}+o(t^{2}).\label{eq:t214}
\end{equation}

\section{Remaining cases for $t^3$-coefficient}\label{sec:t3}

In this appendix, we summarise the results for all remaining cases in the computation of the $t^3$-coefficient $\hat{\Xi}_x$. Recall that the function $\varphi$ is defined by (\ref{eq:varphi}).

\subsection{Total degree $=3$}

Here we discuss all remaining cases listed in (\ref{eq:D3List}).
We continue to use Notation \ref{Not:t3Eq} and \ref{not:PhiNot}. 

\subsubsection{The $(IJ;JI)$ and $(JI;IJ)$ terms}

Target:
\begin{align*}
B_{IJ;JI} & \triangleq t^{2}\int_{0}^{1}\big(\int_{0}^{\rho}\phi_{i}(X_{tr})\frac{X_{tr}^{i}}{1-r}dr\big)\otimes\varphi(X_{t\rho})d\rho\\
 & \ \ \ \otimes\int_{0}^{1}\big(\int_{0}^{\theta}\varphi(X_{t\delta})d\delta\big)\otimes\phi_{j}(X_{t\theta})\frac{X_{t\theta}^{j}}{1-\theta}d\theta.
\end{align*}
Reduction:
\begin{align*}
B_{IJ;JI} & \stackrel{3}{=}t^{3}\phi_{i}({\bf 0})\otimes\varphi({\bf 0})\otimes\varphi({\bf 0})\otimes\phi_{j}({\bf 0})\\
 & \ \ \ \times\int_{0<r<\rho<1}\big(\int_{0}^{r}\frac{dB_{u}^{t,i}}{1-u}\big)drd\rho\times\int_{0}^{1}\theta\big(\int_{0}^{\theta}\frac{dB_{v}^{t,j}}{1-v}\big)d\theta.
\end{align*}
\textit{\uline{Result}}: 
\begin{align*}
\mathbb{E}[B_{IJ;JI}] & =\frac{1}{24}\phi_{k|i,i|j,j|k}t^{3}+o(t^{3}),\\
\mathbb{E}[B_{JI;IJ}] & =\frac{1}{24}\phi_{i,i|kk|j,j}t^{3}+o(t^{3}).\ \ \ (\text{by symmetry})
\end{align*}

\subsubsection{The $(IJ;IJ)$ term}

Target:
\begin{align*}
B_{IJ;IJ}\triangleq & t^{2}\int_{0}^{1}\big(\int_{0}^{\rho}\phi_{i}(X_{tr})\frac{X_{tr}^{i}}{1-r}dr\big)\otimes\varphi(X_{t\rho})d\rho\\
 & \ \ \ \otimes\int_{0}^{1}\big(\int_{0}^{\theta}\phi_{j}(X_{t\delta})\frac{X_{t\delta}^{j}}{1-\delta}d\delta\big)\otimes\varphi(X_{t\theta})d\theta.
\end{align*}
Reduction:
\begin{align*}
B_{IJ;IJ}\stackrel{3}{=} & t^{3}\phi_{i}({\bf 0})\otimes\varphi({\bf 0})\otimes\phi_{j}({\bf 0})\otimes\varphi({\bf 0})\\
 & \ \ \ \times\int_{0<r<\rho<1}\big(\int_{0}^{r}\frac{dB_{u}^{t,i}}{1-u}\big)drd\rho\times\int_{0<\delta<\theta<1}\big(\int_{0}^{\delta}\frac{dB_{v}^{t,j}}{1-v}\big)d\delta d\theta.
\end{align*}
\textit{\uline{Result}}:
\[
\mathbb{E}[B_{IJ;IJ}]=\frac{1}{48}\phi_{k|i,i|k|j,j}t^{3}+o(t^{3}).
\]

\subsubsection{The $(JI;JK)$ and $(JK;JI)$ terms}

Target:
\begin{align*}
B_{JI;JK}\triangleq & -t^{5/2}\int_{0}^{1}\big(\int_{0}^{\rho}\varphi(X_{tr})dr\big)\otimes\phi_{i}(X_{t\rho})\frac{X_{t\rho}^{i}}{1-\rho}d\rho\\
 & \ \ \ \otimes\int_{0}^{1}\big(\int_{0}^{\theta}\varphi(X_{t\delta})d\delta\big)\otimes\phi_{j}(X_{t\theta})(\sigma(X_{t\theta})dB_{\theta}^{t})^{j}.
\end{align*}
Reduction:
\begin{align*}
B_{JI;JK}\stackrel{3}{=} & -t^{3}\varphi({\bf 0})\otimes\phi_{i}({\bf 0})\otimes\varphi({\bf 0})\otimes\phi_{j}({\bf 0})\times\int_{0}^{1}\rho\big(\int_{0}^{\rho}\frac{dB_{u}^{t,i}}{1-u}\big)d\rho\times\int_{0}^{1}\theta dB_{\theta}^{t,j}.
\end{align*}
\textit{\uline{Result}}:
\begin{align*}
\mathbb{E}[B_{JI;JK}] & =-\frac{5}{48}\phi_{i,i|k|j,j|k}t^{3}+o(t^{3}),\\
\mathbb{E}[B_{JK;JI}] & =-\frac{5}{48}\phi_{i,i|k|j,j|k}t^{3}+o(t^{3}).\ \ \ (\text{by symmetry}).
\end{align*}

\subsubsection{The $(IJ;JK)$ and $(JK;IJ)$ terms}

Target:
\begin{align*}
B_{IJ;JK}\triangleq & -t^{5/2}\int_{0}^{1}\big(\int_{0}^{\rho}\phi_{i}(X_{tr})\frac{X_{tr}^{i}}{1-r}dr\big)\otimes\varphi(X_{t\rho})d\rho\\
 & \ \ \ \otimes\int_{0}^{1}\big(\int_{0}^{\theta}\varphi(X_{t\delta})d\delta\big)\otimes\phi_{j}(X_{t\theta})(\sigma(X_{t\theta})dB_{\theta}^{t})^{j}.
\end{align*}
Reduction:
\begin{align*}
B_{IJ;JK}\stackrel{3}{=} & -t^{3}\phi_{i}({\bf 0})\otimes\varphi({\bf 0})\otimes\varphi({\bf 0})\otimes\phi_{j}({\bf 0})\\
 & \ \ \ \times\int_{0<r<\rho<1}\big(\int_{0}^{r}\frac{dB_{u}^{t,i}}{1-u}\big)drd\rho\times\int_{0}^{1}\theta dB_{\theta}^{t,j}.
\end{align*}
\textit{\uline{Result}}:
\begin{align*}
\mathbb{E}[B_{IJ;JK}] & =-\frac{1}{48}\phi_{k|i,i|j,j|k}t^{3}+o(t^{3}),\\
\mathbb{E}[B_{JK;IJ}] & =-\frac{1}{48}\phi_{i,i|kk|j,j}t^{3}+o(t^{3}).\ \ \ (\text{by symmetry})
\end{align*}

\subsubsection{The $(JI;KJ)$ and $(KJ;JI)$ terms}

Target:
\begin{align*}
B_{JI;KJ}\triangleq & -t^{5/2}\int_{0}^{1}\big(\int_{0}^{\rho}\varphi(X_{tr})dr\big)\otimes\phi_{i}(X_{t\rho})\frac{X_{t\rho}^{i}}{1-\rho}d\rho\\
 & \ \ \ \otimes\int_{0}^{1}\big(\int_{0}^{\theta}\phi_{j}(X_{t\delta})(\sigma(X_{t\delta})dB_{\delta}^{t})^{j}\big)\otimes\varphi(X_{t\theta})d\theta.
\end{align*}
Reduction:
\begin{align*}
B_{JI;KJ}\stackrel{3}{=} & -t^{3}\varphi({\bf 0})\otimes\phi_{i}({\bf 0})\otimes\phi_{j}({\bf 0})\otimes\varphi({\bf 0})\\
 & \ \ \ \int_{0}^{1}\rho\big(\int_{0}^{\rho}\frac{dB_{u}^{t,i}}{1-u}\big)d\rho\times\int_{0}^{1}\big(\int_{0}^{\theta}dB_{\delta}^{t,j}\big)d\theta.
\end{align*}
\textit{\uline{Result}}:
\begin{align*}
\mathbb{E}[B_{JI;KJ}] & =-\frac{1}{12}\phi_{i,i|kk|j,j}t^{3}+o(t^{3}),\\
\mathbb{E}[B_{KJ;JI}] & =-\frac{1}{12}\phi_{k|i,i|j,j|k}t^{3}+o(t^{3}).\ \ \ (\text{by symmetry})
\end{align*}

\subsubsection{The $(IJ;KJ)$ and $(KJ;IJ)$ terms}

Target:
\begin{align*}
B_{IJ;KJ}\triangleq & -t^{5/2}\int_{0}^{1}\big(\int_{0}^{\rho}\phi_{i}(X_{tr})\frac{X_{tr}^{i}}{1-r}dr\big)\otimes\varphi(X_{t\rho})d\rho\\
 & \ \ \ \otimes\int_{0}^{1}\big(\int_{0}^{\theta}\phi_{j}(X_{t\delta})(\sigma(X_{t\delta})dB_{\delta}^{t})^{j}\big)\otimes\varphi(X_{t\theta})d\theta.
\end{align*}
Reduction:
\begin{align*}
B_{IJ;KJ}\stackrel{3}{=} & -t^{3}\phi_{i}({\bf 0})\otimes\varphi({\bf 0})\otimes\phi_{j}({\bf 0})\otimes\varphi({\bf 0})\\
 & \ \ \ \times\int_{0<r<\rho<1}\big(\int_{0}^{r}\frac{dB_{u}^{t,i}}{1-u}\big)drd\rho\times\int_{0}^{1}\big(\int_{0}^{\theta}dB_{\delta}^{t,j}\big)d\theta.
\end{align*}
\textit{\uline{Result}}: 
\begin{align*}
\mathbb{E}[B_{IJ;KJ}] & =-\frac{1}{24}\phi_{k|i,i|k|j,j}t^{3}+o(t^{3}),\\
\mathbb{E}[B_{KJ;IJ}] & =-\frac{1}{24}\phi_{k|i,i|k|j,j}t^{3}+o(t^{3}).\ \ \ (\text{by symmetry})
\end{align*}

\subsubsection{The $(JK;JK)$ term}

Target:
\begin{align*}
B_{JK;JK}\triangleq & t^{3}\int_{0}^{1}\big(\int_{0}^{\rho}\varphi(X_{tr})dr\big)\otimes\phi_{i}(X_{t\rho})(\sigma(X_{t\rho})dB_{\rho}^{t})^{i}\\
 & \ \ \ \otimes\int_{0}^{1}\big(\int_{0}^{\theta}\varphi(X_{t\delta})d\delta\big)\otimes\phi_{j}(X_{t\theta})(\sigma(X_{t\theta})dB_{\theta}^{t})^{j}.
\end{align*}
Reduction:
\begin{align*}
B_{JK;JK}\stackrel{3}{=} & t^{3}\varphi({\bf 0})\otimes\phi_{i}({\bf 0})\otimes\varphi({\bf 0})\otimes\phi_{j}({\bf 0})\times\int_{0}^{1}\rho dB_{\rho}^{t,i}\times\int_{0}^{1}\theta dB_{\theta}^{t,j}.
\end{align*}
\textit{\uline{Result}}:
\[
\mathbb{E}[B_{JK;JK}]=\frac{1}{12}\phi_{i,i|k|j,j|k}t^{3}+o(t^{3}).
\]

\subsubsection{The $(JK;KJ)$ and $(KJ;JK)$ terms}

Target:
\begin{align*}
B_{JK;KJ}\triangleq & t^{3}\int_{0}^{1}\big(\int_{0}^{\rho}\varphi(X_{tr})dr\big)\otimes\phi_{i}(X_{t\rho})(\sigma(X_{t\rho})dB_{\rho}^{t})^{i}\\
 & \ \ \ \otimes\int_{0}^{1}\big(\int_{0}^{\theta}\phi_{j}(X_{t\delta})(\sigma(X_{t\delta})dB_{\delta}^{t})^{j}\big)\otimes\varphi(X_{t\theta})d\theta.
\end{align*}
Reduction: 
\begin{align*}
B_{JK;KJ}= & t^{3}\varphi({\bf 0})\otimes\phi_{i}({\bf 0})\otimes\phi_{j}({\bf 0})\otimes\varphi({\bf 0})\times\int_{0}^{1}\rho dB_{\rho}^{t,i}\times\int_{0}^{1}\big(\int_{0}^{\theta}dB_{\delta}^{t,j}\big)d\theta.
\end{align*}
\textit{\uline{Result}}:
\begin{align*}
\mathbb{E}[B_{JK;KJ}] & =\frac{1}{24}\phi_{i,i|kk|j,j}t^{3}+o(t^{3}),\\
\mathbb{E}[B_{KJ;JK}] & =\frac{1}{24}\phi_{k|i,i|j,j|k}t^{3}+o(t^{3}).\ \ \ (\text{by symmetry})
\end{align*}

\subsubsection{The $(KJ;KJ)$ term}

Target:
\begin{align*}
B_{KJ;KJ}\triangleq & t^{3}\int_{0}^{1}\big(\int_{0}^{\rho}\phi_{i}(X_{tr})(\sigma(X_{tr})dB_{r}^{t})^{i}\big)\otimes\varphi(X_{t\rho})d\rho\\
 & \ \ \ \otimes\int_{0}^{1}\big(\int_{0}^{\theta}\phi_{j}(X_{t\delta})(\sigma(X_{t\delta})dB_{\delta}^{t})^{j}\big)\otimes\varphi(X_{t\theta})d\theta.
\end{align*}
Reduction:
\begin{align*}
B_{KJ;KJ}\stackrel{3}{=} & t^{3}\phi_{i}({\bf 0})\otimes\varphi({\bf 0})\otimes\phi_{j}({\bf 0})\otimes\varphi({\bf 0})\\
 & \ \ \ \times\int_{0}^{1}\big(\int_{0}^{\rho}dB_{r}^{t,i}\big)dr\times\int_{0}^{1}\big(\int_{0}^{\theta}dB_{\delta}^{t,j}\big)d\theta.
\end{align*}
\textit{\uline{Result}}: 
\[
\mathbb{E}[B_{KJ;KJ}]=\frac{1}{12}\phi_{k|i,i|k|j,j}t^{3}+o(t^{3}).
\]

\subsubsection{The $(JJ;II)$ and $(II;JJ)$ terms}

Target:
\begin{align*}
B_{JJ;II}\triangleq & t^{2}\int_{0}^{1}\big(\int_{0}^{\rho}\varphi(X_{tr})dr\big)\otimes\varphi(X_{t\rho})d\rho\\
 & \ \ \ \otimes\int_{0}^{1}\big(\int_{0}^{\theta}\phi_{i}(X_{t\delta})\frac{X_{t\delta}^{i}}{1-\delta}d\delta\big)\otimes\phi_{j}(X_{t\theta})\frac{X_{t\theta}^{j}}{1-\theta}d\theta.
\end{align*}
Reduction:
\begin{align*}
B_{JJ;II}\stackrel{3}{=} & t^{3}\varphi({\bf 0})\otimes\varphi({\bf 0})\otimes\phi_{i}({\bf 0})\otimes\phi_{j}({\bf 0})\\
 & \ \ \ \times\int_{0}^{1}\rho d\rho\times\int_{0<\delta<\theta<1}\big(\int_{0}^{\delta}\frac{dB_{u}^{t,i}}{1-u}\big)\big(\int_{0}^{\theta}\frac{dB_{v}^{t,j}}{1-v}\big)d\delta d\theta.
\end{align*}
\textit{\uline{Result}}:
\begin{align*}
\mathbb{E}[B_{JJ;II}] & =\frac{1}{16}\phi_{i,i|j,j|kk}t^{3}+o(t^{3}),\\
\mathbb{E}[B_{JJ;II}] & =\frac{1}{16}\phi_{kk|i,i|j,j}t^{3}+o(t^{3}).\ \ \ (\text{by symmetry})
\end{align*}

\subsubsection{The $(JJ;IK)$, $(IK;JJ)$, $(JJ;KK)$ and $(KK;JJ)$ terms}

Target:
\begin{align*}
B_{JJ;IK}\triangleq & -t^{5/2}\int_{0}^{1}\big(\int_{0}^{\rho}\varphi(X_{tr})dr\big)\otimes\varphi(X_{t\rho})d\rho\\
 & \ \ \ \otimes\int_{0}^{1}\big(\int_{0}^{\theta}\phi_{i}(X_{t\delta})\frac{X_{t\delta}^{i}}{1-\delta}d\delta\big)\otimes\phi_{j}(X_{t\theta})(\sigma(X_{t\theta})dB_{\theta}^{t})^{j}.
\end{align*}
If one freezes $\varphi(X_{tr})$ and $\varphi(X_{t\rho})$ at the
origin, the resulting expression is an It\^o integral. The same pattern
occurs for the $(JJ;KK)$ term. As a result, one sees that $\mathbb{E}[B_{JJ;IK}],\mathbb{E}[B_{IK;JJ}],\mathbb{E}[B_{JJ;KK}],\mathbb{E}[B_{KK;JJ}]$
are all of order $o(t^{3})$.

\subsubsection{The $(JJ;KI)$ and $(KI;JJ)$ terms}

Target:
\begin{align*}
B_{JJ;KI}\triangleq & -t^{5/2}\int_{0}^{1}\big(\int_{0}^{\rho}\varphi(X_{tr})dr\big)\otimes\varphi(X_{t\rho})d\rho\\
 & \ \ \ \otimes\int_{0}^{1}\big(\int_{0}^{\theta}\phi_{i}(X_{t\delta})(\sigma(X_{t\delta})dB_{\delta}^{t})^{i}\big)\otimes\phi_{j}(X_{t\theta})\frac{X_{t\theta}^{j}}{1-\theta}d\theta.
\end{align*}
Reduction:
\begin{align*}
B_{JJ;KI}\stackrel{3}{=} & -t^{3}\varphi({\bf 0})\otimes\varphi({\bf 0})\otimes\phi_{i}({\bf 0})\otimes\phi_{j}({\bf 0})\\
 & \ \ \ \times\int_{0}^{1}\rho d\rho\times\int\big(\int_{0}^{\theta}dB_{\delta}^{t,i}\big)\big(\int_{0}^{\theta}\frac{dB_{u}^{t,j}}{1-u}\big)d\theta.
\end{align*}
\textit{\uline{Result}}:
\begin{align*}
\mathbb{E}[B_{JJ;KI}] & =-\frac{1}{8}\phi_{i,i|j,j|kk}t^{3}+o(t^{3}),\\
\mathbb{E}[B_{KI;JJ}] & =-\frac{1}{8}\phi_{kk|i,i|j,j}t^{3}+o(t^{3}).\ \ \ (\text{by symmetry})
\end{align*}

\subsubsection{The $(JJ;P)$ and $(P;JJ)$ terms}

Target:
\begin{align*}
B_{JJ;P}\triangleq & \frac{1}{2}t^{3}\int_{0}^{1}\big(\int_{0}^{\rho}\varphi(X_{tr})dr\big)\otimes\varphi(X_{t\rho})d\rho\\
 & \ \ \ \otimes\int_{0}^{1}\phi_{i}(X_{t\theta})\otimes\phi_{j}(X_{t\theta})a^{ij}(X_{t\theta})d\theta.
\end{align*}
Reduction:
\begin{align*}
B_{JJ;P}\stackrel{3}{=} & \frac{1}{2}t^{3}\varphi({\bf 0})\otimes\varphi({\bf 0})\otimes\phi_{i}({\bf 0})\otimes\phi_{j}({\bf 0})\times\int_{0}^{1}\rho d\rho\times\delta^{ij}\int_{0}^{1}d\theta.
\end{align*}
\textit{\uline{Result}}:
\begin{align*}
\mathbb{E}[B_{JJ;P}] & =\frac{1}{16}\phi_{i,i|j,j|kk}t^{3}+o(t^{3}),\\
\mathbb{E}[B_{JJ;P}] & =\frac{1}{16}\phi_{kk|i,i|j,j}t^{3}+o(t^{3}).\ \ \ (\text{by symmetry})
\end{align*}

\subsection{Total degree $=2.5$}

Here we present the results for all remaining cases listed in (\ref{eq:D2.5List}). The corresponding results for the permuted cases (i.e. interchanging the $(1,2)$ and $(3,4)$ tensor slots) are obtained directly by tensor permutation and  will not be displayed here.

The calculation for this part is computer-assisted by Wolfram Mathematica. We only present the final expressions; all source codes and documentation are provided at the link
\footnote{\url{https://github.com/DeepIntoStreams/ESig_BM_on_Manifold}}. The displayed tensors are always evaluated at the origin; for instance $$\partial_i \phi_i \otimes \varphi \otimes \phi_j \otimes \phi _j \triangleq \partial_i \phi_i ({\bf 0})\otimes \varphi ({\bf 0}) \otimes \phi_j ({\bf 0})\otimes \phi _j ({\bf 0}).$$

\subsubsection{The $(IJ;P)$ term}

Target:
\begin{align*}
 & -\frac{1}{2}t^{2}\int_{0}^{1}\big(\int_{0}^{\rho}\phi_{i}(X_{tr})\frac{X_{tr}^{i}}{1-r}dr\big)\otimes\varphi(X_{t\rho})d\rho\otimes\int_{0}^{1}\phi_{k}(X_{t\theta})\phi_{l}(X_{t\theta})a^{kl}(X_{t\theta})d\theta.
\end{align*}
The extra order of $\sqrt{t}$ comes from one of the following four possibilities: expanding $\phi_i$, $\varphi$, $\phi_k$ or $\phi_l$. Here the possibilities of expanding $\frac{X_{tr}^i}{1-r}$ or $a^{kl}$ are not considered because the extra order gained from further expanding these two terms is $t$, which exceeds the needed  $\sqrt{t}$.

\vspace{2mm}\noindent Expand $\phi_{i}$: the resulting expectation is that 
\[
-\frac{1}{12}\partial_{i}\phi_{i}\otimes\varphi\otimes\phi_{j}\otimes\phi_{j}t^3 + o(t^3).
\]In what follows, we will only display the $t^3$-coefficient and omit the symbols ``$\times t^3$'' and ``$+o(t^3)$''.

\vspace{2mm}\noindent Expand $\varphi:$
\[
-\frac{1}{24}\phi_{i}\otimes\partial_{i}\varphi\otimes\phi_{j}\otimes\phi_{j}.
\]
Expand $\phi_{k}:$
\[
-\frac{1}{24}\phi_{i}\otimes\varphi\otimes\partial_{i}\phi_{j}\otimes\phi_{j}.
\]
Expand $\phi_{l}$:
\[
-\frac{1}{24}\phi_{i}\otimes\varphi\otimes\phi_{j}\otimes\partial_{i}\phi_{j}.
\]

\subsubsection{The $(JK;P)$ term}

Target:
\[
\frac{1}{2}t^{5/2}\int_{0}^{1}\big(\int_{0}^{\rho}\varphi(X_{tr})dr\big)\otimes\phi_{i}(X_{t\rho})(\sigma dB^{t})_{\rho}^{i}\otimes\int_{0}^{1}\phi_{k}(X_{t\theta})\phi_{l}(X_{t\theta})a^{kl}(X_{t\theta})d\theta.
\]
Expand $\phi_{k}:$
\[
\frac{1}{24}\varphi\otimes\phi_{i}\otimes\partial_{i}\phi_{j}\otimes\phi_{j}.
\]
Expand $\phi_{l}:$
\[
\frac{1}{24}\varphi\otimes\phi_{i}\otimes\phi_{j}\otimes\partial_{i}\phi_{j}.
\]
Freeze both: the result is $0$.

\subsubsection{The $(KJ;P)$ term}

Target:
\[
\frac{1}{2}t^{5/2}\int_{0}^{1}\big(\int_{0}^{\rho}\phi_{i}(X_{tr})(\sigma dB^{t})_{r}^{i}\big)\otimes\varphi(X_{t\rho})d\rho\otimes\int_{0}^{1}\phi_{k}(X_{t\theta})\phi_{l}(X_{t\theta})a^{kl}(X_{t\theta})d\theta.
\]
Expand $\varphi:$
\[
\frac{1}{8}\phi_{i}\otimes\partial_{i}\varphi\otimes\phi_{j}\otimes\phi_{j}.
\]
Expand $\phi_{k}:$
\[
\frac{1}{12}\phi_{i}\otimes\varphi\otimes\partial_{i}\phi_{j}\otimes\phi_{j}.
\]
Expand $\phi_{l}$:
\[
\frac{1}{12}\phi_{i}\otimes\varphi\otimes\phi_{j}\otimes\partial_{i}\phi_{j}.
\]
Freeze all: the result is $0$. 

\subsubsection{The $(JI;II)$ term}

Target: 
\[
    -t\int_{0}^{1}\big(\int_{0}^{\rho}\varphi(X_{tr})dr\big)\otimes\phi_{j}(X_{t\rho})\frac{X_{t\rho}^{j}}{1-\rho}d\rho\otimes\int_{0}^{1}\big(\int_{0}^{\theta}\phi_{k}(X_{t\delta})\frac{X_{t\delta}^{k}}{1-\delta}d\delta\big)\otimes\phi_{l}(X_{t\theta})\frac{X_{t\theta}^{l}}{1-\theta}d\theta.
\]

\vspace{2mm}\noindent Expand $\varphi$: 
\[
    \partial_{i}\varphi\otimes\phi_{j}\otimes\phi_{k}\otimes\phi_{l}\times t^{3}\big(-\frac{25}{432}\delta^{il}\delta^{jk}-\frac{55}{432}\delta^{ik}\delta^{jl}-\frac{1}{12}\delta^{ij}\delta^{kl}\big) + o(t^3).
\]

\vspace{2mm}\noindent Expand $\phi_{j}$:
\[
    \varphi\otimes\partial_{i}\phi_{j}\otimes\phi_{k}\otimes\phi_{l}\times \big(-\frac{19}{216}\delta^{il}\delta^{jk}-\frac{19}{216}\delta^{ik}\delta^{jl}-\frac{1}{6}\delta^{ij}\delta^{kl}\big).
\]
Expand $\phi_{k}$:
\[
    \varphi\otimes\phi_{j}\otimes\partial_{i}\phi_{k}\otimes\phi_{l}\times \big(-\frac{7}{72}\delta^{il}\delta^{jk}-\frac{35}{144}\delta^{ik}\delta^{jl}-\frac{7}{72}\delta^{ij}\delta^{kl}\big).
\]
Expand $\phi_{l}$:
\[
    \varphi\otimes\phi_{j}\otimes\phi_{k}\otimes\partial_{i}\phi_{l}\times \big(-\frac{19}{144}\delta^{il}\delta^{jk}-\frac{1}{12}\delta^{ik}\delta^{jl}-\frac{1}{12}\delta^{ij}\delta^{kl}\big).
\]

\subsubsection{The $(IJ;II)$ term}

Target: 
\[
    -t\int_{0}^{1}\big(\int_{0}^{\rho}\phi_{i}(X_{tr})\frac{X_{tr}^{i}}{1-r}dr\big)\varphi(X_{t\rho})d\rho\otimes\int_{0}^{1}\big(\int_{0}^{\theta}\phi_{k}(X_{t\delta})\frac{X_{t\delta}^{k}}{1-\delta}d\delta\big)\otimes\phi_{l}(X_{t\theta})\frac{X_{t\theta}^{l}}{1-\theta}d\theta.
\]
Expand $\varphi$:
\[
    \phi_{i}\otimes\partial_{j}\varphi\otimes\phi_{k}\otimes\phi_{l}\times \big(-\frac{11}{432}\delta^{il}\delta^{jk}-\frac{17}{432}\delta^{ik}\delta^{jl}-\frac{1}{24}\delta^{ij}\delta^{kl}\big).
\]
Expand $\phi_{i}$:
\[
    \partial_{j}\phi_{i}\otimes\varphi\otimes\phi_{k}\otimes\phi_{l}\times \big(-\frac{1}{27}\delta^{il}\delta^{jk}-\frac{1}{27}\delta^{ik}\delta^{jl}-\frac{1}{12}\delta^{ij}\delta^{kl}\big).
\]
Expand $\phi_{k}$:
\[
    \phi_{i}\otimes\varphi\otimes\partial_{j}\phi_{k}\otimes\phi_{l}\times \big(-\frac{1}{16}\delta^{il}\delta^{jk}-\frac{1}{24}\delta^{ik}\delta^{jl}-\frac{1}{24}\delta^{ij}\delta^{kl}\big).
\]
Expand $\phi_{l}$:
\[
    \phi_{i}\otimes\varphi\otimes\phi_{k}\otimes\partial_{j}\phi_{l}\times \big(-\frac{1}{36}\delta^{il}\delta^{jk}-\frac{1}{16}\delta^{ik}\delta^{jl}-\frac{1}{36}\delta^{ij}\delta^{kl}\big).
\]

\subsubsection{The $(JK;II)$ term}
Target:
\[
    t^{3/2}\int_{0}^{1}\big(\int_{0}^{\rho}\varphi(X_{tr})dr\big)\otimes\phi_{i}(X_{t\rho})(\sigma dB^t)_{\rho}^{i}\otimes\int_{0}^{1}\big(\int_{0}^{\theta}\phi_{k}(X_{t\delta})\frac{X_{t\delta}^{k}}{1-\delta}d\delta\big)\otimes\phi_{l}(X_{t\theta})\frac{X_{t\theta}^{l}}{1-\theta}d\theta.
\]
Expand $\phi_{k}$:
\[
    \varphi\otimes\phi_{i}\otimes\partial_{j}\phi_{k}\otimes\phi_{l}\times \big(\frac{13}{72}\delta^{il}\delta^{jk}+\frac{1}{18}\delta^{ik}\delta^{jl}+\frac{1}{18}\delta^{ij}\delta^{kl}\big).
\]
Expand $\phi_{l}$:
\[
    \varphi\otimes\phi_{i}\otimes\phi_{k}\otimes\partial_{j}\phi_{l}\times \big(\frac{1}{18}\delta^{il}\delta^{jk}+\frac{5}{72}\delta^{ik}\delta^{jl}+\frac{1}{18}\delta^{ij}\delta^{kl}\big).
\]
Freeze both and expand $\varphi$: 
\begin{align*}
    \partial_{j}\varphi\otimes\phi_{i}\otimes\phi_{k}\otimes\phi_{l}\times \big(\frac{13}{144}\delta^{il}\delta^{jk} + \frac{1}{48}\delta^{ik}\delta^{jl}\big).
\end{align*}
Freeze both and expand $\phi_i$:
\begin{align*}
    \varphi\otimes\partial_{j}\phi_{i}\otimes\phi_{k}\otimes\phi_{l}\times \big(\frac{7}{72}\delta^{il}\delta^{jk} + \frac{1}{24}\delta^{ik}\delta^{jl}\big).
\end{align*}

\subsubsection{The $(KJ;II)$ term}
Target:
\[
    t^{3/2}\int_{0}^{1}\big(\int_{0}^{\rho}\phi_{i}(X_{tr})(\sigma dB^t)_{r}^{i}\big)\otimes\varphi(X_{t\rho})d\rho\otimes\int_{0}^{1}\big(\int_{0}^{\theta}\phi_{k}(X_{t\delta})\frac{X_{t\delta}^{k}}{1-\delta}d\delta\big)\otimes\phi_{l}(X_{t\theta})\frac{X_{t\theta}^{l}}{1-\theta}d\theta.
\]
Expand $\varphi$:
\[
    \phi_{i}\otimes\partial_{j}\varphi\otimes\phi_{k}\otimes\phi_{l}\times \big(\frac{1}{16}\delta^{il}\delta^{jk}+\frac{11}{144}\delta^{ik}\delta^{jl}+\frac{1}{8}\delta^{ij}\delta^{kl}\big).
\]
Expand $\phi_{k}$:
\[
    \phi_{i}\otimes\varphi\otimes\partial_{j}\phi_{k}\otimes\phi_{l}\times \big(\frac{1}{8}\delta^{il}\delta^{jk}+\frac{1}{12}\delta^{ik}\delta^{jl}+\frac{1}{12}\delta^{ij}\delta^{kl}\big).
\]
Expand $\phi_{l}$:
\[
    \phi_{i}\otimes\varphi\otimes\phi_{k}\otimes\partial_{j}\phi_{l}\times \big(\frac{1}{18}\delta^{il}\delta^{jk}+\frac{1}{8}\delta^{ik}\delta^{jl}+\frac{1}{18}\delta^{ij}\delta^{kl}\big).
\]
Freeze all and expand $\phi_i$:
\[
    \partial_{j}\phi_{i}\otimes\varphi\otimes\phi_{k}\otimes\phi_{l}\times \big(\frac{5}{72}\delta^{il}\delta^{jk} + \frac{1}{24}\delta^{ik}\delta^{jl}\big).
\]

\subsubsection{The $(IJ;IK)$ term}
Target:
\[
    t^{3/2}\int_{0}^{1}\big(\int_{0}^{\rho}\phi_{i}(X_{tr})\frac{X_{tr}^{i}}{1-r}dr\big)\otimes\varphi(X_{t\rho})d\rho\otimes\int_{0}^{1}\big(\int_{0}^{\theta}\phi_{k}(X_{t\delta})\frac{X_{t\delta}^{k}}{1-\delta}d\delta\big)\otimes\phi_{l}(X_{t\theta})(\sigma dB^t)_{\theta}^{l}.
\]
Expand $\varphi$:
\[
    \phi_{i}\otimes\partial_{j}\varphi\otimes\phi_{k}\otimes\phi_{l}\times \big(\frac{1}{216}\delta^{il}\delta^{jk}+\frac{1}{54}\delta^{ik}\delta^{jl}\big).
\]
Expand $\phi_{i}$:
\[
    \partial_{j}\phi_{i}\otimes\varphi\otimes\phi_{k}\otimes\phi_{l}\times \big(\frac{1}{108}\delta^{il}\delta^{jk}+\frac{1}{108}\delta^{ik}\delta^{jl}\big).
\]
Freeze both and expand $\phi_k$:
\[
    \phi_{i}\otimes\varphi\otimes\partial_{j}\phi_{k}\otimes\phi_{l}\times \big(\frac{1}{48}\delta^{il}\delta^{jk}\big).
\]
Freeze both and expand $\phi_l$:
\[
    \phi_{i}\otimes\varphi\otimes\phi_{k}\otimes\partial_{j}\phi_{l}\times \big(\frac{1}{72}\delta^{il}\delta^{jk}\big).
\]

\subsubsection{The $(JI;IK)$ term}
Target:
\[
    t^{3/2}\int_{0}^{1}\big(\int_{0}^{\rho}\varphi(X_{tr})dr\big)\otimes\phi_{i}(X_{t\rho})\frac{X_{t\rho}^{i}}{1-\rho}d\rho\otimes\int_{0}^{1}\big(\int_{0}^{\theta}\phi_{k}(X_{t\delta})\frac{X_{t\delta}^{k}}{1-\delta}d\delta\big)\otimes\phi_{l}(X_{t\theta})(\sigma dB^t)_{\theta}^{l}.
\]
Expand $\varphi$:
\[
    \partial_{j}\varphi\otimes\phi_{i}\otimes\phi_{k}\otimes\phi_{l}\times \big(\frac{17}{216}\delta^{il}\delta^{jk}+\frac{1}{108}\delta^{ik}\delta^{jl}\big).
\]
Expand $\phi_{i}$:
\[
    \varphi\otimes\partial_{j}\phi_{i}\otimes\phi_{k}\otimes\phi_{l}\times \big(\frac{7}{216}\delta^{il}\delta^{jk}+\frac{7}{216}\delta^{ik}\delta^{jl}\big).
\]
Freeze both and expand $\phi_k$:
\[
    \varphi\otimes\phi_{i}\otimes\partial_{j}\phi_{k}\otimes\phi_{l}\times \big(\frac{7}{48}\delta^{il}\delta^{jk}\big).
\]
Freeze both and expand $\phi_l$
\[
    \varphi\otimes\phi_{i}\otimes\phi_{k}\otimes\partial_{j}\phi_{l}\times \big(\frac{5}{72}\delta^{il}\delta^{jk}\big).
\]

\subsubsection{The $(KJ;IK)$ term}
Target:
\[
    -t^2\int_{0}^{1}\big(\int_{0}^{\rho}\phi_{i}(X_{tr})(\sigma dB^t)_{r}^{i}\big)\otimes\varphi(X_{t\rho})d\rho\otimes\int_{0}^{1}\big(\int_{0}^{\theta}\phi_{k}(X_{t\delta})\frac{X_{t\delta}^{k}}{1-\delta}d\delta\big)\otimes\phi_{l}(X_{t\theta})(\sigma dB^t)_{\theta}^{l}.
\]
Expand $\varphi$:
\[
    \phi_{i}\otimes\partial_{j}\varphi\otimes\phi_{k}\otimes\phi_{l}\times \big(-\frac{1}{36}\delta^{ik}\delta^{jl}-\frac{1}{72}\delta^{il}\delta^{jk}\big).
\]
Expand $\phi_i$:
\[
    \partial_{j}\phi_{i}\otimes\varphi\otimes\phi_{k}\otimes\phi_{l}\times \big(-\frac{1}{36}\delta^{il}\delta^{jk}\big).
\]
Expand $\phi_{k}$:
\[
    \phi_{i}\otimes\varphi\otimes\partial_{j}\phi_{k}\otimes\phi_{l}\times \big(-\frac{1}{24}\delta^{il}\delta^{jk}\big).
\]
Expand $\phi_{l}$:
\[
    \phi_{i}\otimes\varphi\otimes\phi_{k}\otimes\partial_{j}\phi_{l}\times \big(-\frac{1}{36}\delta^{il}\delta^{jk}\big).
\]

\subsubsection{The $(JK;IK)$ term}
Target:
\[
    -\delta^{il}t^{2}\int_{0}^{1}\big(\int_{0}^{\rho}\varphi(X_{tr})dr\big)\otimes\phi_{i}(X_{t\rho})\otimes\big(\int_{0}^{\rho}\phi_{k}(W_{t\eta})\frac{X_{t\eta}^{k}}{1-\eta}d\eta\big)\otimes\phi_{l}(X_{t\rho})d\rho.
\]
Expand $\varphi$:
\[
    \partial_{j}\varphi\otimes\phi_{i}\otimes\phi_{k}\otimes\phi_{l}\times \big(-\frac{5}{72}\delta^{il}\delta^{jk}\big).
\]
Expand $\phi_{i}$:
\[
    \varphi\otimes\partial_{j}\phi_{i}\otimes\phi_{k}\otimes\phi_{l}\times \big(-\frac{1}{18}\delta^{il}\delta^{jk}\big).
\]
Expand $\phi_{k}$:
\[
    \varphi\otimes\phi_{i}\otimes\partial_{j}\phi_{k}\otimes\phi_{l}\times \big(-\frac{1}{8}\delta^{il}\delta^{jk}\big).
\]
Expand $\phi_{l}$:
\[
    \varphi\otimes\phi_{i}\otimes\phi_{k}\otimes\partial_{j}\phi_{l}\times \big(-\frac{1}{18}\delta^{il}\delta^{jk}\big).
\]

\subsubsection{The $(IJ;KI)$ term}
Target:
\[
    t^{3/2}\int_{0}^{1}\big(\int_{0}^{\rho}\phi_{i}(X_{tr})\frac{X_{tr}^{i}}{1-r}dr\big)\otimes\varphi(X_{t\rho})d\rho\otimes\int_{0}^{1}\big(\int_{0}^{\theta}\phi_{k}(X_{t\delta})(\sigma dB^t)_{\delta}^{k}\big)\otimes\phi_{l}(X_{t\theta})\frac{X_{t\theta}^{l}}{1-\theta}d\theta.
\]
Expand $\varphi$:
\[
    \phi_{i}\otimes\partial_{j}\varphi\otimes\phi_{k}\otimes\phi_{l}\times \big(\frac{5}{108}\delta^{il}\delta^{jk}+\frac{13}{216}\delta^{ik}\delta^{jl}+\frac{1}{12}\delta^{ij}\delta^{kl}\big).
\]
Expand $\phi_{i}$:
\[
    \partial_{j}\phi_{i}\otimes\varphi\otimes\phi_{k}\otimes\phi_{l}\times \big(\frac{7}{108}\delta^{il}\delta^{jk}+\frac{7}{108}\delta^{ik}\delta^{jl}+\frac{1}{6}\delta^{ij}\delta^{kl}\big).
\]
Expand $\phi_{l}$:
\[
    \phi_{i}\otimes\varphi\otimes\phi_{k}\otimes\partial_{j}\phi_{l}\times \big(\frac{5}{72}\delta^{il}\delta^{jk}+\frac{5}{48}\delta^{ik}\delta^{jl}+\frac{5}{72}\delta^{ij}\delta^{kl}\big).
\]
Freeze all and expand $\phi_k$:
\[
    \phi_{i}\otimes\varphi\otimes\partial_{j}\phi_{k}\otimes\phi_{l}\times \big(\frac{1}{24}\delta^{ik}\delta^{jl} + \frac{1}{12}\delta^{ij}\delta^{kl}\big).
\]

\subsubsection{The $(JI;KI)$ term}
Target: 
\[
    t^{3/2}\int_{0}^{1}\big(\int_{0}^{\rho}\varphi(X_{tr})dr\big)\otimes\phi_{i}(X_{t\rho})\frac{X_{t\rho}^{i}}{1-\rho}d\rho\otimes\int_{0}^{1}\big(\int_{0}^{\theta}\phi_{k}(X_{t\delta})(\sigma dB^t)_\delta^{k}\big)\otimes\phi_{l}(X_{t\theta})\frac{X_{t\theta}^{l}}{1-\theta}d\theta.
\]
Expand $\varphi$:
\[
    \partial_{j}\varphi\otimes\phi_{i}\otimes\phi_{k}\otimes\phi_{l}\times \big(\frac{19}{108}\delta^{il}\delta^{jk}+\frac{23}{216}\delta^{ik}\delta^{jl}+\frac{1}{6}\delta^{ij}\delta^{kl}\big).
\]
Expand $\phi_{i}$:
\[
    \varphi\otimes\partial_{j}\phi_{i}\otimes\phi_{k}\otimes\phi_{l}\times \big(\frac{31}{216}\delta^{il}\delta^{jk}+\frac{31}{216}\delta^{ik}\delta^{jl}+\frac{1}{3}\delta^{ij}\delta^{kl}\big).
\]
Expand $\phi_{l}$:
\[
    \varphi\otimes\phi_{i}\otimes\phi_{k}\otimes\partial_{j}\phi_{l}\times \big(\frac{13}{72}\delta^{il}\delta^{jk}+\frac{11}{48}\delta^{ik}\delta^{jl}+\frac{13}{72}\delta^{ij}\delta^{kl}\big).
\]
Freeze all and expand $\phi_k$:
\[
    \varphi\otimes\phi_{i}\otimes\partial_{j}\phi_{k}\otimes\phi_{l}\times \big(\frac{1}{6}\delta^{ij}\delta^{kl} + \frac{1}{8}\delta^{ik}\delta^{jl}\big).
\]

\subsubsection{The $(KJ;KI)$ term}
Target:
\[
    -t^2\int_{0}^{1}\big(\int_{0}^{\rho}\phi_{i}(X_{tr})(\sigma dB^t)_{r}^{i}\big)\otimes\varphi(X_{t\rho})d\rho\otimes\int_{0}^{1}\big(\int_{0}^{\theta}\phi_{k}(X_{t\delta})(\sigma dB^t)_{\delta}^{k}\big)\otimes\phi_{l}(X_{t\theta})\frac{X_{t\theta}^{l}}{1-\theta}d\theta.
\]
Expand $\varphi$:
\[
    \phi_{i}\otimes\partial_{j}\varphi\otimes\phi_{k}\otimes\phi_{l}\times \big(-\frac{1}{9}\delta^{il}\delta^{jk}-\frac{1}{8}\delta^{ik}\delta^{jl}-\frac{1}{4}\delta^{ij}\delta^{kl}\big).
\]
Expand $\phi_{l}$:
\[
    \phi_{i}\otimes\varphi\otimes\phi_{k}\otimes\partial_{j}\phi_{l}\times \big(-\frac{5}{36}\delta^{il}\delta^{jk}-\frac{5}{24}\delta^{ik}\delta^{jl}-\frac{5}{36}\delta^{ij}\delta^{kl}\big).
\]
Freeze both and expand $\phi_i$:
\[
    \partial_{j}\phi_{i}\otimes\varphi\otimes\phi_{k}\otimes\phi_{l}\times \big(-\frac{1}{9}\delta^{il}\delta^{jk} - \frac{1}{12}\delta^{ik}\delta^{jl}\big).
\]
Freeze both and expand $\phi_k$:
\[
    \phi_{i}\otimes\varphi\otimes\partial_{j}\phi_{k}\otimes\phi_{l}\times \big(-\frac{1}{6}\delta^{ij}\delta^{kl} - \frac{1}{12}\delta^{ik}\delta^{jl}\big).
\]

\subsubsection{The $(JK;KI)$ term}
Target:
\[
    -t^2\int_{0}^{1}\big(\int_{0}^{\rho}\varphi(X_{tr})dr\big)\otimes\phi_{i}(X_{t\rho})(\sigma dB^t)_{\rho}^{i}\otimes\int_{0}^{1}\big(\int_{0}^{\theta}\phi_{k}(X_{t\delta})(\sigma dB^t)_{\delta}^{k}\big)\otimes\phi_{l}(X_{t\theta})\frac{X_{t\theta}^{l}}{1-\theta}d\theta.
\]
Expand $\phi_{l}$:
\[
    \varphi\otimes\phi_{i}\otimes\phi_{k}\otimes\partial_{j}\phi_{l}\times \big(-\frac{1}{9}\delta^{il}\delta^{jk}-\frac{1}{8}\delta^{ik}\delta^{jl}-\frac{1}{9}\delta^{ij}\delta^{kl}\big).
\]
Freeze $\phi_{l}$ and expand $\phi_i$:
\[
    \varphi\otimes\partial_{j}\phi_{i}\otimes\phi_{k}\otimes\phi_{l}\times \big(-\frac{5}{36}\delta^{il}\delta^{jk} - \frac{1}{12}\delta^{ik}\delta^{jl}\big)
\]
Freeze $\phi_{l}$ and expand $\phi_k$:
\[
    \varphi\otimes\phi_{i}\otimes\partial_{j}\phi_{k}\otimes\phi_{l}\times \big(-\frac{1}{12}\delta^{ij}\delta^{kl} - \frac{1}{12}\delta^{ik}\delta^{jl}\big)
\]
Freeze $\phi_{l}$ and expand $\varphi$:
\[
    \partial_{j}\varphi\otimes\phi_{i}\otimes\phi_{k}\otimes\phi_{l}\times \big(-\frac{1}{9}\delta^{il}\delta^{jk} - \frac{1}{24}\delta^{ik}\delta^{jl}\big)
\]

\subsubsection{The $(IJ;KK)$ term}
Target:
\[
    -t^2\int_{0}^{1}\big(\int_{0}^{\rho}\phi_{i}(X_{tr})\frac{X_{tr}^{i}}{1-r}dr\big)\otimes\varphi(X_{t\rho})d\rho\otimes\int_{0}^{1}\big(\int_{0}^{\theta}\phi_{k}(X_{t\delta})(\sigma dB^t)_\delta^{k}\big)\otimes\phi_{l}(X_{t\theta})(\sigma dB^t)_{\theta}^{l}.
\]
Expand $\phi_{i}$:
\[
    \partial_{j}\phi_{i}\otimes\varphi\otimes\phi_{k}\otimes\phi_{l}\times \big(-\frac{1}{27}\delta^{il}\delta^{jk}-\frac{1}{27}\delta^{ik}\delta^{jl}\big).
\]
Expand $\varphi$:
\[
    \phi_{i}\otimes\partial_{j}\varphi\otimes\phi_{k}\otimes\phi_{l}\times \big(-\frac{1}{54}\delta^{il}\delta^{jk}-\frac{5}{108}\delta^{ik}\delta^{jl}\big).
\]
Freeze both and expand $\phi_k$: the result is $0$.

\noindent
Freeze both and expand $\phi_l$:
\[
    \phi_{i}\otimes\varphi\otimes\phi_{k}\otimes\partial_{j}\phi_{l}\times \big(-\frac{1}{18}\delta^{il}\delta^{jk}\big).
\]

\subsubsection{The $(JI;KK)$ term}
Target:
\[
    -t^2\int_{0}^{1}\big(\int_{0}^{\rho}\varphi(X_{tr})dr\big)\otimes\phi_{i}(X_{t\rho})\frac{X_{t\rho}^{i}}{1-\rho}d\rho\otimes\int_{0}^{1}\big(\int_{0}^{\theta}\phi_{k}(X_{t\delta})(\sigma dB^t)_\delta^{k}\big)\otimes\phi_{l}(X_{t\theta})(\sigma dB^t)_{\theta}^{l}.
\]
Expand $\varphi$:
\[
    \partial_{j}\varphi\otimes\phi_{i}\otimes\phi_{k}\otimes\phi_{l}\times \big(-\frac{4}{27}\delta^{il}\delta^{jk}-\frac{1}{27}\delta^{ik}\delta^{jl}\big).
\]
Expand $\phi_{i}$:
\[
    \varphi\otimes\partial_{j}\phi_{i}\otimes\phi_{k}\otimes\phi_{l}\times \big(-\frac{19}{216}\delta^{il}\delta^{jk}-\frac{19}{216}\delta^{ik}\delta^{jl}\big).
\]
Freeze both and expand $\phi_k$: the result is $0$.

\noindent 
Freeze both and expand $\phi_l$:
\[
    \varphi\otimes\phi_{i}\otimes\phi_{k}\otimes\partial_{j}\phi_{l}\times \big(-\frac{7}{36}\delta^{il}\delta^{jk}\big).
\]

\subsubsection{The $(JK;KK)$ term}
Target: 
\[
    t^{5/2}\int_{0}^{1}\big(\int_{0}^{\rho}\varphi(X_{tr})dr\big)\otimes\phi_{i}(X_{t\rho})(\sigma dB^t)_{\rho}^{i}\otimes\int_{0}^{1}\big(\int_{0}^{\theta}\phi_{k}(X_{t\delta})(\sigma dB^t)_{\delta}^{k}\big)\otimes\phi_{l}(X_{t\theta})(\sigma dB^t)_{\theta}^{l}.
\]
Expand $\varphi$:
\[
    \partial_{j}\varphi\otimes\phi_{i}\otimes\phi_{k}\otimes\phi_{l}\times \big(\frac{1}{9}\delta^{il}\delta^{jk}\big)
\]
Expand $\phi_{i}$:
\[
    \varphi\otimes\partial_{j}\phi_{i}\otimes\phi_{k}\otimes\phi_{l}\times \big(\frac{5}{36}\delta^{il}\delta^{jk}\big)
\]
Expand $\phi_k$: the result is $0$.

\noindent 
Expand $\phi_{l}$:
\[
    \varphi\otimes\phi_{i}\otimes\phi_{k}\otimes\partial_{j}\phi_{l}\times \big(\frac{5}{36}\delta^{il}\delta^{jk}\big)
\]

\subsubsection{The $(KJ;KK)$ term}
Target:
\[
    t^{5/2}\int_{0}^{1}\big(\int_{0}^{\rho}\phi_{i}(X_{tr})(\sigma dB^t)_{r}^{i}\big)\otimes\varphi(X_{t\rho})d\rho\otimes\int_{0}^{1}\big(\int_{0}^{\theta}\phi_{k}(X_{t\delta})(\sigma dB^t)_{\delta}^{k}\big)\otimes\phi_{l}(X_{t\rho})(\sigma dB^t)_{\rho}^{l}.
\]
Expand $\varphi$:
\[
    \phi_{i}\otimes\partial_{j}\varphi\otimes\phi_{k}\otimes\phi_{l}\times \big(\frac{1}{18}\delta^{il}\delta^{jk}+\frac{1}{12}\delta^{ik}\delta^{jl}\big).
\]
Freeze $\varphi$ and expand $\phi_{i}$:
\[
    \partial_{j}\phi_{i}\otimes\varphi\otimes\phi_{k}\otimes\phi_{l}\times \big(\frac{1}{9}\delta^{il}\delta^{jk}\big).
\]
Freeze $\varphi$ and expand $\phi_{k}$: the result is $0$.

\noindent
Freeze $\varphi$ and expand $\phi_{l}$:
\[
    \phi_{i}\otimes\varphi\otimes\phi_{k}\otimes\partial_{j}\phi_{l}\times \big(\frac{1}{9}\delta^{il}\delta^{jk}\big).
\]

\subsection{Total degree $=2$}
Here we present the results for all cases listed in (\ref{eq:D2List}). The calculation for this part is also computer-assisted by Wolfram Mathematica. We only present the final expressions; all source codes and documentation are provided at the link
\footnote{\url{https://github.com/DeepIntoStreams/ESig_BM_on_Manifold}}.  Again, we only display the results for half of the list because the corresponding results for the permuted cases (i.e. interchanging the $(1,2)$ and $(3,4)$ tensor slots) are obtained directly by tensor permutation.

We first introduce some notation. We fix the following 
list of products of Kronecker deltas that goes through all  possible
different combinations of the indices $i,j,k,l,p,q$ in a specific order:
\begin{align*}
\Delta & \triangleq 
(\delta^{ij}\delta^{kl}\delta^{pq}, 
\delta^{ij}\delta^{kp}\delta^{lq},
\delta^{ij}\delta^{kq}\delta^{lp},
\delta^{ik}\delta^{jl}\delta^{pq},
\delta^{ik}\delta^{jp}\delta^{lq},
\delta^{ik}\delta^{jq}\delta^{lp},
\delta^{il}\delta^{jk}\delta^{pq},
\delta^{il}\delta^{jp}\delta^{kq},
\\
 & \ \ \ \ \ \ \ \ \ 
 \delta^{il}\delta^{jq}\delta^{kp},
 \delta^{ip}\delta^{jk}\delta^{lq},
 \delta^{ip}\delta^{jl}\delta^{kq},
 \delta^{ip}\delta^{jq}\delta^{kl},
 \delta^{iq}\delta^{jk}\delta^{lp},
 \delta^{iq}\delta^{jl}\delta^{kp},
 \delta^{iq}\delta^{jp}\delta^{kl}).
\end{align*}This list of product Kronecker deltas is regarded as a basis, so that one only needs to record the list of coefficients to represent the final results
instead of writing down full expressions. For instance,
the vector
\[
    (2,2,2,0,0,0,0,0,0,0,0,0,0,0,0),
\]
represents the expression
\[
    \Delta\cdot(2,2,2,0,0,0,0,0,0,0,0,0,0,0,0) = 2 \delta^{ij} \delta^{kl} \delta^{pq} + 2 \delta^{ij} \delta^{kp} \delta^{lq} + 2 \delta^{ij} \delta^{kq} \delta^{lp}.
\]We will also use Notation \ref{not:PhiNot} exclusively. Recall that $\bar{b}$ is the function defined by (\ref{eq:barb}).

\subsubsection{The $(II;II)$ term}
Target:
\small
\[
    \int_{0}^{1}\big(\int_{0}^{\rho}\phi_{i}(X_{tr})\frac{X_{tr}^{i}}{1-r}dr\big)\otimes\phi_{j}\big(X_{t\rho}\big)\frac{X_{t\rho}^{j}}{1-\rho}d\rho\otimes\int_{0}^{1}\big(\int_{0}^{\theta}\phi_{k}(X_{t\delta})\frac{X_{t\delta}^{k}}{1-\delta}d\delta\big)\otimes\phi_{l}(X_{t\theta})\frac{X_{t\theta}^{l}}{1-\theta}d\theta.
\]
\normalsize
Expand $\phi_{i}$ to order $t$:
\scriptsize
\[
    \phi_{i,pq|jkl}\times\big(\frac{1}{48},\frac{37}{3456},\frac{37}{3456},\frac{533}{17280},\frac{37}{3456},\frac{37}{3456},\frac{257}{17280},\frac{37}{3456},\frac{37}{3456},\frac{257}{17280},\frac{533}{17280},\frac{1}{48},\frac{257}{17280},\frac{533}{17280},\frac{1}{48}\big).
\]
\normalsize
Expand $\phi_{j}$ to order $t$:
\scriptsize
\[
    \phi_{i|j,pq|kl}\times\big(\frac{1}{72},\frac{1}{128},\frac{1}{128},\frac{253}{17280},\frac{253}{17280},\frac{253}{17280},\frac{157}{17280},\frac{157}{17280},\frac{157}{17280},\frac{1}{128},\frac{1}{128},\frac{1}{72},\frac{1}{128},\frac{1}{128},\frac{1}{72}\big).
\]
\normalsize
Expand $\phi_{k}$ to order $t$:
\scriptsize
\[
    \phi_{ij|k,pq|l}\times\big(\frac{1}{48},\frac{1}{48},\frac{1}{48},\frac{533}{17280},\frac{37}{3456},\frac{37}{3456},\frac{257}{17280},\frac{257}{17280},\frac{37}{3456},\frac{533}{17280},\frac{37}{3456},\frac{37}{3456},\frac{37}{3456},\frac{533}{17280},\frac{37}{3456}\big).
\]
\normalsize
Expand $\phi_{l}$ to order $t$:
\scriptsize
\[
    \phi_{ijk|l,pq}\times\big(\frac{1}{72},\frac{1}{72},\frac{1}{72},\frac{253}{17280},\frac{253}{17280},\frac{253}{17280},\frac{157}{17280},\frac{1}{128},\frac{1}{128},\frac{157}{17280},\frac{1}{128},\frac{1}{128},\frac{157}{17280},\frac{1}{128},\frac{1}{128}\big).
\]
\normalsize
Expand both $\phi_{i}$ and $\phi_{j}$ to order $\sqrt{t}$:
\scriptsize
\[
    \phi_{i,p|j,q|kl}\times\big(\frac{1}{48},\frac{347}{17280},\frac{233}{17280},\frac{347}{17280},\frac{347}{17280},\frac{101}{3456},\frac{233}{17280},\frac{233}{17280},\frac{101}{3456},\frac{115}{3456},\frac{115}{3456},\frac{1}{16},\frac{233}{17280},\frac{347}{17280},\frac{1}{48}\big).
\]
\normalsize
Expand both $\phi_{i}$ and $\phi_{k}$ to order $\sqrt{t}$:
\scriptsize
\[
    \phi_{i,p|j|k,q|l}\times\big(\frac{1}{45},\frac{1}{45},\frac{3}{80},\frac{11}{270},\frac{1}{45},\frac{1}{54},\frac{1}{54},\frac{3}{80},\frac{1}{54},\frac{3}{80},\frac{37}{360},\frac{3}{80},\frac{1}{54},\frac{11}{270},\frac{1}{45}\big).
\]
\normalsize
Expand both $\phi_{i}$ and $\phi_{l}$ to order $\sqrt{t}$:
\scriptsize
\[
    \phi_{i,p|jk|l,q}\times\big(\frac{23}{1440},\frac{23}{1440},\frac{23}{1440},\frac{11}{576},\frac{23}{720},\frac{11}{576},\frac{1}{80},\frac{23}{1440},\frac{11}{576},\frac{1}{20},\frac{97}{2880},\frac{97}{2880},\frac{1}{80},\frac{11}{576},\frac{23}{1440}\big).
\]
\normalsize
Expand both $\phi_{j}$ and $\phi_{k}$ to order $\sqrt{t}$:
\scriptsize
\[
    \phi_{i|j,p|k,q|l}\times\big(\frac{23}{1440},\frac{23}{1440},\frac{97}{2880},\frac{11}{576},\frac{23}{720},\frac{11}{576},\frac{1}{80},\frac{1}{20},\frac{1}{80},\frac{23}{1440},\frac{97}{2880},\frac{23}{1440},\frac{11}{576},\frac{11}{576},\frac{23}{720}\big).
\]
\normalsize
Expand both $\phi_{j}$ and $\phi_{l}$ to order $\sqrt{t}$:
\scriptsize
\[
    \phi_{i|j,p|k|l,q}\times\big(\frac{7}{540},\frac{7}{320},\frac{7}{540},\frac{19}{1080},\frac{17}{360},\frac{19}{1080},\frac{1}{96},\frac{7}{320},\frac{1}{96},\frac{7}{320},\frac{7}{540},\frac{7}{540},\frac{1}{96},\frac{1}{96},\frac{7}{320}\big).
\]
\normalsize
Expand both $\phi_{k}$ and $\phi_{l}$ to order $\sqrt{t}$:
\scriptsize
\[
    \phi_{ij|k,p|l,q}\times\big(\frac{1}{48},\frac{1}{16},\frac{1}{48},\frac{347}{17280},\frac{101}{3456},\frac{347}{17280},\frac{233}{17280},\frac{233}{17280},\frac{115}{3456},\frac{101}{3456},\frac{347}{17280},\frac{347}{17280},\frac{233}{17280},\frac{115}{3456},\frac{233}{17280}\big).
\]
\normalsize
Expand $\frac{X_{tr}^{i}}{1-r}$ or $\frac{X_{t\rho}^{j}}{1-\rho}$ or $\frac{X_{t\delta}^{k}}{1-\delta}$
or $\frac{X_{t\theta}^{l}}{1-\theta}$ to order $t^{1.5}$: 
\scriptsize
\[
    \phi_{ijkl}\times \bigg[\partial_{p}\bar{b}^{i}\big(\frac{1}{24}\delta^{pj}\delta^{kl}+\frac{19}{216}\delta^{jl}\delta^{kp}+\frac{7}{216}\delta^{jk}\delta^{lp}\big)+\partial_{p}\bar{b}^{j}\big(\frac{1}{12}\delta^{ip}\delta^{kl}+\frac{7}{108}\delta^{il}\delta^{kp}+\frac{7}{108}\delta^{ik}\delta^{lp}\big)
\]
\[
    +\partial_{p}\bar{b}^{k}\big(\frac{19}{216}\delta^{ip}\delta^{jl}+\frac{7}{216}\delta^{il}\delta^{jp}+\frac{1}{24}\delta^{ij}\delta^{lp}\big)+\partial_{p}\bar{b}^{l}\big(\frac{7}{108}\delta^{ip}\delta^{jk}+\frac{7}{108}\delta^{ik}\delta^{jp}+\frac{1}{12}\delta^{ij}\delta^{kp}\big) \bigg].
\]
\normalsize
Expand $a$ to order $t$:
\scriptsize
\begin{align*}
        &
        \phi_{ijkl}\times\bigg[\partial_{pq}^{2}a^{ij}\big(\frac{1}{108}\delta^{kq}\delta^{lp}+\frac{1}{108}\delta^{kp}\delta^{lq}+\frac{1}{96}\delta^{kl}\delta^{pq}\big)+\partial_{pq}^{2}a^{kl}\frac{1}{96}\delta^{ij}\delta^{pq} \\
        &
        +\partial_{pq}^{2}a^{ik}\big(\frac{1}{240}\delta^{jq}\delta^{lp}+\frac{1}{240}\delta^{jp}\delta^{lq}+\frac{1}{240}\delta^{jl}\delta^{pq}\big)+\partial_{pq}^{2}a^{jl}\frac{1}{288}\delta^{ik}\delta^{pq} \\
        &
        +\partial_{pq}^{2}a^{il}\big(\frac{23}{2880}\delta^{jq}\delta^{kp}+\frac{23}{2880}\delta^{jp}\delta^{kq}+\frac{23}{2880}\delta^{jk}\delta^{pq}\big)+\partial_{pq}^{2}a^{jk}\frac{1}{144}\delta^{il}\delta^{pq} \\
        &
        +\partial_{pq}^{2}a^{jk}\big(\frac{23}{2880}\delta^{iq}\delta^{lp}+\frac{23}{2880}\delta^{ip}\delta^{lq}+\frac{23}{2880}\delta{il}\delta^{pq}\big)+\partial_{pq}^{2}a^{il}\frac{1}{144}\delta^{jk}\delta^{pq} \\
        &
        +\partial_{pq}^{2}a^{jl}\big(\frac{73}{4320}\delta^{iq}\delta^{kp}+\frac{73}{4320}\delta^{ik}\delta^{kq}+\frac{31}{1440}\delta^{ik}\delta^{pq}\big)+\partial_{pq}^{2}a^{ik}\frac{7}{288}\delta^{jl}\delta^{pq} \\
        &
        +\partial_{pq}^{2}a^{kl}\big(\frac{1}{108}\delta^{iq}\delta^{jp}+\frac{1}{108}\delta^{ip}\delta^{jq}+\frac{1}{96}\delta^{ij}\delta^{pq}\big)+\partial_{pq}^{2}a^{ij}\frac{1}{96}\delta^{kl}\delta^{pq}\bigg].
\end{align*}
\normalsize
\subsubsection{The $(II;IK)$ term}
Target:
\small
\[
    -\sqrt{t}\int_{0}^{1}\big(\int_{0}^{\rho}\phi_{i}(X_{tr})\frac{X_{tr}^{i}}{1-r}dr\big)\otimes\phi_{j}(X_{t\rho})\frac{X_{t\rho}^{j}}{1-\rho}d\rho\otimes\int_{0}^{1}\big(\int_{0}^{\theta}\phi_{k}(X_{t\delta})\frac{X_{t\delta}^{k}}{1-\delta}d\delta\big)\otimes\phi_{l}(X_{\theta})(\sigma dB^t)_{\theta}^{l}.
\]
\normalsize
Expand $\phi_{i}$ to order $t$:
\scriptsize
\[
    \phi_{i,pq|jkl}\times\big(0,-\frac{53}{17280},-\frac{53}{17280},-\frac{329}{17280},-\frac{53}{17280},-\frac{53}{17280},-\frac{53}{17280},-\frac{53}{17280},-\frac{53}{17280},-\frac{53}{17280},-\frac{329}{17280},0,-\frac{53}{17280},-\frac{329}{17280},0\big).
\]
\normalsize
Expand $\phi_{j}$ to order $t$:
\scriptsize
\[
    \phi_{i|j,pq|kl}\times\big(0,-\frac{47}{17280},-\frac{47}{17280},-\frac{221}{51840},-\frac{221}{51840},-\frac{221}{51840},-\frac{53}{51480},-\frac{53}{51840},-\frac{53}{51840},-\frac{47}{17280},-\frac{47}{17280},0,-\frac{47}{17280},-\frac{47}{17280},0\big).
\]
\normalsize
Expand $\phi_{k}$ to order $t$:
\scriptsize
\[
    \phi_{ij|k,pq|l}\times\big(0,0,0,-\frac{29}{1440},0,0,-\frac{1}{240},-\frac{1}{240},-\frac{1}{240},0,-\frac{29}{1440},0,0,-\frac{29}{1440},0\big).
\]
\normalsize
Expand $\phi_{l}$ to order $t$:
\scriptsize
\[
    \phi_{ijk|l,pq}\times\big(0,0,0,-\frac{13}{960},0,0,-\frac{11}{2880},-\frac{11}{2880},-\frac{11}{2880},0,-\frac{77}{8640},0,0,-\frac{77}{8640},0\big).
\]
\normalsize
Expand both $\phi_{i}$ and $\phi_{j}$ to order $\sqrt{t}$:
\scriptsize
\[
    \phi_{i,p|j,q|kl}\times\big(0,-\frac{167}{17280},-\frac{53}{17280},-\frac{167}{17280},-\frac{167}{17280},-\frac{53}{17280},-\frac{53}{17280},-\frac{53}{17280},-\frac{53}{17280},-\frac{227}{17280},-\frac{227}{17280},0,-\frac{53}{17280},-\frac{167}{17280},0\big).
\]
\normalsize
Expand both $\phi_{i}$ and $\phi_{k}$ to order $\sqrt{t}$:
\scriptsize
\[
    \phi_{i,p|j|k,q|l}\times\big(0,0,-\frac{11}{720},-\frac{7}{270},0,-\frac{1}{270},-\frac{1}{270},-\frac{11}{720},-\frac{1}{270},0,-\frac{47}{720},0,-\frac{1}{270},-\frac{7}{270},0\big).
\]
\normalsize
Expand both $\phi_{i}$ and $\phi_{l}$ to order $\sqrt{t}$:
\scriptsize
\[
    \phi_{i,p|jk|l,q}\times\big(0,0,-\frac{7}{720},-\frac{79}{4320},0,-\frac{11}{2160},-\frac{11}{2160},-\frac{7}{720},-\frac{11}{2160},0,-\frac{43}{1440},0,-\frac{11}{2160},-\frac{79}{4320},0\big).
\]
\normalsize
Expand both $\phi_{j}$ and $\phi_{k}$ to order $\sqrt{t}$:
\scriptsize
\[
    \phi_{i|j,p|k,q|l}\times\big(0,0,-\frac{17}{960},-\frac{73}{8640},0,-\frac{73}{8640},-\frac{1}{540},-\frac{13}{720},-\frac{1}{540},0,-\frac{17}{960},0,-\frac{73}{8640},-\frac{73}{8640},0\big).
\]
\normalsize
Expand both $\phi_{j}$ and $\phi_{l}$ to order $\sqrt{t}$:
\scriptsize
\[
    \phi_{i|j,p|k|l,q}\times\big(0,0,-\frac{43}{4320},-\frac{53}{4320},0,-\frac{53}{4320},-\frac{11}{4320},-\frac{17}{1440},-\frac{11}{4320},0,-\frac{43}{4320},0,-\frac{11}{1440},-\frac{11}{1440},0\big).
\]
\normalsize
Expand both $\phi_{k}$ and $\phi_{l}$ to order $\sqrt{t}$:
\scriptsize
\[
    \phi_{ij|k,p|l,q}\times\big(0,0,0,-\frac{3}{160},0,0,-\frac{1}{180},-\frac{1}{180},-\frac{1}{80},0,-\frac{3}{160},0,0,-\frac{47}{1440},0\big).
\]
\normalsize
Expand $\frac{X_{tr}^{i}}{1-r}$ or $\frac{X_{t\rho}^{j}}{1-\rho}$ or $\frac{X_{t\delta}^{k}}{1-\delta}$
to order $t^{1.5}$:
\scriptsize
\begin{align*}
    &
    \phi_{ijkl}\times\bigg[\partial_{p}\bar{b}^{i}\big(-\frac{13}{216}\delta^{jl}\delta^{kp}-\frac{1}{216}\delta^{jk}\delta^{lp}\big)
    +\partial_{p}\bar{b}^{j}\big(-\frac{5}{216}\delta^{il}\delta^{kp}-\frac{5}{216}\delta^{ik}\delta^{lp}\big) \\
    & \qquad
    +\partial_{p}\bar{b}^{k}\big(-\frac{1}{144}\delta^{il}\delta^{kp}-\frac{7}{144}\delta^{ik}\delta^{jl}\big)\bigg].
\end{align*}
\normalsize
Expand $a$ to order $t$:
\scriptsize
\begin{align*}
    &
    \phi_{ijkl}\times\bigg[\partial_{pq}a^{il}\big(-\frac{11}{2880}\delta^{jq}\delta^{kp}-\frac{11}{2880}\delta^{jp}\delta^{kq}-\frac{11}{2880}\delta^{jk}\delta^{pq}\big)+\partial_{pq}a^{jk}(\big(-\frac{1}{288}\delta^{il}\delta^{pq}\big) \\
    & \qquad
    +\partial_{pq}a^{jl}\big(-\frac{77}{8640}\delta^{iq}\delta^{kq}-\frac{77}{8640}\delta^{ip}\delta^{kq}-\frac{13}{960}\delta^{ik}\delta^{pq}\big) \\
    & \qquad
    +\partial_{pq}a^{ik}\big(-\frac{5}{288}\delta^{jl}\delta^{pq}\big)+\partial_{pq}a^{ij}\big(-\frac{1}{432}\delta^{kq}\delta^{lp}-\frac{1}{432}\delta^{kp}\delta^{lq}\big)\bigg].
\end{align*}
\normalsize

\subsubsection{The $(II;KI)$ term}
Target:
\small
\[
    -\sqrt{t}\int_{0}^{1}\big(\int_{0}^{\rho}\phi_{i}(X_{tr})\frac{X_{tr}^{i}}{1-r}dr\big)\otimes\phi_{j}\big(X_{t\rho}\big)\frac{X_{t\rho}^{j}}{1-\rho}d\rho\otimes\int_{0}^{1}\big(\int_{0}^{\theta}\phi_{k}(X_{t\delta})(\sigma dB^t)_{\delta}^{k}\big)\otimes\phi_{l}(X_{t\theta})\frac{X_{t\theta}^{l}}{1-\theta}d\theta.
\]
\normalsize
Expand $\phi_{i}$ to order $t$:
\tiny
\[
    \phi_{i,pq|jkl}\times\big(-\frac{1}{24},-\frac{317}{17280},-\frac{317}{17280},-\frac{737}{17280},-\frac{317}{17280},-\frac{317}{17280},-\frac{461}{17280},-\frac{317}{17280},-\frac{317}{17280},-\frac{461}{17280},-\frac{737}{17280},-\frac{1}{24},-\frac{461}{17280},-\frac{737}{17280},-\frac{1}{24}\big).
\]
\normalsize
Expand $\phi_{j}$ to order $t$:
\tiny
\[
    \phi_{i|j,pq|kl}\times\big(-\frac{1}{36},-\frac{223}{17280},-\frac{223}{17280},-\frac{379}{17280},-\frac{379}{17280},-\frac{379}{17280},-\frac{283}{17280},-\frac{283}{17280},-\frac{283}{17280},-\frac{223}{17280},-\frac{223}{17280},-\frac{1}{36},-\frac{223}{17280},-\frac{223}{17280},-\frac{1}{36}\big).
\]
\normalsize
Expand $\phi_{k}$ to order $t$:
\scriptsize
\[
    \phi_{ij|k,pq|l}\times\big(-\frac{1}{24},0,0,-\frac{19}{480},-\frac{1}{120},-\frac{1}{120},-\frac{19}{720},0,0,-\frac{23}{1440},0,-\frac{1}{54},-\frac{23}{1440},0,-\frac{1}{54}\big).
\]
\normalsize
Expand $\phi_{l}$ to order $t$:
\scriptsize
\[
    \phi_{ijk|l,pq}\times\big(-\frac{5}{144},-\frac{5}{144},-\frac{5}{144},-\frac{73}{2880},-\frac{73}{2880},-\frac{73}{2880},-\frac{19}{960},-\frac{1}{54},-\frac{1}{54},-\frac{19}{960},-\frac{1}{54},-\frac{1}{54},-\frac{19}{960},-\frac{1}{54},-\frac{1}{54}\big).
\]
\normalsize
Expand both $\phi_{i}$ and $\phi_{j}$ to order $\sqrt{t}$:
\tiny
\[
    \phi_{i,p|j,q|kl}\times\big(-\frac{1}{24},-\frac{527}{17280},-\frac{413}{17280},-\frac{527}{17280},-\frac{527}{17280},-\frac{877}{17280},-\frac{413}{17280},-\frac{413}{17280},-\frac{877}{17280},-\frac{923}{17280},-\frac{923}{17280},-\frac{1}{8},-\frac{413}{17280},-\frac{527}{17280},-\frac{1}{24}\big).
\]
\normalsize
Expand both $\phi_{i}$ and $\phi_{k}$ to order $\sqrt{t}$:
\scriptsize
\[
    \phi_{i,p|j|k,q|l}\times\big(-\frac{1}{24},-\frac{1}{40},0,-\frac{13}{270},-\frac{1}{40},-\frac{11}{540},-\frac{17}{540},0,-\frac{11}{540},-\frac{1}{20},0,-\frac{1}{16},-\frac{17}{540},-\frac{13}{270},-\frac{1}{24}\big).
\]
\normalsize
Expand both $\phi_{i}$ and $\phi_{l}$ to order $\sqrt{t}$:
\scriptsize
\[
    \phi_{i,p|jk|l,q}\times\big(-\frac{11}{288},-\frac{13}{240},-\frac{11}{288},-\frac{293}{8640},-\frac{13}{240},-\frac{293}{8640},-\frac{59}{2160},-\frac{11}{288},-\frac{293}{8640},-\frac{7}{80},-\frac{41}{576},-\frac{41}{576},-\frac{59}{2160},-\frac{293}{8640},-\frac{11}{288}\big).
\]
\normalsize
Expand both $\phi_{j}$ and $\phi_{k}$ to order $\sqrt{t}$:
\scriptsize
\[
    \phi_{i|j,p|k,q|l}\times\big(-\frac{1}{36},-\frac{29}{1440},0,-\frac{37}{2160},-\frac{1}{30},-\frac{37}{2160},-\frac{77}{4320},0,-\frac{77}{4320},-\frac{29}{1440},0,-\frac{1}{36},-\frac{137}{4320},-\frac{137}{4320},-\frac{1}{16}\big).
\]
\normalsize
Expand both $\phi_{j}$ and $\phi_{l}$ to order $\sqrt{t}$:
\scriptsize
\[
    \phi_{i|j,p|k|l,q}\times\big(-\frac{25}{864},-\frac{109}{2880},-\frac{25}{864},-\frac{61}{2160},-\frac{19}{240},-\frac{61}{2160},-\frac{91}{4320},-\frac{31}{576},-\frac{91}{4320},-\frac{109}{2880},-\frac{25}{864},-\frac{25}{864},-\frac{91}{4320},-\frac{91}{4320},-\frac{31}{576}\big).
\]
\normalsize
Expand both $\phi_{k}$ and $\phi_{l}$ to order $\sqrt{t}$:
\scriptsize
\[
    \phi_{ij|k,p|l,q}\times\big(-\frac{1}{24},0,-\frac{1}{24},-\frac{1}{45},-\frac{7}{240},-\frac{1}{45},-\frac{11}{480},-\frac{11}{432},0,-\frac{73}{1440},-\frac{17}{432},-\frac{17}{432},-\frac{11}{480},0,-\frac{11}{432}\big).
\]
\normalsize
Expand $\frac{X_{tr}^{i}}{1-r}$ or $\frac{X_{t\rho}^{j}}{1-\rho}$ or $\frac{X_{t\theta}^{l}}{1-\theta}$
to order $t^{1.5}$:
\scriptsize
\begin{align*}
    &
    \phi_{ijkl}\times\bigg[\partial_{p}\bar{b}^{i}(\big(-\frac{1}{12}\delta^{jp}\delta^{kl}-\frac{25}{216}\delta^{jl}\delta^{kp}-\frac{13}{216}\delta^{jk}\delta^{lp}\big) + \partial_{p}\bar{b}^{j}\big(-\frac{1}{6}\delta^{ip}\delta^{kl}-\frac{23}{216}\delta^{il}\delta^{kp}-\frac{23}{216}\delta^{ik}\delta^{lp}\big) \\
    & \qquad \qquad
    +\partial_{p}\bar{b}^{l}\big(-\frac{5}{48}\delta^{ip}\delta^{jk}-\frac{13}{144}\delta^{ik}\delta^{jp}-\frac{1}{8}\delta^{ij}\delta^{kp}\big)\bigg].
\end{align*}
\normalsize
Expand $a$ to order $t$:
\scriptsize
\begin{align*}
    &
    \phi_{ijkl}\times\bigg[\partial_{pq}^{2}a^{ik}\big(-\frac{1}{120}\delta^{jq}\delta^{lp}-\frac{1}{120}\delta^{jp}\delta^{lq}-\frac{1}{120}\delta^{jl}\delta^{pq}\big) +\partial_{pq}^{2}a^{jk}\big(-\frac{23}{1440}\delta^{iq}\delta^{lp}--\frac{23}{1440}\delta^{ip}\delta^{lq}-\frac{23}{1440}\delta^{il}\delta^{pq}\big) \\
    & \qquad
    +\partial_{pq}^{2}a^{lk}\big(-\frac{1}{54}\delta^{iq}\delta^{jp}--\frac{1}{54}\delta^{ip}\delta^{jq}-\frac{1}{48}\delta^{ij}\delta^{pq}\big) + \partial_{pq}^{2}a^{ij}\big(-\frac{7}{432}\delta^{kq}\delta^{lp}--\frac{7}{432}\delta^{kp}\delta^{lq}-\frac{1}{48}\delta^{kl}\delta^{pq}\big) \\
    & \qquad
    +\partial_{pq}^{2}a^{il}\big(-\frac{7}{576}\delta^{jq}\delta^{kp}--\frac{7}{576}\delta^{jp}\delta^{kq}-\frac{7}{576}\delta^{jk}\delta^{pq}\big) + \partial_{pq}^{2}a^{jl}\big(-\frac{43}{1728}\delta^{iq}\delta^{kp}--\frac{43}{1728}\delta^{ip}\delta^{kq}-\frac{17}{576}\delta^{ik}\delta^{pq}\big) \\
    & \qquad
    +\partial_{pq}^{2}a^{jl}\big(-\frac{1}{144}\delta^{ik}\delta^{pq}\big)+\partial_{pq}^{2}a^{il}\cdot\big(-\frac{1}{72}\delta^{jk}\delta^{pq}\big)+\partial_{pq}^{2}a^{ij}\big(-\frac{1}{48}\delta^{lk}\delta^{pq}\big)+\partial_{pq}^{2}a^{lk}\big(-\frac{1}{48}\delta^{ij}\delta^{pq}\big) \\
    & \qquad
    +\partial_{pq}^{2}a^{jk}\big(-\frac{1}{96}\delta^{il}\delta^{pq}\big)+\cdot\partial_{pq}^{2}a^{ik}\big(-\frac{1}{32}\delta^{jl}\delta^{pq}\big)\bigg].
\end{align*}
\normalsize

\subsubsection{The $(II;KK)$ term}
Target:
\small
\[
    t\int_{0}^{1}\big(\int_{0}^{\rho}\phi_{i}(X_{tr})\frac{X_{tr}^{i}}{1-r}dr\big)\otimes\phi_{j}(X_{t\rho})\frac{X_{t\rho}^{j}}{1-\rho}d\rho\otimes\int_{0}^{1}\big(\int_{0}^{\theta}\phi_{k}(X_{t\delta})(\sigma dB^t)_{\delta}^{k}\big)\otimes\phi_{l}(X_{\theta})(\sigma dB^t)_{\theta}^{l}.
\]
Expand $\phi_{i}$ to order $t$:
\scriptsize
\[
    \phi_{i,pq|jkl}\times\big(0,\frac{37}{3456},\frac{37}{3456},\frac{121}{3456},\frac{37}{3456},\frac{37}{3456},\frac{37}{3456},\frac{37}{3456},\frac{37}{3456},\frac{37}{3456},\frac{121}{3456},0,\frac{37}{3456},\frac{121}{3456},0\big).
\]
\normalsize
Expand $\phi_{j}$ to order $t$:
\scriptsize
\[
    \phi_{i|j,pq|kl}\times\big(0,\frac{1}{128},\frac{1}{128},\frac{59}{3456},\frac{59}{3456},\frac{59}{3456},\frac{23}{3456},\frac{23}{3456},\frac{23}{3456},\frac{1}{128},\frac{1}{128},0,\frac{1}{128},\frac{1}{128},0\big).
\]
\normalsize
Expand $\phi_{k}$ to order $t$:
\scriptsize
\[
    \phi_{ij|k,pq|l}\times\big(0,0,0,\frac{1}{32},0,0,\frac{1}{96},0,0,0,0,0,0,0,0\big).
\]
\normalsize
Expand $\phi_{l}$ to order $t$:
\scriptsize
\[
    \phi_{ijk|l,pq}\times\big(0,0,0,\frac{17}{576},0,0,\frac{7}{576},0,0,\frac{7}{576},\frac{43}{1728},0,\frac{7}{576},\frac{43}{1728},0\big).
\]
\normalsize
Expand both $\phi_{i}$ and $\phi_{j}$ to order $\sqrt{t}$:
\scriptsize
\[
    \phi_{i,p|j,q|kl}\times\big(0,\frac{79}{3456},\frac{79}{3456},\frac{79}{3456},\frac{79}{3456},\frac{101}{3456},\frac{37}{3456},\frac{37}{3456},\frac{101}{3456},\frac{115}{128},\frac{115}{128},0,\frac{37}{3456},\frac{79}{3456},0\big).
\]
\normalsize
Expand both $\phi_{i}$ and $\phi_{k}$ to order $\sqrt{t}$:
\scriptsize
\[
    \phi_{i,p|j|k,q|l}\times\big(0,0,0,\frac{1}{27},0,\frac{1}{108},\frac{1}{108},0,\frac{1}{108},0,0,0,\frac{1}{108},\frac{1}{27},0\big).
\]
\normalsize
Expand both $\phi_{i}$ and $\phi_{l}$ to order $\sqrt{t}$:
\scriptsize
\[
    \phi_{i,p|jk|l,q}\times\big(0,0,\frac{5}{144},\frac{35}{864},0,\frac{7}{432},\frac{7}{432},\frac{5}{144},\frac{7}{432},0,\frac{23}{288},0,\frac{7}{432},\frac{35}{864},0\big).
\]
\normalsize
Expand both $\phi_{j}$ and $\phi_{k}$ to order $\sqrt{t}$:
\scriptsize
\[
    \phi_{i|j,p|k,q|l}\times\big(0,0,0,\frac{5}{432},0,\frac{5}{432},\frac{1}{216},0,\frac{1}{216},0,0,0,\frac{1}{54},\frac{1}{54},0\big).
\]
\normalsize
Expand both $\phi_{j}$ and $\phi_{l}$ to order $\sqrt{t}$:
\scriptsize
\[
    \phi_{i|j,p|k|l,q}\times\big(0,0,\frac{13}{432},\frac{11}{432},0,\frac{11}{432},\frac{7}{864},\frac{13}{288},\frac{7}{864},0,\frac{13}{432},0,\frac{1}{48},\frac{1}{48},0\big).
\]
\normalsize
Expand both $\phi_{k}$ and $\phi_{l}$ to order $\sqrt{t}$:
\scriptsize
\[
    \phi_{ij|k,p|l,q}\times\big(0,0,0,\frac{1}{36},0,0,\frac{1}{72},\frac{1}{72},0,0,\frac{1}{24},0,0,0,0\big).
\]
\normalsize
Expand $\frac{X_{tr}^{i}}{1-r}$ or $\frac{X_{t\rho}^{j}}{1-\rho}$ to order
$t^{1.5}$:
\scriptsize
\[
    \phi_{ijkl}\times\Big[\partial_{p}\bar{b}^{i}\big(\frac{11}{108}\delta^{jl}\delta^{kp}+\frac{1}{54}\delta^{jk}\delta^{lp}\big)+\partial_{p}\bar{b}^{j}\big(\frac{7}{108}\delta^{il}\delta^{kp}+\frac{7}{108}\delta^{ik}\delta^{ip}\big)\Big].
\]
\normalsize
Expand $a$ to order $t$:
\scriptsize
\begin{align*}
    &
    \phi_{ijkl}\times\bigg[\partial_{pq}a^{il}\big(\frac{7}{576}\delta^{jq}\delta^{kp}+\frac{7}{576}\delta^{jp}\delta^{kq}+\frac{7}{576}\delta^{jk}\delta^{pq}\big)+\partial_{pq}a^{jk}\frac{1}{96}\delta^{il}\delta^{pq} \\
    & \qquad
    +\partial_{pq}a^{jl}\big(\frac{43}{1728}\delta^{iq}\delta^{kp}+\frac{43}{1728}\delta^{ip}\delta^{kq}+\frac{17}{576}\delta^{ik}\delta^{pq}\big)+\partial_{pq}a^{ik}\big(\frac{1}{32}\delta^{jl}\delta^{pq}\big) \\
    & \qquad
    +\partial_{pq}a^{ij}\big(\frac{1}{108}\delta^{kq}\delta^{lp}+\frac{1}{108}\delta^{kp}\delta^{lq}\big)\bigg].
\end{align*}
\normalsize

\subsubsection{The $(IK;IK)$ term}
Target:
\small
\[
    t \int_{0}^{1}\big(\int_{0}^{\rho}\phi_{i}(X_{tr})\frac{X_{tr}^{i}}{1-r}dr\big)\otimes\phi_{j}(X_{t\rho})(\sigma dB^t)_{\rho}^{j}\otimes\int_{0}^{1}\big(\int_{0}^{\theta}\phi_{k}(X_{t\delta})\frac{X_{t\delta}^{k}}{1-\delta}d\delta\big)\otimes\phi_{l}(X_{t\theta})(\sigma dB^t)_{\theta}^{l}.
\]
\normalsize
Expand $\phi_{i}$ to order $t$:
\scriptsize
\[
    \phi_{i,pq|jkl}\times\big(0,0,0,\frac{23}{1440},0,0,0,0,0,0,\frac{23}{1440},0,0,\frac{23}{1440},0\big).
\]
\normalsize
Expand $\phi_{j}$ to order $t$:\scriptsize
\[
    \phi_{i|j,pq|kl}\times\big(0,0,0,\frac{7}{720},0,0,0,0,0,0,\frac{11}{2160},0,0,\frac{11}{2160},0\big).
\]
\normalsize Expand $\phi_{k}$ to order $t$: \scriptsize
\[
    \phi_{ij|k,pq|l}\times\big(0,0,0,\frac{23}{1440},0,0,0,0,0,0,\frac{23}{1440},0,0,\frac{23}{1440},0\big).
\]
\normalsize Expand $\phi_{l}$ to order $t$: \scriptsize
\[
    \phi_{ijk|l,pq}\times\big(0,0,0,\frac{7}{720},0,0,0,0,0,0,\frac{11}{2160},0,0,\frac{11}{2160},0\big).
\]
\normalsize Expand both $\phi_{i}$ and $\phi_{j}$ to order $\sqrt{t}$: \scriptsize
\[
    \phi_{i,p|j,q|kl}\times\big(0,0,0,\frac{19}{1440},0,0,0,0,0,0,\frac{29}{1440},0,0,\frac{19}{1440},0\big).
\]
\normalsize Expand both $\phi_{i}$ and $\phi_{k}$ to order $\sqrt{t}$: \scriptsize
\[
    \phi_{i,p|j|k,q|l}\times\big(0,0,0,\frac{1}{45},0,0,0,0,0,0,\frac{1}{20},0,0,\frac{1}{45},0\big).
\]
\normalsize Expand both $\phi_{i}$ and $\phi_{l}$ to order $\sqrt{t}$: \scriptsize
\[
    \phi_{i,p|jk|l,q}\times\big(0,0,0,\frac{19}{1440},0,0,0,0,0,0,\frac{29}{1440},0,0,\frac{19}{1440},0\big).
\]
\normalsize Expand both $\phi_{j}$ and $\phi_{k}$ to order $\sqrt{t}$: \scriptsize
\[
    \phi_{i|j,p|k,q|l}\times\big(0,0,0,\frac{19}{1440},0,0,0,0,0,0,\frac{29}{1440},0,0,\frac{19}{1440},0\big).
\]
\normalsize Expand both $\phi_{j}$ and $\phi_{l}$ to order $\sqrt{t}$: \scriptsize
\[
    \phi_{i|j,p|k|l,q}\times\big(0,0,0,\frac{7}{360},0,0,0,0,0,0,\frac{11}{1080},0,0,\frac{11}{1080},0\big).
\]
\normalsize Expand both $\phi_{k}$ and $\phi_{l}$ to order $\sqrt{t}$: \scriptsize
\[
    \phi_{ij|k,p|l,q}\times\big(0,0,0,\frac{19}{1440},0,0,0,0,0,0,\frac{19}{1440},0,0,\frac{29}{1440},0\big).
\]
\normalsize Expand $\frac{X_{tr}^{i}}{1-r}$ or $\frac{X_{t\delta}^{k}}{1-\delta}$ to order
$t^{1.5}$: \scriptsize
\[
    \phi_{ijkl}\times\bigg[\partial_{p}\bar{b}^{i}\frac{1}{24}\delta^{jl}\delta^{kp}+\partial_{p}\bar{b}^{k}\frac{1}{24}\delta^{ip}\delta^{jl}\bigg].
\]
\normalsize Expand $a$ to order $t$: \scriptsize
\[
    \phi_{ijkl}\times\bigg[\partial_{pq}^{2}a^{jl}\big(\frac{11}{2160}\delta^{iq}\delta^{kp}+\frac{11}{2160}\delta^{ip}\delta^{kq}+\frac{7}{720}\delta^{ik}\delta^{pq}\big)+\partial_{pq}^{2}a^{ik}\frac{1}{72}\delta^{jl}\delta^{pq}\bigg]
\]
\normalsize

\subsubsection{The $(IK;KI)$ term}
Target:
\small
\[
    t\int_{0}^{1}\big(\int_{0}^{\rho}\phi_{i}(X_{tr})\frac{X_{tr}^{i}}{1-r}du\big)\otimes\phi_{j}(X_{t\rho})(\sigma dB^t)_{\rho}^{j}\otimes\int_{0}^{1}\big(\int_{0}^{\theta}\phi_{k}(X_{t\delta})(\sigma dB^t)_{\delta}^{k}\big)\otimes\phi_{l}(X_{t\theta})\frac{X_{t\theta}^{l}}{1-\theta}d\theta.
\]
\normalsize
Expand $\phi_{i}$ to order $t$: \scriptsize
\[
    \phi_{i,pq|jkl}\times\big(0,0,0,\frac{7}{288},0,0,\frac{1}{120},0,0,\frac{1}{120},\frac{7}{288},0,\frac{1}{120},\frac{7}{288},0\big).
\]
\normalsize Expand $\phi_{j}$ to order $t$:\scriptsize
\[
    \phi_{i|j,pq|kl}\times\big(0,0,0,\frac{5}{288},0,0,\frac{11}{1440},0,0,\frac{11}{1440},\frac{11}{864},0,\frac{11}{1440},\frac{11}{864},0\big).
\]
\normalsize Expand $\phi_{k}$ to order $t$: \scriptsize
\[
    \phi_{ij|k,pq|l}\times\big(0,0,0,\frac{1}{48},0,0,\frac{11}{1440},0,0,\frac{11}{1440},0,\frac{1}{216},\frac{11}{1440},0,\frac{1}{216}\big).
\]
\normalsize Expand $\phi_{l}$ to order $t$: \scriptsize
\[
    \phi_{ijk|l,pq}\times\big(0,0,0,\frac{1}{96},\frac{1}{96},\frac{1}{96},\frac{7}{1440},\frac{5}{864},\frac{5}{864},\frac{7}{1440},\frac{5}{864},\frac{5}{864},\frac{7}{1440},\frac{5}{864},\frac{5}{864}\big).
\]
\normalsize Expand both $\phi_{i}$ and $\phi_{j}$ to order $\sqrt{t}$: \scriptsize
\[
    \phi_{i,p|j,q|kl}\times\big(0,0,0,\frac{7}{288},0,0,\frac{1}{90},0,0,\frac{1}{40},\frac{13}{288},0,\frac{1}{90},\frac{7}{288},0\big).
\]
\normalsize Expand both $\phi_{i}$ and $\phi_{k}$ to order $\sqrt{t}$: \scriptsize
\[
    \phi_{i,p|j|k,q|l}\times\big(0,0,0,\frac{1}{36},0,0,\frac{1}{90},0,0,\frac{1}{40},0,\frac{1}{48},\frac{1}{90},\frac{1}{36},0\big).
\]
\normalsize Expand both $\phi_{i}$ and $\phi_{l}$ to order $\sqrt{t}$: \scriptsize
\[
    \phi_{i,p|jk|l,q}\times\big(0,0,0,\frac{7}{576}0,\frac{7}{576},\frac{1}{180},0,\frac{7}{576},\frac{1}{30},\frac{19}{576},\frac{19}{576},\frac{1}{180},\frac{7}{576},0\big).
\]
\normalsize Expand both $\phi_{j}$ and $\phi_{k}$ to order $\sqrt{t}$: \scriptsize
\[
    \phi_{i|j,p|k,q|l}\times\big(0,0,0,\frac{1}{72},0,0,\frac{11}{720},0,0,\frac{11}{720},0,\frac{1}{72},\frac{11}{720},\frac{1}{36},0\big).
\]
\normalsize Expand both $\phi_{j}$ and $\phi_{l}$ to order $\sqrt{t}$: \scriptsize
\[
    \phi_{i|j,p|k|l,q}\times\big(0,0,0,\frac{5}{288},0,\frac{5}{288},\frac{11}{1440},0,\frac{11}{864},\frac{31}{1440},\frac{11}{864},\frac{11}{864},\frac{11}{1440},\frac{11}{864},0\big).
\]
\normalsize Expand both $\phi_{k}$ and $\phi_{l}$ to order $\sqrt{t}$: \scriptsize
\[
    \phi_{ij|k,p|l,q}\times\big(0,0,0,\frac{1}{144},0,\frac{1}{144},\frac{11}{1440},\frac{1}{216},0,\frac{31}{1440},\frac{1}{54},\frac{1}{54},\frac{11}{1440},0,\frac{1}{216}\big).
\]
\normalsize Expand $\frac{X_{tr}^{i}}{1-r}$ or $\frac{X_{t\theta}^{l}}{1-\theta}$ to order
$t^{1.5}$: \scriptsize
\[
    \phi_{ijkl}\times\Big[\partial_{p}\bar{b}^{i}\big(\frac{1}{72}\delta^{jk}\delta^{lp}+\frac{1}{18}\delta^{jl}\delta^{kp}\big)+\partial_{p}\bar{b}^{l}\big(\frac{1}{24}\delta^{ip}\delta^{jk}+\frac{1}{36}\delta^{ik}\delta^{jp}\big)\Big].
\]
\normalsize Expand $a$ to order $t$: \scriptsize
\begin{align*}
    &
    \phi_{ijkl}\times\bigg[\partial_{pq}^{2}a^{jk}\big(\frac{11}{1440}\delta^{iq}\delta^{lp}+\frac{11}{1440}\delta^{ip}\delta^{lq}+\frac{11}{1440}\delta^{il}\delta^{pq}\big)+\partial_{pq}^{2}a^{il}\frac{1}{144}\delta^{jk}\delta^{pq} \\
    & \qquad
    +\partial_{pq}^{2}a^{jl}\big(\frac{11}{864}\delta^{iq}\delta^{kp}+\frac{11}{864}\delta^{ip}\delta^{kq}+\frac{5}{288}\delta^{ik}\delta^{pq}\big)+\partial_{pq}^{2}a^{ik}\frac{1}{48}\delta^{jl}\delta^{pq} \\
    & \qquad
    +\partial_{pq}^{2}a^{kl}\big(\frac{1}{216}\delta^{iq}\delta^{jp}+\frac{1}{216}\delta^{ip}\delta^{jq}\big)\bigg].
\end{align*}
\normalsize

\subsubsection{The $(IK;KK)$ term}
Target:
\small
\[
    -t^{3/2}\int_{0}^{1}\big(\int_{0}^{\rho}\phi_{i}(X_{tr})\frac{X_{tr}^{i}}{1-r}dr\big)\otimes\phi_{j}(X_{t\rho})(\sigma dB^t)_{\rho}^{j}\otimes\int_{0}^{1}\big(\int_{0}^{\theta}\phi_{k}(X_{t\delta})(\sigma dB)_{\delta}^{k}\big)\otimes\phi_{l}(X_{t\theta})(\sigma dB^t)_{\theta}^{l}.
\]
\normalsize
Expand $\phi_{i}$ to order $t$: \scriptsize
\[
    \phi_{i,pq|jkl}\times\big(0,0,0,-\frac{7}{288},0,0,0,0,0,0,-\frac{7}{288},0,0,-\frac{7}{288},0\big).
\]
\normalsize Expand $\phi_{j}$ to order $t$: \scriptsize
\[
    \phi_{i|j,pq|kl}\times\big(0,0,0,-\frac{5}{288},0,0,0,0,0,0,-\frac{11}{864},0,0,-\frac{11}{864},0\big).
\]
\normalsize Expand $\phi_{k}$ to order $t$: \scriptsize
\[
    \phi_{ij|k,pq|l}\times\big(0,0,0,-\frac{1}{48},0,0,0,0,0,0,0,0,0,0,0\big).
\]
\normalsize Expand $\phi_{l}$ to order $t$: \scriptsize
\[
    \phi_{ijk|l,pq}\times\big(0,0,0,-\frac{5}{288},0,0,0,0,0,0,-\frac{11}{864},0,0,-\frac{11}{864},0\big).
\]
\normalsize Expand both $\phi_{i}$ and $\phi_{j}$ to order $\sqrt{t}$: \scriptsize
\[
    \phi_{i,p|j,q|kl}\times\big(0,0,0,-\frac{7}{288},0,0,0,0,0,0,-\frac{13}{288},0,0,-\frac{7}{288},0\big).
\]
\normalsize Expand both $\phi_{i}$ and $\phi_{k}$ to order $\sqrt{t}$: \scriptsize
\[
    \phi_{i,p|j|k,q|l}\times\big(0,0,0,-\frac{1}{36},0,0,0,0,0,0,0,0,0,-\frac{1}{36},0\big).
\]
\normalsize Expand both $\phi_{i}$ and $\phi_{l}$ to order $\sqrt{t}$: \scriptsize
\[
    \phi_{i,p|jk|l,q}\times\big(0,0,0,-\frac{7}{288},0,0,0,0,0,0,-\frac{13}{288},0,0,-\frac{7}{288},0\big).
\]
\normalsize Expand both $\phi_{j}$ and $\phi_{k}$ to order $\sqrt{t}$: \scriptsize
\[
    \phi_{i|j,p|k,q|l}\times\big(0,0,0,-\frac{1}{72},0,0,0,0,0,0,0,0,0,-\frac{1}{36},0\big).
\]
\normalsize Expand both $\phi_{j}$ and $\phi_{l}$ to order $\sqrt{t}$: \scriptsize
\[
    \phi_{i|j,p|k|l,q}\times\big(0,0,0,-\frac{5}{144},0,0,0,0,0,0,-\frac{11}{432},0,0,-\frac{11}{432},0\big).
\]
\normalsize Expand both $\phi_{k}$ and $\phi_{l}$ to order $\sqrt{t}$: \scriptsize
\[
    \phi_{ij|k,p|l,q}\times\big(0,0,0,-\frac{1}{72},0,0,0,0,0,0,-\frac{1}{36},0,0,0,0\big).
\]
\normalsize Expand $\frac{X_{tr}^{i}}{1-r}$ to order $t^{1.5}$: \scriptsize
\[
    \phi_{ijkl}\times\Big[\partial_{p}\bar{b}^{i}\big(-\frac{1}{18}\delta^{jl}\delta^{kp}\big)\Big].
\]
\normalsize Expand $a$ to order $t$: \scriptsize
\[
    \phi_{ijkl}\times\bigg[\partial_{pq}^{2}a^{jl}\big(-\frac{11}{864}\delta^{iq}\delta^{kp}-\frac{11}{864}\delta^{ip}\delta^{kq}-\frac{5}{288}\delta^{ik}\delta^{pq}\big)+\partial_{pq}^{2}a^{ik}\big(-\frac{1}{48}\delta^{jl}\delta^{pq}\big)\bigg].
\]
\normalsize

\subsubsection{The $(KI;KI)$ term}
Target:
\small
\[
    t\int_{0}^{1}\big(\int_{0}^{\rho}\phi_{i}(X_{tr})(\sigma dB^t)_{r}^{i}\big)\otimes\phi_{j}(X_{t\rho})\frac{X_{t\rho}^{j}}{1-\rho}dv\otimes\int_{0}^{1}\big(\int_{0}^{\theta}\phi_{k}(X_{t\delta})(\sigma dB^t)_{\delta}^{k}\big)\otimes\phi_{l}(X_{t\theta})\frac{X_{t\theta}^{l}}{1-\theta}d\theta.
\]
\normalsize
Expand $\phi_{i}$ to order $t$: \scriptsize
\[
    \phi_{i,pq|jkl}\times\big(\frac{1}{12},\frac{7}{216},\frac{7}{216},\frac{7}{120},\frac{1}{60},\frac{1}{60},\frac{13}{288},\frac{7}{288},\frac{7}{288},0,0,0,0,0,0\big).
\]
\normalsize Expand $\phi_{j}$ to order $t$: \scriptsize
\[
    \phi_{i|j,pq|kl}\times\big(\frac{5}{72},\frac{1}{32},\frac{1}{32},\frac{29}{720},\frac{29}{720},\frac{29}{720},\frac{5}{144},\frac{5}{144},\frac{5}{144},\frac{1}{32},\frac{1}{32},\frac{5}{72},\frac{1}{32},\frac{1}{32},\frac{5}{72}\big).
\]
\normalsize Expand $\phi_{k}$ to order $t$: \scriptsize
\[
    \phi_{ij|k,pq|l}\times\big(\frac{1}{12},0,0,\frac{7}{120},\frac{1}{60},\frac{1}{60},\frac{13}{288},0,0,\frac{7}{288},0,\frac{7}{216},\frac{7}{288},0,\frac{7}{216}\big).
\]
\normalsize Expand $\phi_{l}$ to order $t$: \scriptsize
\[
    \phi_{ijk|l,pq}\times\big(\frac{5}{72},\frac{5}{72},\frac{5}{72},\frac{29}{720},\frac{29}{720},\frac{29}{720},\frac{5}{144},\frac{1}{32},\frac{1}{32},\frac{5}{144},\frac{1}{32},\frac{1}{32},\frac{5}{144},\frac{1}{32},\frac{1}{32}\big).
\]
\normalsize Expand both $\phi_{i}$ and $\phi_{j}$ to order $\sqrt{t}$: \scriptsize
\[
    \phi_{i,p|j,q|kl}\times\big(\frac{1}{12},\frac{13}{216},\frac{5}{108},\frac{3}{80},\frac{3}{80},\frac{7}{120},\frac{11}{288},\frac{11}{288},\frac{23}{288},0,0,0,\frac{5}{108},\frac{13}{216},\frac{1}{12}\big).
\]
\normalsize Expand both $\phi_{i}$ and $\phi_{k}$ to order $\sqrt{t}$: \scriptsize
\[
    \phi_{i,p|j|k,q|l}\times\big(\frac{1}{12},\frac{1}{24},0,\frac{7}{60},\frac{1}{30},\frac{1}{30},\frac{1}{18},0,\frac{1}{36},0,0,0,\frac{1}{36},0,\frac{1}{24}\big).
\]
\normalsize Expand both $\phi_{i}$ and $\phi_{l}$ to order $\sqrt{t}$: \scriptsize
\[
    \phi_{i,p|jk|l,q}\times\big(\frac{5}{72},\frac{5}{48},\frac{5}{72},\frac{3}{80},\frac{7}{120},\frac{3}{80},\frac{11}{288},\frac{13}{288},\frac{5}{96},0,0,0,\frac{11}{288},\frac{5}{96},\frac{13}{288}\big).
\]
\normalsize Expand both $\phi_{j}$ and $\phi_{k}$ to order $\sqrt{t}$: \scriptsize
\[
    \phi_{i|j,p|k,q|l}\times\big(\frac{5}{72},\frac{13}{288},0,\frac{3}{80},\frac{7}{120},\frac{3}{80},\frac{11}{288},0,\frac{11}{288},\frac{13}{288},0,\frac{5}{72},\frac{5}{96},\frac{5}{96},\frac{5}{48}\big).
\]
\normalsize Expand both $\phi_{j}$ and $\phi_{l}$ to order $\sqrt{t}$: \scriptsize
\[
    \phi_{i|j,p|k|l,q}\times\big(\frac{29}{432},\frac{53}{576},\frac{29}{432},\frac{1}{20},\frac{2}{15},\frac{1}{20},\frac{37}{864},\frac{53}{576},\frac{37}{864},\frac{53}{576},\frac{29}{432},\frac{29}{432},\frac{37}{864},\frac{37}{864},\frac{53}{576}\big).
\]
\normalsize Expand both $\phi_{k}$ and $\phi_{l}$ to order $\sqrt{t}$: \scriptsize
\[
    \phi_{ij|k,p|l,q}\times\big(\frac{1}{12},0,\frac{1}{12},\frac{3}{80},\frac{7}{120},\frac{3}{80},\frac{11}{288},\frac{5}{108},0,\frac{23}{288},\frac{13}{216},\frac{13}{216},\frac{11}{288},0,\frac{5}{108}\big).
\]
\normalsize Expand $\frac{X_{t\rho}^{j}}{1-\rho}$ or $\frac{X_{t\theta}^{l}}{1-\theta}$ to order
$t^{1.5}$: \scriptsize
\[
    \phi_{ijkl}\times\bigg[\partial_{p}\bar{b}^{j}\big(\frac{1}{4}\delta^{ip}\delta^{kl}+\frac{1}{6}\delta^{ip}\delta^{kl}+\frac{11}{72}\delta^{ik}\delta^{lp}\big)+\partial_{p}\bar{b}^{l}\big(\frac{1}{6}\delta^{ip}\delta^{jk}+\frac{11}{72}\delta^{ik}\delta^{jp}+\frac{1}{4}\delta^{ij}\delta^{kp}\big)\bigg].
\]
\normalsize Expand $a$ to order $t$:
\scriptsize
\begin{align*}
    &
    \phi_{ijkl}\times\bigg[\partial_{pq}^{2}a^{ij}\big(\frac{7}{216}\delta^{kq}\delta^{lp}+\frac{7}{216}\delta^{kp}\delta^{lq}+\frac{1}{24}\delta^{kl}\delta^{pq}\big)+\partial_{pq}^{2}a^{ik}\big(\frac{1}{60}\delta^{jq}\delta^{lp}+\frac{1}{60}\delta^{jp}\delta^{lq}+\frac{1}{60}\delta^{jl}\delta^{pq}\big) \\
    & \qquad
    +\partial_{pq}^{2}a^{il}\big(\frac{7}{288}\delta^{jq}\delta^{kp}+\frac{7}{288}\delta^{jp}\delta^{kq}+\frac{7}{288}\delta^{jk}\delta^{pq}\big)+\partial_{pq}^{2}a^{jk}\big(\frac{7}{288}\delta^{iq}\delta^{lp}+\frac{7}{288}\delta^{ji}\delta^{lq}+\frac{7}{288}\delta^{il}\delta^{pq}\big) \\
    & \qquad
    +\partial_{pq}^{2}a^{jl}\big(\frac{1}{27}\delta^{iq}\delta^{kp}+\frac{1}{27}\delta^{ip}\delta^{kq}+\frac{1}{24}\delta^{ik}\delta^{pq}\big)+\partial_{pq}^{2}a^{kl}\big(\frac{7}{216}\delta^{iq}\delta^{jp}+\frac{7}{216}\delta^{ip}\delta^{jq}+\frac{1}{24}\delta^{ij}\delta^{pq}\big) \\
    & \qquad
    +\partial_{pq}^{2}a^{kl}\big(\frac{1}{24}\delta^{ij}\delta^{pq}\big)+\partial_{pq}^{2}a^{jl}\big(\frac{1}{72}\delta^{ik}\delta^{pq}\big)+\partial_{pq}^{2}a^{jk}\big(\frac{1}{48}\delta^{il}\delta^{pq}\big) \\
    & \qquad
    +\partial_{pq}^{2}a^{il}\big(\frac{1}{48}\delta^{jk}\delta^{pq}\big)+\partial_{pq}^{2}a^{ik}\big(\frac{1}{24}\delta^{jl}\delta^{pq}\big)+\partial_{pq}^{2}a^{ij}\big(\frac{1}{24}\delta^{kl}\delta^{pq}\big)\bigg].
\end{align*}
\normalsize

\subsubsection{The $(KI;KK)$ term}
Target:
\small
\[
    -t^{3/2}\int_{0}^{1}\big(\int_{0}^{\rho}\phi_{i}(X_{tr})(\sigma dB^t)_{r}^{i}\big)\otimes\phi_{j}(X_{t\rho})\frac{X_{t\rho}^{j}}{1-\rho}d\rho\otimes\int_{0}^{1}\big(\int_{0}^{\theta}\phi_{k}(X_{t\delta})(\sigma dB^t)_{\delta}^{k}\big)\otimes\phi_{l}(X_{t\theta})(\sigma dB^t)_{\theta}^{l}.
\]
\normalsize
Expand $\phi_{i}$ to order $t$: \scriptsize
\[
    \phi_{i,pq|jkl}\times\big(0,-\frac{1}{54},-\frac{1}{54},-\frac{1}{24},0,0,-\frac{7}{288},-\frac{7}{288},-\frac{7}{288},0,0,0,0,0,0\big).
\]
\normalsize Expand $\phi_{j}$ to order $t$: \scriptsize
\[
    \phi_{i|j,pq|kl}\times\big(0,-\frac{1}{54},-\frac{1}{54},-\frac{1}{36},-\frac{1}{36},-\frac{1}{36},-\frac{5}{288},-\frac{5}{288},-\frac{5}{288},-\frac{1}{54},-\frac{1}{54},0,-\frac{1}{54},-\frac{1}{54},0\big).
\]
\normalsize Expand $\phi_{k}$ to order $t$: \scriptsize
\[
    \phi_{ij|k,pq|l}\times\big(0,0,0,-\frac{1}{24},0,0,-\frac{1}{48},0,0,0,0,0,0,0,0\big).
\]
\normalsize Expand $\phi_{l}$ to order $t$: \scriptsize
\[
    \phi_{ijk|l,pq}\times\big(0,0,0,-\frac{1}{24},0,0,-\frac{7}{288},-\frac{7}{288},-\frac{7}{288},0,-\frac{1}{27},0,0,-\frac{1}{27},0\big).
\]
\normalsize Expand both $\phi_{i}$ and $\phi_{j}$ to order $\sqrt{t}$: \scriptsize
\[
    \phi_{i,p|j,q|kl}\times\big(0,-\frac{1}{108},-\frac{1}{54},-\frac{1}{48},-\frac{1}{48},0,-\frac{7}{288},-\frac{7}{288},-\frac{23}{288},0,0,0,-\frac{1}{54},-\frac{5}{108},0\big).
\]
\normalsize Expand both $\phi_{i}$ and $\phi_{k}$ to order $\sqrt{t}$: \scriptsize
\[
    \phi_{i,p|j|k,q|l}\times\big(0,0,0,-\frac{1}{12},0,0,-\frac{1}{36},0,-\frac{1}{36},0,0,0,0,0,0\big).
\]
\normalsize Expand both $\phi_{i}$ and $\phi_{l}$ to order $\sqrt{t}$: \scriptsize
\[
    \phi_{i,p|jk|l,q}\times\big(0,0,-\frac{1}{18},-\frac{1}{24},0,0,-\frac{7}{288},-\frac{7}{288},-\frac{7}{288},0,0,0,0,-\frac{1}{18},0\big).
\]
\normalsize Expand both $\phi_{j}$ and $\phi_{k}$ to order $\sqrt{t}$: \scriptsize
\[
    \phi_{i|j,p|k,q|l}\times\big(0,0,0,-\frac{1}{48},0,-\frac{1}{48},0,0,0,0,0,0,-\frac{1}{36},-\frac{1}{36},0\big).
\]
\normalsize Expand both $\phi_{j}$ and $\phi_{l}$ to order $\sqrt{t}$: \scriptsize
\[
    \phi_{i|j,p|k|l,q}\times\big(0,0,-\frac{7}{108},-\frac{1}{24},0,-\frac{1}{24},-\frac{7}{288},-\frac{23}{288},-\frac{7}{288},0,-\frac{7}{108},0,-\frac{1}{27},-\frac{1}{27},0\big).
\]
\normalsize Expand both $\phi_{k}$ and $\phi_{l}$ to order $\sqrt{t}$: \scriptsize
\[
    \phi_{ij|k,p|l,q}\times\big(0,0,0,-\frac{1}{24},0,0,-\frac{1}{36},-\frac{1}{36},0,0,-\frac{1}{18},0,0,0,0\big).
\]
\normalsize Expand $\frac{X_{t\rho}^{j}}{1-\rho}$ to order $t^{1.5}$: \scriptsize
\[
    \phi_{ijkl}\times\bigg[\partial_{p}\bar{b}^{j}\big(-\frac{1}{9}\delta^{il}\delta^{kp}-\frac{1}{12}\delta^{ik}\delta^{lp}\big)\bigg].
\]
\normalsize Expand $a$ to order $t$:
\scriptsize
\begin{align*}
    &
    \phi_{ijkl}\times\bigg[\partial_{pq}^{2}a^{ij}\big(-\frac{1}{54}\delta^{kq}\delta^{lp}-\frac{1}{54}\delta^{kp}\delta^{lq}\big)+\partial_{pq}^{2}a^{il}\big(-\frac{7}{288}\delta^{jq}\delta^{kp}-\frac{7}{288}\delta^{jp}\delta^{kq}-\frac{7}{288}\delta^{jk}\delta^{pq}\big) \\
    & \qquad
    +\partial_{pq}^{2}a^{jl}\big(-\frac{1}{27}\delta^{iq}\delta^{kp}-\frac{1}{27}\delta^{ip}\delta^{kq}-\frac{1}{24}\delta^{ik}\delta^{pq}\big)+\partial_{pq}^{2}a^{jk}\big(-\frac{1}{24}\delta^{il}\delta^{pq}\big)+\partial_{pq}^{2}a^{ik}\big(-\frac{1}{12}\delta^{jl}\delta^{pq}\big)\bigg].
\end{align*}
\normalsize

\subsubsection{The $(KK;KK)$ term}
Target:
\small
\[
    t^2\int_{0}^{1}\big(\int_{0}^{\rho}\phi_{i}(X_{tr})(\sigma dB^t)_{r}^{i}\big)\otimes\phi_{j}(X_{t\rho})(\sigma dB^t)_{\rho}^{j}\otimes\int_{0}^{1}\big(\int_{0}^{\theta}\phi_{k}(X_{t\delta})(\sigma dB^t)_{\delta}^{k}\big)\otimes\phi_{l}(X_{t\theta})(\sigma dB^t)_{\theta}^{l}. 
\]
\normalsize
\normalsize Expand $\phi_{i}$ to order $t$: \scriptsize
\[
    \phi_{i,pq|jkl}\times\big(0,0,0,\frac{1}{24},0,0,0,0,0,0,0,0,0,0,0\big).
\]
\normalsize Expand $\phi_{j}$ to order $t$: \scriptsize
\[
    \phi_{i|j,pq|kl}\times\big(0,0,0,\frac{1}{24},0,0,0,0,0,0,\frac{1}{27},0,0,\frac{1}{27},0\big).
\]
\normalsize Expand $\phi_{k}$ to order $t$: \scriptsize
\[
    \phi_{ij|k,pq|l}\times\big(0,0,0,\frac{1}{24},0,0,0,0,0,0,0,0,0,0,0,0,0\big).
\]
\normalsize Expand $\phi_{l}$ to order $t$: \scriptsize
\[
    \phi_{ijk|l,pq}\times\big(0,0,0,\frac{1}{24},0,0,0,0,0,0,\frac{1}{27},0,0,\frac{1}{27},0\big).
\]
\normalsize Expand both $\phi_{i}$ and $\phi_{j}$ to order $\sqrt{t}$: \scriptsize
\[
    \phi_{i,p|j,q|kl}\times\big(0,0,0,\frac{1}{24},0,0,0,0,0,0,0,0,0,\frac{1}{18},0\big).
\]
\normalsize Expand both $\phi_{i}$ and $\phi_{k}$ to order $\sqrt{t}$: \scriptsize
\[
    \phi_{i,p|j|k,q|l}\times\big(0,0,0,\frac{1}{12},0,0,0,0,0,0,0,0,0,0,0\big).
\]
\normalsize Expand both $\phi_{i}$ and $\phi_{l}$ to order $\sqrt{t}$: \scriptsize
\[
    \phi_{i,p|jk|l,q}\times\big(0,0,0,\frac{1}{24},0,0,0,0,0,0,0,0,0,\frac{1}{18},0\big).
\]
\normalsize Expand both $\phi_{j}$ and $\phi_{k}$ to order $\sqrt{t}$: \scriptsize
\[
    \phi_{i|j,p|k,q|l}\times\big(0,0,0,\frac{1}{24},0,0,0,0,0,0,\frac{1}{18},0,0,0,0\big).
\]
\normalsize Expand both $\phi_{j}$ and $\phi_{l}$ to order $\sqrt{t}$: \scriptsize
\[
    \phi_{i|j,p|k|l,q}\times\big(0,0,0,\frac{1}{12},0,0,0,0,0,0,\frac{2}{27},0,0,\frac{2}{27},0\big).
\]
\normalsize Expand both $\phi_{k}$ and $\phi_{l}$ to order $\sqrt{t}$: \scriptsize
\[
    \phi_{ij|k,p|l,q}\times\big(0,0,0,\frac{1}{24},0,0,0,0,0,0,\frac{1}{18},0,0,0,0\big).
\]
\normalsize Expand $a$ to order $t$: \scriptsize
\[
    \phi_{ijkl}\times\bigg[\partial_{pq}^{2}a^{jl}\big(\frac{1}{27}\delta^{iq}\delta^{kp}+\frac{1}{27}\delta^{ip}\delta^{kq}+\frac{1}{24}\delta^{ik}\delta^{pq}\big)+\partial_{pq}^{2}a^{ik}\frac{1}{24}\delta^{jl}\delta^{pq}\bigg].
\]
\normalsize

\subsubsection{The $(II;P)$ term}
Target:
\small
\[
    t\int_{0}^{1}\big(\int_{0}^{\rho}\phi_{i}(X_{tr})\frac{X_{tr}^{i}}{1-r}dr\big)\otimes\phi_{j}(X_{t\rho})\frac{X_{t\rho}^{j}}{1-\rho}d\rho\otimes\int_{0}^{1}\frac{1}{2}\phi_{k}\big(X_{t\theta}\big)\otimes\phi_{l}\big(X_{t\theta}\big)a^{kl}\big(X_{t\theta}\big)d\theta.
\]
\normalsize
Expand $\phi_{i}$ to order $t$: \scriptsize
\[
    \phi_{i,pq|jkl}\times\big(\frac{1}{48},0,0,0,0,0,0,0,0,0,0,\frac{1}{48},0,0,\frac{1}{48}\big).
\]
\normalsize Expand $\phi_{j}$ to order $t$: \scriptsize
\[
    \phi_{i|j,pq|kl}\times\big(\frac{1}{72},0,0,0,0,0,0,0,0,0,0,\frac{1}{72},0,0,\frac{1}{72}\big).
\]
\normalsize Expand $\phi_{k}$ to order $t$: \scriptsize
\[
    \phi_{ij|k,pq|l}\times\big(\frac{1}{48},0,0,0,0,0,0,0,0,0,0,\frac{1}{108},0,0,\frac{1}{108}\big).
\]
\normalsize Expand $\phi_{l}$ to order $t$: \scriptsize
\[
    \phi_{ijk|l,pq}\times\big(\frac{1}{48},0,0,0,0,0,0,0,0,0,0,\frac{1}{108},0,0,\frac{1}{108}\big).
\]
\normalsize Expand both $\phi_{i}$ and $\phi_{j}$ to order $\sqrt{t}$: \scriptsize
\[
    \phi_{i,p|j,q|kl}\times\big(\frac{1}{48},0,0,0,0,0,0,0,0,0,0,\frac{1}{16},0,0,\frac{1}{48}\big).
\]
\normalsize Expand both $\phi_{i}$ and $\phi_{k}$ to order $\sqrt{t}$: \scriptsize
\[
    \phi_{i,p|j|k,q|l}\times\big(\frac{1}{48},0,0,0,0,0,0,0,0,0,0,\frac{1}{32},0,0,\frac{1}{48}\big).
\]
\normalsize Expand both $\phi_{i}$ and $\phi_{l}$ to order $\sqrt{t}$: \scriptsize
\[
    \phi_{i,p|jk|l,q}\times\big(\frac{1}{48},0,0,0,0,0,0,0,0,0,0,\frac{1}{32},0,0,\frac{1}{48}\big).
\]
\normalsize Expand both $\phi_{j}$ and $\phi_{k}$ to order $\sqrt{t}$: \scriptsize
\[
    \phi_{i|j,p|k,q|l}\times\big(\frac{1}{72},0,0,0,0,0,0,0,0,0,0,\frac{1}{72},0,0,\frac{1}{32}\big).
\]
\normalsize Expand both $\phi_{j}$ and $\phi_{l}$ to order $\sqrt{t}$: \scriptsize
\[
    \phi_{i|j,p|k|l,q}\times\big(\frac{1}{72},0,0,0,0,0,0,0,0,0,0,\frac{1}{72},0,0,\frac{1}{32}\big).
\]
\normalsize Expand both $\phi_{k}$ and $\phi_{l}$ to order $\sqrt{t}$: \scriptsize
\[
    \phi_{ij|k,p|l,q}\times\big(\frac{1}{24},0,0,0,0,0,0,0,0,0,0,\frac{1}{54},0,0,\frac{1}{54}\big).
\]
\normalsize Expand $\frac{X_{tr}^{i}}{1-r}$ or $\frac{X_{t\rho}^{j}}{1-\rho}$ to order
$t^{1.5}$: \scriptsize
\[
    \phi_{ijkl}\times\Big[\partial_{p}\bar{b}^{i}\frac{1}{24}\delta^{jp}\delta^{kl}+\partial_{p}\bar{b}^{j}\frac{1}{12}\delta^{ip}\delta^{kl}\Big].
\]
\normalsize Expand $a$ to order $t$: \scriptsize
\[
    \phi_{ijkl}\times\bigg[\partial_{pq}^{2}a^{kl}\big(\frac{1}{108}\delta^{iq}\delta^{jp}+\frac{1}{108}\delta^{ip}\delta^{jq}+\frac{1}{48}\delta^{ij}\delta^{pq}\big)+\partial_{pq}^{2}a^{ij}\big(\frac{1}{48}\delta^{kl}\delta^{pq}\big)\bigg].
\]
\normalsize

\subsubsection{The $(IK;P)$ term}
Target:
\small
\[
    -t^{3/2}\int_{0}^{1}\big(\int_{0}^{\rho}\phi_{i}(X_{tr})\frac{X_{tr}^{i}}{1-r}dr\big)\otimes\phi_{j}(X_{t\rho})(\sigma dB^t)_{\rho}^{j}\otimes\int_{0}^{1}\frac{1}{2}\phi_{k}\big(X_{t\theta}\big)\otimes\phi_{l}\big(X_{t\theta}\big)a^{kl}\big(X_{t\theta}\big)d\theta.
\]
\normalsize
Expand $\phi_{k}$ to order $t$: \scriptsize
\[
    \phi_{ij|k,pq|l}\times\big(0,0,0,0,0,0,0,0,0,0,0,-\frac{1}{432},0,0,-\frac{1}{432}\big).
\]
\normalsize Expand $\phi_{l}$ to order $t$: \scriptsize
\[
    \phi_{ijk|l,pq}\times\big(0,0,0,0,0,0,0,0,0,0,0,-\frac{1}{432},0,0,-\frac{1}{432}\big). 
\]
\normalsize Expand both $\phi_{i}$ and $\phi_{k}$ to order $\sqrt{t}$: \scriptsize
\[
    \phi_{i,p|j|k,q|l}\times\big(0,0,0,0,0,0,0,0,0,0,0,-\frac{1}{96},0,0,0\big).
\]
\normalsize Expand both $\phi_{i}$ and $\phi_{l}$ to order $\sqrt{t}$: \scriptsize
\[
    \phi_{i,p|jk|l,q}\times\big(0,0,0,0,0,0,0,0,0,0,0,-\frac{1}{96},0,0,0\big).
\]
\normalsize Expand both $\phi_{j}$ and $\phi_{k}$ to order $\sqrt{t}$: \scriptsize
\[
    \phi_{i|j,p|k,q|l}\times\big(0,0,0,0,0,0,0,0,0,0,0,-\frac{1}{144},0,0,0\big).
\]
\normalsize Expand both $\phi_{j}$ and $\phi_{l}$ to order $\sqrt{t}$: \scriptsize
\[
    \phi_{i|j,p|k|l,q}\times\big(0,0,0,0,0,0,0,0,0,0,0,-\frac{1}{144},0,0,0\big).
\]
\normalsize Expand both $\phi_{k}$ and $\phi_{l}$ to order $\sqrt{t}$: \scriptsize
\[
    \phi_{ij|k,p|l,q}\times\big(0,0,0,0,0,0,0,0,0,0,0,-\frac{1}{216},0,0,-\frac{1}{216}\big).
\]
\normalsize Expand $a$ to order $t$: \scriptsize
\[
    \phi_{ijkl}\times\Big[\partial_{pq}^{2}a^{kl}\big(-\frac{1}{432}\delta^{iq}\delta^{jp}-\frac{1}{432}\delta^{ip}\delta^{jq}\big)\Big].
\]
\normalsize

\subsubsection{The $(KI;P)$ term}
Target:
\small
\[
    -t^{3/2}\int_{0}^{1}\big(\int_{0}^{\rho}\phi_{i}(X_{tr})(\sigma dB^t)_{r}^{i}\big)\otimes\phi_{j}(X_{t\rho})\frac{X_{t\rho}^{j}}{1-\rho}d\rho\otimes\int_{0}^{1}\frac{1}{2}\phi_{k}\big(X_{t\theta}\big)\otimes\phi_{l}\big(X_{t\theta}\big)a^{kl}\big(X_{t\theta}\big)d\theta.
\]
\normalsize
Expand $\phi_{i}$ to order $t$: \scriptsize
\[
    \phi_{i,pq|jkl}\times\big(-\frac{1}{24},0,0,0,0,0,0,0,0,0,0,0,0,0,0\big).
\]
\normalsize Expand $\phi_{j}$ to order $t$: \scriptsize
\[
    \phi_{i|j,pq|kl}\times\big(-\frac{5}{144},0,0,0,0,0,0,0,0,0,0,-\frac{5}{144},0,0,-\frac{5}{144}\big).
\]
\normalsize Expand $\phi_{k}$ to order $t$: \scriptsize
\[
    \phi_{ij|k,pq|l}\times\big(-\frac{1}{24},0,0,0,0,0,0,0,0,0,0,-\frac{7}{432},0,0,-\frac{7}{432}\big).
\]
\normalsize Expand $\phi_{l}$ to order $t$: \scriptsize
\[
    \phi_{ijk|l,pq}\times\big(-\frac{1}{24},0,0,0,0,0,0,0,0,0,0,-\frac{7}{432},0,0,-\frac{7}{432}\big).
\]
\normalsize Expand both $\phi_{i}$ and $\phi_{j}$ to order $\sqrt{t}$: \scriptsize
\[
    \phi_{i,p|j,q|kl}\times\big(-\frac{1}{24},0,0,0,0,0,0,0,0,0,0,0,0,0,-\frac{1}{24}\big).
\]
\normalsize Expand both $\phi_{i}$ and $\phi_{k}$ to order $\sqrt{t}$: \scriptsize
\[
    \phi_{i,p|j|k,q|l}\times\big(-\frac{1}{24},0,0,0,0,0,0,0,0,0,0,0,0,0,-\frac{1}{48}\big).
\]
\normalsize Expand both $\phi_{i}$ and $\phi_{l}$ to order $\sqrt{t}$: \scriptsize
\[
    \phi_{i,p|jk|l,q}\times\big(-\frac{1}{24},0,0,0,0,0,0,0,0,0,0,,0,0,-\frac{1}{48}\big).
\]
\normalsize Expand both $\phi_{j}$ and $\phi_{k}$ to order $\sqrt{t}$: \scriptsize
\[
    \phi_{i|j,p|k,q|l}\times\big(-\frac{5}{144},0,0,0,0,0,0,0,0,0,0,-\frac{5}{144},0,0,-\frac{5}{96}\big).
\]
\normalsize Expand both $\phi_{j}$ and $\phi_{l}$ to order $\sqrt{t}$: \scriptsize
\[
    \phi_{i|j,p|k|l,q}\times\big(-\frac{5}{144},0,0,0,0,0,0,0,0,0,0,-\frac{5}{144},0,0,-\frac{5}{96}\big).
\]
\normalsize Expand both $\phi_{k}$ and $\phi_{l}$ to order $\sqrt{t}$: \scriptsize
\[
    \phi_{ij|k,p|l,q}\times\big(-\frac{1}{12},0,0,0,0,0,0,0,0,0,0,-\frac{7}{216},0,0,-\frac{7}{216}\big).
\]
\normalsize Expand $\frac{X_{t\rho}^{j}}{1-\rho}$ to order $t^{1.5}$: \scriptsize
\[
    \phi_{ijkl}\times\Big[\partial_{p}\bar{b}^{j}\cdot\big(-\frac{1}{8}\delta^{ip}\delta^{kl}\big)\Big].
\]
\normalsize Expand $a$ to order $t$: \scriptsize
\[
    \phi_{ijkl}\times\bigg[\partial_{pq}^{2}a^{kl}\big(-\frac{7}{432}\delta^{iq}\delta^{jp}-\frac{7}{432}\delta^{ip}\delta^{jq}-\frac{1}{24}\delta^{ij}\delta^{pq}\big)+\partial_{pq}^{2}a^{ij}\big(-\frac{1}{24}\delta^{kl}\delta^{pq}\big)\bigg].
\]
\normalsize

\subsubsection{The $(KK;P)$ term}
Target:
\small
\[
    t^2\int_{0}^{1}\big(\int_{0}^{\rho}\phi_{i}(X_{tr})(\sigma dB^t)_{r}^{i}\big)\otimes\phi_{j}(X_{t\rho})(\sigma dB^t)_{\rho}^{j}\otimes\int_{0}^{1}\frac{1}{2}\phi_{k}\big(X_{t\theta}\big)\otimes\phi_{l}\big(X_{t\theta}\big)a^{kl}\big(X_{t\theta}\big)d\theta.
\]
\normalsize
Expand $\phi_{k}$ to order $t$: \scriptsize
\[
    \phi_{ij|k,pq|l}\times\big(0,0,0,0,0,0,0,0,0,0,0,\frac{1}{108},0,0,\frac{1}{108}\big).
\]
\normalsize Expand $\phi_{l}$ to order $t$: \scriptsize
\[
    \phi_{ijk|l,pq}\times\big(0,0,0,0,0,0,0,0,0,0,0,\frac{1}{108},0,0,\frac{1}{108}\big).
\]
\normalsize Expand both $\phi_{j}$ and $\phi_{k}$ to order $\sqrt{t}$: \scriptsize
\[
    \phi_{i|j,p|k,q|l}\times\big(0,0,0,0,0,0,0,0,0,0,0,\frac{1}{36},0,0,0\big).
\]
\normalsize Expand both $\phi_{j}$ and $\phi_{l}$ to order $\sqrt{t}$: \scriptsize
\[
    \phi_{i|j,p|k|l,q}\times\big(0,0,0,0,0,0,0,0,0,0,0,\frac{1}{36},0,0,0\big).
\]
\normalsize Expand both $\phi_{k}$ and $\phi_{l}$ to order $\sqrt{t}$: \scriptsize
\[
    \phi_{ij|k,p|l,q}\times\big(0,0,0,0,0,0,0,0,0,0,0,\frac{1}{54},0,0,\frac{1}{54}\big).
\]
\normalsize Expand $a$ to order $t$: \scriptsize
\[
    \phi_{ijkl}\times\bigg[\partial_{pq}^{2}a^{kl}\big(\frac{1}{108}\delta^{iq}\delta^{jp}+\frac{1}{108}\delta^{ip}\delta^{jq}\big)\bigg].
\]
\normalsize

\subsubsection{The $(P;P)$ term}
Target:
\small
\[
    t^2\int_{0}^{1}\frac{1}{2}\phi_{i}\big(X_{t\rho}\big)\otimes\phi_{j}\big(X_{t\rho}\big)a^{ij}\big(X_{t\rho}\big)d\rho\otimes\int_{0}^{1}\frac{1}{2}\phi_{k}\big(X_{t\theta}\big)\otimes\phi_{l}\big(X_{t\theta}\big)a^{kl}\big(X_{t\theta}\big)d\theta.
\]
\normalsize Expand $\phi_{i}$ to order $t$: \scriptsize
\[
    \phi_{i,pq|jkl}\times\big(\frac{1}{48},0,0,0,0,0,0,0,0,0,0,0,0,0,0\big).
\]
\normalsize Expand $\phi_{j}$ to order $t$: \scriptsize
\[
    \phi_{i|j,pq|kl}\times\big(\frac{1}{48},0,0,0,0,0,0,0,0,0,0,0,0,0,0\big).
\]
\normalsize Expand $\phi_{k}$ to order $t$: \scriptsize
\[
    \phi_{ij|k,pq|l}\times\big(\frac{1}{48},0,0,0,0,0,0,0,0,0,0,0,0,0,0\big).
\]
\normalsize Expand $\phi_{l}$ to order $t$: \scriptsize
\[
    \phi_{ijk|l,pq}\times\big(\frac{1}{48},0,0,0,0,0,0,0,0,0,0,0,0,0,0\big).
\]
\normalsize Expand both $\phi_{i}$ and $\phi_{j}$ to order $\sqrt{t}$: \scriptsize
\[
    \phi_{i,p|j,q|kl}\times\big(\frac{1}{24},0,0,0,0,0,0,0,0,0,0,0,0,0,0\big).
\]
\normalsize Expand both $\phi_{i}$ and $\phi_{k}$ to order $\sqrt{t}$: \scriptsize
\[
    \phi_{i,p|j|k,q|l}\times\big(\frac{1}{48},0,0,0,0,0,0,0,0,0,0,0,0,0,0\big).
\]
\normalsize Expand both $\phi_{i}$ and $\phi_{l}$ to order $\sqrt{t}$: \scriptsize
\[
    \phi_{i,p|jk|l,q}\times\big(\frac{1}{48},0,0,0,0,0,0,0,0,0,0,0,0,0,0\big).
\]
\normalsize Expand both $\phi_{j}$ and $\phi_{k}$ to order $\sqrt{t}$: \scriptsize
\[
    \phi_{i|j,p|k,q|l}\times\big(\frac{1}{48},0,0,0,0,0,0,0,0,0,0,0,0,0,0\big).
\]
\normalsize Expand both $\phi_{j}$ and $\phi_{l}$ to order $\sqrt{t}$: \scriptsize
\[
    \phi_{i|j,p|k|l,q}\times\big(\frac{1}{48},0,0,0,0,0,0,0,0,0,0,0,0,0,0\big).
\]
\normalsize Expand both $\phi_{k}$ and $\phi_{l}$ to order $\sqrt{t}$: \scriptsize
\[
    \phi_{ij|k,p|l,q}\times\big(\frac{1}{24},0,0,0,0,0,0,0,0,0,0,0,0,0,0\big).
\]
\normalsize Expand $a$ to order $t$: \scriptsize
\[
    \phi_{ijkl}\times\bigg[\partial_{pq}^{2}a^{ij}\frac{1}{48}\delta^{kl}\delta^{pq}+\partial_{pq}^{2}a^{kl}\frac{1}{48}\delta^{ij}\delta^{pq}\bigg].
\]

\end{appendices}

\end{document}